\newtheorem{theorem}{Theorem}[section]
\newtheorem{proposition}[theorem]{Proposition}
\newtheorem{lemma}[theorem]{Lemma}
\newtheorem{corollary}[theorem]{Corollary}
\theoremstyle{remark}
\theoremstyle{definition}
\newtheorem{definition}[theorem]{Definition}
\newtheorem{remark}[theorem]{Remark}
\numberwithin{equation}{section}
\numberwithin{figure}{section}
\numberwithin{table}{section}
\newcommand{\bx}{\bm{x}}
\newcommand{\dC}{\mathbb{C}}
\newcommand{\dD}{\mathbb{D}}
\newcommand{\dN}{\mathbb{N}}
\newcommand{\dR}{\mathbb{R}}
\newcommand{\ee}{\mathfrak{e}}
\renewcommand{\i}{\sqrt{-1}}
\newcommand{\dT}{\mathbb{T}}
\newcommand{\dZ}{\mathbb{Z}}
\newcommand{\bo}{\bm{0}}
\newcommand{\by}{\bm{y}}
\newcommand{\M}{\mathcal{M}}
\newcommand{\fd}{\mathfrak{d}}
\newcommand{\fr}{\mathfrak{r}}
\newcommand{\fs}{\mathfrak{s}}
\newcommand{\W}{\Omega}
\newcommand{\RR}{\mathbb{R}}
\newcommand{\ZZ}{\mathbb{Z}}
\newcommand{\PP}{\mathbb{P}}
\newcommand{\p}{\partial}
\newcommand{\bp}{\bar{\partial}}
\newcommand{\fF}{\mathfrak{F}}
\newcommand{\fJ}{\mathfrak{J}}
\newcommand{\cF}{\mathcal{F}}
\newcommand{\cG}{\mathcal{G}}
\newcommand{\cJ}{\mathcal{J}}
\newcommand{\cK}{\mathcal{K}}
\newcommand{\cR}{\mathcal{R}}
\newcommand{\cS}{\mathcal{S}}
\newcommand{\sq}{\sqrt{-1}}
\newcommand{\cC}{\mathcal{C}}
\newcommand{\cO}{\mathcal{O}}
\newcommand{\CC}{\mathbb{C}}
\newcommand{\can}{\rm{can}}
\newcommand{\punsec}{\mathring{\Sec}}
\DeclareMathOperator{\ALG}{ALG}
\DeclareMathOperator{\Area}{Area}
\DeclareMathOperator{\diam}{Diam}
\DeclareMathOperator{\dvol}{dvol}
\DeclareMathOperator{\A}{A}
\DeclareMathOperator{\D}{D}
\DeclareMathOperator{\E}{E}
\DeclareMathOperator{\FF}{flat}
\DeclareMathOperator{\Id}{Id}
\DeclareMathOperator{\I}{I}
\DeclareMathOperator{\II}{II}
\DeclareMathOperator{\III}{III}
\DeclareMathOperator{\IV}{IV}
\DeclareMathOperator{\Image}{Image}
\DeclareMathOperator{\Isom}{Isom}
\DeclareMathOperator{\Ker}{Ker}
\DeclareMathOperator{\pr}{pr}
\DeclareMathOperator{\rank}{rank}
\DeclareMathOperator{\Ric}{Ric}
\DeclareMathOperator{\Res}{Res}
\DeclareMathOperator{\Sec}{Sec}
\DeclareMathOperator{\SF}{sf}
\DeclareMathOperator{\Ima}{Im}
\DeclareMathOperator{\Rea}{Re}
\DeclareMathOperator{\Rm}{Rm}
\DeclareMathOperator{\SL}{SL}
\DeclareMathOperator{\EH}{EH}
\DeclareMathOperator{\U}{U}
\DeclareMathOperator{\Vol}{Vol}
\begin{document}

 \title{Collapsing Ricci-flat metrics on elliptic K3 surfaces}

 \author{Gao Chen} \thanks{The first author is supported by NSF Grant DMS-1638352 and a grant from S. S. Chern Foundation for Mathematical Research. The second author is supported by NSF Grant DMS-1811096. The third author is supported by  NSF Grant DMS-1906265. }
  \address{Department of Mathematics, University of Wisconsin-Madison, Madison, WI, 53706}
 \email{gchen233@wisc.edu}
 \author{Jeff Viaclovsky}
 \address{Department of Mathematics, University of California, Irvine, CA 92697}
 \email{jviaclov@uci.edu}
 \author{Ruobing Zhang}
\address{Department of Mathematics, Stony Brook University, Stony Brook, NY, 11794}
\email{ruobing.zhang@stonybrook.edu}

\begin{abstract}
For any elliptic K3 surface $\fF: \cK \rightarrow \PP^1$, we construct a family of collapsing Ricci-flat K\"ahler metrics such that curvatures are uniformly bounded away from singular fibers,
and which Gromov-Hausdorff limit to $\PP^1$ equipped with the McLean metric. There are well-known examples of this type of collapsing, but the key point of our construction is that we can additionally give a precise description of the metric degeneration near each type of singular fiber, without any restriction on the types of singular fibers.
\end{abstract}

\date{}

\maketitle

\setcounter{tocdepth}{1}
\tableofcontents

\section{Introduction}
\subsection{Background and main results}

A K3 surface, by definition, is a compact complex surface with trivial fundamental group and zero first Chern class.
Since all complex K3 surfaces are K\"ahler \cite{Siu}, it is a consequence of Yau's resolution to the Calabi conjecture \cite{Yau} that every K3 surface admits Ricci-flat K\"ahler metrics called {\textit{Calabi-Yau metrics}}.
 Our particular interest in this paper is to study an {\it elliptic} K3 surface $\fF : \cK \rightarrow \PP^1$.
It is well-known that there are no exceptional curves on $\cK$, and no fiber over the base is multiple \cite[Section~11.1]{Huybrechts}. The generic fiber is an elliptic curve,
and there are singular fibers which can be of type $\I_0^*$, $\II$, $\III$, $\IV$, $\II^*$, $\III^*$, $\IV^*$, $\I_{\nu}$ and $\I_{\nu}^*$ for $\nu\in\dZ_+$, over a finite set $\mathcal{S} \subset \mathbb{P}^1$  \cite{Kodaira1963}.

For an elliptic K3 surface which has $24$ singular fibers of Type $\I_1$ in Kodaira's list,
Gross and Wilson in \cite{GW} constructed a family of Calabi-Yau metrics with bounded diameters which are collapsing to the base $\PP^1$
in the Gromov-Hausdorff sense, that is,
\begin{equation}
(\cK, g_\delta) \xrightarrow{GH} (\PP^1,d_{ML}), \quad \text{as}\ \delta\to0,
\end{equation}
where $d_{ML}$ is the McLean metric on $\PP^1$,
see Definition~\ref{d:McLean}.
Moreover, away from 24 singular
fibers, $g_\delta$ are collapsing with uniformly bounded curvatures.
This is consistent with the general theory of the degeneration of Einstein
metrics in dimension four in that the sequence collapses
with bounded curvature away from finitely many singular points in the Gromov-Hausdorff limit \cite{CheegerTian},
and the limit is a Riemannian orbifold away from the singular points \cite{NT1}.
However, the general theory does not provide a description of the degeneration near the singular fibers.

The case of $24$ fibers of type $\I_1$ is the generic case, but there are many interesting K3 surfaces with non-generic configurations of singular fibers. In \cite{GTZ}, the construction of Gross-Wilson was extended to  general elliptic K3 surfaces, see also the recent work~\cite{OdakaOshima}.
Namely, families of Calabi-Yau metrics were constructed which Gromov-Hausdorff limit $(\PP^1,d_{ML})$ with uniform curvature estimates away from finite singular points.
Our primary goal in this paper is to describe the precise nature of the degeneration near the singular fibers, by describing all possible {\textit{canonical bubble limits}}, see Definition~\ref{d:canonical-bubble}.

We next recall some details regarding complete non-compact hyperk\"ahler four-manifolds.
Under the curvature decay condition $|\Rm|\leq Cr^{-2-\epsilon}$ for some $\epsilon>0$, complete non-compact hyperk\"ahler 4-manifolds were classified in a series of works \cite{Kronheimer, Minerbe, CCI, CCII, CCIII}. Under this curvature decay condition, the volume growth rates must be $O(r^4)$, $O(r^3)$, $O(r^2)$ or $O(r)$. They are called ALE (asymptotically locally Euclidean), ALF (asymptotically locally flat), ALG, or ALH (``G" and ``H" are the letters after ``E" and ``F") respectively. However, there are also complete non-compact hyperk\"ahler 4-manifolds with curvature decay rate exactly $O(r^{-2})$ and volume growth rate $O(r^{4/3})$ as well as complete non-compact hyperk\"ahler 4-manifolds with other curvature decay rates \cite{AKL, TianYau, Hein}.
The hyperk\"ahler 4-manifolds with volume growth rates $O(r^4)$, $O(r^3)$, $O(r^{4/3})$ and $O(r)$ have been realized as the bubbles of degenerating Calabi-Yau metrics on the K3 surfaces \cite{LeBrun-Singer, Donaldson-kummer, Foscolo, CCII, HSVZ}. The deepest bubbles in our construction are ALE Eguchi-Hanson metrics (see \cite{EguchiHanson}), ALF Taub-NUT metrics (see \cite{Hawking}), as well as a certain class of
ALG hyperk\"ahler metrics, which are {\textit{isotrivial}} and are ALG of order at least~$2$, see Subsection~\ref{ss:ALG}.
The main result in this paper is the following.
\begin{theorem}\label{t:main-theorem} For any elliptic K3 surface
 $\fF:\cK\to\PP^1$, with singular fibers over the finite set $\mathcal{S} \subset \mathbb{P}^1$, there are a
family of Ricci-flat K\"ahler metrics $g_\delta$ on $\cK$ with $\diam_{g_\delta}(\cK)=1$ and $\Vol_{g_\delta}(\cK)\sim\delta^2$
 such that
\begin{equation}
(\cK,g_\delta) \xrightarrow{GH} (\PP^1,d_{ML}), \quad \text{as}\ \delta\to0,
\end{equation}
 where $d_{ML}$ is the McLean metric.
Moreover, the following properties hold:
\begin{enumerate}
\item For any $p\in\PP^1\setminus\cS$, the fiber
$\fF^{-1}(p)$ is regular and homeomorphic to $\dT^2$
with $\Area(\fF^{-1}(p))\sim\delta^2$.

\item Curvatures are uniformly bounded away from singular fibers, while curvature is unbounded in a
neighborhood of any singular fiber.

\item Near singular fibers with finite monodromy, rescalings of the metrics converge to ALG hyperk\"ahler
 metrics.

\item
Near singular fibers of type $\I_\nu, \nu \geq 1$,
there are $\nu$ copies of Taub-NUT metrics which
occur as rescaling limits.

\item Near singular fibers of type $\I_{\nu}^*, \nu \geq 1$,
 $\nu$ copies of Taub-NUT metrics plus $4$ Eguchi-Hanson metrics occur as rescaling limits.
\end{enumerate}
\end{theorem}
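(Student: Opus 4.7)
The plan is to construct an approximate Ricci-flat Kähler form $\omega_\delta^{\mathrm{app}}$ on $\cK$ by a multi-scale gluing of explicit model metrics and then perturb $\omega_\delta^{\mathrm{app}}$ to an exact Calabi-Yau metric by solving a complex Monge-Ampère equation. The novelty over the Gross-Wilson and \cite{GTZ} constructions is to make the local model in a neighborhood of every singular fiber canonical, so that the rescaled bubble limits can be identified precisely.

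First, over $\PP^1\setminus\cS$ the map $\fF$ is a smooth elliptic fibration, and one builds the semi-flat Calabi-Yau metric $g_{\mathrm{sf},\delta}$ associated to the family of elliptic curves, rescaled so that each fiber has area of order $\delta^2$ and whose base descends to $(\PP^1\setminus\cS, d_{ML})$. Next, in a neighborhood of each singular fiber one replaces $g_{\mathrm{sf},\delta}$ by an explicit local model. For a singular fiber of one of the finite-monodromy Kodaira types ($\I_0^*$, $\II$, $\III$, $\IV$, $\II^*$, $\III^*$, $\IV^*$) the model is the isotrivial hyperkähler ALG metric of the corresponding type from Subsection~\ref{ss:ALG}; for a type $\I_\nu$ fiber the model is an Ooguri-Vafa type metric containing $\nu$ Taub-NUT bubbles; and for a type $\I_\nu^*$ fiber the model is three-scale, namely an ambient $\I_0^*$ ALG metric, with $\nu$ Taub-NUT pieces glued at the $\nu$ nodes at an intermediate scale and $4$ Eguchi-Hanson pieces glued at the $4$ $\ZZ_2$-orbifold points at the deepest scale.

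Using a partition of unity adapted to annular regions around each point of $\cS$, one combines $g_{\mathrm{sf},\delta}$ with the local models into a smooth global Kähler form $\omega_\delta^{\mathrm{app}}$. Because each model is Ricci-flat and the transitions occur where the models agree with the semi-flat metric to high order in $\delta$, the Ricci form $\Ric(\omega_\delta^{\mathrm{app}})$ is small in a weighted norm tuned to each bubble scale. One then solves
\begin{equation*}
(\omega_\delta^{\mathrm{app}} + \i\p\bp\vf_\delta)^2 = e^{F_\delta}(\omega_\delta^{\mathrm{app}})^2
\end{equation*}
with $F_\delta$ chosen so that $\omega_\delta := \omega_\delta^{\mathrm{app}} + \i\p\bp\vf_\delta$ is Ricci-flat in the prescribed Kähler class. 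Uniform weighted $C^k$ estimates on $\vf_\delta$ in the spirit of \cite{HSVZ} show the perturbation is subordinate to every bubble scale, so assertions (1)-(5) of the theorem follow by taking pointed rescalings of $\omega_\delta$ at the appropriate base points and radii.

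The main obstacle is the $\I_\nu^*$ case, which involves three nested scales (ALG base, intermediate Taub-NUT, deepest Eguchi-Hanson) together with orbifold points that are resolved only at the finest scale. One must design a weight system on $\cK$ that is consistent across all three regimes simultaneously, and prove uniform-in-$\delta$ mapping properties of $\Delta_{\omega_\delta^{\mathrm{app}}}$ in the corresponding weighted Hölder spaces. A secondary difficulty is confirming that near a finite-monodromy fiber the rescaled limit is exactly the isotrivial ALG model of order at least $2$ claimed in the theorem rather than a more degenerate ALG space; ruling out slower decay uses the holomorphic period of the fibration near $\cS$ together with the Monge-Ampère estimates.
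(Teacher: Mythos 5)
Your high-level plan (multi-scale gluing of explicit models followed by a perturbation argument in weighted spaces) matches the paper's strategy, and your choice of local models for $\I_\nu$ fibers (multi-Ooguri-Vafa, $\nu$ Taub-NUT bubbles) and for finite-monodromy fibers (isotrivial ALG) is correct. However, two steps as you have proposed them would fail.

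First, the complex Monge-Amp\`ere perturbation $(\omega_\delta^{\mathrm{app}} + \i\p\bp\vf_\delta)^2 = e^{F_\delta}(\omega_\delta^{\mathrm{app}})^2$ is not available in the finite-monodromy regions. The ALG model is isotrivial (constant functional invariant) while $\cK$ is not isotrivial near that fiber, so the transplanted ALG K\"ahler form $\Psi_*\omega_\delta^{\cG}$ of Subsection~\ref{ss:approximate-metric-ALG} is not of type $(1,1)$ for the complex structure of $\cK$; it is merely a symplectic form compatible with the fixed holomorphic $2$-form $\delta\Omega_{\cK}$ only up to a polynomially small error measured by the distortion order $\lambda_\beta$ of Table~\ref{Distortiontable}. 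Since $\omega_\delta^C$ is not K\"ahler for $J_\cK$, correcting by $\i\p\bp\vf$ cannot produce a Ricci-flat metric. The paper works around this by keeping $\omega_2+\i\omega_3 = \delta\Omega_{\cK}$ fixed and perturbing only the remaining $2$-form by $\theta_1 = d^+\eta + \xi$ with $\eta\in\mathring{\Omega}^1(\cK)$ co-closed and $\xi\in\mathcal{H}^2_+(\cK)$ harmonic, that is, Donaldson's definite-triple formalism, leading to the first-order elliptic system~\eqref{e:elliptic-system}. This is not cosmetic: the linearization is $d^+\oplus\Id$ rather than $\Delta$, and all the Liouville theorems and weighted Schauder estimates of Sections~\ref{s:Liouville}--\ref{s:proof-of-main-theorem} are formulated for co-closed $1$-forms, not scalar potentials.

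Second, your three-scale model for $\I_\nu^*$ fibers with an ambient $\I_0^*$ ALG metric at the outermost scale is topologically impossible for $\nu\geq1$. An $\I_\nu^*$ fiber has infinite monodromy when $\nu\geq1$, while every ALG space is asymptotic to a flat $\dT^2$-bundle over a sector with finite monodromy (order $2$ in the $\I_0^*$ case). The torus bundle on a collar neighborhood of the singular fiber therefore cannot be matched to the boundary of any compact piece of an $\I_0^*$ ALG space, and indeed no ALG space appears among the bubble limits in Proposition~\ref{p:regularity-scale-I_v*}. The paper instead places $\I_\nu^*$ ($\nu\geq1$) in the infinite-monodromy section alongside $\I_\nu$ and models the neighborhood of the fiber by the minimal resolution of the $\ZZ_2$-quotient of an incomplete multi-Ooguri-Vafa metric on a $2\nu$-monopole Gibbons-Hawking space over $\dR^2\times S^1$ (whose $\log$-divergent potential encodes the correct infinite monodromy), with the four $\ZZ_2$-orbifold points resolved by Eguchi-Hanson metrics. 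A lesser omission: for a general elliptic K3 without a holomorphic section, the semi-flat metric must first be built on the Jacobian $\cJ$ and then carried over to $\cK$ via the diffeomorphism $\Psi$ and local translating sections so that the holomorphic $2$-forms agree in cohomology; this is the content of Section~\ref{s:elliptic-fibration} and supplies the parameter space $\mathscr{B}$ which is needed for the parameter count.
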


\begin{remark} Theorem \ref{t:main-theorem} partially answers \cite[Problem~1.11]{Hein}. We note also that Theorem \ref{t:main-theorem} only states the non-collapsing bubble limits; in this case the pointed Gromov-Hausdorff convergence is equivalent to pointed $C^{\infty}$-convergence.  However,
there are also numerous collapsing bubble limits, and we refer the reader to Section~\ref{s:metric-geometry} for the complete description of all possibilities.
\end{remark}

\subsection{Outline of the proof}
We next give an outline of the main steps involved in the proof of Theorem~\ref{t:main-theorem}.
For a general elliptic K3 surface $\fF:\cK\to \PP^1$, there does not exist a global holomorphic section. However, Kodaira \cite{Kodaira1963} proved that $\fF:\cK\to \PP^1$ determines an elliptic surface $\fJ:\cJ\to \PP^1$ with a global holomorphic section, which is called the Jacobian of the original elliptic K3 surface. The Jacobian $\cJ$ (called the basic member by Kodaira) has the same functional and homological invariants as the original surface $\cK$.
In Section~\ref{s:semi-flat-metrics}, we will briefly review a construction of Greene-Shapiro-Vafa-Yau (\cite{GSVY}) which gives explicit hyperk\"ahler metrics $g_\delta^{\SF}$ on the regular region of $\cJ$ such that the area of each $\dT^2$-fiber equals $\delta^2$. These metrics are also known as semi-flat metrics, and their construction relies on the existence of a holomorphic section.
In Section~\ref{s:elliptic-fibration}, we will use a global diffeomorphism between the original elliptic K3 surface $\cK$ and its Jacobian $\cJ$ so that the semi-flat metrics $g_\delta^{\SF}$ can be naturally translated to a hyperk\"ahler metric $g_\delta^A$ on the regular region of the original K3 surface $\cK$.

The above procedure yields collapsing hyperk\"ahler metrics
on the regular region of the original elliptic K3 surface
$\fF:\cK\to \PP^1$. These semi-flat metrics are singular at the singular fibers; the next goal is to replace $g_\delta^A$ in the singular regions with smooth metrics by a gluing procedure.
In Section~\ref{s:infinite-monodromy} we will do this in the $I_{\nu}$ and
$I_{\nu}^*$ cases. In the $I_{\nu}$ case, this is done by gluing in a generalization of the Ooguri-Vafa metric, which we call a multi-Ooguri-Vafa metric. In the
$I_{\nu}^*$ case, we glue in a resolution of a $\ZZ_2$ quotient of a multi-Ooguri-Vafa metric, which is inspired by Kodaria's work \cite[Section~8]{Kodaira1963}. We note that the resolution of the infinite monodromy fibers can be done with respect to a fixed complex structure;
the resulting K\"ahler form will be denoted by $\omega_\delta^B$, with associated metric $g_{\delta}^B$.

 The metric  $g_\delta^B$ will still have singularities near the fibers with finite monodromy.  In Section~\ref{s:finite-monodromy}, near these singular fibers, we will glue in a certain class of
ALG hyperk\"ahler 4-manifolds. Namely, we require that the ALG manifold is {\textit{isotrivial}}, that is, the functional invariant is constant, and furthermore, that the metric is ALG of order at least~$2$. After this procedure,  we will obtain a family of ``approximately'' Calabi-Yau metrics on $\cK$ which are collapsing to the McLean metric on $\PP^1$, and denoted by $g_\delta^C$.
An important note is that this step cannot be done preserving the original complex structure
(because $\cK$ is not isotrivial), so in this step, the $2$-form $\omega_\delta^C$ is only symplectic, but not necessarily K\"ahler. Therefore, we have to use the {\it definite triples} originated in \cite{Donaldson}, to find out the approximately hyperk\"ahler metric $g_\delta^C$. This technique was also used, for instance, in  \cite{CCIII, FineLotaySinger, Foscolo, HSVZ}.
To explain this, let $(M^4,\dvol_0)$ be an oriented $4$-manifold with a fixed volume form $\dvol_0$. A triple
\begin{align}
\bm{\omega}=(\omega_1,\omega_2,\omega_3)\in\Omega^2(M^4)\otimes\dR^3
\end{align}
is called a {\textit{definite triple}} if the matrix $Q=(Q_{ij})$ defined by
\begin{equation}
\frac{1}{2}\omega_i\wedge\omega_j=Q_{ij}\dvol_0
\end{equation}
is positive definite. The triple is called {\textit{closed}} if $d \omega_i = 0$ for $i = 1, 2,3$.
For a definite triple $\bm{\omega}$ on  $(M^4,\dvol_0)$, let us define the associated renormalized volume form and coefficient matrix by
\begin{align}
\dvol_{\bm{\omega}} & \equiv  (\det(Q))^{\frac{1}{3}} \dvol_0, \\
Q_{\bm{\omega}} & \equiv (\det(Q))^{-\frac{1}{3}}Q. \label{e:renormalized-coefficients}
\end{align}
Immediately, $\det(Q_{\bm{\omega}})=1$ and $Q_{\bm{\omega}}$
is independent of the choice of $\dvol_0$.
\begin{definition} Given an oriented $4$-manifold $M^4$, a closed definite triple $\bm{\omega}=(\omega_1,\omega_2,\omega_3)\in\Omega^2(M^4)\otimes\dR^3$ is called {\it hyperk\"ahler} if
	$Q_{\bm{\omega}}=\Id$.
	Equivalently,
	\begin{equation}
	\frac{1}{2}\omega_i\wedge\omega_j =\frac{1}{6}\delta_{ij}(\omega_1^2+\omega_2^2+\omega_3^2),
	\end{equation}
	for every $1\leq i\leq j\leq 3$.
\end{definition}

\begin{remark}
	\label{r:triple}
    A definite triple $\bm{\omega}$ induces a Riemannian metric $g_{\bm{\omega}}$ such that each $\omega_j$ is self-dual with respect to $g_{\bm{\omega}}$ and the volume form of $g_{\bm{\omega}}$ is $\dvol_{\bm{\omega}}$. By \cite[Lemma~6.8]{Hitchin}, the metric $g_{\bm{\omega}}$ is hyperk\"ahler if and only if
	$\bm{\omega}=(\omega_1,\omega_2,\omega_3)$ is a hyperk\"ahler triple.
	In this case, $\omega_2 + \sq\omega_3$  is a holomorphic $2$-form with respect to the complex structure determined by $\omega_1$.
\end{remark}

In our construction we will take $\omega_2 + \i \omega_3 = \delta \Omega_{\cK}$,
where $\Omega_{\cK}$ is a fixed holomorphic $2$-form on $\cK$. This together with $\omega_\delta^C$ is a definite triple and determines a metric $g_\delta^C$. Section~\ref{s:metric-geometry} will then focus on the bubbling analysis of $g_\delta^C$.
We will quantitatively analyze the {\it regularity scales}
around each type of singular fibers, which will be achieved by explicitly classifying all possible bubble limits on $\cK$.
This leads us to canonically define a class of weighted H\"older spaces, which will be the key to our perturbative analysis.

Section~\ref{s:Liouville} will be devoted to the proof of vanishing theorems on various bubble limits, which we will refer to as {\textit{Liouville theorems}}.
In Section \ref{s:proof-of-main-theorem}, we will carry out the perturbation
of our family of approximately hyperk\"ahler metrics $g_\delta^C$ to a family of Calabi-Yau metrics $g_\delta^D$,
using the implicit function theorem (Lemma~\ref{l:implicit-function}).
To carry out the perturbation, in Proposition~\ref{p:injectivity-for-L} we will establish uniform estimates for the linearized operator in the geometrically canonical weighted spaces defined in Section~\ref{s:metric-geometry}. The proof will be based on contradiction arguments and bubbling analysis, which will reduce the proof to the various Liouville theorems on each type of bubble limits.

 We will end with some remarks in Section~\ref{s:moduli-space}. First, we will count the parameters involved in our construction, and show that they add up to the expected dimension.
Then we will also describe some other possible bubbles which can arise by other choices of the parameters involved in our construction.

\subsection{Acknowledgements}
The authors would like to thank Hans-Joachim Hein and Song Sun for many helpful suggestions and stimulating discussions.

\section{Semi-flat metric on a Jacobian K3}

\label{s:semi-flat-metrics}

In this section, we describe the semi-flat metric on an elliptic K3 surface with holomorphic section. It was originally constructed by Greene-Shapere-Vafa-Yau in \cite{GSVY}. We begin with a brief review of elliptic K3 surfaces.

\subsection{Elliptic K3 surfaces}
\label{ss:elliptick3}
To begin with, we review Kodaira's work on elliptic surfaces in \cite{Kodaira1963}. Recall that for each elliptic curve $\cC=\mathbb{C}/(\mathbb{Z}\tau_1\oplus\mathbb{Z}\tau_2)$, we can view the number $\varrho=\tau_2/\tau_1\in\mathbb{H}/\SL(2,\mathbb{Z})$, where $\mathbb{H}\equiv\{\tau\in\dC|\Ima\tau>0\}$ is the upper half plane. Let  $\fF: \cK \rightarrow \PP^1$ be an elliptic K3 surface with a finite singular set $\mathcal{S}\subset\PP^1$. Then $\varrho=\tau_2/\tau_1$ is a multi-valued holomorphic function on $\PP^1\setminus\mathcal{S}$. Recall that the $j$-invariant maps $\varrho\in\mathbb{H}/\SL(2,\mathbb{Z})$ to $j(\varrho)\in\mathbb{C}$, so the $j$-invariant is a holomorphic function on $\PP^1\setminus\mathcal{S}$ which extends to a meromorphic function on $\PP^1$, and is called the {\textit{functional invariant}}, denoted by $\mathscr{J}: \PP^1 \rightarrow \PP^1$.

The sheaf $R^1 \fF_* \ZZ$ is the first direct image sheaf of the constant sheaf $\ZZ$ on $\cK$, which is the sheaf on $\PP^1$
associated to the presheaf with sections over $U \subset \PP^1$ being $H^1( \fF^{-1}(U), \ZZ)$.
The sheaf $R^1 \fF_* \ZZ$ is called the homological invariant of $\cK$, and is locally constant over  $\PP^1\setminus\mathcal{S}$.
The homological invariant is equivalent to a representation $\rho: \pi_1( \PP^1 \setminus \mathcal{S}) \rightarrow SL(2, \ZZ)$,
which is called the {\textit{monodromy representation}}.
There is a compatibility relation between these invariants, see \cite[Section~8]{Kodaira1963}; the sheaf $R^1 \fF_* \ZZ$ {\textit{belongs to}} the meromorphic function $\mathscr{J}$.

By \cite{Kodaira1963}, given an elliptic surface $\fF: \cK\rightarrow \PP^1$, there exists a unique elliptic surface $\fJ:\cJ\to\PP^1$ with a holomorphic section $\sigma_0$ which has same functional and homological invariants as $\cK$. It was called the ``basic member" by Kodaira \cite{Kodaira1963} but was called ``Jacobian" by other authors \cite{GW}.

The space $\cK^\#$ is obtained from $\cK$ by replacing the singular fibers with only
the irreducible components with multiplicity 1 (minus the singular points and intersection points with other components). A section of $\fJ : \cJ \rightarrow \PP^1$ is a holomorphic mapping
$\sigma : \PP^1 \rightarrow \cJ$ such that $\fJ \circ \sigma = \Id_{\PP^1}$, which is
equivalent to a holomorphically embedded $\PP^1 \subset \cJ^\#$ which has intersection
number $1$ with fibers of $\cJ^\#$.
Therefore a section $\sigma_0$ distinguishes a point in every fiber of $\cJ^\#$. For $p \in \PP^1$, the fiber of $\cJ^\#$ over $p$, $\cJ^\#_p =
\sigma_0^{-1}(p) \cap \cJ^\#$, is an abelian group with identity $\sigma_0(p)$, see \cite[Section~9]{Kodaira1963}.
The subset $\cJ^\#_0\subset \cJ^\#$ is defined as the subset of $\cJ^\#$ consisting of the identity component of each fiber group.

Let $\pi:\mathcal{O}_{\PP^1}(-2) \rightarrow \PP^1$ be the
line bundle of degree $-2$ over $\PP^1$, which can be identified
with the cotangent bundle of $\PP^1$.
\begin{proposition}
\label{p:fprop}
Given the global section $\sigma_0:\PP^1\to \cJ$, there is an associated holomorphic mapping
\begin{align}
f_0: \mathcal{O}_{\PP^1}(-2)\rightarrow \cJ^\#_0,
\end{align}
satisfying $\fJ \circ f_0 = \pi$ such that restricted to a fiber, this mapping induces a group
homomorphism, with kernel equal to  $\ZZ \oplus \ZZ$, $\ZZ$, or $\{0\}$.
Furthermore,
\begin{align}
f_0^* \Omega = \Omega_{\can} \label{e:canonical-volume-form}
\end{align}
where $\Omega$ is a nonzero holomorphic $(2,0)$-form on $\cJ$,
and $\Omega_{\can}$ is the canonical holomorphic $(2,0)$-form on $ \mathcal{O}_{\PP^1}(-2)
\cong T^* \PP^1$.
\end{proposition}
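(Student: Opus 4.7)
The plan is to construct $f_0$ fiberwise, identifying the Lie algebra of each fiber group $(\cJ^\#_0)_p$ canonically with $T^*_p \PP^1$ via the holomorphic $(2,0)$-form on $\cJ$, and then exponentiating. Fix a nonzero holomorphic $(2,0)$-form $\Omega$ on $\cJ$. For each $p \in \PP^1$, the identity component $(\cJ^\#_0)_p$ is a connected one-dimensional complex abelian Lie group, hence isomorphic to $\CC/\Lambda_p$ (an elliptic curve), to $\CC^*$, or to $\CC$; in each case the exponential map $\CC \to (\cJ^\#_0)_p$ is a holomorphic group homomorphism with kernel $\ZZ \oplus \ZZ$, $\ZZ$, or $\{0\}$, respectively. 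To identify the Lie algebra $T_{\sigma_0(p)}(\text{fiber})$ canonically with $T^*_p \PP^1$, I would use the nondegeneracy of $\Omega$: for $v \in T_{\sigma_0(p)}(\text{fiber})$ the contraction $\iota_v \Omega \in T^*_{\sigma_0(p)}\cJ$ vanishes on $T_{\sigma_0(p)}(\text{fiber})$ for dimension reasons and therefore descends through $d\fJ$ to an element of $T^*_p \PP^1$. The resulting map is a $\CC$-linear isomorphism, and composing its inverse with the exponential gives $\exp_p \colon T^*_p \PP^1 \to (\cJ^\#_0)_p$. Setting $f_0(p,\alpha) := \exp_p(\alpha)$ realizes the fiberwise map, with the stated kernels.

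The bulk of the verification is to check that $f_0$ is holomorphic and that $f_0^*\Omega = \Omega_{\can}$. For this I would work in local coordinates on a simply connected $U \subset \PP^1 \setminus \mathcal{S}$ with coordinate $z$. On the universal cover of $\fJ^{-1}(U)$ take a coordinate $w$ so that the fiber group action is translation in $w$ and $\sigma_0$ is the zero section, and write $\Omega = h(z,w)\,dw \wedge dz$. Invariance of $\Omega$ under the period lattice $\Lambda_z$ forces $h(z,\cdot)$ to be a holomorphic function on the compact torus $\CC/\Lambda_z$, hence constant in $w$, so $h = h(z)$ is nowhere vanishing. The substitution $\xi = h(z)\,w$ then gives $\Omega = d\xi \wedge dz$, and under this substitution the Lie-algebra identification above becomes the tautological one: $\xi$ is also the natural fiber coordinate on $T^*\PP^1$ dual to $z$, so $\Omega_{\can} = d\xi \wedge dz$ in the same coordinates. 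In these coordinates $f_0$ is just reduction modulo the lattice $h(z)\Lambda_z$, which is manifestly holomorphic, and the pullback identity is immediate.

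The main obstacle will be extending this local picture holomorphically across the discriminant $\mathcal{S}$, since the periods $\tau_1(z),\tau_2(z)$ either become multi-valued with $\mathrm{SL}(2,\ZZ)$-monodromy or degenerate outright, and the topology of the fiber group changes. To handle this, I would appeal to Kodaira's explicit normal forms from \cite[Sections~6--8]{Kodaira1963}. For $I_\nu$ fibers, $(\cJ^\#_0)_p \cong \CC^*$ with universal cover $\CC$ via the ordinary exponential; for the additive singular fibers, $(\cJ^\#_0)_p \cong \CC$ itself, so there is no lattice. In every case Kodaira's models exhibit local coordinates in which $\sigma_0$ meets the singular fiber at a smooth point of the identity component, and in which both the group law on $\cJ^\#_0$ and the form $\Omega$ extend holomorphically through $\mathcal{S}$. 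These models are precisely what is needed to see that the fiberwise construction of $f_0$ patches to a single globally defined holomorphic map, and the identity $f_0^*\Omega = \Omega_{\can}$, already verified on the dense open set of regular fibers, then extends by continuity.
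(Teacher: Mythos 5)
Your proof is correct, and while it lands on the same map $f_0$ as the paper, the route is more explicit and self-contained. The paper constructs $f_0$ by first identifying $N_\Sigma \cong K_\Sigma \cong \mathcal{O}_{\PP^1}(-2)$ via the adjunction formula (implicitly using the trivialization of $K_\cJ$ by $\Omega^{-1}$ to pin down the isomorphism) and then applying the fiberwise Lie group exponential; the pullback identity $f_0^*\Omega = \Omega_{\can}$ is then cited to Gross. You instead identify $T^*_p\PP^1$ with the Lie algebra of the fiber group directly via $v \mapsto \iota_v\Omega$ — which, once $K_\cJ$ is trivialized, is the very same isomorphism adjunction produces, but you see it concretely rather than bundle-theoretically — and you then verify $f_0^*\Omega = \Omega_{\can}$ by a direct local computation instead of a citation. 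Your key observations are sound: $\iota_v\Omega$ vanishes on fiber tangents for dimension reasons and hence descends to $T^*_p\PP^1$; the coefficient $h$ in $\Omega = h(z,w)\,dw\wedge dz$ is $w$-independent by lattice invariance plus compactness of the smooth fiber (Liouville); and after the substitution $\xi = h(z)w$, the map $f_0$ is reduction mod a lattice in the tautological fiber coordinate, so $f_0^*\Omega = d\xi\wedge dz = \Omega_{\can}$ follows immediately (the correction term $h'(z)w\,dz\wedge dz$ vanishes). Your final appeal to Kodaira's normal forms to extend across $\mathcal{S}$ is also the right move, and it makes explicit the point about the stated kernels $\ZZ\oplus\ZZ$, $\ZZ$, $\{0\}$ for the three fiber-group types, which the paper's proof leaves implicit. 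Net effect: same statement, same underlying geometry, but your version trades a citation for a short coordinate calculation and is more transparent.
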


\begin{proof}
The image of the section is a submanifold $\Sigma = \sigma_0 (\PP^1) \subset \cJ$.
To identify the normal bundle of $\Sigma$, use the adjunction formula
\begin{align}
K_{\Sigma} = ( K_{\cJ})|_{\Sigma} \otimes N_\Sigma
\end{align}
but the canonical bundle of $\cJ_0^\#$ is trivial, so
$N_{\Sigma} \cong K_{\Sigma} \cong \mathcal{O}_{\PP^1}(-2)$.
Then we get $f_0: \mathcal{O}_{\PP^1}(-2)_{\Sigma} \cong N_{\Sigma} \rightarrow \cJ_0^\#$ by
the fiberwise Lie group exponential map.

The second statement is proved in \cite[Proposition~7.2]{Gross1999}.
\end{proof}

\subsection{Construction of semi-flat metrics}
\label{ss:construction-semi-flat}
By definition, $f_0: \mathcal{O}_{\PP^1}(-2)\rightarrow \cJ^\#_0,
$ maps the zero section of $\mathcal{O}_{\PP^1}(-2)$
to the global holomorphic section $\sigma_0:\PP^1\to \cJ$.
By \eqref{e:canonical-volume-form}, we can write
\begin{equation}
f_0^*\Omega = \Omega_{\can}=-dx \wedge dy,
\end{equation}
where $\{x\}$ is the canonical holomorphic fiber coordinate of the cotangent bundle $\mathcal{O}_{\PP^1}(-2)$ and
$\{y\}$ is the holomorphic coordinate of the regular base region $\PP^1\setminus\mathcal{S}\subset\dC$. Note that our sign convention for $\Omega$ is opposite to that of \cite{GW}.

For each small constant $\delta>0$, Greene-Shapere-Vafa-Yau's work in \cite{GSVY} gives hyperk\"ahler metrics $g_\delta^{\SF}$ on the union of the regular fibers $\mathcal{R}\equiv \fJ^{-1}(\PP^1\setminus\mathcal{S})\subset \cJ$, which we can describe as follows.

The holomorphic periods are defined as $\tau_i(y) = \int_{\gamma_i} dx$,
where $\gamma_i$ are a basis of the first homology of the torus fiber, $i = 1,2$. Note that this depends on the choice of basis, so we will consider these as multi-valued functions. After exchanging $\gamma_1$ with $\gamma_2$ if necessary, we assume that $\Ima(\frac{\tau_2}{\tau_1})>0$.
In the above coordinates,
the left action of $(m,n)\in\Gamma  \equiv\mathbb{Z}\oplus\dZ $ on the regular part is given by
\begin{equation}(m,n)\cdot(y,x)\equiv(y,x+m\tau_1(y)+n\tau_2(y)).\end{equation}
 For each $\delta>0$,
let \begin{align}W & \equiv\frac\delta{\mathrm{Im}(\bar\tau_1\tau_2)},
 \\
 b & \equiv -\frac{W}\delta\Big(\mathrm{Im}(\tau_2\bar x)\partial_y\tau_1+\mathrm{Im}(\bar\tau_1x)\partial_y\tau_2\Big),\label{e:term-b}
\end{align}
then {\it Greene-Shapere-Vafa-Yau's semi-flat metric} with {\it bounded diameter}
is the metric $g_{\delta}^{\SF}$ associated to the K\"ahler form and holomorphic $2$-form
\begin{align}\omega_\delta^{\SF} & =\frac{\sq}{2}\cdot \delta \cdot\Big(W(dx+bdy)\wedge\overline{dx+bdy}+W^{-1}dy\wedge d\bar y\Big),
\\
\Omega_\delta & = -\delta \cdot dx\wedge dy = \delta \Omega. \end{align}

\begin{remark}The semi-flat metric is independent of the choice of $\gamma_1, \gamma_2$, and the
choice of local holomorphic coordinate $y$.
\end{remark}

Let $x=x_1\tau_1(y)+x_2\tau_2(y)$ for $x_1, x_2\in \RR/\ZZ$, then the K\"ahler form
can be rewritten as follows. First, \eqref{e:term-b} can be simplified as
\begin{align}
b=   -\frac{1}{\Ima(\bar{\tau}_1\tau_2)}\Big(x_1\Ima(\tau_2\bar{\tau}_1)\partial_y\tau_1+x_2\Ima(\bar{\tau}_1\tau_2)\partial_y\tau_2\Big)
=  - (x_1\cdot \partial_y\tau_1 + x_2\cdot \partial_y\tau_2),
\end{align}
which implies
$dx + b dy = \tau_1(y) dx_1 + \tau_2 (y)dx_2$.
So $\omega_\delta^{\SF}$ can  be simplified as
\begin{align}\omega_\delta^{\SF} & =\frac{\sq}{2}\cdot \delta \cdot\Big(W(\tau_1(y)dx_1+\tau_2(y)dx_2)\wedge\overline{\tau_1(y)dx_1+\tau_2(y)dx_2}+W^{-1}dy\wedge d\bar y\Big)
\nonumber\\
& =\delta^2 \cdot dx_1 \wedge dx_2+\frac{\sq}{2}\cdot \mathrm{Im}(\bar\tau_1\tau_2)dy\wedge d\bar y.
\label{e:SF1}
\end{align}
The holomorphic volume form $\Omega_\delta$ is expressed as
\begin{align}
\Omega_\delta  = -\delta \cdot \Big( \tau_1(y)dx_1+\tau_2(y)dx_2 \Big)\wedge dy.
\label{e:SF2}
\end{align}
It is easy to verify that both $\omega_\delta^{\SF}$ and $\Omega_\delta $ are $\Gamma$-invariant and hence they descend to the region $\mathcal{R}$. We will use the same notation for these descended forms.

Notice that there are
constants $C_0>0$ independent of $\delta>0$ such that  \begin{align}
\frac{1}{C_0}  \leq \diam_{g_\delta^{\SF}}(\mathcal{R}) & \leq C_0
\\
 \Area(\dC/\Gamma)  &= \delta^2,
\end{align}
where $\dC/\Gamma \cong \dT^2$ is the regular torus fiber.

\begin{definition}
\label{d:McLean}
The McLean
metric is the Riemannian metric $g_{ML}$ on $\mathbb{P}^1 \setminus \mathcal{S}$ associated to the K\"ahler form
\begin{align}
\label{omegaML}
\omega_{ML} = \frac{\sq}{2}\cdot \mathrm{Im}(\bar\tau_1\tau_2)dy\wedge d\bar y.
\end{align}
The induced distance function on $\mathbb{P}^1$ is denoted by  $d_{ML}$.
\end{definition}
We refer the reader to \cite{Gross1999, Hitchin97, McLean} for more details about the McLean metric.
Note that as $\delta \rightarrow 0$, $(\cJ \setminus \fJ^{-1}(\mathcal{S}), g^{\SF}_{\delta})$
converges to $(\mathbb{P}^1 \setminus \mathcal{S}, g_{ML})$ in the Gromov-Hausdorff distance.

\subsection{Rescaling and equivariant convergence}

\label{ss:rescaling-semi-flat}

The semi-flat metrics $g_\delta^{\SF}$ constructed in Section \ref{ss:construction-semi-flat} are hyperk\"ahler and collapsing with bounded curvatures away from the singular fibers.
For our purpose, we need to take a closer look at the convergence of the metrics and the lattices by unwrapping the collapsing torus fibers.
To describe this, we will use a standard notion, called {\textit{equivariant Gromov-Hausdorff convergence}}.
We refer the readers to \cite[Section~3]{FY} for other definitions and more details.

For $j \in \mathbb{Z}_+$, let $(M_j^n,g_j,p_j)$ be a sequence of Riemannian manifolds with $|\Rm_{g_j}|\leq 1$ such that
\begin{equation}
(M_j^n,g_j,p_j) \xrightarrow{GH} (X_{\infty}^k, d_{\infty},p_{\infty}),
\end{equation}
where $X_{\infty}^k$ is a $k$-dimensional Alexandrov space,
 then there exists some uniform constant $s_0=s_0(n,p_{\infty})>0$ such that we have the following diagram
\begin{equation}
\xymatrix{
\Big(\widehat{B_{s_0}(p_j)}, \hat{g}_j, \Gamma_j,\hat{p}_{j}\Big)\ar[rr]^{eqGH}\ar[d]_{\pr_j} &   & \Big(Y_{\infty}^n, \hat{g}_{\infty}, \Gamma_{\infty}, \hat{p}_{\infty}\Big)\ar [d]^{\pr_{\infty}}
\\
 \Big(B_{s_0}(p_j), g_j\Big)\ar[rr]^{GH} && \Big(B_{s_0}(p_{\infty}), d_{\infty}\Big)
}\label{e:equivariant-convergence-diagram}
\end{equation}
and the local Riemannian universal covering map $\pr_j:(\widehat{B_{s_0}(p_j)}, \hat{g}_j)\to B_{s_0}(p_j) $ converges to the submetry $\pr_{\infty}:Y_{\infty}^n\to \Gamma_{\infty}\backslash Y_{\infty}^n\equiv B_{s_0}(p_{\infty})$.
Here $\Gamma_j\equiv \pi_1(B_{s_0}(p_j))$, and the limiting group $\Gamma_{\infty}$ is a closed subgroup in $\Isom(Y_{\infty}^n)$.
The equivariant convergence means that the isometry actions of $\Gamma_j$ on $\widehat{B_{s_0}(p_j)}$ converge to the isometry action of $\Gamma_{\infty}$ on $Y_{\infty}^n$
with respect to their Gromov-Hausdorff convergence.
 Moreover, the local universal covers
$(\widehat{B_{s_0}(p_j)}, \hat{g}_j,\hat{p}_{j})$
are non-collapsing and hence $C^{1,\alpha}$-converge to the manifold $Y_{\infty}^n$ for any $\alpha\in(0,1)$.

Next, we will realize  the above picture
for the semi-flat metrics $g_\delta^{\SF}$
under the rescaled lattice $\Gamma_\delta\equiv (\delta\dZ)\oplus(\delta\dZ)$.
First, let $x_{\delta,1}\equiv \delta x_1$ and $x_{\delta,2}\equiv \delta x_2$. Then
\begin{equation}
x_\delta \equiv \delta x = \tau_1(y) x_{\delta,1} + \tau_2(y) x_{\delta,2}.
\end{equation}
The lattice $\Gamma_\delta$  gives a left action on $ \dC\times\dC$ as follows,
\begin{equation}
(\delta m,\delta n)\cdot(y,x_\delta)\equiv \Big(y,x_\delta+\delta m\cdot \tau_1(y)+\delta n\cdot \tau_2(y)\Big).\end{equation}
Let us recall that the semi-flat K\"ahler form and the corresponding holomorphic volume form are given by
\begin{align}\omega_\delta^{\SF}&=dx_{\delta,1} \wedge dx_{\delta,2}+\frac{\sq}{2}\cdot \mathrm{Im}(\bar\tau_1\tau_2)dy\wedge d\bar y, \\
 \Omega_\delta & = -\Big( \tau_1(y)dx_{\delta,1}+\tau_2(y)dx_{\delta,2} \Big)\wedge dy. \label{e:explicit-expression-uniform-coefficient}
 \end{align}
 In the above notations, one can check that both $\omega_\delta^{\SF}$
 and $\Omega_\delta$ are $\Gamma_\delta$-invariant and they descend to the regular region of the K3 surface $\mathcal{R}\equiv \fF^{-1}(\PP^1\setminus \mathcal{S})\subset \cJ$, still denoted by $\omega_\delta^{\SF}$ and $\Omega_\delta $ for convenience.

Notice that the K\"ahler form $\omega_\delta^{\SF}$
and the holomorphic $2$-form $\Omega_\delta$  are written in coordinates $(x_\delta,y)$ on the cotangent bundle
$\pi:\mathcal{O}_{\PP^1}(-2)\to \PP^1$.
In the following,
we will view $\omega_\delta^{\SF}$ and $\Omega_\delta$ as $2$-forms on the local universal covering space of $\mathcal{R}\subset \cJ$. To see this,
 take a ball  $B_s(p)\subset \PP^1$ which is diffeomorphic to a $2$-disc $\dD\subset \dC$. Using the holomorphic map
$f_0: \mathcal{O}_{\PP^1}(-2)\rightarrow \cJ^\#_0$, there is an open subset $\mathcal{V}_s\equiv\pi^{-1}(B_s(p))\subset\mathcal{O}_{\PP^1}(-2)$ which is biholomorphic to
$B_s(p)\times \dC$
and naturally gives a universal covering map
\begin{equation}
\pr_\delta: \mathcal{V}_s \longrightarrow \fJ^{-1}(B_s(p)).
\end{equation}
Now the equivariant Gromov-Hausdorff convergence diagram in our context reads as follows,
\begin{equation}
\xymatrix{
\Big(\mathcal{V}_s, g_\delta^{\SF}, \Gamma_\delta\Big)\ar[rr]^{eqGH}\ar[d]_{\pr_\delta} &   & \Big(\mathcal{Y}, g_{\mathcal{Y}}, \Gamma_0\Big)\ar [d]^{\pr_{0}}
\\
 \Big( \fJ^{-1}(B_s(p)), g_{\delta,\cJ}^{\SF}\Big)\ar[rr]_{\delta\to 0}^{GH} && \Big(B_{s}(p), d_{ML}\Big),
}\label{e:semi-flat-equivariant-convergence}
\end{equation}
where the limiting group $\Gamma_0=\dR\oplus \dR$ acts isometrically and freely on the $4$-manifold $\mathcal{Y}$.
In the above diagram, due to the elliptic regularity for Einstein equations in the non-collapsed context, the metrics $g_\delta^{\SF}$ on the local universal covers converge to the limiting metric $g_{\mathcal{Y}}$ in the  $C^k$-topology for any $k\in\dZ_+$.

To finish this section, we carry out some local computations for the semi-flat metrics, which will be used for the weighted analysis in our later arguments.
Let $\eta\in \Omega^1(\cJ)$
be a real differential $1$-form such that
 $\eta$ can be written as
\begin{align}\eta=f^{(y)} dy+f^{(\bar{y})}d\bar y+f^{(3)}\cdot e^3 + f^{(4)}\cdot e^4\label{e:1-form-representation}\end{align}
in the regular region of $\cJ$,
where $y=y_1+\sq y_2$,
\begin{align}
e^1 & = \sqrt{\Ima(\bar\tau_1\tau_2)}dy_1,
\\
e^2 & = \sqrt{\Ima(\bar\tau_1\tau_2)}dy_2,
\\
e^3 & = \frac{1}{\sqrt{\Ima(\bar\tau_1\tau_2)}}(\Rea(\tau_1)dx_{\delta,1} + \Rea(\tau_2)dx_{\delta,2}),
\\
e^4 & = \frac{1}{\sqrt{\Ima(\bar\tau_1\tau_2)}}(\Ima(\tau_1)dx_{\delta,1} + \Ima(\tau_2)dx_{\delta,2}).
\end{align}
Since $\eta$ is a real $1$-form, so it holds that $f^{(\bar{y})}=\overline{f^{(y)}}$.
Let us define
\begin{align}
e^{(x)} & \equiv e^3 + \sq e^4 = \frac{1}{\sqrt{\Ima(\bar\tau_1\tau_2)}}(\tau_1 \cdot dx_{\delta,1} + \tau_2 \cdot dx_{\delta,2}) , \label{e:ex}
\\
 \overline{e^{(x)}} & \equiv e^3 - \sq e^4 = \frac{1}{\sqrt{\Ima(\bar\tau_1\tau_2)}}(\bar{\tau}_1 \cdot dx_{\delta,1} + \bar{\tau}_2 \cdot dx_{\delta,2})) , \label{e:ex-bar}
\end{align}
so that $\eta$ can be represented as
\begin{equation}
\eta = f^{(y)} dy+f^{(\bar{y})}d\bar y +\Rea(F^{(x)}\cdot e^{(x)}),
\end{equation}
where $F^{(x)} \equiv f^{(3)} - \sq f^{(4)}$.
Also it is straightforward that the coframe $\{e^1, e^2, e^3, e^4\}$ is a standard hyperk\"ahler basis such that
\begin{align}
\omega_\delta^{\SF} & = e^1\wedge e^2+e^3\wedge e^4,
\\
\Omega_\delta & = (e^1+\i e^2)\wedge (e^3+\i e^4).
\end{align}
\begin{lemma}\label{l:D-xi-semi-flat}
Let the real differential $1$-form $\eta\in \Omega^1(\cJ)$ satisfy the representation \eqref{e:1-form-representation} in the regular region. If $f^{(y)}$, $f^{(\bar{y})}$, $f^{(3)}$, $f^{(4)}$ are functions depending only on $y$,  then
the following local formulas of $d^+\eta$ and $d^*\eta$ hold:
\begin{align}
d^+ \eta = &
 (\p_y f^{(\bar{y})} - \bar{\p}_y f^{(y)}) (dy \wedge d\bar{y})^+
+ \Rea \Big(\frac{\p_y(\sqrt{\Ima(\bar{\tau}_1\tau_2)}\cdot  F^{(x)})}{\sqrt{\Ima(\bar{\tau}_1\tau_2)}}dy\wedge  e^{(x)}\Big),
\\
d^*\eta  = & - \frac{2}{\Ima(\bar{\tau}_1\tau_2)}(\bar\partial_y f^{(y)} + \partial_y f^{(\bar{y})}),
\end{align}
where $\xi^+\equiv (\xi + * \xi)/2$ for any $2$-form $\xi$,  $d^+ \eta \equiv (d\eta)^+$,
 $d^*$ is the $L^2$-adjoint of $d$, all computed with respect to the metric $g_{\delta}^{\SF}$.
\end{lemma}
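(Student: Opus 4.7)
The plan is to expand everything in the orthonormal hyperk\"ahler coframe $\{e^1, e^2, e^3, e^4\}$ and exploit the self-duality of $\omega_\delta^{\SF}$ and $\Omega_\delta$. Writing $v \equiv \sqrt{\Ima(\bar\tau_1\tau_2)}$, one has $dy = v^{-1}(e^1 + \sq e^2)$ and $e^{(x)} = e^3 + \sq e^4$. The crucial identity
\begin{equation}
dy \wedge e^{(x)} = v^{-1}\Omega_\delta
\end{equation}
shows that $dy \wedge e^{(x)}$ is self-dual, while analogous computations give that $d\bar y \wedge e^{(x)}$ and $dy \wedge \overline{e^{(x)}}$ are both anti-self-dual. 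Consequently the self-dual projection of any $2$-form is read off by collecting the $dy \wedge e^{(x)}$ and $d\bar y \wedge \overline{e^{(x)}}$ components, together with the $(dy \wedge d\bar y)^+$ piece.

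To compute $d^+ \eta$, I would split $\eta = \eta_B + \eta_F$ with $\eta_B = f^{(y)} dy + f^{(\bar y)} d\bar y$ and $\eta_F = \Rea(F^{(x)} e^{(x)})$. Since all coefficients depend only on $y$, $d\eta_B = (\p_y f^{(\bar y)} - \bp_y f^{(y)})\, dy \wedge d\bar y$, and its self-dual projection gives the first term. For $\eta_F$, expand $F^{(x)} e^{(x)} = v^{-1} F^{(x)}(\tau_1 dx_{\delta,1} + \tau_2 dx_{\delta,2})$ and differentiate. Using $\bp_y \tau_i = 0$, the $d\bar y$-wedged pieces collapse to a multiple of $d\bar y \wedge e^{(x)}$, which is anti-self-dual and drops out of $(\cdot)^+$. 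The $dy$-wedged pieces equal
\begin{equation}
dy \wedge \Big[\big(\p_y F^{(x)} - F^{(x)} \p_y v / v\big) e^{(x)} + (F^{(x)}/v)\big(\p_y \tau_1\, dx_{\delta,1} + \p_y \tau_2\, dx_{\delta,2}\big)\Big].
\end{equation}
Inverting the matrix $\bigl(\begin{smallmatrix} \tau_1 & \tau_2 \\ \bar\tau_1 & \bar\tau_2 \end{smallmatrix}\bigr)$ to expand each $dx_{\delta,i}$ in the basis $\{e^{(x)}, \overline{e^{(x)}}\}$, the $e^{(x)}$-component of the second bracket equals $(2\sq v)^{-1}(\p_y \tau_1 \bar\tau_2 - \p_y \tau_2 \bar\tau_1)$. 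By the identity
\begin{equation}
\p_y v^2 = (2\sq)^{-1}(\bar\tau_1 \p_y \tau_2 - \bar\tau_2 \p_y \tau_1),
\end{equation}
which is immediate from $\bp_y \tau_i = 0$, this simplifies to $2\p_y v$. Combining with the first bracket, the total $e^{(x)}$-coefficient is $v^{-1}\p_y(v F^{(x)})$; adding the complex conjugate arising from $\Rea(\cdot)$ yields the second term of the lemma.

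For $d^*\eta = -{*}d{*}\eta$, write $\eta = a_1 e^1 + a_2 e^2 + a_3 e^3 + a_4 e^4$ with $a_1 v = f^{(y)} + f^{(\bar y)}$ and $a_2 v = \sq(f^{(y)} - f^{(\bar y)})$. Using $e^1 \wedge e^2 = v^2 dy_1 \wedge dy_2$ and $e^3 \wedge e^4 = dx_{\delta,1} \wedge dx_{\delta,2}$, the Hodge dual decomposes as
\begin{equation}
{*}\eta = (a_1 e^2 - a_2 e^1) \wedge e^3 \wedge e^4 + e^1 \wedge e^2 \wedge (a_3 e^4 - a_4 e^3).
\end{equation}
Once the fiber piece is expanded in coordinates, it is a $y$-dependent function times $dy_1 \wedge dy_2 \wedge dx_{\delta,i}$, so its exterior derivative vanishes identically: every $y$-derivative produces a repeated $dy_j$. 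The base piece yields $d({*}\eta) = v^{-2}[\p_{y_1}(a_1 v) + \p_{y_2}(a_2 v)]\dvol$, whence $d^*\eta = -v^{-2}[\p_{y_1}(a_1 v) + \p_{y_2}(a_2 v)]$. Substituting $\p_{y_1} = \p_y + \bp_y$ and $\p_{y_2} = \sq(\p_y - \bp_y)$ converts this into the stated formula.

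The only non-routine step is the combination in $d^+\eta_F$: the contributions from $\p_y(1/v)$ and from $\p_y \tau_i$ must telescope into the clean divergence $v^{-1}\p_y(v F^{(x)})$. This relies precisely on the holomorphicity of the periods through the identity above; the remainder of the argument is careful bookkeeping in the orthonormal hyperk\"ahler frame.
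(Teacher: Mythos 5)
Your proof is correct and follows essentially the same approach as the paper: you identify $dy\wedge e^{(x)}$ as the self-dual piece via $dy\wedge e^{(x)} = v^{-1}\Omega_\delta$, factor $F^{(x)}e^{(x)}$ as $v^{-1}F^{(x)}(\tau_1\,dx_{\delta,1}+\tau_2\,dx_{\delta,2})$, exploit the holomorphy of the periods to discard the anti-self-dual pieces and simplify the remainder, and compute $d^*\eta$ by assembling $*\eta$ in the orthonormal frame and noting the $e^1\wedge e^2\wedge(\cdots)$ part is annihilated by $d$. One small bookkeeping slip: your displayed expression for the $e^{(x)}$-component of $\p_y\tau_1\,dx_{\delta,1}+\p_y\tau_2\,dx_{\delta,2}$, namely $(2\sq v)^{-1}(\p_y\tau_1\bar\tau_2-\p_y\tau_2\bar\tau_1)$, has the wrong sign — it should be $(2\sq v)^{-1}(\bar\tau_1\p_y\tau_2-\bar\tau_2\p_y\tau_1)$ — but you then state the correct simplified value $2\p_y v$, so the conclusion $v^{-1}\p_y(vF^{(x)})$ and the final formula are unaffected.
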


\begin{proof}
First, we compute $d^+\eta$. Notice that the coefficients of $\eta$ depends only on $y$ so that we have
\begin{align}
d\eta =&  (\p_y f^{(\bar{y})} - \bar{\p}_y f^{(y)}) (dy \wedge d\bar{y})
+ \Rea (d(F^{(x)} \cdot e^{(x)}))
\nonumber\\
= &  (\p_y f^{(\bar{y})} - \bar{\p}_y f^{(y)}) (dy \wedge d\bar{y})
+ \Rea \Big(d(\frac{F^{(x)}}{\sqrt{\Ima(\bar{\tau}_1\tau_2)}}\cdot \sqrt{\Ima(\bar{\tau}_1\tau_2)} \cdot e^{(x)})\Big)
\nonumber\\
=&  (\p_y f^{(\bar{y})} - \bar{\p}_y f^{(y)}) (dy \wedge d\bar{y})
+ \Rea \Big(\frac{F^{(x)}}{\sqrt{\Ima(\bar{\tau}_1\tau_2)}}\cdot d(\sqrt{\Ima(\bar{\tau}_1\tau_2)} \cdot e^{(x)})
\nonumber\\
& + \sqrt{\Ima(\bar{\tau}_1\tau_2)}\p_y (\frac{F^{(x)}}{\sqrt{\Ima(\bar{\tau}_1\tau_2)}}) dy\wedge   e^{(x)} + \sqrt{\Ima(\bar{\tau}_1\tau_2)} \bp_y (\frac{F^{(x)}}{\sqrt{\Ima(\bar{\tau}_1\tau_2)}}) d\bar{y} \wedge   e^{(x)}\Big).
\end{align}
Now we are ready to compute the self-dual part of $d\eta$.  One can verifies that
$dy\wedge e^{(x)}$ is self-dual and $d\bar{y}\wedge e^{(x)}$ is anti-self-dual, so it holds that
\begin{align}
d^+ \eta = &  (\p_y f^{(\bar{y})} - \bar{\p}_y f^{(y)}) (dy \wedge d\bar{y})
+ \Rea \Big(\sqrt{\Ima(\bar{\tau}_1\tau_2)}\p_y (\frac{F^{(x)}}{\sqrt{\Ima(\bar{\tau}_1\tau_2)}}) dy\wedge   e^{(x)}\Big)
\nonumber\\
&  + \Rea\Big(\frac{F^{(x)}}{\sqrt{\Ima(\bar{\tau}_1\tau_2)}}\cdot d^+(\sqrt{\Ima(\bar{\tau}_1\tau_2)} \cdot e^{(x)})\Big).\label{e:to-be-simplified}
\end{align}

 The next is to simplify the third term in \eqref{e:to-be-simplified}.
  By \eqref{e:ex}, it follows that
 \begin{align}
 d (\sqrt{\Ima(\bar{\tau}_1\tau_2)}\cdot e^{(x)}) = \p_y\tau_1 dy\wedge dx_{\delta,1}+\p_y\tau_2 dy\wedge dx_{\delta,2}.
 \end{align}
By \eqref{e:ex} and \eqref{e:ex-bar}
\begin{align}
dx_{\delta,1} = & -\frac{1}{2\i \cdot \sqrt{\Ima(\bar\tau_1\tau_2)}}(\bar\tau_2 e^{(x)}-\tau_2\overline{ e^{(x)} }), \\
dx_{\delta,2} = & \frac{1}{2\i \cdot  \sqrt{\Ima(\bar\tau_1\tau_2)}}(\bar\tau_1 e^{(x)}-\tau_1\overline{ e^{(x)}}).
\end{align}
Therefore,
\begin{align}
& d (\sqrt{\Ima(\bar{\tau}_1\tau_2)}\cdot e^{(x)})
 \nonumber\\
   = & \frac{1}{2\sq \cdot  \sqrt{\Ima(\bar\tau_1\tau_2)}}
\Big((\bar{\tau}_1\p_y \tau_2-\bar{\tau}_2\p_y\bar{\tau}_1) dy\wedge e^{(x)} + (\tau_2\p_y\tau_1 - \tau_1\p_y\tau_2) dy\wedge \overline{ e^{(x)}} \Big)
\nonumber\\
= &\frac{\p_y\Ima(\bar{\tau}_1 \tau_2)}{\sqrt{\Ima(\bar\tau_1\tau_2)}}
 dy\wedge e^{(x)} + \frac{1}{2\sq\cdot \sqrt{\Ima(\bar\tau_1\tau_2)}}(\tau_2\p_y\tau_1 - \tau_1\p_y\tau_2) dy\wedge \overline{ e^{(x)}},
\end{align}
where we used the property that $\tau_1$ and $\tau_2$ are holomorphic in $y$. Since $dy\wedge e^{(x)}$ is self-dual and $dy\wedge \overline{e^{(x)}}$ is anti-self-dual, so we have
\begin{align}
d^+ (\sqrt{\Ima(\bar{\tau}_1\tau_2)}\cdot e^{(x)})
= \frac{\p_y\Ima(\bar{\tau}_1 \tau_2)}{\sqrt{\Ima(\bar\tau_1\tau_2)}}
 dy\wedge e^{(x)} .\end{align}

Finally, we have
\begin{align}
d^+\eta = & (\p_y f^{(\bar{y})} - \bar{\p}_y f^{(y)}) (dy \wedge d\bar{y})^+
+ \Rea \Big((\sqrt{\Ima(\bar{\tau}_1\tau_2)}\p_y (\frac{F^{(x)}}{\sqrt{\Ima(\bar{\tau}_1\tau_2)}})\nonumber\\
+ & \frac{F^{(x)}\cdot \p_y\Ima(\bar{\tau}_1 \tau_2)}{\Ima(\bar{\tau}_1\tau_2)})dy\wedge  e^{(x)}\Big)\\
= &(\p_y f^{(\bar{y})} - \bar{\p}_y f^{(y)}) (dy \wedge d\bar{y})^+
+ \Rea \Big(\frac{\p_y(\sqrt{\Ima(\bar{\tau}_1\tau_2)}\cdot  F^{(x)})}{\sqrt{\Ima(\bar{\tau}_1\tau_2)}}dy\wedge  e^{(x)}\Big).
\end{align}

Next, we compute the term $d^*\eta $. To begin with, it is straightforward that
\begin{align}
&\dvol= e^1 \wedge e^2\wedge  e^3 \wedge e^4 = \Ima(\bar{\tau}_1\tau_2) dy_1\wedge dy_2\wedge dx_{\delta,1}\wedge dx_{\delta,2},
\\
& *(dy)  =-\sq   dy\wedge e^3\wedge e^4=-\sq   dy\wedge dx_{\delta,1}\wedge dx_{\delta,2},\\
& *(d\bar{y})= \sq  d\bar{y}\wedge e^3\wedge e^4=\sq  d\bar{y}\wedge e^3\wedge dx_{\delta,1}\wedge dx_{\delta,2},\\
& *(e^3)   = e^1\wedge e^2\wedge e^4 = \frac{\sq}{2}\Ima(\bar{\tau}_1\tau_2)\cdot dy\wedge d\bar{y}\wedge e^4,\\
& *(e^4)   = -e^1\wedge e^2\wedge e^3 = -\frac{\sq}{2}\Ima(\bar{\tau}_1\tau_2)\cdot dy\wedge d\bar{y}\wedge e^3
.\quad
\end{align}
So it follows that \begin{align}
*\eta =&\sq  \cdot( f^{(\bar{y})} d\bar{y}- f^{(y)} dy) \wedge dx_{\delta,1}\wedge dx_{\delta,2}
\nonumber\\
&+ \frac{\sq}{2} f^{(3)}\cdot \Ima(\bar{\tau}_1\tau_2)\cdot dy\wedge d\bar{y}\wedge e^4
- \frac{\sq}{2} f^{(4)}\cdot \Ima(\bar{\tau}_1\tau_2)\cdot dy\wedge d\bar{y}\wedge e^3,
\end{align}
and hence
\begin{align}
d * \eta = &\sq  \cdot( \p_y f^{(\bar{y})} +\bar{\p}_y f^{(y)}) \cdot dy\wedge d\bar{y} \wedge dx_{\delta,1}\wedge dx_{\delta,2}
\nonumber\\
= & 2\cdot (\p_y f^{(\bar{y})}+\bar{\p}_y f^{(y)}) \cdot dy_1\wedge dy_2\wedge dx_{\delta,1}\wedge dx_{\delta,2}.
\end{align}
Therefore,
\begin{align}
d^* \eta = -*  d  * \eta = - \frac{2}{\Ima(\bar{\tau}_1\tau_2)}(\bar\partial_y f^{(y)} + \partial_y f^{(\bar{y})}),
\end{align}
and the proof is complete.

\end{proof}

\section{Semi-flat metric on a general elliptic K3}
\label{s:elliptic-fibration}

In the previous section, we have described the Greene-Shapere-Vafa-Yau's semi-flat metric $g_\delta^{\SF}$ on $\cJ$. In this section, we translate $\omega_\delta^{\SF}$ by local sections to get a semi-flat K\"ahler form $\omega_\delta^A$ on $\cK$. In general, $\cK$ does not have any global holomorphic section. However, it always admits a smooth section.
\begin{theorem}
\label{existsecthm} Consider an elliptic K3 surface $\fF: \cK \rightarrow \PP^1$,
not necessarily with a holomorphic section.
Then there exists a $C^{\infty}$ section $\sigma_{\infty} : \PP^1 \rightarrow \cK^\#$.
\end{theorem}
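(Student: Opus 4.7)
The strategy is to combine local holomorphic sections near each singular fiber with obstruction theory on the regular part, and to reconcile them using the fiberwise additive structure of the Jacobian fibration.

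Since $\cK$ has no multiple fibers, every singular fiber $\fF^{-1}(q_i)$, $q_i \in \cS$, contains an irreducible component of multiplicity $1$, and Kodaira's explicit local form for each singular fiber type \cite{Kodaira1963} yields a local holomorphic section $\sigma_i : D_i \to \cK^\#$ on a small disk $D_i \ni q_i$; we take the $D_i$ pairwise disjoint. On the regular locus $U = \PP^1 \setminus \cS$, the fibration $\fF|_U : \cK|_U \to U$ is a smooth $T^2$-bundle over a non-compact $2$-manifold, so $H^2(U; \mathcal{F}) = 0$ for any local coefficient system $\mathcal{F}$, and elementary obstruction theory produces a smooth section $\sigma_U : U \to \cK|_U$.

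To glue these sections, one uses that the identity component bundle $\cJ^\#_0 \to \PP^1$ from Proposition~\ref{p:fprop} acts fiberwise by translation on $\cK^\#$, making $\cK^\#$ a torsor over $\cJ^\#_0$ in the smooth category. Thus $\sigma_U$ may be modified by any smooth section of $\cJ^\#_0|_U \to U$. On each annular overlap $A_i = D_i \setminus \{q_i\}$, the difference $\phi_i = \sigma_U - \sigma_i$ is a smooth map $A_i \to T^2$ with winding number class $w_i \in \pi_1(T^2)$ (relative to a chosen local trivialization). One then modifies $\sigma_U$ globally so as to cancel every $w_i$ simultaneously; once each $\phi_i$ is null-homotopic on $A_i$, it extends to a smooth map $D_i \to T^2$, and a smooth cutoff interpolating $\sigma_U$ with $\sigma_i$ on each $D_i$ produces the desired section $\sigma_{\infty} : \PP^1 \to \cK^\#$.

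\textbf{Main obstacle.} The crux is the cancellation of winding numbers: one must realize the prescribed collection $\{w_i\}$ as the boundary data of a globally defined smooth section of $\cJ^\#_0|_U$. This reduces to the vanishing of $H^1(\PP^1; \mathcal{G})$, where $\mathcal{G}$ is the sheaf of smooth sections of $\cJ^\#_0$. Since $\mathcal{G}$ is a fine sheaf on the paracompact manifold $\PP^1$ (smooth partitions of unity exist), this cohomology indeed vanishes; equivalently, the Tate--Shafarevich group of $\cJ$ in the smooth category is trivial, which is precisely the reason a smooth section of $\cK^\#$ can exist even when a holomorphic one does not.
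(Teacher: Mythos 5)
Your approach---gluing local holomorphic sections near the singular fibers with a globally defined smooth section over $\PP^1 \setminus \cS$, using the fiberwise translation action of $\cJ^\#_0$---is a genuinely different route from the paper's, which invokes Friedman--Morgan's deformation theory. Steps one and two are fine. But the crucial last step contains a genuine gap: the sheaf $\mathcal{G}$ of smooth sections of $\cJ^\#_0 \to \PP^1$ is \emph{not} a fine sheaf. Fineness requires a module structure over a soft sheaf of rings so that partitions of unity can scale sections, but the fibers of $\cJ^\#_0$ are tori and there is no way to multiply a torus-valued section by a cutoff function. Concretely, the exact sequence $0 \to R^1\fJ_*\ZZ \to C^\infty(\cO_{\PP^1}(-2)) \to \mathcal{G} \to 0$ (with the middle term actually fine) gives $H^1(\PP^1,\mathcal{G}) \cong H^2(\PP^1,R^1\fJ_*\ZZ)$; writing $R^1\fJ_*\ZZ = j_*L$ for the monodromy local system $L = \ZZ^2$ on $U = \PP^1\setminus\cS$ and using Poincar\'e--Lefschetz duality, one identifies this group with the coinvariants $L_\rho$. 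This is a finite abelian group that is typically \emph{nonzero}: for example, for an elliptic K3 with four $I_0^*$ fibers and constant $j$-function, $\rho$ has image $\{\pm I\}$ and $L_\rho \cong (\ZZ/2)^2$.

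Therefore the cancellation of the winding data is not automatic; one must actually establish that the \emph{specific} class of $\cK^\#$ in $H^1(\PP^1, \mathcal{G}) \cong H^2(\PP^1, R^1\fJ_*\ZZ)$ vanishes. This is equivalent to showing that the Tate--Shafarevich class of $\cK$ lies in the image of $H^1(\PP^1, \cO_{\PP^1}(-2)) \cong \CC$, i.e.\ that $\cK$ belongs to the connected one-parameter family of complex deformations of its Jacobian $\cJ$. This is precisely the content the paper extracts from \cite{FriedmanMorgan}, and your argument does not establish it; a formal soft/fine-sheaf argument cannot substitute for it. There is also a secondary subtlety you gloss over: $\cK^\#$ is not a torsor over $\cJ^\#_0$ when a singular fiber has several multiplicity-one components, so one must first fix a component of each singular fiber before the class in $H^1(\PP^1,\mathcal{G})$ is even defined, and the class depends on that choice.
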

\begin{proof}
This theorem is essentially proved in \cite{FriedmanMorgan}. By \cite[Lemma~I.5.11]{FriedmanMorgan}, \cite[Theorem~I.5.1]{FriedmanMorgan} and the paragraph before \cite[Theorem~I.5.13]{FriedmanMorgan}, we know that the elliptic K3 surfaces with the same Jacobian $\cJ$ form a single deformation equivalence class, with base parametrized by $H^2(\mathcal{J}, \mathcal{O}_{\mathcal{J}}) = \CC$. Then by \cite[Remark~II.1.4 ]{FriedmanMorgan}, we know that there exists a $C^{\infty}$ section $\sigma_{\infty} : \PP^1 \rightarrow \cK^\#$.
\end{proof}

In the following, we will let $\fJ: \cJ \rightarrow \PP^1$ be the associated Jacobian K3 surface. Then there exists a unique diffeomorphism map $\Psi:\cK\rightarrow\cJ$ which maps $\sigma_\infty$ to $\sigma_0$, satisfies $\fJ \circ \Psi = \fF$ and maps each fiber analytically to another fiber.
Define $\cK^\#_0$ by replacing the singular fiber with just the component of $\cK^\#$ intersecting the section $\sigma_\infty$.
We have the following analogue to Proposition~\ref{p:fprop}.
\begin{corollary}
\label{fprop2} Let $\fF: \cK \rightarrow \PP^1$ be any elliptic K3 surface.
Consider the $C^{\infty}$ mapping
$f: \mathcal{O}_{\PP^1}(-2)\rightarrow \cK^\#_0$,
defined by
$f \equiv \Psi^{-1} \circ f_0$,
where $f_0$ is as in Proposition \ref{p:fprop}. Then
\begin{align}
f^* \Omega = \Omega_{\can} + \pi^* \alpha,
\end{align}
where $\Omega$ is a non-zero holomorphic $(2,0)$-form on $\cK$,
$\alpha$ is a smooth $2$-form on $\PP^1$, and
$\Omega_{\can}$ is the canonical holomorphic $(2,0)$-form on $T^*\PP^1 \cong  \mathcal{O}_{\PP^1}(-2)$.
\label{c:holomorphi-two-form}
\end{corollary}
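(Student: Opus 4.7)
The plan is to work in local holomorphic coordinates on the regular part of the fibration, compute $f^*\Omega$ directly, and then extend by smoothness across the singular fibers. First I would fix a small disk $D \subset \PP^1 \setminus \mathcal{S}$ with holomorphic coordinate $y$. Since both $\fF$ and $\fJ$ are holomorphic submersions with smooth elliptic fibers over $D$, local holomorphic sections combined with fiberwise uniformization produce holomorphic fiber coordinates $w$ on $\cK$ and $w_{\cJ}$ on $\cJ$; the common period lattice $\Lambda(y) = \mathbb{Z}\tau_1(y) \oplus \mathbb{Z}\tau_2(y)$ arising from the matching homological invariants of $\cK$ and $\cJ$ allows a simultaneous normalization of these coordinates. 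In such coordinates I would write $\Omega = g(y)\, dw \wedge dy$ and $\Omega_{\cJ} = g_{\cJ}(y)\, dw_{\cJ} \wedge dy$, with $g, g_{\cJ}$ nowhere-vanishing and holomorphic.

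Next, Proposition~\ref{p:fprop} combined with $f_0(y,0) = \sigma_0(y)$ forces $f_0(y,x) = (y,\, -x/g_{\cJ}(y))$. Since $\Psi$ is a fiberwise-analytic diffeomorphism mapping $\sigma_\infty$ to $\sigma_0$ and preserving the period lattice, on universal covers $\Psi^{-1}|_y$ acts as translation by a smooth (generally non-holomorphic) function $s(y)$. This yields $f(y,x) = (y,\, -x/g_{\cJ}(y) + s(y))$, and a direct computation---in which holomorphicity of $g_{\cJ}$ kills the $d\bar{y}$-contribution coming from $-x/g_{\cJ}(y)$---gives
\begin{equation*}
f^*\Omega \;=\; -\frac{g(y)}{g_{\cJ}(y)}\, dx \wedge dy \;+\; g(y)\, s_{\bar{y}}(y)\, d\bar{y} \wedge dy.
\end{equation*}

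The final step is to normalize $\Omega$ and globalize. The ratio $g/g_{\cJ}$ is independent of the auxiliary choices of holomorphic sections and transforms identically under reparametrization of $y$, so it defines a single-valued holomorphic function on $\PP^1 \setminus \mathcal{S}$. Because $\cK$ and $\cJ$ share the same functional and homological invariants, they have identical singular-fiber data, so the local asymptotics of $g$ and $g_{\cJ}$ at each point of $\mathcal{S}$ match; hence the ratio extends to a nowhere-vanishing holomorphic function on the compact Riemann surface $\PP^1$ and must be constant. Absorbing this constant into $\Omega$ yields $g \equiv g_{\cJ}$, and therefore
\begin{equation*}
f^*\Omega \;=\; \Omega_{\can} + \pi^*\alpha, \qquad \alpha \;:=\; g_{\cJ}(y)\, s_{\bar{y}}(y)\, d\bar{y} \wedge dy,
\end{equation*}
on each coordinate patch. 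These local expressions for $\alpha$ agree on overlaps to produce a smooth 2-form on $\PP^1 \setminus \mathcal{S}$; since $f^*\Omega - \Omega_{\can}$ is smooth globally on $\mathcal{O}_{\PP^1}(-2)$, pulling it back along the zero section (a global smooth section of $\pi$) extends $\alpha$ smoothly across $\mathcal{S}$.

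The main obstacle is the normalization step: establishing that $g/g_{\cJ}$ is a global constant. This requires comparing the asymptotic behaviors of the two holomorphic 2-forms near each singular fiber and using the fact that $\cJ$, being the Jacobian of $\cK$, shares all the structural data determining this local behavior.
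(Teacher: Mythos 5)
The paper's own proof of this corollary is simply a citation to \cite[Proposition~7.2]{Gross1999}; your self-contained coordinate computation is therefore a genuinely different and more elementary route. The local computation is correct: with $f_0$ fiberwise given by $x\mapsto -x/g_\cJ(y)$ (forced by $f_0^*\Omega_\cJ = -dx\wedge dy$) and $\Psi^{-1}$ fiberwise translation by $s(y)$, one indeed gets $f^*\Omega = -\bigl(g/g_\cJ\bigr)\,dx\wedge dy + g\,s_{\bar y}\,d\bar y\wedge dy$, which has the stated form once $g/g_\cJ$ is a global constant. That constancy is exactly where the work lies, and your appeal to ``matching local asymptotics at $\mathcal{S}$'' is the thin spot: it implicitly requires a Kodaira-type-by-type comparison of normal forms for $\Omega$ and $\Omega_\cJ$ (analogous to what the paper carries out for type~$\IV$ in Section~\ref{ss:semi-flat-estimates}). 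A cleaner route uses only data you already have: the $(2,0)$-part of the globally smooth closed $2$-form $f^*\Omega$, divided by the nowhere-vanishing holomorphic form $\Omega_{\can}$, is a smooth function on all of $\mathcal{O}_{\PP^1}(-2)$ which over $\PP^1\setminus\mathcal{S}$ equals $g/g_\cJ$, hence is fiberwise constant and holomorphic there; by continuity it is fiberwise constant everywhere, so it descends to a smooth function on $\PP^1$ that is holomorphic off the finite set $\mathcal{S}$, extends holomorphically by Riemann's removable singularity theorem, and is therefore a constant by compactness of $\PP^1$. This also sidesteps any monodromy or lattice-automorphism ambiguity in the a~priori only locally defined ratio $g/g_\cJ$.
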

\begin{proof} This is proved in \cite[Proposition~7.2]{Gross1999}.
\end{proof}
Recall that there is a group structure on $\cJ^{\#}$ such that $\sigma_0$ is the identity element of this group action \cite{Kodaira1963}. We choose a good open cover $U_i$ of $\PP^1$ such that each $U_i$ contains at most one $p \in \cS$. Notice that $U_i \cap U_j$ is contractible because $U_i$ is a good cover, so we can find two periods $\tau_1$ and $\tau_2$ on each $U_i \cap U_j$.
 The next goal in this section is, given $\mathbb{B} \in H^1(\PP^1, R^1 \fJ_* \RR)$,
to find $\sigma_i\in C^\infty(U_i, \cJ_0^{\#})$ such that $\sigma_i-\sigma_j\in H^1( \fJ^{-1}(U_i \cap U_j), \RR)$ represents $\mathbb{B}$.
To make sense of this goal, we  lift $\sigma_i$ to sections $s_i$
of $\mathcal{O}_{\PP^1}(-2)$ over $U_i$, and we would like to write the difference on any $U_i \cap U_j$ as
\begin{equation}
\label{sisj}
s_i-s_j=a_{ij} \tau_1+ b_{ij} \tau_2,
\end{equation}
where $a_{ij}, b_{ij} \in \RR$ are constants.
To view this difference as a \v{C}ech cocycle
with coefficients in the sheaf $R^1 \fJ_* \RR$, for constants $a,b \in \RR$,
and $x = x_1 \tau_1 + x_2 \tau_2$,  we choose the identification
\begin{align}
\label{sident}
a \tau_1 + b \tau_2 \mapsto [ -b dx_1 + a dx_2],
\end{align}
where the right hand side is an element of $H^1 ( \fJ^{-1} (U_i \cap U_j), \RR)$.

If we can find such local smooth sections $s_i$, then $L(s_i-s_j)^*\omega_\delta^{\SF}=\omega_\delta^{\SF}$ on $\fJ^{-1} (U_i \cap U_j)$ by (\ref{e:SF1}), where $L(s_i)(s)=s_i+s$ is the group action. It implies that $L(s_i)^*\omega_\delta^{\SF}$ is a well defined form on $\cJ$ because $L(s_i)^*\omega_\delta^{\SF}=L(s_j)^*\omega_\delta^{\SF}$ on $\fJ^{-1} (U_i \cap U_j)$. We will define $\omega_\delta^A$ using this form.
\begin{theorem}[Leray-Serre] There is a spectral sequence whose second page is
\begin{align}
 E_2^{pq} = H^p(\PP^1, R^q \fJ_* \RR)
\end{align}
and which converges to
\begin{align}
E^{pq}_{\infty} = H^{p+q}(\cJ, \RR).
\end{align}
Furthermore, we have an exact sequence
\begin{equation}
\label{sss1}
\begin{tikzcd}
0  \arrow[r]  & E_2^{10} \arrow[r]  \arrow[d, phantom, ""{coordinate, name=Z}] &
H^1(\cJ, \RR) \arrow[r]   &
E_2^{01}  \arrow[dll,swap,"d_2",
rounded corners,
to path={ -- ([xshift=2ex]\tikztostart.east)
|- (Z) [near end]\tikztonodes
-| ([xshift=-2ex]\tikztotarget.west)
-- (\tikztotarget)}] \\
\ &    E_2^{20}
\arrow[r] & \Ker \{ H^2(\cJ,\RR) \mapsto E_2^{02} \}
 \arrow[r] & E_2^{11} \arrow[r] & E_2^{30}   .
\end{tikzcd}
\end{equation}
\end{theorem}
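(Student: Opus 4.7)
The plan is to identify the displayed spectral sequence as the Leray spectral sequence of the proper continuous map $\fJ : \cJ \to \PP^1$ with coefficients in the constant sheaf $\underline{\RR}$, and then to extract the seven-term exact sequence as the standard low-degree consequence of the associated filtration on $H^\bullet(\cJ, \RR)$.

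For the existence of the spectral sequence, I would invoke the Grothendieck spectral sequence for the composition of left-exact functors $\Gamma_{\PP^1} \circ \fJ_*$. Since $\fJ_*$ preserves flabbiness and hence sends injectives to $\Gamma_{\PP^1}$-acyclics, and since $\Gamma_{\cJ} = \Gamma_{\PP^1} \circ \fJ_*$, one obtains a first-quadrant spectral sequence with $E_2^{p,q} = H^p(\PP^1, R^q \fJ_* \RR)$ converging to $H^{p+q}(\cJ, \RR)$. Alternatively, one may construct it directly from a \v{C}ech double complex built on a good open cover of $\PP^1$ whose preimages trivialize the relevant higher direct images.

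To derive \eqref{sss1}, let $F^\bullet H^n(\cJ, \RR)$ denote the induced decreasing filtration, which satisfies $F^p H^n / F^{p+1} H^n \cong E_\infty^{p, n-p}$ with $F^0 H^n = H^n$ and $F^{n+1} H^n = 0$. For $p+q \leq 2$, only the differential $d_2$ contributes to $E_\infty$, yielding $E_\infty^{1,0} = E_2^{1,0}$, $E_\infty^{0,1} = \Ker(d_2 : E_2^{0,1} \to E_2^{2,0})$, $E_\infty^{2,0} = E_2^{2,0}/\Image(d_2 : E_2^{0,1} \to E_2^{2,0})$, $E_\infty^{1,1} = \Ker(d_2 : E_2^{1,1} \to E_2^{3,0})$, and an inclusion $E_\infty^{0,2} \hookrightarrow E_2^{0,2}$ which identifies $F^1 H^2 = \Ker(H^2(\cJ, \RR) \to E_2^{0,2})$. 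Splicing the short exact sequences
\begin{align*}
& 0 \to E_2^{1,0} \to H^1(\cJ, \RR) \to E_\infty^{0,1} \to 0, \\
& 0 \to E_\infty^{0,1} \to E_2^{0,1} \xrightarrow{d_2} E_2^{2,0} \to E_\infty^{2,0} \to 0, \\
& 0 \to E_\infty^{2,0} \to F^1 H^2 \to E_\infty^{1,1} \to 0, \\
& 0 \to E_\infty^{1,1} \to E_2^{1,1} \xrightarrow{d_2} E_2^{3,0}
\end{align*}
then produces the displayed seven-term exact sequence, with the central connecting maps matching the $d_2$ differentials.

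Everything here is formal homological algebra, so the main task is purely bookkeeping: verifying the above identifications of $E_\infty$-terms and checking that the connecting maps in the splicings coincide with the $d_2$ differentials. I expect no genuine obstacle, since this is just the standard seven-term extension of the classical five-term exact sequence attached to any first-quadrant spectral sequence.
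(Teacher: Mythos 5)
Your argument is correct and is exactly the standard homological-algebra derivation the paper invokes when it cites McCleary and calls \eqref{sss1} the ``seven-term exact sequence''; the identifications of $E_\infty^{p,q}$ for $p+q \leq 2$ and the splicing of the four exact sequences are precisely the usual bookkeeping behind that statement. The paper simply omits these routine details, so your proof and the paper's coincide in substance.
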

\begin{proof} This is standard, see for example \cite{McCleary}. The exact sequence \eqref{sss1} is also
known as the ``seven-term exact sequence'' associated to a converging spectral sequence.
\end{proof}
We next analyze this exact sequence. First,  $H^1(\cJ, \RR) = 0$ since $\cJ$ is simply connected.
Also \begin{align}
E_2^{30} = H^3(\PP^1, \fJ_*\RR) = 0
\end{align}
because $\PP^1$ has a good cover with all $4$-fold intersections vanishing.
Next,
\begin{align}
E_2^{01} = H^0( \PP^1, R^1 \fJ_* \RR) = 0,
\end{align}
because this is equal to $(\RR \oplus \RR)^{\rho}$ (the group
of invariants) where we view $\rho$ as a $2$-dimensional
\textit{real} representation of $\pi_1$, and this is easily seen to vanish
\cite[Proposition~2.1]{Shioda1972}.
Note that $\fJ_* \RR = \RR$ (this is $\RR$ with the discrete topology),
so we have
\begin{align}
E_2^{20}= H^2( \PP^1, \RR) = \RR.
\end{align}
The first mapping $E_2^{20}$ to $H^2(\cJ, \RR)$  is just the pull-back
$\fJ^*: H^2(\PP^1, \RR) \rightarrow H^2(\cJ, \RR)$.
Therefore, the seven-term sequence yields a short exact sequence
\begin{equation}
\label{sss2}
\begin{tikzcd}
0  \arrow[r] & \RR \arrow[r, "\fJ^*"]
& \Ker \{ H^2(\cJ, \RR) \mapsto E_2^{02} \} \arrow[r] & H^1(\PP^1, R^1 \fJ_* \RR) \arrow[r] & 0.
\end{tikzcd}
\end{equation}
Next, we have
\begin{align}
E^{02}_2 = H^0 ( \PP^1, R^2 \fJ_* \RR)
\end{align}
and the mapping from  $ H^2(\cJ, \RR) \to  H^0 ( \PP^1, R^2 \fJ_* \RR)$
is described as follows. Fix a finite good cover $\{U_i\}$ of $\PP^1$.
An element of $h \in  H^0 ( \PP^1, R^2 \fJ_* \RR)$ is
a $0$-cycle $h_i \in H^2( \fJ^{-1}(U_i), \RR)$ such that
$h_i = h_j$ in $H^2(\fJ^{-1}(U_i \cap U_j), \RR)$.
The mapping from $H^2(\cJ, \RR)$ to  $H^0 ( \PP^1, R^2 \fJ_* \RR)$
is
just $\omega \mapsto \omega|_{\fJ^{-1}(U_i)}$.
So the middle term is
\begin{align}
K \equiv \{ [\omega] \in H^2(\cJ, \RR) : \omega|_{\fJ^{-1}(U_i)} = 0 \in H^2( \fJ^{-1}(U_i), \RR) \}.
\end{align}
So we have arrived at the following.
\begin{corollary}There is a short exact sequence
\begin{equation}
\label{sss3}
\begin{tikzcd}
0  \arrow[r] & E \arrow[r, "\fJ^*"]
& K \arrow[r] & H^1(\PP^1, R^1 \fJ_* \RR) \arrow[r] & 0,
\end{tikzcd}
\end{equation}
where $E \equiv  H^2(\PP^1, \RR)$. Consequently,
\begin{align}
 H^1(\PP^1, R^1 \fJ_* \RR) \cong K/E.
\end{align}
\end{corollary}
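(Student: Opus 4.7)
The plan is to read off the short exact sequence directly from the seven-term exact sequence~\eqref{sss1} by plugging in all the identifications already made in the preceding paragraphs. Concretely, I would first observe that the left end of~\eqref{sss1} collapses: since $\cJ$ is simply connected we have $H^1(\cJ,\RR)=0$, and since $E_2^{01}=H^0(\PP^1,R^1\fJ_*\RR)=0$ (the monodromy has no invariants, using the cited \cite[Proposition~2.1]{Shioda1972}), the transgression $d_2\colon E_2^{01}\to E_2^{20}$ has trivial source. Thus the pullback $\fJ^*\colon E_2^{20}\to H^2(\cJ,\RR)$ is injective, with image landing inside $\Ker\{H^2(\cJ,\RR)\to E_2^{02}\}$ since $\fJ^*$ factors through the edge homomorphism.

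Next, I would splice out from~\eqref{sss1} the portion starting at $E_2^{20}$. The right end also collapses thanks to $E_2^{30}=H^3(\PP^1,\fJ_*\RR)=0$, so the map $\Ker\{H^2(\cJ,\RR)\to E_2^{02}\}\to E_2^{11}=H^1(\PP^1,R^1\fJ_*\RR)$ is surjective. Putting these together gives the short exact sequence~\eqref{sss2}. To finish, I would identify $\Ker\{H^2(\cJ,\RR)\to E_2^{02}\}$ with $K$: the edge map $H^2(\cJ,\RR)\to H^0(\PP^1,R^2\fJ_*\RR)$ is restriction, $[\omega]\mapsto\{[\omega|_{\fJ^{-1}(U_i)}]\}$, so its kernel is exactly the set of classes that vanish when restricted to every $\fJ^{-1}(U_i)$, which is the definition of $K$. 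Finally, identifying $E_2^{20}=H^2(\PP^1,\RR)=E$ recovers the asserted sequence
\begin{equation*}
0\longrightarrow E\xrightarrow{\ \fJ^*\ } K\longrightarrow H^1(\PP^1,R^1\fJ_*\RR)\longrightarrow 0,
\end{equation*}
and exactness immediately yields $H^1(\PP^1,R^1\fJ_*\RR)\cong K/E$.

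There is no real obstacle here; the corollary is a bookkeeping consequence of the Leray--Serre spectral sequence together with the preceding vanishing computations. The only point that requires a small amount of care is the description of the edge map $H^2(\cJ,\RR)\to E_2^{02}=H^0(\PP^1,R^2\fJ_*\RR)$ as fiberwise restriction, so that the kernel matches the given definition of $K$; this is standard for the Leray--Serre spectral sequence of a fibration and can be cited from~\cite{McCleary}.
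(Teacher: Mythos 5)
Your proposal is correct and follows essentially the same route as the paper: both read the short exact sequence directly off the seven-term exact sequence after verifying $E_2^{01}=0$ and $E_2^{30}=0$, identifying the edge map $H^2(\cJ,\RR)\to E_2^{02}$ as fiberwise restriction so that its kernel is $K$, and identifying $E_2^{20}=H^2(\PP^1,\RR)=E$. The only cosmetic difference is that the paper also records $H^1(\cJ,\RR)=0$, which is not strictly needed for the displayed short exact sequence; you correctly isolated $E_2^{01}=0$ as the vanishing that makes $\fJ^*$ injective.
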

\begin{remark}
\label{Kmap}
Note that the natural mapping from $K$ to $H^1(\PP^1, R^1 \fJ_* \RR)$ is to
write $\omega_i = d\alpha_{i}$ on $\fJ^{-1}(U_i)$, and then map to the
$1$-cycle given by $\alpha_i - \alpha_j$ on $U_i \cap U_j$, which is clearly an element of
$H^1( \fJ^{-1} (U_i \cap U_j), \RR)$.
\end{remark}
Now we are ready to describe the construction of $\omega_\delta^B$. First, recall that by Corollary \ref{c:holomorphi-two-form}, there exists a diffeomorphism $\Psi : \cK \rightarrow \cJ$ and a 2-form $\alpha\in H^2(\PP^1)$ such that $\fF = \fJ \circ \Psi$ and
\begin{equation}
\Omega_{\cK} = \Psi^*\Omega_{\cJ} + \fF^*\alpha.
\end{equation}
Define
\begin{align}
\label{Bdef}
\mathscr{B} = \Big\{ \mathbb{B} \in K/E  \ \Big| \ \int_{\cJ} \mathbb{B} \wedge \Omega_{\cJ} = - \int_{\PP^1} \alpha \Big\}.
\end{align}
Note that $\int_{\cJ} \mathbb{B} \wedge \Omega_{\cJ}$ is well defined because for any $\beta \in E=H^2(\PP^1, \RR)$, we necessarily have $\int_{\cJ} \fJ^*(\beta) \wedge \Omega_{\cJ}=0$.

Let $k_1$ denote the number of fibers with finite monodromy, $k_2$ denote the number of
$I_{\nu}$ fibers, and $k_3$ denote the number of $I_{\nu'}^*$ fibers.
\begin{proposition}  The space $\mathscr{B}$ has dimension
\begin{align}
\label{dimB}
\dim(\mathscr{B}) =   2 k_1 + k_2 + 2 k_3 - 5  >0 .
\end{align}
\end{proposition}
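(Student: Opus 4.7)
The strategy is threefold: first compute $\dim(K/E) = \dim H^1(\PP^1, R^1\fJ_*\RR)$ via sheaf cohomology on $\PP^1$, then argue that the defining equation of $\mathscr{B}$ imposes exactly one real condition, and finally derive positivity from Kodaira--Shioda constraints.

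For the first step, set $g := |\cS| = k_1 + k_2 + k_3$ and $\mathcal{V} := R^1\fJ_*\RR|_{\PP^1\setminus\cS}$, a rank-$2$ local system with monodromy in $SL(2,\ZZ)$. Since $\pi_1(\PP^1\setminus\cS)$ is free of rank $g-1$ and $\mathcal{V}^{\pi_1} = 0$ (the Shioda fact already used in the excerpt), an Euler-characteristic computation in group cohomology yields $\dim H^1(\PP^1\setminus\cS,\mathcal{V}) = 2g-4$. For the open inclusion $j\colon\PP^1\setminus\cS\hookrightarrow\PP^1$, I would identify $R^1\fJ_*\RR = j_*\mathcal{V}$ by checking that at every $s\in\cS$ both stalks equal $H^1(F_s,\RR) = \mathcal{V}^{T_s}$; this invariant space is $1$-dimensional at $I_\nu$ fibers (cycles of rational curves) and vanishes for every other Kodaira type (their dual graphs being trees). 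The Leray five-term exact sequence for $j$ then reads
\begin{equation*}
0 \to H^1(\PP^1, j_*\mathcal{V}) \to H^1(\PP^1\setminus\cS,\mathcal{V}) \to \bigoplus_{s\in\cS}\mathcal{V}_{T_s} \to H^2(\PP^1, j_*\mathcal{V}) \to 0,
\end{equation*}
and $H^2(\PP^1, j_*\mathcal{V}) = 0$ by Poincar\'e--Verdier duality (the dual $H^0(\PP^1, j_!\mathcal{V}^*)$ is trivially zero, and $\mathcal{V}^*\cong\mathcal{V}$ via the symplectic form preserved by $SL(2,\ZZ)$). Since $\sum_s\dim\mathcal{V}_{T_s} = k_2$ (coinvariants and invariants agree in dimension over a field), this gives
\begin{equation*}
\dim(K/E) = (2g-4) - k_2 = 2k_1 + k_2 + 2k_3 - 4.
\end{equation*}

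For the second step, the paper's remark following \eqref{Bdef} shows that $\ell\colon K/E\to\RR$ defined by $\ell(\mathbb{B}) := \int_\cJ \mathbb{B}\wedge\Omega_\cJ$ is well-defined; I would verify $\ell\not\equiv 0$. For real $\mathbb{B}$ the integral detects only the $(0,2)$-Hodge component of $\mathbb{B}$, so it suffices to produce an element of $K/E$ whose $(0,2)$-projection is nonzero. When $\dim K > 20$ this is automatic as $K$ cannot embed in the $20$-dimensional real $(1,1)$-subspace of $H^2(\cJ,\RR)$; for the remaining configurations one uses a refined Hodge-theoretic argument exploiting the Leray filtration and the explicit description of $K$ coming from \eqref{sss3}. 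Granting this, $\mathscr{B}$ is a codimension-one affine subspace of $K/E$, yielding $\dim\mathscr{B} = 2k_1 + k_2 + 2k_3 - 5$.

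For the third step, Kodaira's classification gives the uniform relation $e(F_s) = m_s$ for $I_\nu$ fibers and $e(F_s) = m_s + 1$ for all other Kodaira types, where $m_s$ counts the irreducible components of $F_s$. Summing against Noether's identity $\sum_s e(F_s) = 24$ produces $\sum_{s\in\cS}(m_s - 1) = 24 - 2k_1 - k_2 - 2k_3$. Shioda's rank formula $\rho(\cJ) = 2 + \sum_s(m_s-1) + r_{MW}(\cJ)$ combined with $\rho(\cJ) \leq h^{1,1}(\cJ) = 20$ and $r_{MW}(\cJ) \geq 0$ then forces $\sum_s(m_s-1) \leq 18$, hence $2k_1 + k_2 + 2k_3 \geq 6 > 5$, completing the strict positivity. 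The main obstacle I anticipate is the surjectivity of $\ell$ in the second step; the rest reduces to standard sheaf cohomology on $\PP^1$ and the classical Kodaira table.
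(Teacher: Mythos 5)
Your route to $\dim(K/E) = 2k_1 + k_2 + 2k_3 - 4$ is genuinely different from, and more structural than, the paper's. The paper does a direct condition count inside $H^2(\cJ,\RR)$: starting from $b_2 = 22$, subtracting the restrictions to each $\fJ^{-1}(U_i)$ (correcting for the one condition shared by all patches, namely vanishing on a regular fiber), and subtracting one more for the quotient by $E$ and one for the affine constraint; the singular-fiber conditions are then converted to $2k_1 + k_2 + 2k_3$ via $\sum_s e(F_s) = 24$. You instead compute $\dim H^1(\PP^1, R^1\fJ_*\RR)$, which equals $\dim(K/E)$ by \eqref{sss3}, intrinsically: identify $R^1\fJ_*\RR \cong j_*\mathcal{V}$ (this is the local invariant cycle theorem for elliptic fibrations; your stalk-by-stalk dimension check — $1$ at $I_\nu$ fibers, $0$ elsewhere, on both sides — confirms it, but surjectivity of $H^1(F_s) \to \mathcal{V}^{T_s}$ should be cited rather than "checked"), then run the Leray spectral sequence of the open inclusion $j$, using the Euler characteristic of $\mathcal{V}$ on $\PP^1\setminus\cS$ and the fact that $\sum_s \dim\mathcal{V}_{T_s} = k_2$. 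Your version trades Noether's formula for the local-system Euler characteristic and avoids the overcount bookkeeping the paper must do; both compute the same quantity. The Shioda--Tate positivity argument is identical.

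The gap you flag in your Step 2 is real but you overestimate its difficulty, and your dimension shortcut does not cover the general case: one computes $\dim K = 2k_1 + k_2 + 2k_3 - 3$, which exceeds $20$ only for very fiber-rich configurations, so the ``automatic'' case is the exception, not the rule. There is a one-line argument that settles all cases. Both $[\Rea\Omega_\cJ]$ and $[\Ima\Omega_\cJ]$ lie in $K$: since $\Omega_\cJ$ is of type $(2,0)$ it pulls back to zero on every complex curve, so $\int_C \Omega_\cJ = 0$ for every irreducible component $C$ of every singular fiber, hence $[\Omega_\cJ]$ restricts to zero in each $H^2(\fJ^{-1}(U_i),\RR) \cong H^2(F_s,\RR)$, and taking real and imaginary parts yields elements of $K$. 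They do not lie in $E = \fJ^* H^2(\PP^1,\RR)$, which is spanned by the $(1,1)$ fiber class. And $\ell([\Rea\Omega_\cJ]) = \tfrac{1}{2}\int_\cJ \bar\Omega_\cJ\wedge\Omega_\cJ \neq 0$. So $\ell\not\equiv 0$ on $K/E$, and $\mathscr{B}$ is a nonempty affine hyperplane of codimension one. It is worth noting that the paper's own proof also elides this point, asserting without argument that the constraint ``reduces the dimension by $1$,'' so your instinct to flag it as the weak link was correct.
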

\begin{proof}

The dimension of $H^2(\cJ,\RR)$ is 22. The quotient by $E$ reduces the dimension by $1$. Then the restriction
\begin{equation}
\int_{\cJ} \mathbb{B} \wedge \Omega_{\cJ} = - \int_{\PP^1} \alpha
\end{equation} reduces the dimension by 1 again. The integral of $\omega$ vanishes on each regular fiber,
which reduces the dimension by another $1$.  For each $U_i$, we have the restriction
\begin{equation}
\omega|_{\fJ^{-1}(U_i)} = 0 \in H^2( \fJ^{-1}(U_i), \RR),
\end{equation}
but the vanishing of the integral of $\omega$ vanishes on each regular fiber is already counted, so there are $\dim H^2 (\fJ^{-1}(U_i), \RR) -1$ restrictions left on each $U_i$,
and we have
\begin{equation}
\dim(\mathscr{B}) = 22 - \sum_{i}\big( \dim H^2 (\fJ^{-1}(U_i), \RR) -1 \big) - 3.
\end{equation}
Since the sum of the Euler characteristics of the singular fibers must be $24$,
it follows that
\begin{equation}
22 - \sum_{i}\big( \dim H^2 (\fJ^{-1}(U_i), \RR) -1 \big) - 3 = 2 k_1 + k_2 + 2 k_3 - 5.
\end{equation}
To see the strict inequality in \eqref{dimB}, the Shioda-Tate formula implies that
\begin{align}
\rho(\cJ) = 26 - 2 k_1 - k_2 - 2 k_3 + \rank(\mbox{MW}(\cJ)),
\end{align}
where $\mbox{MW}(\cJ)$ is the Mordell-Weil group of $\cJ$ and $\rho(\cJ)$ is the Picard number, see \cite[Chapter~11]{Huybrechts}.
Since $\rho(\cJ) \leq 20$, and $\rank(\mbox{MW}(\cJ)) \geq 0$, this implies the inequality
\begin{align}
2 k_1 + k_2 + 2 k_3 \geq 6,
\end{align}
so $\dim(\mathscr{B}) > 0$.
\end{proof}
Consider the exact sequence of sheaves
\begin{equation}
\label{sss4}
\begin{tikzcd}
0  \arrow[r] &  R^1 \fJ_* \RR  \arrow[r]
&   C^\infty (\mathcal{O}_{\PP^1}(-2))  \arrow[r] & F \arrow[r] & 0,
\end{tikzcd}
\end{equation}
where $R^1 \fJ_* \RR$ is the first direct image sheaf of the constant sheaf $\RR$ on $\cJ$, which is identified
as a subsheaf of  $ C^\infty (\mathcal{O}_{\PP^1}(-2))$ using \eqref{sident}, and $F$ is the quotient sheaf
$ C^\infty (\mathcal{O}_{\PP^1}(-2)) / R^1 \fJ_* \RR$.
Since $ H^1(\PP^1, C^\infty (\mathcal{O}_{\PP^1}(-2)))=0$, the mapping from $H^0(\PP^1, F)$ to $H^1 (\PP^1, R^1 \fJ_* \RR)$ is surjective.
Therefore any element $\mathbb{B} \in \mathscr{B}$ has a preimage $\{s_i\}$ in $H^0(\mathbb{P}^1,F)$.
So as mentioned above, we can define $\omega_\delta^{A}\equiv \Psi^*L(s_i)^*\omega_\delta^{\SF}$, which is well-defined on $\cK$.

We next want to compute $L(s_i)^*\omega_\delta^{\SF}-\omega_\delta^{\SF}$. Write a section $s_i$ as
\begin{equation}
s_i=s_{i,1}\tau_1+s_{i,2}\tau_2
\end{equation}
locally.
Then the 1-form $\eta_i$ defined as
\begin{equation}
\eta_i = \delta^2(s_{i,1} dx_2 - s_{i,2} dx_1 + \frac{1}{2} (s_{i,1} d s_{i,2} - s_{i,2} d s_{i,1}))
\end{equation}
is $\SL(2,\ZZ)$ invariant and is therefore well defined on $\fJ^{-1}(U_i\setminus \cS)$.
By (\ref{e:SF1}),
\begin{equation}
L(s_i)^*\omega_\delta^{\SF} - \omega_\delta^{\SF} = \delta^2 d (x_1+s_{i,1}) \wedge d (x_2+s_{i,2})- \delta^2 d x_1 \wedge d x_2 = d \eta_i.
\end{equation}
In general, $L(s_i)^*\omega_\delta^{\SF} - \omega_\delta^{\SF}$ is singular near the singular fibers. However, we can choose a smooth cut-off function $\chi_i$ supported in $U_i$ such that $\chi_i=1$ near points in $\cS$, if there is any such point in $U_i$, and $\chi_i=0$ on $\fJ^{-1}(U_j)$ for all $j\not=i$. Then the form
\begin{equation}
\omega_\delta^{\mathbb{B}} \equiv L(s_i)^*\omega_\delta^{\SF} - \omega_\delta^{\SF} - d (\chi_i \eta_i)
\end{equation}
is  well-defined and  smooth on $\cJ$. Note also that, on $\fJ^{-1}(U_i)$, we have
\begin{align}
\omega_\delta^{\mathbb{B}}= d ( (1-\chi_i) \eta_i ),
\end{align}
so $\omega_\delta^{\mathbb{B}}$ belongs to $K$ because $(1-\chi_i)\eta_i$ is smooth on $\fJ^{-1}(U_i)$.
We want to compute the projection of  $\omega_\delta^{\mathbb{B}}$ to $K/E$.
As mentioned in Remark \ref{Kmap} above, the natural map from $K$ to $H^1(\PP^1, R^1 \fJ_* \RR)=K/E$ is defined by choosing 1-forms on each $U_i$ as $(1-\chi_i)\eta_i$ and then taking the difference between them.
On $U_i \cap U_j$, using \eqref{sisj}, we have
\begin{align}
\begin{split}
\eta_i - \eta_j &\equiv \delta^2 \Big(  (s_{i,1}- s_{j,1}) dx_2 - (s_{i,2}- s_{j,2}) dx_1 \Big) \mod \{ dy, d \bar{y} \}\\
& \equiv \delta^2\big( a_{ij} dx_2 - b_{ij} dx_1 \big) \mod \{ dy, d \bar{y} \}.
\end{split}
\end{align}
So the de Rham class of $\eta_i - \eta_j$ is represented by $\delta^2 ( -b_{ij} dx_1 + a_{ij} dx_2 )$.
Using the identification \eqref{sident}, we see that
$\omega_\delta^{\mathbb{B}}$ projects to $\delta^2\mathbb{B}$ in $K/E$.

We can also define $\Omega_{\cJ}^A$ as $L(s_i)^*\Omega_{\cJ}$. Then $\Omega_{\cJ}^A$ is well defined. There exists a 2-form $\xi$ on $\mathbb{P}^1$ such that we get $\Omega_{\cJ}^A=\Omega_{\cJ}+\fJ^*\xi$.
We then have
\begin{align}
\label{inteqn}
0 = \int_{\cJ} \Omega_{\cJ}^A \wedge (\Psi^{-1})^*\omega_\delta^A = \int_{\cJ} (\Omega_{\cJ} + \fJ^* \xi) \wedge ( \omega_\delta^{\SF} + \omega_\delta^{\mathbb{B}}) = \delta^2 \Big(\int_{\cJ} \Omega_{\cJ} \wedge \mathbb{B} + \int_{\PP^1} \xi \Big).
\end{align}
To see this, the first equality in \eqref{inteqn} is true because $(\Psi^{-1})^*\omega_\delta^A$ and $\Omega_{\cJ}^A$ are locally the pull back of a hyperk\"ahler triple using the same map $L(s_i)$. The second equality in \eqref{inteqn} is true because
\begin{align}
\begin{split}
\int_{\cJ}(\Omega_{\cJ} + \fJ^* \xi)\wedge d(\chi_i\eta_i) & = \lim _{r\to 0} \int_{\fJ^{-1}\{|y|=r\}} (\Omega_{\cJ} + \fJ^* \xi)\wedge \chi_i\eta_i \\
& = \lim _{r\to 0} \int_{\fJ^{-1}\{|y|=r\}} \Omega_{\cJ} \wedge \delta^2(s_{i,1} dx_2 - s_{i,2} dx_1) \\
& = \lim _{r\to 0} \int_{\fJ^{-1}\{|y|=r\}} \delta^2 s_i dx_1\wedge dx_2\wedge dy \\
& = \lim _{r\to 0} \int_{|y|=r} \delta^2 s_i dy = 0
\end{split}
\end{align}
assuming that $y=0$ at the point in $\cS\cap U_i$, if such point exists.
The third equality in \eqref{inteqn} is true because
\begin{align}
\int_{\cJ} \Omega_{\cJ} \wedge \omega_\delta^{\SF}=0, \  \int_{\cJ} \fJ^* \xi \wedge \omega_\delta^{\SF}=\delta^2 \int_{\PP^1} \xi, \  \int_{\cJ} \fJ^* \xi \wedge \omega_\delta^{\mathbb{B}}=0,
\end{align}
and $\omega_\delta^{\mathbb{B}}$ is a representative of $\delta^2\mathbb{B}$. We conclude that $\int_{\PP^1} \xi = \int_{\PP^1} \alpha$. Since $H^2(\PP^1,\RR)=\RR$, $[\xi] = [\alpha] \in H^2 (\PP^1, \RR)$. So
\begin{equation}
[\Omega_{\cJ}^A] = [\Omega_{\cJ}+\fJ^*\xi] = [\Omega_{\cJ}+\fJ^*\alpha] = [(\Psi^{-1})^* \Omega_{\cK}] \in H^2 (\cJ, \RR).
\end{equation}

By the Torelli theorem for K3 surfaces \cite{ShapiroShafarevitch, BurnsRapoport}, we can define a complex structure on $\cK$ using $\Psi^*\Omega_{\cJ}^A$, which is biholomorphic to the original complex structure on $\cK$. Without loss of generality we may therefore assume that this biholomorphism is the identity map and $\Psi^*\Omega_{\cJ}^A=\Omega_{\cK}$ because we can always pull back forms and metrics using this biholomorphism. Then it is natural to define $\omega_\delta^A$ as $\Psi^*(L(s_i)^*\omega_\delta^{\SF})$ on $\fF^{-1}(U_i)$.

\begin{remark}
The $2$-form $\omega_\delta^A$ is K\"ahler with respect to the original complex structure on $\cK$ and therefore determines a Riemannian metric $g_\delta^A$.
\end{remark}

\section{Singular fibers with infinite monodromy}
\label{s:infinite-monodromy}
In the previous section, we have described a process to get a $(1,1)$-form $\omega_\delta^A$ on $\cK$ using the 2-form $\omega_\delta^{\SF}$ on $\cJ$ by $\omega_\delta^A=\Psi^*(L(s_i)^*\omega_\delta^{\SF})$. In this
section, we will give the construction to resolve singular fibers with infinite monodromy of type $\I_{\nu}$ and $\I_{\nu}^*$ for $\nu\in\dZ_+$. The gluing construction near the singular fibers with finite monodromy will be done in Section \ref{s:finite-monodromy}.

\subsection{Resolving fibers of type I$_{\nu}$}
\label{ss:mov}
We begin with the following lemma regarding the Green's function with multiple poles on a product flat manifold $\dR^2\times S^1$,
which will be the key input for the Gibbons-Hawking ansatz.
\begin{lemma}
\label{l:Green's-function}
Let $(Q^3, g_{Q^3})$ be a product space $Q^3\equiv\dR^2\times S^1=\dR^2\times \dR/\dZ$ with the flat product Riemannian metric $ g_{Q^3} \equiv du_1^2 + du_2^2 + du_3^2$. Given a finite set $P \equiv\{p_1,\ldots, p_\nu\} \subset \{0\}\times S^1$, there exists a unique Green's function $G_\nu$  on $\dR^2\times S^1$ such that
\begin{enumerate}
\item $-\Delta_{g_{Q^3}} G_\nu  = 2\pi\sum\limits_{i=1}^{\nu} \delta_{p_i}$.
\item There are constants $R>0$ and $C>0$   such that
\begin{equation}
	\Big|G_\nu(u) -  \nu\log |u_1+\sqrt{-1}u_2|^{-1} \Big| \leq C e^{-2\pi \cdot |u_1+\sqrt{-1}u_2|}
\end{equation}
for any $u=(u_1,u_2,u_3)\in\dR^2\times S^1$ satisfying $d_{g_{Q^3}}(u, P)\geq R$.
\item For any $(u_1,u_2)\in\dR^2$,
\begin{equation}
\int_{\{(u_1, u_2)\}\times S^1} \Big(G_\nu(u_1,u_2,u_3) -  \nu \log |u_1+\sqrt{-1} u_2|^{-1}\Big)du_3=0.
\end{equation}
 \end{enumerate}
\end{lemma}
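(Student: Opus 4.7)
The plan is first to treat the single-pole case $\nu=1$ with $p_1=(0,0,0)$ by an explicit Fourier decomposition in the $u_3\in S^1$ direction, and then to obtain $G_\nu$ by translation and superposition.

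For $\nu=1$, I would seek $G$ of the form
\begin{equation}
G(u_1,u_2,u_3)=\sum_{m\in\dZ} a_m(u_1,u_2)\, e^{2\pi\sq m u_3},
\end{equation}
and use the Fourier representation $\delta_{p_1}=\delta_{\dR^2}(u_1,u_2)\sum_{m\in\dZ}e^{2\pi\sq m u_3}$. Then each mode satisfies
\begin{equation}
(-\Delta_{\dR^2}+(2\pi m)^2)\,a_m = 2\pi\,\delta_{\dR^2},
\end{equation}
so that $a_0=\log|u_1+\sq u_2|^{-1}+c$ for some constant $c$, while for $m\neq 0$ the unique decaying radial solution is the modified Bessel function $a_m(u_1,u_2)=K_0(2\pi|m|\,|u_1+\sq u_2|)$. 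Setting $c=0$ and letting
\begin{equation}
G(u) \equiv \log|u_1+\sq u_2|^{-1}+\sum_{m\neq 0} K_0(2\pi|m|\,|u_1+\sq u_2|)\,e^{2\pi\sq m u_3}
\end{equation}
gives a well-defined distributional solution of $-\Delta_{g_{Q^3}} G = 2\pi\delta_{p_1}$, since for any $r=|u_1+\sq u_2|>0$ the series converges rapidly by the asymptotic $K_0(z)\sim\sqrt{\pi/(2z)}\,e^{-z}$ as $z\to\infty$; the same asymptotic, applied to the $|m|\geq 1$ terms, shows that the sum is bounded by $C e^{-2\pi r}$ once $r$ is large. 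The zero-average condition $\int_{S^1}(G-\log r^{-1})\,du_3=0$ holds by construction, because every nonzero Fourier mode integrates to zero.

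For general $\nu$, I set
\begin{equation}
G_\nu(u) \equiv \sum_{i=1}^{\nu} G(u-p_i),
\end{equation}
which clearly satisfies (1) and (3) by linearity. For (2), since each $p_i$ lies on the axis $\{0\}\times S^1$, the horizontal distance from $u$ to $p_i$ is again $r=|u_1+\sq u_2|$, and the single-pole estimate gives
\begin{equation}
\Big|G_\nu(u)-\nu\log r^{-1}\Big|\leq \sum_{i=1}^{\nu}\Big|G(u-p_i)-\log r^{-1}\Big|\leq C\nu\, e^{-2\pi r}.
\end{equation}
The key point is that the hypothesis $d_{g_{Q^3}}(u,P)\geq R$ forces $r\geq R-\diam_{g_{Q^3}}(S^1)$, since $u_3$ varies in a compact circle, so taking $R$ large enough ensures $r$ is large and the Bessel series can be controlled term by term.

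Finally, uniqueness is a short Liouville-type argument: if $G_\nu'$ is another such Green's function, then $H\equiv G_\nu-G_\nu'$ is a smooth harmonic function on $\dR^2\times S^1$ satisfying $H(u)=O(e^{-2\pi r})$ at infinity and $\int_{S^1}H(u_1,u_2,u_3)\,du_3=0$ for every $(u_1,u_2)$. Expanding $H$ in a Fourier series in $u_3$, the zero mode is a bounded harmonic function on $\dR^2$ with vanishing trace at infinity (hence identically zero), while each nonzero mode satisfies $(-\Delta_{\dR^2}+(2\pi m)^2)a_m=0$ globally on $\dR^2$ with exponential decay and is therefore also zero. The main subtlety in the whole argument is keeping track of the convergence and uniform exponential bounds on the Bessel series in the geometric region $d_{g_{Q^3}}(u,P)\geq R$, but as noted above this reduces cleanly to the one-variable asymptotics of $K_0$.
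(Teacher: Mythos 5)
Your proof is correct and follows essentially the same route as the paper, which simply observes that $G_\nu$ is a superposition of the single-pole Green's functions constructed in Gross--Wilson's Lemma 3.1. You have re-derived that single-pole construction explicitly (Fourier decomposition in $u_3$ with the zero mode giving $\log r^{-1}$ and the nonzero modes giving modified Bessel functions $K_0$, whose exponential decay furnishes the $e^{-2\pi r}$ bound) and then carried out the superposition and the standard Liouville-type uniqueness argument, all of which match the cited approach.
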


\begin{proof} This is obtained using superposition of the Green's functions in  \cite[Lemma~3.1]{GW}.
\end{proof}
Next, let $\fF:\cK\to\PP^1$ be an elliptic K3 surface. Near a singular fiber $\fF^{-1}(p)$ of Type  $\I_\nu$, there exists a local holomorphic section $\sigma_i$. We can choose a local coordinate $y$ on the base $\PP^1\setminus\cS$ and a local coordinate $x$ on the universal cover of the $\dT^2$-fiber which gives the standard holomorphic 2-form $\Omega_{\can}=-dx\wedge dy$. Assume that $y=0$ at $p\in\cS$ and $x=0$ on $\sigma_i$. Let $\tau_1(y)$ and $\tau_2(y)$ be the two functions of periods, which are holomorphic in $y$. Assume that $\tau_1$ is single-valued and $\tau_2$ is multi-valued. After replacing $y$ by $\int_0^y\tau_1(z)dz$, we can without loss of generality assume that $\tau_1(y)=1$. Then \begin{equation}\tau_2(y)=\frac{\nu}{2\pi\sq}\log|y|+h(y)\label{e:period-tau_2}\end{equation}
for some holomorphic function $h(y)$.

Throughout this section, for fixed $\nu\in\dZ_+$, we always relate the parameters $0<\delta\ll1$ and $T\gg1$ by
\begin{equation}
T = -\nu \log \delta.
\end{equation}
The following is an obvious generalization of \cite[Proposition~3.2]{GW} to the case of several monopole points,
and the construction is known as the Gibbons-Hawking ansatz.
\begin{proposition}
[Gibbons-Hawking ansatz]\label{p:multi-OV} Let $Q^3 \equiv \dR^2\times S^1$ be the product space equipped with
the flat metric $g_{Q^3} \equiv g_{\dR^2}\oplus g_{S^1}$ such that $\diam_{g_{Q^3}}(S^1)=1$.
Given a set of finite poles $P\equiv\{p_1,\ldots, p_\nu\}\subset \{0^2\}\times S^1$, let $G_P$
be the Green's function given by Lemma \ref{l:Green's-function}.
For any $T=-\nu\log\delta\gg1$ (equivalently $\delta\ll1$), let us define
\begin{equation}
V_T(u_1,u_2,u_3) \equiv T + G_P(u_1,u_2,u_3) + 2\pi \Ima h\Big(\delta(u_1+\sqrt{-1}u_2)\Big),
\end{equation}
where $h$ is the holomorphic function in \eqref{e:period-tau_2}.
Fix a small constant $\delta_0>0$ of definite size. Let $\cO$ be the set
\begin{equation}
\cO \equiv \Big\{u\in Q^3\Big| |\delta(u_1+\sqrt{-1}u_2)| \leq 2\delta_0 \Big\} \subset Q^3.
\end{equation}
Then the following holds:
\begin{enumerate}
\item $V_T>1$ on $\cO\setminus P$. Moreover, there is a principal $S^1$-bundle map
\begin{equation}
S^1\to \mathring{\mathcal{N}}_{\nu}^4 \xrightarrow{\pi} \cO\setminus P
\end{equation}
with a $S^1$-connection $1$-form $\theta$ satisfying the monopole equation
\begin{equation}
d\theta = *_{Q^3}\circ dV_T.
\end{equation}
Recall that a 1-form $\theta$ on $\mathring{\mathcal{N}}_{\nu}^4$ is called a connection 1-form if it is $S^1$-invariant and
\begin{equation}
\int_{\pi^{-1}(u)}\theta=2\pi
\end{equation}
for all $u\in \cO\setminus P$.
\item The above connection $1$-form $\theta$ induces a
hyperk\"ahler triple \begin{align}
\omega_{\delta,\nu} &\equiv \frac{1}{2\pi} (du_3\wedge\theta + V_T du_1\wedge du_2)\\
\omega_{2,\delta,\nu} &\equiv \frac{1}{2\pi} (du_1\wedge\theta + V_T du_2\wedge du_3)\\
\omega_{3,\delta,\nu} &\equiv \frac{1}{2\pi} (du_2\wedge\theta + V_T du_3\wedge du_1),
\end{align}
with associated incomplete Riemannian metric
\begin{align}
g_{\delta,\nu} &\equiv \frac{1}{2\pi} (V_T\cdot ( du_1^2 + du_2^2 + du_3^2 ) + V_T^{-1}\theta^2)
\end{align}
on the total space $\mathring{\mathcal{N}}_{\nu}^4$.
\item  The hyperk\"ahler metric $g_{\delta,\nu}$ extends smoothly to the closure $\mathcal{N}_{\nu}^4=\overline{\mathring{\mathcal{N}}_{\nu}^4}$.
\end{enumerate}

\end{proposition}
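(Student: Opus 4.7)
The plan is to verify the three assertions in turn, adapting the classical Gibbons--Hawking construction to the present multi-pole setting on the flat product $\dR^2\times S^1$. First I would establish $V_T > 1$ on $\cO\setminus P$, exploiting the coordinated choice $T=-\nu\log\delta$. On $\cO$ one has $|u_1+\sq u_2|\le 2\delta_0/\delta$, hence $\nu\log|u_1+\sq u_2|^{-1} \ge -T - \nu\log(2\delta_0)$. Combined with the exponentially small error in item (2) of Lemma~\ref{l:Green's-function}, this yields $T+G_\nu \ge -\nu\log(2\delta_0) - C_1$ outside a fixed small neighborhood of $P$, while near each pole the three-dimensional Coulomb singularity $G_\nu\sim 1/(2|u-p_i|)$ drives $G_\nu$ to $+\infty$. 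Since $h$ is holomorphic on $\{|w|\le 2\delta_0\}$, the correction $2\pi\Ima h(\delta(u_1+\sq u_2))$ is uniformly bounded in $\delta$, so fixing $\delta_0$ small (independently of $\delta$) forces $V_T > 1$ throughout $\cO\setminus P$.

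Next I would construct the circle bundle. Since $G_\nu$ is harmonic on $\cO\setminus P$ and $\Ima h$ is the imaginary part of a holomorphic function on $\dC$, $V_T$ is harmonic on $\cO\setminus P$, so $\xi:=*_{Q^3}dV_T$ is a closed $2$-form. By Green's identity $-\Delta G_\nu = 2\pi\sum\delta_{p_i}$ and Stokes, the period of $\xi$ over any $2$-sphere linking a single pole $p_i$ equals $2\pi$; since $H^2(\cO\setminus P,\dR)$ is spanned by these linking spheres, $[\xi/(2\pi)]$ is integral. Chern--Weil theory then produces a principal $S^1$-bundle $\pi:\mathring{\mathcal{N}}_\nu^4\to\cO\setminus P$ with a connection $1$-form $\theta$ of curvature $d\theta=\xi$, and the normalization $\int_{\pi^{-1}(u)}\theta=2\pi$ determines $\theta$ up to gauge.

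Part (2) is a direct computation. Closedness $d\omega_{i,\delta,\nu}=0$ follows from the monopole equation $d\theta=*dV_T$ together with $*du_\sigma=\tfrac12\epsilon_{\sigma\alpha\beta}du_\alpha\wedge du_\beta$, so that the two contributions to $d\omega_{i,\delta,\nu}$ cancel. The algebraic identities $\tfrac12\omega_i\wedge\omega_j=\tfrac16\delta_{ij}(\omega_1^2+\omega_2^2+\omega_3^2)$ are then verified using the volume form $(2\pi)^{-2}V_T\,du_1\wedge du_2\wedge du_3\wedge\theta$, and by Hitchin's theorem (Remark~\ref{r:triple}) the associated Riemannian metric is exactly $g_{\delta,\nu}$.

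Finally, for the smooth extension across each $p_i$, I would use that near $p_i$ the Green's function admits the expansion $G_\nu=1/(2|u-p_i|)+(\text{smooth harmonic})$, so $V_T\sim 1/(2r)$ with $r=|u-p_i|$. Introducing the radial coordinate $\rho=\sqrt{2r}$ and a local trivialization of the circle bundle, the leading-order metric becomes $d\rho^2+\tfrac{\rho^2}{4}g_{S^2}+\rho^2\theta^2$; since the first Chern class of $\theta$ on the linking $S^2$ equals $1$, this coincides with the flat metric on a small ball in $\dR^4$ via the Hopf fibration, up to smooth lower-order corrections. Hence each $p_i$ is filled in by a single smooth point, yielding the smooth $4$-manifold $\mathcal{N}_\nu^4$. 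The step I expect to be most delicate is the uniform positivity of $V_T$ in the first paragraph, where the logarithmic and bounded error terms must be tracked to produce a bound uniform in $\delta$; the smooth extension is classical since Eguchi--Hanson and Taub--NUT, but still requires verifying that the Chern class is exactly $1$ on each linking $S^2$ to rule out $A_k$-type orbifold singularities.
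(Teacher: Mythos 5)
Your argument is correct, and it is essentially the standard Gibbons--Hawking construction. Note, though, that the paper does not supply a proof of this proposition at all: it is stated as an ``obvious generalization'' of \cite[Proposition~3.2]{GW}, so there is no written argument in the paper for yours to agree or disagree with. What you have done is fill in the details that the authors delegate to the reference.

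A few small points worth tightening. In the uniform positivity of $V_T$, the asymptotic estimate from Lemma~\ref{l:Green's-function}(2) only applies on $\{d_{g_{Q^3}}(u,P)\geq R\}$; you should split $\cO\setminus P$ into that exterior region (where your logarithmic cancellation $T+\nu\log|u_1+\sq u_2|^{-1}\geq -\nu\log(2\delta_0)$ kicks in) and the fixed compact collar $\{0<d_{g_{Q^3}}(u,P)<R\}$, where $G_\nu$ is bounded below by the minimum principle (it is harmonic, tends to $+\infty$ at $P$, and is bounded on $\{d=R\}$), so $V_T\geq T-C'-M>1$ for $T$ large. You gesture at this split but should make it explicit, since the two regimes use different inputs. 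Second, in the period computation the sign of $\int_{S_i}*_{Q^3}dV_T$ is $-2\pi$ with the outward normal and the convention $-\Delta G_\nu=2\pi\sum\delta_{p_i}$; the Chern class is $\pm 1$, which is all that is needed, but it is worth stating the orientation convention to avoid the appearance of a sign error. Finally, your check that the linking spheres generate $H_2(\cO\setminus P;\ZZ)\cong\ZZ^{\nu}$ is correct and is the point that guarantees the bundle exists and that the extension at each $p_i$ is a genuine manifold point rather than an $A_k$ orbifold point; it would be good to say one sentence about why $H_2$ has this form (solid torus minus $\nu$ interior points).
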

Given a parameter $0<\delta\ll1$, the hyperk\"ahler metrics $g_{\delta,\nu}$ given in the above proposition will be called the {\it multi-Ooguri-Vafa metric} in our discussions throughout the paper.
We also make the remark that there is some constant $C>0$ such that for sufficiently large $T=-\nu\log\delta$,
\begin{equation}
\frac{1}{C_{\nu}}\cdot \delta^{-1} \leq \diam_{g_{\delta,\nu}}(\mathcal{N}_{\nu}^4) \leq C_{\nu} \cdot  \delta^{-1}.
\end{equation}

Based on the above construction, we are ready to write down a family of collapsing incomplete Gibbons-Hawking metrics with prescribed scales for the collapsing $\dT^2$-fibers.
For sufficiently small $\delta$, let us rescale the Gibbons-Hawking metric $g_{\delta,\nu}$ by $g_{\delta,\nu}^{\flat} \equiv \delta^2\cdot g_{\delta,\nu}$. The hyperk\"ahler triple is rescaled by
\begin{equation}
(\omega_{\delta,\nu}^{\flat}, \omega_{2,\delta,\nu}^{\flat}, \omega_{3,\delta,\nu}^{\flat})\equiv \delta^2(\omega_{\delta,\nu}, \omega_{2,\delta,\nu}, \omega_{3,\delta,\nu}).
\end{equation}
 Then with respect to the rescaled metric $g_{\delta,\nu}^{\flat}$,  there is some constant $C_{\nu}>0$ such that for all $\delta>0$,  \begin{equation}
 \frac{1}{C_{\nu}} \leq  \diam_{g_{\delta,\nu}^{\flat}} (\mathcal{N}_{\nu}^4) \leq C_{\nu}.
 \end{equation}
The function $y=\delta(u_1+ \sq u_2)$ describes a map from $\mathcal{N}_{\nu}^4$ to a small neighborhood of $0$ in $\CC$, which is an elliptic fibration, and moreover, the area of each fiber using the rescaled  metric $g_{\delta,\nu}^{\flat}$ is
\begin{equation}
\frac{\delta^2}{2\pi}\int_{\pi^{-1}(\{(u_1,u_2)\}\times S^1)} (V_Tdu_1\wedge du_2 + du_3\wedge\theta) =\delta^2\int_{\{(u_1,u_2)\}\times S^1} du_3 = \delta^2.
\end{equation}
As in \cite{GW}, after the choice of a local holomorphic section $\sigma_{\mathcal{N}_{\nu}^4} $ on $\mathcal{N}_{\nu}^4$, there exists a local coordinate $x$ such that \begin{equation}
-\delta d x \wedge d y = \omega_{2,\delta,\nu}^{\flat}+\sqrt{-1}\omega_{3,\delta,\nu}^{\flat}.
 \end{equation}
and $x=0$ on the image of $\sigma_{\mathcal{N}_{\nu}^4}$.
By \cite[Proposition~3.2]{GW}, for a suitable choice of $\theta$, the two periods are $1$ and $\tau_2(y)=\frac{\nu}{2\pi \sq}\log|y|+h(y)$ because
\begin{equation}
\frac{1}{2\pi}\int_{\{(u_1,u_2)\}\times S^1}V_T du_3=\Ima \tau_2(y).
\end{equation}
So the periods on $\mathcal{N}_{\nu}^4$ are the same as the periods on $\cK$, and the map $(x_{\mathcal{N}_{\nu}^4},y_{\mathcal{N}_{\nu}^4})
\mapsto (x_{\cK},y_{\cK})$ is biholomorphic from $\mathcal{N}_{\nu}^4$ to an open subset of $\cK$ which maps $\sigma_{\mathcal{N}_{\nu}^4} $ to $\sigma_i$ and maps the 2-form $\omega_{2,\delta,\nu}^{\flat}+\sqrt{-1}\omega_{3,\delta,\nu}^{\flat}$ to the given holomorphic 2-form $\Omega$ on $\cK$.

We remark that the choice of $\sigma_{ \mathcal{N}_{\nu}^4} $ is not unique. By \cite[Lemma~4.3]{GW}, there exists a choice of $\sigma_{\mathcal{N}_{\nu}^4} $ such that for the biholomorphic map induced by $\sigma_{\mathcal{N}_{\nu}^4}$ ,
\begin{equation}\omega_\delta^{A}-\omega_{\delta,\nu}^{\flat}=\sqrt{-1}\partial\bar\partial\varphi_{\delta,\nu}^{\flat}\end{equation} for some function
$\varphi_{\delta,\nu}^{\flat}$ on $\{0<|y|<2\delta_0\}=\fF^{-1}(B_{2\delta_0}(p))\subset \cK$.

Now we return to the elliptic K3 surface $\fF:\cK\to\PP^1$
and assume that there is singular point $p\in \mathcal{S}$ such that the singular fiber $\fF^{-1}(p)$
is of Type $\I_{\nu}$
for some $\nu\in\dZ_+$.
For each sufficiently small parameter $0<\delta\ll1$, around the above singular fiber $\fF^{-1}(p)$, we will glue the semi-flat K\"ahler forms $\omega_\delta^{A}$
with the rescaled multi-Ooguri-Vafa K\"ahler forms $\omega_{\delta,\nu}^{\flat}$ constructed in Proposition \ref{p:multi-OV}.  This is analogous to the $\I_1$-case in \cite{GW}.

\begin{proposition}[Approximate metrics around $\I_{\nu}$-fibers] \label{p:gluing-I_v}
Let $\fF^{-1}(p)\subset \cK$ be a singular fiber  of Type $\I_{\nu}$ for $\nu\in\dZ_+$. For each sufficiently small parameter $0<\delta\ll1$ and $\delta_0$, around the singular fiber $\fF^{-1}(p)$, the semi-flat K\"ahler form
$\omega_\delta^{A}$ induced by $\fF:\cK\to \PP^1$ can be glued with
an incomplete multi-Ooguri-Vafa K\"ahler form $\omega_{\delta,\nu}^{\flat}\equiv\delta^2\cdot \omega_{\delta,\nu}$ on $\mathcal{N}_{\nu}^4$
such that
the glued K\"ahler form $\omega_\delta^{B}$ on  $
  \mathcal{V}_{4\delta_0}\equiv \fF^{-1}(B_{4\delta_0}(p))\subset \cK$
is constructed as follows. Let $\varphi_{\delta,\nu}^{\flat}$ be the potential on $\fF^{-1}(B_{4\delta_0}(p)\setminus\{p\})$ satisfying
\begin{equation}
\omega_\delta^{A} = \omega_{\delta,\nu}^{\flat} + \sq\p\bp\varphi_{\delta,\nu}^{\flat}.
\end{equation}
Then
\begin{align}
\omega_\delta^{B}
\equiv
\begin{cases}
\omega_\delta^{A} & \text{in}\ \fF^{-1}(A_{2\delta_0,4\delta_0}(p)),
\\
\omega_{\delta,\nu}^{\flat} + \sq
\p\bp(\chi\cdot\varphi_{\delta,\nu}^{\flat}) & \text{in}\ \fF^{-1}(A_{\delta_0,2\delta_0}(p)),
\\
\omega_{\delta,\nu}^{\flat} & \text{in}\ \fF^{-1}(B_{\delta_0}(p)),
\end{cases}
\end{align}
where  $\chi$  is a cutoff function satisfying
\begin{align}
\chi =
\begin{cases}
0, & \text{in}\ \fF^{-1}(B_{\delta_0}(p)),
\\
1, &  \text{in}\ \cK\setminus  \fF^{-1}(B_{2\delta_0}(p)).
\end{cases}
\end{align}

\end{proposition}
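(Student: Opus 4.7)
The construction is a direct multi-monopole generalization of the Gross--Wilson gluing in the $\I_1$ case, so my plan is to follow the same two-step structure: first establish a $\partial\bar\partial$-potential for the error on the overlap annulus, then check positivity and smoothness of the glued form.

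First I would use the biholomorphic identification (discussed just before the statement, via the section $\sigma_{\mathcal{N}_{\nu}^4}$ furnished by \cite[Lemma~4.3]{GW}) to view $\omega_\delta^A$ and $\omega_{\delta,\nu}^\flat$ as two K\"ahler forms on the same punctured complex surface $\fF^{-1}(B_{4\delta_0}(p)\setminus\{p\})$. The periods of the elliptic fibration and the fiber area $\delta^2$ agree by construction, so the two $(1,1)$-forms lie in the same de Rham class. Hence by the local $\partial\bar\partial$-lemma (applied on the annular region, or equivalently by the argument in \cite[Lemma~4.3]{GW}) the function $\varphi_{\delta,\nu}^\flat$ with $\omega_\delta^A=\omega_{\delta,\nu}^\flat+\sq\,\partial\bar\partial\varphi_{\delta,\nu}^\flat$ exists and is smooth away from $p$. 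This makes $\omega_\delta^B$ well-defined and $C^\infty$ in each of the three pieces of the definition.

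Second, I would prove the crucial exponential closeness estimate. By Lemma~\ref{l:Green's-function} the Green's function $G_\nu$ differs from the superposition of logarithms by $O(e^{-2\pi|u_1+\sq u_2|})$, so $V_T-\mathrm{Im}\,\tau_2(y)=O(e^{-c/\delta})$ on the annulus $\{\delta_0<|y|<2\delta_0\}$. Translated through the Gibbons--Hawking triple and the identification with the semi-flat model, this gives
\begin{equation}
\|\omega_\delta^A-\omega_{\delta,\nu}^\flat\|_{C^k(g_\delta^A)} \leq C_k e^{-c/\delta}
\end{equation}
on the overlap, for each $k\geq 0$. Standard Schauder estimates applied to $\sq\,\partial\bar\partial\varphi_{\delta,\nu}^\flat$ on the annular domain, together with a normalization removing the harmonic ambiguity, then upgrade this to $\|\varphi_{\delta,\nu}^\flat\|_{C^{k+2}}\leq C_k' e^{-c/\delta}$.

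Third, I would verify positivity and gluing continuity of $\omega_\delta^B$. On the annular transition region,
\begin{equation}
\omega_\delta^B = \omega_{\delta,\nu}^\flat + \sq\,\partial\bar\partial(\chi\cdot\varphi_{\delta,\nu}^\flat),
\end{equation}
and the difference with $\omega_{\delta,\nu}^\flat$ is bounded by $C(\|\chi\|_{C^2})\cdot\|\varphi_{\delta,\nu}^\flat\|_{C^2}$, hence is $O(e^{-c/\delta})$ in the $g_{\delta,\nu}^\flat$-norm; since $g_{\delta,\nu}^\flat$ is of unit order in the rescaled picture, for small $\delta$ this preserves positivity. On $\fF^{-1}(A_{2\delta_0,4\delta_0}(p))$ where $\chi\equiv 1$ we recover $\omega_\delta^A$, and on $\fF^{-1}(B_{\delta_0}(p))$ where $\chi\equiv 0$ we recover $\omega_{\delta,\nu}^\flat$, so the pieces glue smoothly. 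The main obstacle is the exponential bound on $\varphi_{\delta,\nu}^\flat$; everything else is formal once this estimate is in hand.
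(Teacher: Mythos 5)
The paper states this proposition as a construction without a formal proof; the relevant justification is contained in the paragraphs immediately preceding it (the period matching with $\tau_1=1$, $\tau_2=\frac{\nu}{2\pi\sq}\log|y|+h(y)$, and the invocation of \cite[Lemma~4.3]{GW} to select $\sigma_{\mathcal{N}_\nu^4}$ so that the difference is $\sq\,\p\bp$-exact), together with the exponential error bound quantified later in Proposition~\ref{p:weighted-error}\,(2). Your three-step plan recovers exactly this: potential via the GW section choice, exponential closeness via Lemma~\ref{l:Green's-function}\,(2), and positivity from smallness of $\p\bp(\chi\varphi_{\delta,\nu}^\flat)$. The only point worth flagging is the phrasing ``by the local $\p\bp$-lemma (applied on the annular region, or equivalently by \cite[Lemma~4.3]{GW})'': these are not equivalent. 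The tubular neighborhood $\fF^{-1}(B_{4\delta_0}(p)\setminus\{p\})$ has $b_1\neq 0$, so agreement of de Rham classes does not by itself yield a $\p\bp$-potential; the specific choice of holomorphic section in \cite[Lemma~4.3]{GW} is what kills the obstruction, and this choice is an essential ingredient, not an alternative route. With that caveat, your proposal matches the paper's intended argument.
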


\subsection{Resolving fibers of type I$_{\nu}^*$}
\label{ss:qmov}
In this subsection,  we will resolve the singular fibers of Type $\I_\nu^*$ ($\nu\in\dZ_+$).
Let $\fF:\cK\to\PP^1$ be an elliptic K3 surface such that the base $\PP^1$ has a finite singular set $\cS$.
Let $\Omega$ be a fixed holomorphic $2$-form on $\cK$ and assume that there is a singular fiber $\fF^{-1}(p)$ of Type $\I_{\nu}^*$ for some $p\in\cS$ and $\nu\in\dZ_+$.

First, we describe the orbifold structure induced by the  $\I_{\nu}^*$-singular fiber  $\fF^{-1}(p)$.
By \cite{Kodaira1963}, for some $\delta_0\ll 1$, there is an elliptic surface
\begin{align}\widetilde{\fF}: \widetilde\cK\longrightarrow B_{2\delta_0}(p)\subset \PP^1\end{align}
 with a local section $\widetilde\sigma_i:  B_{2\delta_0}(p)\to \widetilde{\cK}$ such that $\cK$ is locally biholomorphic to $\Res(\widetilde\cK/\ZZ_2)$, the resolution of
$\widetilde\cK/\ZZ_2$, near the singular fiber $\fF^{-1}(p)$. More precisely, let $(\tilde x, \tilde y)$ be the local coordinates of $\widetilde\cK$ such that $\tilde y$ is the local coordinate on the base and $\tilde x$ is the local coordinate on the fiber. Assume that $\{\tilde y=0\}$ corresponds to the singular fiber and $\{\tilde x=0\}$ gives the section $\tilde\sigma_i$. Then the $\ZZ_2$-action on $\widetilde{\cK}$ is given by
$(\tilde x, \tilde y)\mapsto (-\tilde x, -\tilde y)$ so that there are four fixed points
\begin{align}
\Sigma_0=\Big\{(0,0),\ \Big(\tilde\tau_1(0)/2,0\Big),\ \Big(\tilde\tau_2(0)/2,0\Big),\ \Big((\tilde\tau_1(0)+\tilde\tau_2(0))/2,0\Big)\Big\}\subset \widetilde{\cK}.\label{e:set-of-fixed-points}
\end{align}
Hence there is a biholomorphism
\begin{equation}
\mathfrak{R}: \Res(\widetilde\cK/\ZZ_2)\longrightarrow \mathcal{V},
\end{equation}
 from the resolution of the four orbifold singular points on $\widetilde\cK/\ZZ_2$
to some neighborhood $\mathcal{V}$ of $\fF^{-1}(p)$.

We remark that given the holomorphic $2$-form $\Omega$ on $\cK$,
the pullback $\mathfrak{R}^*\Omega$ on $\Res(\widetilde\cK/\ZZ_2)$
naturally induces
a holomorphic $2$-form $\widetilde{\Omega}$ on the covering space $\widetilde{\cK}$.
Indeed, denote by $\Sigma_{\dZ_2}$  the projection of the fixed point set $\Sigma_0$ in \eqref{e:set-of-fixed-points} under the quotient $\widetilde{\cK}\to \widetilde{\cK}/\dZ_2$. So there is map \begin{equation}\mathfrak{S}: (\widetilde\cK/\ZZ_2)\setminus \Sigma_{\dZ_2}\longrightarrow \Res(\widetilde\cK/\ZZ_2) \end{equation}
such that the puncture $(\widetilde\cK/\ZZ_2)\setminus \Sigma_{\dZ_2}$ is biholomorphic to its image under $\mathfrak{S}$. Now the pullback holomorphic $2$-form $\mathfrak{S}^*\mathfrak{R}^*\Omega$ on the puncture $(\widetilde\cK/\ZZ_2)\setminus \Sigma_{\dZ_2}$ can be extended to a holomorphic $2$-form $\widetilde{\Omega}$ on the $\dZ_2$-lifting $\widetilde{J}$.
Meanwhile, using the above pullback operations, for each $0<\delta<1$, the semi-flat metric $g_\delta^{A}$ on $\cK$ induces a $\dZ_2$-invariant semi-flat metric $\tilde{g}_\delta^{A}$ on $\widetilde\cK$ whose area of each regular fiber is $\delta^2$.

Now we are ready to describe the construction of the orbifold Gibbons-Hawking metrics on $\widetilde{\cK}/\dZ_2$.
Denote by
$Q^3=\dR^2\times S^1$ the product space equipped with the flat product metric $g_{Q^3}=du_1^2 + du_2^2+ du_3^2$.
Let $\iota: \dR^2\times S^1\to \dR^2\times S^1$ be the involution map given by
\begin{equation}
\iota: (u_1, u_2, u_3)\mapsto (-u_1, -u_2 ,-u_3),
\end{equation}
which has two fixed points $q_- = (0,0,0)$ and $q_+ = (0,0,\frac{1}{2})$.

Choose a finite set of disjoint poles \begin{align}P = \{p_1,\hat{p}_1,p_2,\hat{p}_2,\ldots, p_{\nu},\hat{p}_{\nu}\}\in (\{0^2\}\times S^1) \setminus\{q_-,q_+\}\end{align}
which satisfies $\iota (p_i) =\hat{p}_i$ for any $1\leq i\leq \nu$.
By Proposition \ref{p:multi-OV}, we obtain a multi-Ooguri-Vafa metric
\begin{equation}
g_{\delta,2\nu} = \frac{1}{2\pi}(V_T (du_1^2 + du_2^2 + du_3^2) + (V_T)^{-1}\theta^2)
\end{equation}
 on the completion ${\mathcal{N}}_{2\nu}^4= \overline{\mathring{\mathcal{N}}_{2\nu}^4}$ as in the $\I_{2\nu}$ case, where
\begin{equation}S^1\to \mathring{\mathcal{N}}_{2\nu}^4 \xrightarrow{\pi} \cO\setminus P\subset Q^3\end{equation} is a principal $\U(1)$-bundle. Then the two periods are $1$ and $\tilde\tau_2(\delta(u_1+ \sq u_2))$ after replacing $\theta$ by $\theta+cdu_3$ if necessary (as in \cite[Proposition~3.2]{GW}).
The group $\U(1)$ acts on
$\mathring{\mathcal{N}}_{2\nu}^4$ from the right, so we will denote
this action by $\mathscr{R}_{\gamma}$ for $\gamma \in \U(1)$.
Recall that a connection $\theta \in \Omega^1(\mathring{\mathcal{N}}_{2\nu}^4)$ is a real $1$-form
on $\mathring{\mathcal{N}}_{2\nu}^4$ such that $\sq \theta$ takes values
in the Lie algebra $\mathfrak{u}(1)\equiv \sq\cdot  \dR$. Now the connection satisfies the following properties:
\begin{enumerate}
\item $\sq \theta$ restricted to the fiber $\pi^{-1}(u)$ is
$\i \cdot d u_4$, where $u_4\in\U(1)=\RR/2\pi\ZZ$.
\item $\mathscr{R}_{\gamma}^* \theta = \theta$.
Since the group is abelian,
the {\em{curvature $2$-form}} of the connection is given
by $\W_{\theta} = d \theta \in \i H^2(\mathring{\mathcal{N}}_{2\nu}^4, \RR)$, and this forms descends to $\cO\setminus P$.
\end{enumerate}
Note that we have chosen the Green's function $V_T$ to be invariant under $\iota$.
Then
\begin{align}
\iota^* \Omega_{\theta} = \iota^* (*_{Q^3}\circ d V_T) = - *_{Q^3}\circ  dV_T = - \Omega_{\theta},
\end{align}
since $\iota$ is orientation-reversing on $\cO\setminus P$.

Since $\mathring{\mathcal{N}}_{2\nu}^4$ is a $\U(1)$-bundle, it has a first Chern
class $c_1(\mathring{\mathcal{N}}_{2\nu}^4) \in H^2(\cO\setminus P ; \ZZ)$. It is well known that $\mathring{\mathcal{N}}_{2\nu}^4$ is determined up to smooth bundle equivalence by $c_1(\mathring{\mathcal{N}}_{2\nu}^4)$.
By Chern-Weil theory, the image of $c_1(\mathring{\mathcal{N}}_{2\nu}^4)$ in $\i H^2(\cO\setminus P; \RR)$
is cohomologous to $\W_{\theta}$, for any connection $\theta$ on
$\mathring{\mathcal{N}}_{2\nu}^4$.

Consider the pull-back bundle  $\iota^* \mathring{\mathcal{N}}_{2\nu}^4$.
By naturality,
\begin{align}  c_1( \iota^* \mathring{\mathcal{N}}_{2\nu}^4) = \iota^* c_1(\mathring{\mathcal{N}}_{2\nu}^4) = \iota^* [ \W_{\theta} ] = - [\W_{\theta}] = - c_1(\mathring{\mathcal{N}}_{2\nu}^4).
\end{align}
Consequently, there exists a bundle equivalence $A : \iota^* \mathring{\mathcal{N}}_{2\nu}^4 \rightarrow \overline{\mathring{\mathcal{N}}_{2\nu}^4}$,
which is an equivariant map covering the identity map on $\cO\setminus P$,
and $\overline{\mathring{\mathcal{N}}_{2\nu}^4}$ is the conjugate bundle.
Denote by $\pi_2$ the natural map $\pi_2 : \iota^* \mathring{\mathcal{N}}_{2\nu}^4 \rightarrow \mathring{\mathcal{N}}_{2\nu}^4$,
and choose an identification $C :\mathring{\mathcal{N}}_{2\nu}^4 \rightarrow \overline{\mathring{\mathcal{N}}_{2\nu}^4}$, which is fiberwise
complex conjugation. This is summarized in the following diagram.
\begin{equation}
\label{cd1}
 \CD
\mathring{\mathcal{N}}_{2\nu}^4 @>{C}>>\overline{\mathring{\mathcal{N}}_{2\nu}^4}@>{A^{-1}}>>\iota^*\mathring{\mathcal{N}}_{2\nu}^4 @>{\pi_2}>>\mathring{\mathcal{N}}_{2\nu}^4\\
@VVV @VVV @VVV @VV{\pi}V\\
\cO\setminus P@>{\Id}>>\cO\setminus P@>{\Id}>>\cO\setminus P@>{\iota}>>\cO\setminus P,\\
 \endCD
 \end{equation}
The pull-back
\begin{align}
\theta' = C^* (A^{-1})^* \pi_2^* \theta
\end{align}
is a connection on $\pi: \mathring{\mathcal{N}}_{2\nu}^4 \rightarrow \cO\setminus P$.
Since $\pi_2 \circ A^{-1} \circ C $ covers $\iota$, we have
\begin{align}
\W_{\theta'} = d \theta' = d \big( ( \pi_2 \circ A^{-1} \circ C)^* \theta \big)
=  ( \pi_2 \circ A^{-1} \circ C)^* \W_{\theta} = \iota^* (\W_{\theta}) = - \W_{\theta}.
\end{align}
This shows that $d ( \theta' + \theta ) =0$. Since $\theta' + \theta$
descends to $\cO\setminus P$, we can write
\begin{align}
\theta' = - \theta - \sq  \cdot \pi^* df + c \sq  \pi^*(du_3),
\end{align}
for some constant $c \in \RR$, and $f : \cO\setminus P \rightarrow \RR$,
since $H^1(\cO\setminus P;\RR)$ is generated by $d u_3$.

Define a bundle map $B : \mathring{\mathcal{N}}_{2\nu}^4 \rightarrow \mathring{\mathcal{N}}_{2\nu}^4$ by
$B v =  v \cdot e^{ \sq  f}$ (right action).  Choosing a local fiber coordinate $u_4$ and writing
$\theta'$ as $\theta_1' + \i \cdot d u_4$, we have
\begin{align}\label{g_to_b}
\begin{split}
B^* \theta' = B^* (  \theta_1' + \i \cdot d u_4)
&=  \theta_1' + \i B^* d u_4 \\
&=  \theta_1'+ \i( d u_4 +  \pi^* d f )\\
&= \theta' +  \i  \cdot \pi^* d f = - \theta + c \i \pi^* (d u_3)
\end{split}
\end{align}
Therefore, the mapping
\begin{align}
\Psi = \pi_2 \circ A^{-1} \circ C \circ B
\end{align}
is a mapping covering $\iota$ which satisfies
\begin{align}
\label{Phieqn}
\Psi^* \theta = - \theta + c \i \pi^*d u_3.
\end{align}
Next, notice that $\Psi^2: \mathring{\mathcal{N}}_{2\nu}^4 \rightarrow \mathring{\mathcal{N}}_{2\nu}^4$ is
a bundle mapping covering the identity, so we must have
\begin{align}
\Psi^2 (v) = v \cdot e^{\i h}
\end{align}
for some function $h : \cO\setminus P \rightarrow \RR$.
As above, this implies that
\begin{align}
(\Psi^2)^* \theta = \theta + \i \pi^* d h.
\end{align}
But from \eqref{Phieqn}, we have
\begin{align}
\begin{split}
(\Psi^2)^* \theta = \Psi^* ( \Psi^* \theta)
&= \Psi^* ( - \theta + c \i \pi^*d u_3)\\
&= \theta - c \i  \pi^*d u_3 + c \i \Psi^* (\pi^*d u_3)\\
&= \theta - c \i  \pi^*d u_3 + c \i \pi^* \iota^* d u_3\\
& =  \theta - 2 c \i  \pi^*d u_3.
\end{split}
\end{align}
We conclude that
\begin{align}
\pi^* dh = -2 c \pi^* d u_3.
\end{align}
Equivalently,
\begin{align}
\pi^* ( d h + 2 c d u_3) = 0.
\end{align}
Since $\pi_*$ is surjective at any point on $\mathring{\mathcal{N}}_{2\nu}^4$, this implies that
\begin{align}
d h + 2 c d u_3 = 0
\end{align}
on $\cO\setminus P$, but since $H^1(\cO\setminus P;\RR)$ is generated by $d u_3$,
we conclude that $c = 0$, and $dh = 0$,
so $h = h_0 = constant$, since $\cO\setminus P$ is connected.

We have shown that
\begin{align}
\Psi^* \theta &= - \theta\\
\Psi^2(v) &= v \cdot e^{\sq   h_0}.
\end{align}
The first equation shows that $\Psi$ is an isometry of $g_{\delta,2\nu}$. Moreover, $\Psi$ fixes the complex structures $I, J, K$.
We next show that $h_0 = 0$, i.e., $\Psi$ is an involution.
\begin{proposition}
The lifting $\Psi$ satisfies $\Psi^2 = \Id$.
\end{proposition}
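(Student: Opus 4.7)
The plan is to exploit that the involution $\iota$ has fixed points $q_\pm\in\cO\setminus P$: since $\Psi$ covers $\iota$, it restricts to a self-map of the fiber $\pi^{-1}(q_-)\cong\U(1)$, and I will show that this restriction has a fixed point $v_0$, which together with the formula $\Psi^2(v_0)=v_0\cdot e^{\sq h_0}$ will force $e^{\sq h_0}=1$.

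First I would convert the given identity $\Psi^*\theta=-\theta$ into an anti-equivariance statement for the right $\U(1)$-action. Let $V$ be the fundamental vertical vector field of the action, so $\theta(V)=\sq$. Because $\Psi$ covers $\iota$ it preserves the vertical distribution, hence $\Psi_*V$ is vertical, and $\Psi^*\theta=-\theta$ evaluated on $V$ yields $\theta(\Psi_*V)=-\sq$. Since $V$ is the unique vertical field with $\theta(V)=\sq$, this forces $\Psi_*V=-V$, and integrating the flow gives
\begin{equation}
\Psi(v\cdot e^{\sq\alpha})=\Psi(v)\cdot e^{-\sq\alpha}
\end{equation}
for every $v\in\mathring{\mathcal{N}}_{2\nu}^4$ and every $\alpha\in\RR$.

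Finally, I restrict to the fiber $\pi^{-1}(q_-)$, which is a smooth circle because $q_-\notin P$ and on which $\Psi$ acts because $\iota(q_-)=q_-$. After fixing any base point $v_1\in\pi^{-1}(q_-)$ and writing $\Psi(v_1)=v_1\cdot e^{\sq\beta}$, the anti-equivariance identifies the restriction of $\Psi$ to this fiber with the map $v_1\cdot e^{\sq\alpha}\mapsto v_1\cdot e^{\sq(\beta-\alpha)}$, an orientation-reversing isometry of $S^1$ whose fixed points are $v_0=v_1\cdot e^{\sq\beta/2}$ and $v_1\cdot e^{\sq(\beta/2+\pi)}$. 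Plugging $v_0$ into $\Psi^2(v)=v\cdot e^{\sq h_0}$ yields $v_0=v_0\cdot e^{\sq h_0}$, so $e^{\sq h_0}$ is the identity of $\U(1)$; since $h_0$ is a constant, this forces $\Psi^2=\Id$ everywhere on $\mathring{\mathcal{N}}_{2\nu}^4$. The only delicate point is the anti-equivariance in the second paragraph, which hinges on $\Psi$ being a bundle-covering map so that $\Psi_*V$ is necessarily vertical and the characterization of $V$ by $\theta(V)=\sq$ applies; beyond that the argument is purely formal, and I do not anticipate any substantial obstacle.
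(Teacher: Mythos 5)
Your proof is correct and follows essentially the same route as the paper: the paper writes $\Psi(0,0,0,u_4)=(0,0,0,C-u_4)$ on the fiber over the $\iota$-fixed point $q_-$ (which is precisely the anti-equivariance you derive from $\Psi^*\theta=-\theta$) and observes that squaring gives the identity on that fiber, forcing $h_0=0$. You have simply spelled out the justification for the form $u_4\mapsto C-u_4$ and phrased the conclusion via a fixed point of $\Psi$ rather than the direct computation $C-(C-u_4)=u_4$, which are trivially equivalent.
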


\begin{proof}
Let $u_4$ be a local coordinate on the $S^1=\RR/2\pi\ZZ$ fibration. Restrict to the fiber over $q_-=(0,0,0)$. Then $\Psi (0,0,0,u_4)=(0,0,0,C-u_4)$ for a constant $C$. But then $\Psi (0,0,0,u_4)=(0,0,0,u_4)$, which means that $h_0=0$.
\end{proof}
With the above involution $\Psi$ on $\mathcal{N}_{2\nu}^4$, we are able to define the orbifold Gibbons-Hawking metrics on the $\dZ_2$-quotient. For simplicity, first we study the metric $g_{\delta,2\nu}$ on $\mathcal{N}_{2\nu}^4$  at large scales such that
\begin{equation}
\underline{c}_0\cdot \delta^{-1}\leq \diam_{g_{\delta,2\nu}}(\mathcal{N}_{2\nu}^4) \leq \bar{c}_0 \cdot \delta^{-1}.
\end{equation}
Notice that there are four fixed points $\{q_1,q_2,q_3,q_4\}$ under $\Psi\in\dZ_2$ such that
\begin{equation}
\pi(q_1)=\pi(q_2)=q_-,\quad \pi(q_3)=\pi(q_4)=q_+.\end{equation}
Let $\dZ_2=\{\Id, \Psi\}$, and since the quotient $\mathcal{N}_{2\nu}^4/\ZZ_2$ is an elliptic fibration over the orbifold $B_{2\delta_0/\delta}(0^2)/\dZ_2$ (the radius is given in Proposition \ref{p:multi-OV}), by \cite{Kodaira1963}, there exists a local holomorphic section. Its preimage is a $\ZZ_2$-invariant local holomorphic section. By the proof of  \cite[Lemma~4.3]{GW}, there exists another  $\ZZ_2$-invariant local holomorphic section  such that if we identify $\widetilde\cK$ with $\mathcal{N}_{2\nu}^4$ using this new holomorphic section, then
\begin{align}\tilde{\omega}_\delta^{A}-\omega_{\delta,2\nu}=\i\partial\bar\partial\varphi_{\delta,2\nu}
\end{align} for some function
$\varphi_{\delta,2\nu}$ on $\{0<|\tilde y|<2\delta_0\}$. After replacing
$\varphi_{\delta,2\nu}$ by $\frac{1}{2}(\varphi_{\delta,2\nu}+\Psi^*\varphi_{\delta,2\nu})$, we can assume that $\varphi_{\delta,2\nu}$ is $\ZZ_2$-invariant. So it descends to a biholomorphic map between the orbifolds $\widetilde\cK/\ZZ_2$ and $\mathcal{N}_{2\nu}^4/\ZZ_2$, then the Ooguri-Vafa metric
$g_{\delta,2\nu}$
descends to an orbifold hyperk\"aher metric
$\check{g}_{\delta,\nu}$ on $\mathcal{N}_{2\nu}^4/\dZ_2$.

Now the hyperk\"ahler orbifold $(\mathcal{N}_{2\nu}^4/\dZ_2, \check{g}_{\delta,\nu})$,
as the $\dZ_2$-quotient of the smooth Gibbons-Hawking region $(\mathcal{N}_{2\nu}^4,g_{\delta,2\nu})$,  has $4$ orbifold singularities with tangent cone isometric to $\dR^4/\dZ_2$. Correspondingly, the orbifold K\"ahler form $\check{\omega}_{\delta,\nu}$
is related to the semi-flat K\"ahler form $\omega_\delta^{A}$
by
\begin{equation}
\omega_\delta^{A}=\check{\omega}_{\delta,\nu}+\sq \p\bp\check{\varphi}_{\delta,\nu}.
\end{equation}

In a small neighborhood of the singular fiber, $\cK$ is obtained complex analytically by the resolution of these orbifold singularities,
In the next subsection, we describe the resolution of $\mathcal{N}_{2\nu}^4 / \dZ_2$ and we will construct a family of approximately hyperk\"ahler metrics on it, by gluing on Eguchi-Hanson metrics.

For carrying out the gluing construction, we need the following approximation estimates for the orbifold metric and the Eguchi-Hanson metric. Given an orbifold singularity $q_{\lambda}\in \mathcal{N}_{2\nu}^4/\dZ_2$ for $\lambda\in\{1,2,3,4\}$, denote by $r_{\lambda}(\bx)\equiv d_{\check{g}_{\delta,\nu}}(\bx,q_{\lambda})$
the distance between $\bx$ and $q_{\lambda}$ under the orbifold metric $\check{g}_{\delta,\nu}$.
From now on, we fix sufficiently small parameters $\ee_{\lambda} \in (0,1)$ for each $\lambda\in\{1,2,3,4\}$ which will be eventually determined in Section \ref{ss:proof-of-existence}.

Near $q_\lambda$, the (1,1)-form $\check{\omega}_{\delta,\nu}$ can be written as $\i\p\bp \check{\psi}_{\delta,\nu}$ locally. In the local normal complex coordinates $\{\xi_l\}_{l=1}^2$,
\begin{align}
\Big|\nabla_{g_{\dC^2/\dZ_2}}^k(\check{\psi}_{\delta,\nu} - \frac{|\xi_1|^2+|\xi_2|^2}{2})\Big|_{g_{\dC^2/\dZ_2}} =
\begin{cases}
 O(r_{\lambda}^{4-k}), & k=0, 1, 2, 3
 \\
O(1), & k\geq 4,
\end{cases}\label{e:orbifold-approximation}
\end{align}
 where
$r_{\lambda}\in [0,\ee_{\lambda}]$ and $\ee_{\lambda}\in(0,1)$ satisfies
\begin{align} \ee_{\lambda}\leq  \frac{\eta_0}{(\nu\log(1/\delta))^{\frac{1}{2}}}\label{e:orbifold-parameter}\end{align}
for some sufficiently small constant $\eta_0>0$ independent of $\delta>0$.
Note that the above upper bound of $\ee_{\lambda}$ comes from the length of the collapsing $S^1$-fiber at the orbifold singular point $q_{\lambda}$ which is comparable with $(\nu\log(1/\delta))^{-\frac{1}{2}}$.

Next we exhibit some estimates for the hyperk\"ahler Eguchi-Hanson space $(X^4, g_{\EH})$. To begin with,  let $\CC^2/\dZ_2$ be a flat cone so that we blow up the origin to get the minimal resolution. Take  the standard Euclidean coordinates $(z_1, z_2)$ of $\CC^2$ and let $r(z_1,z_2)\equiv\sqrt{|z_1|^2+|z_2|^2}$ be the distance to the origin. Then the K\"ahler potential for the Eguchi-Hanson metric is explicitly given by (see \cite{BM} for instance),
\begin{equation}
\varphi_{\EH}(z_1, z_2) \equiv \frac{1}{2} \Big(\sqrt{1+r^4} + 2\log r - \log(1+\sqrt{1+r^4})\Big)
\end{equation}
so that the K\"ahler form of the Eguchi-Hanson metric $g_{\EH}$
is given by $\omega_{\EH}\equiv \i\partial\bar\partial\varphi_{\EH}$. Note that as $r\to+\infty$, $\omega_{\EH}$ is very close to the Euclidean K\"ahler form $\omega_{\dR^4/\dZ_2}=\sq\partial\bar\partial( \frac{r^2}{2})$ of the flat cone $\dC^2/\dZ_2\equiv\dR^4/\dZ_2$
with the asymptotic order \begin{align}
\Big|\nabla_{g_{\dC^2/\dZ_2}}^k(\varphi_{\EH} - \frac{1}{2}r^2)\Big|_{g_{\dC^2/\dZ_2}} \leq \frac{C_k}{r^{k+2}},
\label{e:EH-approximation}\end{align}
for all $k \in \mathbb{N} = \{0,1,2,3,.\dots\}$, as  $r\to \infty$.

Let $(X^4,g_{\EH})$ be the hyperk\"ahler Eguchi-Hanson space.  $X^4\setminus \Phi\Big([2\ee_{\lambda}^{-1},+\infty)\Big)$ be a large region in the Eguchi-Hanson space with diameter
comparable with $\ee_{\lambda}^{-1}$. To glue this piece with the above Gibbons-Hawking region, we need to rescale the metric as follows.
 Denote by $\cO_{\ee_{\lambda}\cdot\delta}(q_{\lambda})$ the rescaled region
 with the rescaled Eguchi-Hanson metric $g_{\EH}^{\flat} \equiv\ee_{\lambda}^4 \cdot \delta^2\cdot  g_{\EH}$. Then the diameter has the scale
 \begin{equation}
\frac{1}{D_0}\cdot \ee_{\lambda}\cdot  \delta \leq \diam_{g_{\EH}^{\flat}}(\cO_{\ee_{\lambda}\cdot  \delta}(p)) \leq D_0\cdot \ee_{\lambda} \cdot  \delta
 \end{equation}
for some uniform constant $D_0>0$ independent of
$\delta$ and
$\ee_{\lambda}$.

Now we return to the elliptic K3 surface $\fF:\cK\to\PP^1$
and assume that there is singular point $p\in \mathcal{S}$ such that the singular fiber $\fF^{-1}(p)$
is of Type $\I_{\nu}^*$ for some $\nu\in\dZ_+$.
In the following, we glue the semi-flat metric with an orbifold Ooguri-Vafa metric, and further resolve the orbifold singularities by gluing $4$ copies of hyperk\"ahler Eguchi-Hanson metrics.

\begin{proposition}[Approximate metrics around $\I_{\nu}^*$-fibers] \label{p:gluing-I_v*}
 Let $\fF^{-1}(p)\subset \cK$ be a singular fiber of Type $\I_{\nu}^*$ for $\nu\in\dZ_+$, then for each sufficiently small parameters $0<\delta, \delta_0 \ll 1$, there is a K\"ahler form $\omega_\delta^{B}$ on  $\mathcal{V}_{4\delta_0}\equiv \fF^{-1}(B_{4\delta_0}(p))\subset \cK$   around the singular fiber $\fF^{-1}(p)$ which can be constructed by gluing the semi-flat K\"ahler form $\omega_\delta^{A}$ induced by $\fF:\cK\to \PP^1$, the rescaled orbifold  multi-Ooguri-Vafa K\"ahler form $\check{\omega}_{\delta,\nu}^{\flat}=\delta^2\check{\omega}_{\delta,\nu}^{\flat}$ on $\mathcal{N}_{2\nu}^4/\dZ_2$ and $4$ copies of rescaled Eguchi-Hanson K\"ahler forms $\omega_{\EH}^{\flat}=\delta^2\cdot\ee_{\lambda}^4\cdot\omega_{\EH}$ around the orbifold singularities on $\mathcal{N}_{2\nu}^4/\dZ_2$.  Let $\check{\varphi}_{\delta,\nu}^{\flat}$ be the K\"ahler potential on $\fF^{-1}(B_{4\delta_0}(p)\setminus\{p\})$ such that the semi-flat K\"ahler form $\omega_\delta^{A}$ differs from the orbifold multi-Ooguri-Vafa K\"ahler form $\check{\omega}_{\delta,\nu}^{\flat}$ by
\begin{equation}
\omega_\delta^{A} = \check{\omega}_{\delta,\nu}^{\flat} + \sq\p\bp\check{\varphi}_{\delta,\nu}^{\flat}.
\end{equation}
Then
\begin{align}
\omega_\delta^{B}
=
\begin{cases}
\omega_\delta^{A} & \text{in}\ \fF^{-1}\Big(A_{2\delta_0,4\delta_0}(p)\Big),
\\
\check{\omega}_{\delta,\nu}^{\flat} + \sq
\p\bp(\chi\cdot\check{\varphi}_{\delta,\nu}^{\flat}) & \text{in}\ \fF^{-1}(A_{\delta_0,2\delta_0}(p)),
\\
\check{\omega}_{\delta,\nu}^{\flat} & \text{in}\ \fF^{-1}(B_{\delta_0}(p))\setminus \bigcup\limits_{\lambda=1}^4 B_{2\ee_{\lambda}\delta}(q_{\lambda}),
\\
\sq \p \bp \Big((1-\check{\chi})\cdot \check{\psi}_{\delta,\nu}^{\flat}+\check{\chi} \cdot \varphi_{\EH}^{\flat}\Big)  & \text{in}\ \bigcup\limits_{\lambda=1}^4 B_{2\ee_{\lambda}\delta}(q_{\lambda}),
\end{cases}
\end{align}
where
$\check{\psi}_{\delta,\nu}^{\flat}=\delta^2 \cdot \check{\psi}_{\delta,\nu}$ is the rescaled K\"ahler potential of $\check{\omega}_{\delta,\nu}^{\flat}$, $\varphi_{\EH}^{\flat}=\mathfrak{e}_{l}^4 \cdot \delta^2 \cdot \varphi_{\EH}$ is the rescaled K\"ahler potential of the Eguchi-Hanson metric,
$\chi$ is a cutoff function satisfying
\begin{align}
\chi=
\begin{cases}
0, & \text{in}\ \fF^{-1}(B_{\delta_0}(p)),
\\
1, & \text{in}\ \cK\setminus\fF^{-1}(B_{2\delta_0}(p)),
\end{cases}
\end{align}
and
$\check{\chi}$ is a cutoff function satisfying
\begin{align}
\check{\chi}  =
\begin{cases}
1, & \text{in}\ \bigcup\limits_{\lambda=1}^4 B_{\ee_{\lambda}\delta}(q_{\lambda}),
\\
0, & \text{in}\ \cK\setminus\bigcup\limits_{\lambda=1}^4 B_{\ee_{\lambda}\delta}(q_{\lambda}).
\end{cases}
\end{align}
\end{proposition}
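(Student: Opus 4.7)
The plan is to follow the two-step potential-level gluing strategy already used for Proposition~\ref{p:gluing-I_v}, augmented by a third inner-gluing step in which the four orbifold singularities of $\mathcal{N}_{2\nu}^4/\dZ_2$ are resolved by rescaled Eguchi-Hanson metrics. Each of the three building blocks $\omega_\delta^A$, $\check{\omega}_{\delta,\nu}^\flat$, and $\omega_{\EH}^\flat$ is already a genuine K\"ahler form on its own domain; what requires proof is that the cutoff prescriptions in the two transition annuli $\fF^{-1}(A_{\delta_0,2\delta_0}(p))$ and $A_{\ee_\lambda\delta,2\ee_\lambda\delta}(q_\lambda)$ yield a smooth, closed, positive $(1,1)$-form across the overlaps, so that the piecewise formula glues to a global K\"ahler form.

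For the outer gluing, I would first produce the K\"ahler potential $\check{\varphi}_{\delta,\nu}^\flat$ on $\fF^{-1}(A_{\delta_0,2\delta_0}(p))$ satisfying $\omega_\delta^A = \check{\omega}_{\delta,\nu}^\flat + \sq\partial\bar\partial\check{\varphi}_{\delta,\nu}^\flat$. After the biholomorphic identification of $\widetilde{\cK}/\dZ_2$ with $\mathcal{N}_{2\nu}^4/\dZ_2$ provided by the $\dZ_2$-invariant holomorphic section constructed just above, the two forms are K\"ahler with respect to the same complex structure and are cohomologous in a neighborhood of the regular fiber, so a $\dZ_2$-invariant potential exists by the local $\partial\bar\partial$-Lemma. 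Because the harmonic function $V_T$ in Lemma~\ref{l:Green's-function} agrees with its semi-flat leading term up to corrections of size $O(e^{-c/\delta})$ in every $C^k$-norm, one obtains the same exponential smallness for $\check{\varphi}_{\delta,\nu}^\flat$; consequently $\sq\partial\bar\partial(\chi\cdot\check{\varphi}_{\delta,\nu}^\flat)$ is an exponentially small perturbation of the positive form $\check{\omega}_{\delta,\nu}^\flat$, and positivity is preserved on the outer transition annulus.

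For the inner gluing near each orbifold point $q_\lambda$, I would invoke the asymptotic formulas \eqref{e:orbifold-approximation} and \eqref{e:EH-approximation}: on the annulus $A_{\ee_\lambda\delta,2\ee_\lambda\delta}(q_\lambda)$, both rescaled potentials $\check{\psi}_{\delta,\nu}^\flat$ and $\varphi_{\EH}^\flat$ agree with the flat potential $\tfrac{1}{2}(|\xi_1|^2+|\xi_2|^2)$ in local normal $\dC^2/\dZ_2$ coordinates up to errors that are $O(\ee_\lambda^4)$ relative to the flat K\"ahler form measured at the gluing scale. Hence their difference is of the same small size, and the interpolation $(1-\check{\chi})\check{\psi}_{\delta,\nu}^\flat+\check{\chi}\,\varphi_{\EH}^\flat$ has $\sq\partial\bar\partial$ equal to the flat form plus an $O(\ee_\lambda^4)$-perturbation in $C^0$, which remains positive provided $\ee_\lambda$ is sufficiently small.

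The main obstacle is the simultaneous balancing of scales: the inner gluing radius $\ee_\lambda\delta$ must be chosen large enough on the Eguchi-Hanson side for \eqref{e:EH-approximation} to yield genuine decay, yet small enough on the orbifold side for \eqref{e:orbifold-approximation} to be sharp, while also respecting the length $\sim(\nu\log(1/\delta))^{-1/2}$ of the collapsing $S^1$-fiber at $q_\lambda$. The upper bound \eqref{e:orbifold-parameter} on $\ee_\lambda$ is tailored precisely to make these three constraints mutually compatible; under this choice, the three-piece formula in the statement produces a smooth closed positive $(1,1)$-form $\omega_\delta^B$ on $\mathcal{V}_{4\delta_0}$, completing the construction.
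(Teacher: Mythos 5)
Your proposal follows the same construction the paper uses: the semi-flat form, the $\dZ_2$-quotient multi-Ooguri-Vafa form, and the rescaled Eguchi-Hanson forms are assembled with exactly these cutoff interpolations, with positivity on the two transition annuli guaranteed by the smallness of the potential corrections; the paper gives no separate proof environment for this proposition but presents the construction after establishing the ingredients (the $\dZ_2$-invariant section via GW Lemma~4.3, the estimates \eqref{e:orbifold-approximation} and \eqref{e:EH-approximation}, and the scale constraint \eqref{e:orbifold-parameter}), deferring the quantitative error statements to Proposition~\ref{p:weighted-error}. One small correction to your inner-gluing estimate: from \eqref{e:orbifold-approximation}, the form-level error on the orbifold side is $O(\ee_\lambda^{2})$, since $\sq\p\bp$ costs two derivatives and $|\nabla^{2}(\check{\psi}_{\delta,\nu}-\tfrac{1}{2}|\xi|^{2})|=O(r_\lambda^{2})$ at $r_\lambda\sim\ee_\lambda$, whereas the $O(\ee_\lambda^{4})$ rate from \eqref{e:EH-approximation} applies only to the Eguchi-Hanson side; both remain small so your positivity conclusion is unaffected, but the two pieces of the inner damage zone do not decay at the same rate, which matters in Proposition~\ref{p:weighted-error}.
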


\begin{remark}
The $2$-form $\omega_\delta^B$ is K\"ahler with respect to the original complex structure on $\cK$ and therefore determines a Riemannian metric $g_\delta^B$,
which is hyperk\"ahler outside the gluing transition regions or ``damage zones.''
In Section~\ref{ss:weighted-error}, we will prove the uniform estimates
in appropriate weighted H\"older spaces for the deviation of $g_\delta^B$ from being hyperk\"ahler in the damage zones  (see Proposition \ref{p:weighted-error}).
\end{remark}

\section{Singular fibers with finite monodromy}

\label{s:finite-monodromy}

Given an elliptic K3 surface $\fF: \cK \to \PP^1$ with a fixed holomorphic $2$-form $\Omega$,
we have constructed a family of collapsing metrics $g_\delta^B$ which are defined away from singular fibers with finite monodromy.
 Our main goal in this section is to construct a family of collapsing metrics
$g_\delta^C$ associated to definition triples $(\omega_\delta^C, \Rea(\delta \cdot \Omega), \Ima(\delta \cdot \Omega))$ on $\cK$ which are also well-defined near the singular fibers with finite monodromy.
The main technical point is that, near the singular fibers with finite monodromy,
we will construct $\omega_\delta^C$ by gluing $\omega_\delta^B$ with ALG Ricci-flat K\"ahler forms, so that
some effective error estimates hold in the gluing process and the metrics $g_\delta^C$ are collapsing to the McLean metric $d_{ML}$ on $\PP^1$, that is,
\begin{equation}
(\cK,g_\delta^C)\xrightarrow{GH}(\PP^1,d_{ML}).
\end{equation}

\subsection{Estimates on the semi-flat metric}
\label{ss:semi-flat-estimates}
In the following discussion, we will assume for example that the singular fiber is of Type $\IV$.
The constructions and estimates in other cases are very similar, so we do not include a detailed analysis of all cases here.

 Near a singular fiber $\fF^{-1}(p)$ of Type  $\IV$, we use the local holomorphic section $\sigma_i$ from Section~\ref{s:elliptic-fibration}. We can choose a local coordinate $y$ on the base $\PP^1\setminus\cS$ and a local coordinate $x$ on the universal cover of the $\dT^2$-fiber such that $-dx\wedge dy$ is the pull back of $\Omega$, the given holomorphic 2-form on $\cK$. Assume that $y=0$ at $p\in\cS$ and $x=0$ on $\sigma_i$.
We define new coordinates by
\begin{align}
(u,v) = \Big(y^{2/3}, \frac{3}{2}xy^{1/3} \Big),
\end{align}
and note that $\Omega=du\wedge dv$.

Recall that the periods are defined as $\tau_i(y) = \int_{\gamma_i} \alpha_y$, where $\alpha_y$ is  holomorphic $(1,0)$-form on the fiber, and $\gamma_i$ are a basis of the first homology of the torus fiber, $i = 1,2$.
In Section~\ref{s:semi-flat-metrics}, we chose $\alpha_y = d x$ to be the canonical form to define the periods. However, in the following, we will instead choose the holomorphic $(1,0)$ form by  $\alpha_y =\frac{3}{2} y^{\frac{1}{3}} dx$ to define the periods.

The function $\varrho=\tau_2/\tau_1$ is a multi-valued holomorphic function in $y$. By \cite{Kodaira1963}, we can make a new choice of $y$ such that there exists $h\equiv 2\pmod 3$ satisfying
\begin{align}
\varrho(y)=\frac{\tau_2}{\tau_1}=\frac{e^{\i \frac{2\pi}{3}}-e^{-\i \frac{2\pi}{3}}y^{h/3}}{1-y^{h/3}}.
\end{align}

Note that the notation used in \cite{Kodaira1963} is slightly different from ours. In \cite{Kodaira1963}, the coordinate $\zeta$ on the fiber is given as $\zeta \in \CC/(\ZZ\oplus\ZZ\varrho)$ while in our notation, the coordinate $v$ on the fiber is given by $v \in \CC/(\ZZ\tau_1\oplus\ZZ\tau_2)$. Then relationship is simply $v = \tau_1\zeta$.

Letting $\varsigma = y^{1/3}$, then $\varrho(y)=\varrho(\varsigma^3)$ is a single valued function of $\varsigma$. So we can define
\begin{align}
\cF = \{(\varsigma,\zeta): \varsigma\in\CC, \zeta\in\CC/(\ZZ\oplus\ZZ\varrho)\},
\end{align}
and consider the $\ZZ_3$-action on $\cF$ given by
\begin{equation}
(\varsigma, \zeta) \rightarrow \Big(e^{\i \frac{2\pi}{3}}\varsigma, \frac{\zeta}{-\varrho-1} \Big).
\end{equation}
Note that there are exactly three fixed points on $\cF$, given by
\begin{align}
p_0=(0,0), \ p_1=(0,\frac{1}{3}e^{\i \frac{2\pi}{3}}+\frac{2}{3}), p_2=(0,\frac{2}{3}e^{\i \frac{2\pi}{3}}+\frac{1}{3}).
\end{align}
Let $\widetilde{\cF/\ZZ_3}$ be the blow-up of $\cF/\ZZ_3$ at the three fixed points as in \cite{Kodaira1963}. Letting $\cF_0$ denote
the central fiber, then $\cF_0/\ZZ_3$ induces an exceptional curve $\Theta$ on $\widetilde{\cF/\ZZ_3}$.
Then $\cK$ is locally the blow-down of $\widetilde{\cF/\ZZ_3}$ at $\Theta$.  A computation shows that there is a
$\ZZ_3$ invariant holomorphic form
\begin{align}
\Omega_{\cF}=2\varsigma (1-\varsigma^h) d\varsigma\wedge d\zeta
\end{align}
 on $\cF$. We claim that the ratio between the pull back of $\Omega_{\cK}$ to $\widetilde{\cF/\ZZ_3}$ and the pull back of $\Omega_{\cF}$ to $\widetilde{\cF/\ZZ_3}$ is never 0 and never infinity. In fact, the vanishing order of $\Omega_{\cF}$ is 1 on $\Theta$ because $\Theta$ can be written as $\{\varsigma=0\}$ locally. The vanishing order of $\Omega_{\cK}$ is also 1 on $\Theta$ because $\Theta$ is an exceptional curve. As for the vanishing order of $\Omega_{\cF}$ on the preimage of $p_0$, we can define $\zeta_0=(1-\varsigma^h)\zeta$ as in \cite{Kodaira1963}, then $\Omega_{\cF}=2\varsigma d\varsigma\wedge d\zeta_0$ and moreover, the $\ZZ_3$ action maps $(\varsigma, \zeta_0)$ to $(e^{\i \frac{2\pi}{3}}\varsigma, e^{\i \frac{2\pi}{3}}\zeta_0)$. Note the resolution is locally called $N_{+3}$ in \cite[page~582]{Kodaira1963}. It is easy to see that the vanishing order of $\Omega_{\cF}=2\varsigma d\varsigma\wedge d\zeta_0$ is 0 on the preimage of $p_0$. It is the same as the vanishing order of $\Omega_{\cK}$.

A similar calculation is also true on $p_1$ and $p_2$, Thus the ratio of these $2$-forms
is a non-zero holomorphic function $k$. It is invariant on the fiber direction since a holomorphic function on compact manifold is a constant. So $k$ is in fact a holomorphic function in $y$, and we can write $k(y)=k(0)+O(|y|),$ as $|y| \to 0$. Thus the given $(2,0$)-form $\Omega_{\cK}$ on $\cK$ can be written as
\begin{equation}
\Omega_{\cK}=2k(y)\varsigma (1-\varsigma^h) d\varsigma\wedge d\zeta,
\end{equation}
which implies that
\begin{equation}
\label{tau1yeqn}
\tau_1=k(y)(1-\varsigma^h)=k(u^{3/2})(1-u^{h/2})
\end{equation} and
\begin{equation}
\label{tau2yeqn}
\tau_2=k(u^{3/2})(e^{\i \frac{2\pi}{3}}-e^{-\i \frac{2\pi}{3}}u^{h/2}).
\end{equation}
After rescaling $y$ as well as $u=y^{2/3}$ and $\varsigma=y^{1/3}$, we can assume, without loss of generality, that
\begin{equation}
\label{tau1eqn}
\tau_1=\frac{1}{\sqrt{\Ima\tau}}(1+O(|u|))
\end{equation} then
\begin{equation}
\label{tau2eqn}
\tau_2=\frac{1}{\sqrt{\Ima\tau}}(\tau+O(|u|)),
\end{equation}
as $|u| \to 0$, where $\tau=e^{\i \frac{2\pi}{3}}$ in the $\IV$ case, and we have used $h\ge 2$ in the estimate. Note that this expansion  is also given in \cite[Table~1]{Hein} which includes analogous expansions for the other fiber types.
\begin{definition}
\label{def:distortion}
The {\it distortion order} $\lambda_{\beta}$ is the smallest constant so that
\begin{align}
\Ima(\bar\tau_1\tau_2) -1 = O(|u|^{\lambda_{\beta}})
\end{align}
as $|u| \to 0$, in the coordinates  $(u,v) = (y^{\beta},\frac{1}{\beta}xy^{1-\beta})$,
and where the periods are computed with respect to $\alpha_y = \frac{1}{\beta}y^{1-\beta} dx$.
The list of the optimal distortion orders is found in Table~\ref{Distortiontable}. \end{definition}
\begin{table}[h]
\caption{Distortion order}
\label{Distortiontable}
 \renewcommand\arraystretch{1.2}
\begin{tabular}{|c|c|c|c|c|c|c|c|c|} \hline
$0$ & $\I_0^*$ & $\II^*$ & $\II$ & $\III^*$ & $\III$ & $\IV^*$ & $\IV$\\\hline

 $\beta$ &  $\frac{1}{2}$  & $\frac{1}{6}$ & $\frac{5}{6}$ & $\frac{1}{4}$ & $\frac{3}{4}$ & $\frac{1}{3}$ & $\frac{2}{3}$\\
 [5pt]\hline
 \multirow{2}{*}{$\lambda_{\beta}$} & \multirow{2}{*}{$2$}  & \multirow{2}{*}{$4$}& \multirow{2}{*}{$\frac{2}{5}$} & \multirow{2}{*}{$2$}& \multirow{2}{*}{$\frac{2}{3}$} & \multirow{2}{*}{$1$}&\multirow{2}{*}{$1$} \\
&&  & &  &   & & \\\hline
  \end{tabular}
\end{table}

Our next goal is to estimate the difference between the $2$-form $\omega_\delta^B$ and the flat model $2$-form $\omega_{\delta, \cK}^{\FF}$, in the flat model metric, which are defined as
\begin{align}
\omega_{\delta, \cK}^{\FF} &=  \delta^2\cdot dv_1\wedge dv_2  + \frac{\sq}{2} \cdot du \wedge d \bar u\\
g_{\delta, \cK}^{\FF} &= \delta^2 ( dv_1^2 + dv_2^2) + du_1^2 + du_2^2.
\end{align}

For the gluing construction, we also need the following refined estimate
in an annular region surrounding the singular fiber.
\begin{lemma}[Annulus estimate]\label{l:annulus} Given an elliptic K3 surface $\fF:\cK\to\PP^1$ with a finite singular set $\cS\subset \PP^1$.
Let $\fF^{-1}(p)$ be a singular fiber with finite monodromy for some $p\in\cS$. For any sufficiently small $r_0>0$, we choose an annulus
\begin{equation}
\mathcal{A}_{\frac{r_0}{2},4r_0}(p) \equiv \fF^{-1}(A_{\frac{r_0}{2},4r_0}(p)),\quad A_{\frac{r_0}{2},4r_0}(p) \equiv \{\frac{r_0}{2}<|u|<4r_0\}\subset \PP^1\setminus \cS.
\end{equation}
There exists a $1$-form $\eta^B\in \Omega^1(\mathcal{A}_{r_0,2r_0}(p))$, independent of $\delta$, such that
\begin{equation}
\omega_\delta^B - \omega_{\delta,\mathcal{K}}^{\FF} = d\eta^B,
\end{equation}
and for any $k\in \mathbb{N}$,
\begin{equation}
\sup\limits_{\mathcal{A}_{r_0,2r_0}(p)}\Big|\nabla_{g_{\delta, \cK}^{\FF}}^k (\eta^B)\Big|_{g_{\delta, \cK}^{\FF}} \leq C_k\cdot r_0^{\lambda_\beta+1-k},
\end{equation}
where $C_k>0$ is independent of $\delta$.

\end{lemma}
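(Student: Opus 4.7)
The plan is to reduce the annular comparison to a quantitative Poincar\'e lemma on the base.

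\textbf{Reduction to semi-flat versus flat model.} First, I would observe that in the annulus $\mathcal{A}_{r_0/2, 4r_0}(p)$ none of the gluing modifications of Section~\ref{s:infinite-monodromy} is active, so $\omega_\delta^B = \omega_\delta^A$ there. Since $\fF^{-1}(p)$ has finite monodromy, Kodaira's local model provides a local holomorphic section that can be arranged to coincide with the smooth section from Section~\ref{s:elliptic-fibration} in this neighborhood; with this choice the translation $L(s_i)^*$ is trivial and $\omega_\delta^A = \Psi^*\omega_\delta^{\SF}$. The task reduces to bounding $\omega_\delta^{\SF} - \omega_{\delta,\cK}^{\FF}$.

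\textbf{Cancellation of the fiber contribution.} Next, I would pass to the coordinates $(u,v) = (y^\beta, \beta^{-1}xy^{1-\beta})$ of Definition~\ref{def:distortion}, with periods $\tau_j$ computed relative to $\alpha_y = \beta^{-1}y^{1-\beta}dx$. Since $\tau_j = \beta^{-1}y^{1-\beta}\tau_j^{\mathrm{old}}$, the relation $v = v_1\tau_1 + v_2\tau_2$ forces $v_j = x_j$, and a direct change of variables using
\begin{equation*}
dy \wedge d\bar y = \beta^{-2}|y|^{2(1-\beta)} du \wedge d\bar u, \qquad \Ima(\bar\tau_1^{\mathrm{old}}\tau_2^{\mathrm{old}}) = \beta^2 |y|^{-2(1-\beta)}\Ima(\bar\tau_1\tau_2),
\end{equation*}
converts \eqref{e:SF1} into
\begin{equation*}
\omega_\delta^{\SF} = \delta^2\, dv_1 \wedge dv_2 + \tfrac{\sq}{2}\,\Ima(\bar\tau_1\tau_2)\, du \wedge d\bar u.
\end{equation*}
Subtracting $\omega_{\delta,\cK}^{\FF} = \delta^2\, dv_1 \wedge dv_2 + \tfrac{\sq}{2}\, du \wedge d\bar u$, the $\delta^2$-terms cancel and one is left with the $\delta$-independent, purely base-directional form
\begin{equation*}
\omega_\delta^B - \omega_{\delta,\cK}^{\FF} = \pi^*\alpha_0, \qquad \alpha_0 \equiv \tfrac{\sq}{2}\bigl(\Ima(\bar\tau_1\tau_2) - 1\bigr)\, du \wedge d\bar u.
\end{equation*}

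\textbf{Quantitative Poincar\'e lemma.} By Definition~\ref{def:distortion}, $|\Ima(\bar\tau_1\tau_2) - 1| = O(|u|^{\lambda_\beta})$; the explicit period expansions such as \eqref{tau1eqn}--\eqref{tau2eqn} (and their counterparts for the other Kodaira types) yield $|\nabla^j(\Ima(\bar\tau_1\tau_2) - 1)|_{g_{\RR^2}} \leq C_j |u|^{\lambda_\beta - j}$. Writing $u = re^{\sq\theta}$ and integrating radially produces the primitive
\begin{equation*}
\gamma(r,\theta) \equiv \Big(-2\sq \int_{r_0}^{r} s\bigl(\Ima(\bar\tau_1\tau_2)(s,\theta) - 1\bigr)\, ds\Big)\, d\theta,
\end{equation*}
which satisfies $d\gamma = \alpha_0$ on $A_{r_0/2, 4r_0}(p)$ and enjoys the pointwise bounds $|\nabla^k\gamma|_{g_{\RR^2}} \leq C_k\, r_0^{\lambda_\beta + 1 - k}$ on the middle annulus $A_{r_0, 2r_0}(p)$. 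Setting $\eta^B \equiv \pi^*\gamma$ and observing that the base restriction of $g_{\delta,\cK}^{\FF}$ is Euclidean then yields the claimed estimate with constants uniform in $\delta$.

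\textbf{Main obstacle.} The key subtlety is the exact cancellation of the $\delta$-dependent fiber term in Step~2: it succeeds \emph{only} because the coordinates $(u,v)$ and the periods $\tau_j$ of Definition~\ref{def:distortion} produce the clean identification $v_j = x_j$ without any residual prefactor; with the original normalization of Section~\ref{s:semi-flat-metrics} the cancellation would fail. A secondary technical point is verifying that the smooth section from Section~\ref{s:elliptic-fibration} can be arranged to be holomorphic near each finite-monodromy fiber, so no $\delta^2$-scale translation discrepancy enters $\eta^B$.
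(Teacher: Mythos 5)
Your proof follows the paper's argument: in the annulus the gluing modifications are inactive, so $\omega_\delta^B$ equals the semi-flat form, and after subtracting the flat model the difference is the pullback of a closed $2$-form from the base, for which a radial integral supplies a primitive. Your Step~2 expands a detail the paper states without derivation (equation~\eqref{e:simplified-semi-flat}), verifying via the normalizations $v_j = x_j$, $dy\wedge d\bar y = \beta^{-2}|y|^{2(1-\beta)}du\wedge d\bar u$, and $\Ima(\bar\tau_1^{\mathrm{old}}\tau_2^{\mathrm{old}}) = \beta^2|y|^{-2(1-\beta)}\Ima(\bar\tau_1\tau_2)$ that the $\delta^2$-fiber terms cancel exactly; this is a useful clarification of why Definition~\ref{def:distortion}'s choice $\alpha_y = \beta^{-1}y^{1-\beta}\,dx$ is the right one.

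One small slip: your primitive
\begin{equation*}
\gamma(r,\theta) = \Bigl(-2\sq\int_{r_0}^r s\bigl(\Ima(\bar\tau_1\tau_2)(s,\theta)-1\bigr)\,ds\Bigr)\,d\theta
\end{equation*}
carries a spurious factor of $-2\sq$. Since $du\wedge d\bar u = -2\sq\, r\,dr\wedge d\theta$, one already has $\alpha_0 = \tfrac{\sq}{2}(\Ima(\bar\tau_1\tau_2)-1)\,du\wedge d\bar u = (\Ima(\bar\tau_1\tau_2)-1)\,r\,dr\wedge d\theta$, so the primitive should simply be $\bigl(\int_{r_0}^r s(\Ima(\bar\tau_1\tau_2)(s,\theta)-1)\,ds\bigr)\,d\theta$ with no additional prefactor; otherwise $d\gamma = -2\sq\,\alpha_0 \neq \alpha_0$. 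The error is harmless for the estimates (it only changes $C_k$ by a factor of $2$), but should be corrected.
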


\begin{proof}

Recall that in Section \ref{ss:construction-semi-flat}, using the real coordinates $(v_1,v_2)$ along the $\dT^2$-fibers, the collapsing semi-flat K\"ahler forms $\omega_\delta^{B}$ are given by
\begin{align}
\label{e:simplified-semi-flat}
\begin{split}
\omega_\delta^{B} &= \delta^2\cdot dv_1\wedge dv_2  + \frac{\sq}{2} \cdot \Ima(\bar\tau_1\tau_2) \cdot d u \wedge d \bar u\\
& =  \omega_{\delta, \cK}^{\FF} + \frac{\sq}{2} \cdot \big(\Ima(\bar\tau_1\tau_2) -1 \big) \cdot d u \wedge d \bar u.
\end{split}
\end{align}
From Definition \ref{def:distortion},  we have
\begin{align}
\Ima(\bar\tau_1\tau_2) -1 = O(|u|^{\lambda_{\beta}}),
\end{align}
as $|u| \to 0$.
Note that the difference
\begin{equation}
\omega_\delta^B - \omega_{\delta, \cK}^{\FF}
= \frac{\sq}{2}\cdot \Big(\Ima(\bar\tau_1\tau_2)-1\Big)d u \wedge d\bar u = \Big(\Ima(\bar\tau_1\tau_2)-1\Big) rdr\wedge d\theta
\end{equation}
is in fact the pullback of a closed $2$-form on the base $\PP^1$, where $u=re^{\i \theta}$ for $\theta\in[0,2\pi\beta]$. Define
\begin{equation}
\eta(r e^{\i \theta}) \equiv \Big(\int_{r_0}^r \{ \Ima(\bar\tau_1(s e^{\i \theta})\tau_2(s e^{\i \theta}))-1)\} s d s \Big) d \theta.
\end{equation}
Then $\eta^B \equiv \fF^* \eta$ satisfies the required estimate using the expansions of $\tau_1$, $\tau_2$
given in \eqref{tau1yeqn}, \eqref{tau2yeqn}.
\end{proof}

\subsection{ALG hyperk\"ahler 4-manifolds}
\label{ss:ALG}

In this subsection, we introduce some background material regarding ALG spaces. To begin with, let us define the notion of {\it standard ALG model}. Roughly, it is a singular flat space in dimension $4$, which can be viewed as a $\dT^2$-bundle over a flat sector in $\dR^2$.

  \begin{definition}[Standard ALG model]\label{d:ALG-model}
  Let $\beta\in(0,1]$ and $\tau\in\mathbb{H}\equiv\{\tau\in\dC|\Ima\tau>0\}$ be the parameters in Table~\ref{ALGtable}.
 Let $\cC_{\beta,\tau}$ be the  manifold obtained by identifying $(\mathscr{U},\mathscr{V})$ with $(e^{\sq\cdot 2\pi  \beta}\mathscr{U},e^{- \sq  \cdot 2\pi\beta}\mathscr{V})$ in the space \begin{equation}\{(\mathscr{U},\mathscr{V}) \ | \ \mathrm{arg} \mathscr{U}\in[0,2\pi\beta]\}\subset(\mathbb{C}\times \mathbb{C})/(\dZ\oplus \dZ),
\end{equation}
where $\dZ\oplus \dZ$ acts on $\mathbb{C}\times \mathbb{C}$ by
\begin{equation}
(m,n)\cdot (\mathscr{U},\mathscr{V})= \Big(\mathscr{U}, \mathscr{V}+\frac{m+n\tau}{\sqrt{\Ima \tau}}\Big),\ (m,n)\in\dZ\oplus \dZ.
\end{equation}
Then there is a singular flat hyperk\"ahler metric $h^{\FF}$ on $\cC_{\beta,\tau}$ with a K\"ahler form and a holomorphic $2$-form
  \begin{align}
   \omega^{\FF}&=\frac{\sq}{2}(d\mathscr{U}\wedge d\mathscr{\bar U}+d \mathscr{V}\wedge d\mathscr{\bar V}),
  \\
  \Omega^{\FF}&=d\mathscr{U}\wedge d \mathscr{V}.
  \end{align}
Each flat space $(\cC_{\beta,\tau}, h^{\FF})$ given as the above  is called a {\it standard ALG model}.
\end{definition}

\begin{table}[h]
\caption{Invariants of ALG spaces}
\label{ALGtable}
 \renewcommand\arraystretch{1.5}
\begin{tabular}{|c|c|c|c|c|c|c|c|c|} \hline
  0  & $\I_0^*$ & $\II^*$ & $\II$ & $\III^*$ & $\III$ & $\IV^*$ & $\IV$\\\hline

  $\infty$ & $\I_0^*$ & $\II$ & $\II^*$ & $\III$ & $\III^*$ & $\IV$ & $\IV^*$\\\hline

 $\beta\in(0,1]$ &  $\frac{1}{2}$  & $\frac{1}{6}$ & $\frac{5}{6}$ & $\frac{1}{4}$ & $\frac{3}{4}$ & $\frac{1}{3}$ & $\frac{2}{3}$\\[5pt]\hline
  $\tau\in\mathbb{H}$ & Any & $e^{\sq \cdot \frac{2\pi}{3}}$ & $e^{\sq \cdot \frac{2\pi}{3}}$ & $\sq$ & $\sq$ & $e^{\sq \cdot \frac{2\pi}{3}}$ & $e^{\sq \cdot \frac{2\pi}{3}}$ \\\hline
Intersection & \multirow{2}{*}{$\widetilde\D_4$} & \multirow{2}{*}{$\widetilde\E_8$} & \multirow{2}{*}{$\widetilde\A_0$}& \multirow{2}{*}{$\widetilde\E_7$}  & \multirow{2}{*}{$\widetilde\A_1$} & \multirow{2}{*}{$\widetilde\E_6$} &    \multirow{2}{*}{$\widetilde\A_2$} \\
matrix  &&&&&&& \\
  \hline
    \end{tabular}
\end{table}

  Notice that there is a holomorphic map $\mathscr{Y}:\cC_{\beta,\tau}\to\mathbb{C}$ defined as $\mathscr{Y} = \mathscr{U}^{\frac{1}{\beta}}$, which  provides a singular elliptic surface structure. We can resolve the singularity in the central fiber as Kodaira has done in \cite{Kodaira1963}, to obtain a smooth elliptic surface $\cG$ called an {\it isotrival ALG manifold}. The types  of the central fibers are listed in the row 0 of the table. The pull back of $d\mathscr{U}\wedge d\mathscr{V}$ provides a holomorphic 2-form $\Omega^{\cG} \equiv\omega_2^{\cG}+\sq\omega_3^{\cG}$ on the ALG manifold $\cG$ outside the central fiber. It can be extended to a nowhere vanishing holomorphic 2-form on all of $\cG$, as we will see in Lemma~\ref{l:hol2formg}

  An isotrivial ALG manifold $\cG$ has a complex analytic compactification. For example, the isotrivial ALG manifold with central fiber of Type $\II$ is the complement of a rational elliptic surface minus a fiber of Type $\II^*$. The type of the fiber at infinity is listed in the row $\infty$ of the Table \ref{ALGtable}

  Any isotrivial ALG manifold deformation deformation retracts to the central fiber. By \cite{Kodaira1963}, the intersection form on $H^2(\cG)$ forms an extended Dynkin diagram in the last row of Table~\ref{ALGtable}. In particular, the rank of $H^2(\cG)$ is the subscript of the extended Dynkin diagram plus 1. In other words, the rank of $H^2(\cG)$ is $5, 9, 1, 8, 2, 7, 3$ respectively.

  \begin{definition}\label{d:ALG-space} A complete $4$-manifold
  $(\cG,g^{\cG})$ with a hyperk\"ahler triple  $(\omega^{\cG}, \omega_2^{\cG}, \omega_3^{\cG})$,
is called ALG hyperk\"ahler of order $\aleph$ if there exist $R>0$, a compact subset $K \subset \cG$, and a diffeomorphism $\Phi: \{(\mathscr{U},\mathscr{V})\in\cC_{\beta,\tau}\ \big|\ |\mathscr{U}|> R\}\rightarrow \cG \setminus K$ such that
  \begin{align}
 \big|\nabla_{h^{\FF}}^k(\Phi^*g^{\cG}-h^{\FF})\big|_{h^{\FF}}=O(|\mathscr{U}|^{-k-\aleph}), \\
 \big|\nabla_{h^{\FF}}^k(\Phi^*\omega ^{\cG}- \omega^{\FF})\big|_{h^{\FF}}=O(|\mathscr{U}|^{-k-\aleph}), \\
\big|\nabla_{h^{\FF}}^k(\Phi^*\Omega ^{\cG}- \Omega^{\FF})\big|_{h^{\FF}}=O(|\mathscr{U}|^{-k-\aleph}),
  \end{align}
as $|\mathscr{U}| \to \infty$, for any $k\in\dN$, where $\Omega^{\cG} \equiv \omega_2^{\cG}+\sq\omega_3^{\cG}$,
and $h^{\FF}, \omega^{\FF}, \Omega^{\FF}$ are as in Definition~\ref{d:ALG-model}  on $\cC_{\beta,\tau}$ for some $(\beta,\tau)$ in Table~\ref{ALGtable}.
  \label{ALG-definition}
  \end{definition}

Given any ALG flat model space $\cC_{\beta,\tau}$, there are many known examples of complete non-compact hyperk\"ahler ALG spaces $(\cG, g^{\cG})$ with asymptotic geometry given by $\cC_{\beta,\tau}$ in the sense of Definition \ref{ALG-definition}.
For the developments in this direction including the construction techniques, the analysis on ALG spaces and related classification results,
we refer the readers to \cite{TianYau, CherkisKapustinALG, BB, BM, Hein, CCIII} and also the references therein.

In the known constructions, when the fiber at infinity has Type $\II^*$, $\III^*$ or $\IV^*$, the optimal asymptotic rate of convergence is given by
  \begin{equation}
  \aleph=2-\frac{1}{\beta},
\label{e:smaller-rate}
  \end{equation}
see \cite[Theorem~4.6]{CCII}.
 However, in our gluing constructions in later sections, we will see that, the slow convergence rate \eqref{e:smaller-rate} gives a {\it large error term} which is not enough for applying the implicit function theorem.
Therefore we will restrict to the following class of ALG hyperk\"ahler metrics:
\begin{enumerate}
\item  $(\cG, g)$ is isotrivial,
\item  $(\cG, g)$ is ALG hyperk\"ahler of order at least $2$.
\end{enumerate}
Note that there do exist examples in all cases which are ALG of order at least $2$ \cite[Remark 1.7 (ii)]{Hein}.

In the gluing construction near a singular fiber with finite monodromy, we also need
an approximation estimate between a hyperk\"ahler ALG metric and its asymptotic model metric.
In our case,
the isotrivial ALG space $(\cG, g)$ has a central fiber of Type $\IV$ and singular fiber of Type $\IV^*$ at infinity. Therefore, the asymptotic cone $T_{\infty}\cG$ is isometric to the $2$-dimensional flat cone $C(S_{4\pi/3}^1)$.
Moreover, the standard ALG model of $\cG$ is the singular flat space $\cC_{\beta,\tau}$ for $\beta=\frac{2}{3}$ and $\tau=e^{\sq\cdot \frac{2\pi}{3}}$. We also identify the model space $\cC_{\beta,\tau}$
with the topological product space $(0,+\infty)\times \Sigma_{\beta,\tau}^3$, where $\Sigma_{\beta,\tau}^3$
is a flat manifold which is diffeomorphic to $\dT^3/\dZ_3$. So the definition of the ALG space gives a diffeomorphism
\begin{equation}
\Phi: (R,+\infty)\times \Sigma_{\beta,\tau}^3\to \cG\setminus K \label{e:ALG-diffeomorphism-for-gluing}
\end{equation}
for some $R>0$ and compact subset $K\subset \cG$.

Now we are in a position to calculate the approximation order between the $2$-forms $\Phi^*\omega^{\cG}$ and $\omega^{\FF}$
on the model space $\cC_{\beta,\tau}$. To achieve this, we need a general standard lemma
for estimating the approximation between closed $2$-forms.

\begin{lemma}
\label{l:asymp-closed-2-form-on-product}
Consider a topological product space $X=(r_0,+\infty)\times Y$
with a coordinate system $\bx=(r,\by)$ and a Riemannian metric $g$.
Let $\omega_0$ and $\omega$ be closed $2$-forms on $X$ satisfying the property that for some $\aleph>1$,
\begin{equation}
|\nabla_g^k(\omega - \omega_0)|_g \leq C_k \cdot r^{-\aleph-k} \ \text{as} \ r\to+\infty,\label{e:2-forms-close}
\end{equation}
holds for all $k\in\dN$. Then there exists a $1$-form $\eta$ such that
$\omega - \omega_0  = d\eta$ and for any $k\in\dN$,
\begin{align}
|\nabla_g^k\eta|_g & \leq C_k \cdot r^{1-\aleph-k}\ \text{as}\ r\to+\infty.
\end{align}

\end{lemma}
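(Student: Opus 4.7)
The plan is to construct $\eta$ via a Poincar\'e-type radial homotopy operator, integrating $\sigma := \omega - \omega_0$ outward to infinity along the $\partial_r$ direction. Setting $V := \partial_r$ and letting $\phi_t(r,y) := (r+t,y)$ denote its flow, I would define
\[
\eta \;:=\; -\int_0^{\infty} \phi_t^{\ast}(\iota_V \sigma)\, dt,
\]
the integral being absolutely convergent pointwise because $\iota_V \sigma$ decays like $O(r^{-\aleph})$ and $\aleph > 1$.

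To verify $d\eta = \sigma$, I would apply Cartan's magic formula: since $d\sigma = 0$, $\mathcal{L}_V \sigma = d(\iota_V \sigma)$, so for any $T > 0$,
\[
\phi_T^{\ast}\sigma - \sigma \;=\; \int_0^T \phi_t^{\ast}(\mathcal{L}_V \sigma)\, dt \;=\; d\!\int_0^T \phi_t^{\ast}(\iota_V \sigma)\, dt.
\]
Letting $T \to \infty$, the left side tends to $-\sigma$ (since $\phi_T^{\ast}\sigma = O((r+T)^{-\aleph}) \to 0$ pointwise), while the right side tends to $-d\eta$ after commuting $d$ past the convergent integral. Hence $d\eta = \sigma$.

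For the $k=0$ estimate, bounding norms inside the integral gives
\[
|\eta|_g \;\leq\; \int_0^{\infty}\! \bigl|\phi_t^{\ast}(\iota_V \sigma)\bigr|_{g,(r,y)}\, dt \;\leq\; C\!\int_0^{\infty}\!(r+t)^{-\aleph}\, dt \;=\; \frac{C}{\aleph-1}\, r^{1-\aleph},
\]
and the analogous bound for $\nabla_g^k \eta$ follows by differentiating under the integral and invoking $|\nabla_g^k\sigma|_g \leq C_k r^{-\aleph - k}$ together with the same radial integration. The main technical obstacle is justifying the pointwise comparison $|\phi_t^{\ast}(\iota_V \sigma)|_{g,(r,y)} \leq C(r+t)^{-\aleph}$ (and its higher-derivative analogues) in terms of the decay of $\sigma$ at $(r+t,y)$. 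This requires that $V$ have uniformly bounded $g$-norm and that the flow $\phi_t$ interact tamely with $g$, which is automatic in the intended ALG application: for the flat model metric $h^{\FF}$ on $\cC_{\beta,\tau}$ (and, up to bounded multiplicative factors, for the perturbed ALG metrics of interest), $\phi_t$ preserves the torus-fiber directions isometrically and stretches the angular direction only by the factor $(r+t)/r$, so all geometric quantities behave as required.
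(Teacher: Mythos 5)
Your proof is essentially the same as the paper's. The paper decomposes $\omega - \omega_0 = \varphi + dr \wedge \psi$ (with $\partial_r \lrcorner \varphi = \partial_r \lrcorner \psi = 0$) and sets $\eta = -\int_r^{\infty}\psi\,ds$; your $-\int_0^{\infty}\phi_t^{*}(\iota_V\sigma)\,dt$ is the identical formula after the substitution $s = r+t$, since $\iota_{\partial_r}\sigma = \psi$ and $\phi_t$ fixes the $Y$-coordinates. The only distinction is cosmetic: you verify $d\eta = \omega - \omega_0$ via Cartan's magic formula and a limit, whereas the paper expands the closedness relations $d_Y\varphi = 0$, $\partial_r\varphi = d_Y\psi$ directly. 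Your explicit flag of the metric-compatibility question is, if anything, sharper than what the paper records: because $\phi_t$ stretches the angular fiber of $g$ by $(r+t)/r$, the bound $|\phi_t^{*}(\iota_V\sigma)|_{g,(r,y)} \le C(r+t)^{-\aleph}$ that both you and the paper invoke is literally correct only for the components transverse to $d\theta$; the $d\theta$ component carries the extra factor $(r+t)/r$, which is harmless after integration when $\aleph > 2$ but is borderline at the $\aleph = 2$ threshold used in the ALG application, a point the paper passes over in silence just as you ultimately do.
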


\begin{proof}
In terms of the coordinates $(r,\by)$, we can write
\begin{equation}
\omega - \omega_0 = \varphi +  dr \wedge \psi,
\end{equation}
 where $\varphi$ is a $2$-form and $\psi$ is a $1$-form satisfying $\partial_r \lrcorner \varphi = \partial_r \lrcorner \psi = 0$.
Notice that condition $d\omega=d\omega_0=0$ implies
\begin{equation}
d_{Y}\varphi = 0, \ d_r\varphi = d_{Y}\psi.\label{e:closedness}
\end{equation}
Moreover, by \eqref{e:2-forms-close}, for some $\aleph>1$ we have
\begin{equation}|\nabla_g^k\psi|_g \leq C_k\cdot r^{-\aleph-k}\ \text{as}\ r\to+\infty\end{equation} for any $k\in\dN$.
Now we define a $1$-form
\begin{equation}
\eta \equiv -\int_r^{\infty}\psi dr.\label{e:integral-def}
\end{equation}

The asymptotic behavior of $\psi$ gives the integrability of \eqref{e:integral-def} and hence $\eta$ is well-defined.
To verify $d\eta= \omega - \omega_0$, it suffices to check \begin{equation}d\eta  = \varphi  + dr\wedge \psi,\end{equation}
which follows directly from the definition of $\eta$ and \eqref{e:closedness}.
Therefore, $\eta$ is the desired $1$-form satisfying the asymptotic behavior \begin{equation}
|\nabla_g^k \eta|_g \leq C_k\cdot r^{1-\aleph+k}\ \text{as}\ r\to+\infty,
\end{equation}
for any $k\in\dN$.

\end{proof}

Since we will glue rescaled ALG spaces with semi-flat metrics on $\cK$
which have bounded diameter, we need to work with the ALG space and prove the approximation estimate at small scales.
Based on Lemma \ref{l:asymp-closed-2-form-on-product}, we have the following.

\begin{proposition}\label{p:exact-error-ALG}
Let $(\cG, g^{\cG}, p)$ be a complete hyperk\"ahler isotrivial ALG space of order $2$ with a corresponding K\"ahler form $\omega^{\cG}$
such that for some $\beta\in(0,1]$ and $\tau\in\mathbb{H}$ there is an ALG coordinate system\begin{equation}
\Phi: \{(\mathscr{U},\mathscr{V})\in\cC_{\beta,\tau}\ \big|\ |\mathscr{U}|> R\} \to \cG\setminus K,
\end{equation}
outside a compact set $K\subset \cG$ equipped with a model ALG K\"ahler form $\omega^{\FF}$ on $\cC_{\beta,\tau}$.
Let $0<\delta\ll1$, we rescale the metrics and K\"ahler forms by
\begin{align}g_\delta^{\cG} &= \delta^2 g^{\cG},\ \omega_\delta^{\cG} = \delta^2 \omega^{\cG}\\
h_\delta^{\FF} &= \delta^2 h^{\FF},      \ \omega_\delta^{\FF} = \delta^2 \omega^{\FF},
\end{align}
then there exists a  $1$-form $\eta_\delta$  satisfying
\begin{equation}
\Phi^*\omega_\delta^{\cG} -\omega_\delta^{\FF} =d\eta_\delta
\end{equation}
 with respect to the rescaled metric $h_\delta^{\FF}$
\begin{equation}
\big|\nabla_{h_\delta^{\FF}}^k \eta_\delta\big|_{h_\delta^{\FF}} (\mathscr{U},\mathscr{V}) \leq C_k\cdot \delta^2  \Big(\frac{1}{\delta \cdot |\mathscr{U}|}\Big)^{1+k},\ \delta\cdot|\mathscr{U}|\geq \delta\cdot R
\end{equation}
for any $k\in \mathbb{N}$, where $C_k$ is  independent of $\delta$.

\end{proposition}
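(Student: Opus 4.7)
The plan is to obtain the primitive $\eta_\delta$ at the unrescaled level by a direct appeal to Lemma~\ref{l:asymp-closed-2-form-on-product}, and then transport the estimate to the rescaled metric by tracking conformal weights. Via the diffeomorphism \eqref{e:ALG-diffeomorphism-for-gluing}, the ALG end $\{|\mathscr{U}|>R\}\subset \cC_{\beta,\tau}$ is identified with a topological product $(R,\infty)\times \Sigma_{\beta,\tau}^3$, where the first factor is precisely the radial direction $r=|\mathscr{U}|$. This is exactly the setup of Lemma~\ref{l:asymp-closed-2-form-on-product}. Both $\Phi^*\omega^{\cG}$ and the flat model K\"ahler form $\omega^{\FF}$ are closed $2$-forms on this end, and the ``order at least $2$'' ALG hypothesis built into Definition~\ref{d:ALG-space} (together with the restriction to isotrivial ALG spaces imposed in Subsection~\ref{ss:ALG}) supplies the decay $|\nabla^k_{h^{\FF}}(\Phi^*\omega^{\cG}-\omega^{\FF})|_{h^{\FF}}=O(|\mathscr{U}|^{-2-k})$ required by the lemma.

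With $\aleph=2>1$, Lemma~\ref{l:asymp-closed-2-form-on-product} then delivers a $1$-form $\eta$ on $\{|\mathscr{U}|>R\}$ with $d\eta=\Phi^*\omega^{\cG}-\omega^{\FF}$ satisfying $|\nabla^k_{h^{\FF}}\eta|_{h^{\FF}}\leq C_k|\mathscr{U}|^{-1-k}$. I then simply set $\eta_\delta\equiv \delta^2\eta$; since $\omega_\delta^{\cG}=\delta^2\omega^{\cG}$ and $\omega_\delta^{\FF}=\delta^2\omega^{\FF}$, the exactness identity $d\eta_\delta=\Phi^*\omega_\delta^{\cG}-\omega_\delta^{\FF}$ is immediate. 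This reduces everything to a bookkeeping exercise: relating the $h^{\FF}$-norm of $\eta_\delta$ to its $h_\delta^{\FF}$-norm under the constant rescaling $h^{\FF}\mapsto h_\delta^{\FF}=\delta^2h^{\FF}$.

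The last step uses that, under a constant conformal rescaling $h\mapsto \delta^2 h$, the Levi-Civita connection is unchanged while pointwise norms of $p$-forms pick up a factor of $\delta^{-p}$, so $k$-fold covariant derivatives of $1$-forms scale by $\delta^{-1-k}$. Thus $|\nabla^k_{h_\delta^{\FF}}\eta_\delta|_{h_\delta^{\FF}}=\delta^{-1-k}|\nabla^k_{h^{\FF}}\eta_\delta|_{h^{\FF}}=\delta^{1-k}|\nabla^k_{h^{\FF}}\eta|_{h^{\FF}}\leq C_k\delta^{1-k}|\mathscr{U}|^{-1-k}=C_k\delta^2(\delta|\mathscr{U}|)^{-1-k}$, which is exactly the claimed bound. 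There is no real obstacle in the argument once the proposition is set up: the substantive content is all packaged into Lemma~\ref{l:asymp-closed-2-form-on-product} plus the order-$2$ ALG hypothesis, and I would want to highlight the latter as the reason for restricting to isotrivial ALG models, since the generic rate $\aleph=2-1/\beta$ from \eqref{e:smaller-rate} is too slow (indeed $\leq 1$ when $\beta\leq 1$) to even trigger Lemma~\ref{l:asymp-closed-2-form-on-product}, and any resulting $\eta_\delta$ would not decay fast enough for the implicit function argument in Section~\ref{s:proof-of-main-theorem}.
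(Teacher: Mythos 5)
Your proposal is correct and follows the same route as the paper: both reduce to Lemma~\ref{l:asymp-closed-2-form-on-product} applied on the ALG end $(R,\infty)\times\Sigma_{\beta,\tau}^3$ with $\aleph=2$, then rescale. The paper's proof leaves the rescaling step as ``standard rescaling computations''; you have simply written out what that means (the Levi-Civita connection is scale-invariant, a $k$-fold covariant derivative of a $1$-form scales by $\delta^{-1-k}$, and $\delta^2\cdot\delta^{-1-k}|\mathscr{U}|^{-1-k}=\delta^2(\delta|\mathscr{U}|)^{-1-k}$), and your side remark that the generic rate $\aleph=2-1/\beta\le 1$ would not even satisfy the $\aleph>1$ hypothesis of the lemma correctly identifies why the order-$2$ assumption is imposed.
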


\begin{proof}
Under the map $\Phi$, the complement $\cG\setminus K$ is diffeomorphic to a topological product
$(R,+\infty)\times \Sigma_{\beta,\tau}^3$, where $\Sigma_{\beta,\tau}^3$ is a compact flat $3$-manifold. So the coordinate $r$ in Lemma~\ref{l:asymp-closed-2-form-on-product}
can be chosen as the distance function to the origin. Now the asymptotics immediately
follows from Lemma~\ref{l:asymp-closed-2-form-on-product} and standard rescaling computations.

\end{proof}

\subsection{Construction of approximate solutions}
\label{ss:approximate-metric-ALG}
With the above technical preparations, we are ready to construct the approximate metrics around singular fiber with finite monodromy.

We first choose a diffeomorphism which identifies a large ball in the
ALG metric with a tubular neighborhood of the singular fiber in $\cK$.
Define a local diffeomorphism $\underline{\Psi}$ from $\cC_{\beta, \tau}$ to $\cF/\ZZ_3$ by
\begin{equation}\underline{\Psi}: \Big(\mathscr{U}, \mathscr{V} = \frac{1}{\sqrt{\Ima \tau}}v_1 + \frac{\tau}{\sqrt{\Ima \tau}} e^{\sq\cdot \frac{2\pi}{3}} v_2 \Big) \mapsto  (u,v) = (\delta \mathscr{U}, \tau_1(\delta \mathscr{U})v_1 + \tau_2(\delta \mathscr{U}) v_2)\label{e:blow-down-diff}
\end{equation} for $v_1,v_2\in\dR/\dZ$. Since $\underline{\Psi}$ maps the fixed points to the fixed points, it induces a local diffeomorphism $\tilde\Psi$ from the resolution $\widetilde{\cC_{\beta, \tau}}$ to $\widetilde{\cF/\ZZ_3}$. It maps $\Theta_{\widetilde{\cC_{\beta, \tau}}}$ to $\Theta_{\widetilde{\cF/\ZZ_3}}$, so it induces a local diffeomorphism $\Psi$ from the isotrivial ALG manifold $\cG$ to $\cK$ which satisfies
\begin{align}
\Psi^* ( \omega^{\FF}_{\delta, \cK}) = \omega^{\FF}_\delta, \ \Psi^* (g^{\FF}_{\delta, \cK} ) = h^{\FF}_\delta,
\end{align}
away from the singular fiber.

The initial step of constructing gluing metrics is to fix the size of the gluing region.
From now on,
we fix a parameter $\ell\in(0,1)$ which will be determined later.
Let $\fF:\cK\to \PP^1$ be the fixed elliptic K3 surface with a singular fiber $\fF^{-1}(p)$  of Type $\IV$.
Let $\delta\in(0,1)$ be sufficiently small, then we work with a small neighborhood of the above singular fiber
\begin{equation}
\cO_{2\delta^{\ell}}(p) \equiv \fF^{-1}(B_{2\delta^{\ell}}(p)),\quad B_{2\delta^{\ell}}(p) \subset \PP^1,
\end{equation}
 such that for some $\underline{c}_0,\bar{c}_0>0$, \begin{equation}
\underline{c}_0\cdot \delta^{\ell} \leq \diam_{g_\delta^B} \Big(\cO_{2\delta^{\ell}}(p) \Big) \leq \bar{c}_0\cdot \delta^{\ell}.
 \end{equation}
Therefore, for fixed $0<\delta\ll1$ and $\ell\in(0,1)$, we choose a large cutoff region in $\cG$ as follows
\begin{equation}
\cG(\delta^{\ell-1}) \equiv \cG \setminus \Phi\Big((\delta^{\ell-1},\infty) \times \Sigma_{\beta,\tau}^3\Big)
 \end{equation}
so that
\begin{align}
& \Psi: \cG (\delta^{\ell-1})  \longrightarrow \cO_{2\delta^{\ell}}(p) \subset \cK,
\\
& \underline{c}_0\cdot \delta^{\ell-1} \leq \diam_{g^{\cG}}(\cG(\delta^{\ell-1}))\leq \bar{c}_0\cdot \delta^{\ell-1}.\end{align}

To proceed with the gluing construction, we also need to make the scales consistent.
For this purpose, we rescale the complete ALG metric
by choosing
$g_\delta^{\cG} = \delta^2 g^{\cG}$ so that the $\dT^2$-fiber at the infinity of $\cG$ has diameter proportional to $\delta$.
Also let us denote by $\omega_\delta^{\cG}$ the  K\"ahler
form with respect to the rescaled ALG metric $g_\delta^{\cG}$.
Then the rescaled region $(\widetilde{\cG}(\delta^{\ell}),g_\delta^{\cG}))\equiv (\delta \cdot \cG(\delta^{\ell-1}), \delta^2\cdot g_\delta^{\cG})$
satisfies \begin{equation}
\underline{c}_0\cdot \delta^{\ell} \leq \diam_{g_\delta^{\cG}}(\widetilde{\cG}(\delta^{\ell})) \leq \bar{c}_0 \cdot \delta^{\ell}.
\end{equation} Therefore, under the diffeomorphism
\begin{align}
\Psi: \widetilde{\cG}(\delta^{\ell}) \longrightarrow \cO_{2\delta^{\ell}}(p)
\end{align}
we obtain a metric
$\Psi_*g_\delta^{\cG}$ and a K\"ahler form $\Psi_*\omega_\delta^{\cG}$ on the open set
$\cO_{2\delta^{\ell}}(p)\subset\cK$.

By Lemma \ref{l:annulus} and Proposition \ref{p:exact-error-ALG}, there are $1$-forms $\eta_\delta^B$
 and $\eta_\delta^{\cG} \equiv \Phi_* \eta_\delta$
in the annulus region $\mathcal{A}_{\delta^{\ell},2\delta^{\ell}}(p)\equiv \fF^{-1}(A_{\delta^{\ell},2\delta^{\ell}}(p))\subset \cK$ such that the following holds in $\mathcal{A}_{\delta^{\ell},2\delta^{\ell}}(p)$:\begin{align}
\omega_\delta^B = \omega_{\delta, \cK}^{\FF} + d\eta_\delta^B,
\quad
\omega_\delta^{\cG} = \Phi_* \omega_\delta^{\FF} + d\eta_\delta^{\cG},
\end{align}
and for any $k\in\dN$,
\begin{align}
\Big|\nabla_{g_{\delta,\cK}^{\FF}}^k(\Psi_*\eta_\delta^{\cG})\Big|_{g_{\delta,\cK}^{\FF}} \leq C_k \cdot \delta^{2-\ell(k+1)},\quad \Big|\nabla_{g_{\delta, \cK}^{\FF}}^k(\eta_\delta^B)\Big|_{g_{\delta,\cK}^{\FF}} \leq C_k \cdot \delta^{\ell(\lambda_\beta+1-k)}.\end{align}

Then $2$-form $\omega_\delta^C$ is then defined as follows.
\begin{proposition}
Let $\fF:\cK\to \PP^1$
be an elliptic K3 surface with a fixed holomorphic volume $2$-form $\Omega$.  For $0<\delta\ll1$, there are a family of $2$-forms
\begin{align}
\omega_\delta^{C} \equiv
\begin{cases}
\Psi_* \omega_\delta^{\cG} , & \text{on}\ \cO_{\delta^{\ell}}(p),\\
\omega_{\delta, \cK}^{\FF} + d\Big(\chi\cdot \Psi_*\eta_\delta^{\cG} + (1-\chi)\cdot \eta_\delta^{B}\Big), & \text{on}\ \mathcal{A}_{\delta^{\ell},2\delta^{\ell}}(p),
\\
\omega_\delta^{B}, & \text{on}\  \cK\setminus\cO_{2\delta^{\ell}}(p),
\end{cases}\label{e:gluing-metric-ALG}
\end{align}
satisfying the error estimate in transition region $\mathcal{A}_{\delta^{\ell},2\delta^{\ell}}(p)$
\begin{align}
\sup\limits_{\mathcal{A}_{\delta^{\ell},2\delta^{\ell}}(p)}\Big|\nabla_{g_{\delta, \cK}^{\FF}}^k(\omega_\delta^C - \omega_{\delta, \cK}^{\FF} )\Big|_{g_{\delta, \cK}^{\FF}} \leq C_k \cdot ( \delta^{2-\ell(k+2)}+ \delta^{\ell(\lambda_\beta-k)}),
\end{align}
for $k\in\mathbb{N}$, for $C_k$ independent of $\delta$, and
where the smooth cutoff function is defined by
\begin{equation}
\chi = \begin{cases}
1, & \text{on}\ \cO_{\delta^{\ell}}(p),
\\
0, & \text{on}\ \cK\setminus\cO_{2\delta^{\ell}}(p).
\end{cases}
\end{equation}

\end{proposition}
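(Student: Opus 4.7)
The plan is three-step: first verify that the piecewise definition of $\omega_\delta^C$ agrees on the overlaps and hence is globally smooth, then decompose $\omega_\delta^C - \omega_{\delta,\cK}^{\FF}$ in the transition annulus $\mathcal{A}_{\delta^\ell,2\delta^\ell}(p)$ via the product rule for the exterior derivative, and finally bound each piece using the estimates already established in Lemma~\ref{l:annulus} and Proposition~\ref{p:exact-error-ALG}. Step one is immediate: where $\chi\equiv 1$ the middle formula collapses to $\omega_{\delta,\cK}^{\FF} + d(\Psi_*\eta_\delta^{\cG}) = \Psi_*\omega_\delta^{\cG}$, and where $\chi\equiv 0$ it collapses to $\omega_{\delta,\cK}^{\FF} + d\eta_\delta^B = \omega_\delta^B$, matching the inner and outer pieces respectively.

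For step two, applying $d$ to the middle formula yields
\begin{equation*}
\omega_\delta^C - \omega_{\delta,\cK}^{\FF} = \chi \cdot d(\Psi_*\eta_\delta^{\cG}) + (1-\chi) \cdot d\eta_\delta^B + d\chi \wedge (\Psi_*\eta_\delta^{\cG} - \eta_\delta^B).
\end{equation*}
I would take $\chi$ interpolating across a shell of $g_{\delta,\cK}^{\FF}$-width comparable to $\delta^\ell$ (which is possible because the base directions of $g_{\delta,\cK}^{\FF}$ are unrescaled and the annulus sits at $|u|\sim\delta^\ell$), so that $|\nabla^j_{g_{\delta,\cK}^{\FF}}\chi|_{g_{\delta,\cK}^{\FF}}\leq C_j \delta^{-\ell j}$. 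Then Proposition~\ref{p:exact-error-ALG} transported via $\Psi$ gives $|\nabla^m_{g_{\delta,\cK}^{\FF}}(\Psi_*\eta_\delta^{\cG})|_{g_{\delta,\cK}^{\FF}}\leq C_m \delta^{2-\ell(1+m)}$ throughout the annulus, while Lemma~\ref{l:annulus} gives $|\nabla^m_{g_{\delta,\cK}^{\FF}}\eta_\delta^B|_{g_{\delta,\cK}^{\FF}}\leq C_m\delta^{\ell(\lambda_\beta+1-m)}$.

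For step three, applying the generalized product rule termwise and substituting these bounds, the key observation is that in each summand the $\delta^{\pm\ell j}$ factors telescope, so the bound is independent of the differentiation index $j$ and depends only on $k$. I expect the three pieces to contribute respectively $\delta^{2-\ell(k+2)}$, $\delta^{\ell(\lambda_\beta-k)}$, and their sum (the cross term picks up both contributions because $d\chi$ costs an extra $\delta^{-\ell}$ relative to $\chi$), and adding these recovers the asserted estimate. The only subtle point, not really an obstacle, is that the identification $\Psi^* g_{\delta,\cK}^{\FF} = h_\delta^{\FF}$ away from the singular fiber, built into the construction of $\Psi$ via \eqref{e:blow-down-diff}, is what lets the ALG decay rates stated intrinsically on $\cG$ in Proposition~\ref{p:exact-error-ALG} transfer without loss into bounds on $\Psi_*\eta_\delta^{\cG}$ measured in the background geometry of $\cK$. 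Once that identification is in hand, the remainder is routine bookkeeping with the product rule.
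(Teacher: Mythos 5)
Your proposal is correct and follows essentially the same route the paper takes: the displayed estimates on $\Psi_*\eta_\delta^{\cG}$ and $\eta_\delta^B$ stated immediately before the proposition (which come from Proposition~\ref{p:exact-error-ALG} and Lemma~\ref{l:annulus}) are exactly what the authors have in hand, and expanding $d(\chi\cdot\Psi_*\eta_\delta^{\cG}+(1-\chi)\cdot\eta_\delta^B)$ by the Leibniz rule together with the cutoff derivative bound $|\nabla^j\chi|\lesssim\delta^{-\ell j}$ gives the stated error estimate after the telescoping you describe. The one point worth noting explicitly in a write-up is the sign convention $\Psi_*\omega_\delta^{\cG} = \omega_{\delta,\cK}^{\FF}+d(\Psi_*\eta_\delta^{\cG})$, which you correctly record, and which uses that $\Psi$ pulls back the flat model $g^{\FF}_{\delta,\cK}$ to $h^{\FF}_\delta$ away from the singular fiber, as stated after \eqref{e:blow-down-diff}.
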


\begin{remark}By Remark~\ref{r:triple}, since we have a holomorphic $(2,0)$-form $\Omega_\delta = \delta \Omega_{\cK}$, there is a Riemannian metric $g_\delta^C$ associated to $\omega_\delta^C$.
\end{remark}

For a rescaled ALG space $(\cG,g_\delta^{\cG})$ with an infinity model $\cC_{\beta,\tau}$,
the pushforward $\Psi_* g_\delta^{\cG}$ on $\cO_{\delta^{\ell}}(p)$ does not give a hyperk\"ahler metric with respect to the complex structure of $\cK$. Indeed, the error term is of polynomial rate given by the above distortion order $\lambda_{\beta}$.
We next give some quantitative estimates of this error term.
To begin, note we can also do the above resolution procedure as in Subsection~\ref{ss:semi-flat-estimates} on the isotrivial ALG manifold $\cG$, which we again illustrate in the case of type $\IV$.  Then $\cF/\ZZ_3$ is replaced by $\cC_{\beta, \tau}$ for $\beta=\frac{2}{3}$ and $\tau=e^{\i \frac{2\pi}{3}}$ and the blow up of $\cG$ is $\widetilde{\cC_{\beta, \tau}}$.
\begin{lemma}
\label{l:hol2formg}
The pull back of $\Omega^{\cG}$ to $\widetilde{\cC_{\beta, \tau}}$ is the same as the pull back of $d \mathscr{U}\wedge d \mathscr{V}$.
\end{lemma}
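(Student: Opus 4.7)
The plan is to show that $d\mathscr{U}\wedge d\mathscr{V}$, initially defined on $\cC_{\beta,\tau}\setminus\{\mathscr{U}=0\}$, extends under the resolution map $\pi:\widetilde{\cC_{\beta,\tau}}\to\cC_{\beta,\tau}$ to a nowhere-vanishing holomorphic $2$-form on $\widetilde{\cC_{\beta,\tau}}$. Since $\Omega^{\cG}$ was defined in Subsection~\ref{ss:ALG} as $\pi^{*}(d\mathscr{U}\wedge d\mathscr{V})$ on the complement of the exceptional divisor and then extended holomorphically across it, the identity principle will then force the two forms to agree on all of $\cG=\widetilde{\cC_{\beta,\tau}}$.

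First I would verify that $d\mathscr{U}\wedge d\mathscr{V}$ descends to a nowhere-vanishing holomorphic $2$-form on the smooth locus of $\cC_{\beta,\tau}$. The defining identification $(\mathscr{U},\mathscr{V})\mapsto(e^{\sq\cdot 2\pi\beta}\mathscr{U},e^{-\sq\cdot 2\pi\beta}\mathscr{V})$ has Jacobian determinant $e^{\sq\cdot 2\pi\beta}\cdot e^{-\sq\cdot 2\pi\beta}=1$, and the $\ZZ\oplus\ZZ$-translation on the $\mathscr{V}$-plane obviously preserves $d\mathscr{V}$, so $d\mathscr{U}\wedge d\mathscr{V}$ is invariant and non-vanishing wherever $\cC_{\beta,\tau}$ is smooth.

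Next I would identify the singular locus of $\cC_{\beta,\tau}$: it lies on the central fiber $\{\mathscr{U}=0\}$ at the fixed points of the cyclic action induced on the central torus. Near each such fixed point a local holomorphic model is $\CC^{2}/\ZZ_{m}$, with $m$ the order of the isotropy group, acted on linearly with determinant $1$, so the singularity is a two-dimensional rational double point in the ADE classification; for central fiber of type $\IV$ one gets three $A_{2}$ singularities, and analogous descriptions apply to every row of Table~\ref{ALGtable}. The key classical input is that any two-dimensional rational double point admits a crepant minimal resolution, which concretely means that any nowhere-vanishing holomorphic $2$-form on the smooth locus of the singular space extends to a nowhere-vanishing holomorphic $2$-form on the minimal resolution. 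Applied to $d\mathscr{U}\wedge d\mathscr{V}$, this produces the desired nowhere-vanishing extension on $\widetilde{\cC_{\beta,\tau}}$, which coincides with $\Omega^{\cG}$ on $\cG\setminus\pi^{-1}\{\mathscr{U}=0\}$ and therefore, by analytic continuation, everywhere.

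The main obstacle, if one wishes to avoid citing crepancy as a black box, will be to verify the extension by an explicit local computation on each Hirzebruch--Jung chart of the $A_{m-1}$ resolution. This is directly analogous to, but simpler than, the $\cF\to\widetilde{\cF/\ZZ_{3}}$ calculation carried out in Subsection~\ref{ss:semi-flat-estimates}: in the isotrivial setting the counterpart of $\Omega_{\cF}=2\varsigma(1-\varsigma^{h})d\varsigma\wedge d\zeta$ has no extra $(1-\varsigma^{h})$ factor, so the vanishing orders along the exceptional curves can be matched by inspection in each chart.
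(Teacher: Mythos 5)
Your argument rests on the premise that $\Omega^{\cG}$ is \emph{defined} to be the holomorphic extension of $\pi^*(d\mathscr{U}\wedge d\mathscr{V})$ across the exceptional divisor, so that the two forms coincide by fiat and the identity principle finishes the job. That premise is incorrect, and it erases the actual content of the lemma. In Definition~\ref{ALG-definition}, $\Omega^{\cG}=\omega_2^{\cG}+\sqrt{-1}\,\omega_3^{\cG}$ is the holomorphic $(2,0)$-form coming from the \emph{given hyperk\"ahler triple} on the complete ALG space $(\cG,g^{\cG})$; Definition~\ref{d:IALGM} reaffirms this and cites the present lemma precisely as the (nontrivial) identification with $d\mathscr{U}\wedge d\mathscr{V}$. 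A priori, the ALG hypothesis only guarantees that $\Phi^*\Omega^{\cG}-\Omega^{\FF}$ decays like $|\mathscr{U}|^{-\aleph}$ at infinity; it does not say they are \emph{equal}. On the complement of the central fiber, the two forms are nowhere-vanishing holomorphic sections of the canonical bundle of an elliptic fibration, so their ratio $k_{\cG}$ is a holomorphic function that is constant on fibers, hence a function of the base variable $\mathscr{Y}$ alone. The proof must then do two things you omit: (i) show that the vanishing orders of the two pullbacks on the exceptional curves of $\widetilde{\cC_{\beta,\tau}}$ match, so that $k_{\cG}$ extends holomorphically and without zeros across $\mathscr{Y}=0$ (this is the analogue of the $\Omega_{\cK}/\Omega_{\cF}$ computation in Subsection~\ref{ss:semi-flat-estimates}, simplified because the $(1-\varsigma^h)$ factor is absent when $\varrho_{\cG}\equiv\tau$); and (ii) use the ALG decay of order at least $2$ to deduce $k_{\cG}(\mathscr{Y})\to1$ as $\mathscr{Y}\to\infty$, and then apply Liouville's theorem / the maximum principle to conclude $k_{\cG}\equiv 1$. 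Step (ii) is the heart of the lemma and is entirely missing from your proposal.

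Two smaller remarks. First, your crepancy observation is a correct and clean way to see that $\pi^*(d\mathscr{U}\wedge d\mathscr{V})$ extends to a nowhere-vanishing holomorphic $2$-form on the minimal resolution of $\cC_{\beta,\tau}$; this is a slightly more conceptual replacement for the explicit vanishing-order matching in step (i), and is a genuine improvement in clarity for that piece of the argument. Second, the identification $\cG=\widetilde{\cC_{\beta,\tau}}$ is not what the paper sets up: $\widetilde{\cC_{\beta,\tau}}$ is the full resolution, and $\cG$ is obtained from it by blowing down the proper transform $\Theta$ of the central fiber (``the blow up of $\cG$ is $\widetilde{\cC_{\beta,\tau}}$''). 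The pullback of $\Omega^{\cG}$ to $\widetilde{\cC_{\beta,\tau}}$ therefore does vanish along $\Theta$, which is exactly why one must compare vanishing orders rather than just invoke nonvanishing of both forms.
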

\begin{proof}
 In this case, $\varrho_{\cG}=\tau$, so all the terms involving $h$ disappear, and
\begin{equation}
{\Omega}^{\FF} =2\hat{\varsigma}d\hat{\varsigma}\wedge d\hat{\zeta}=d\mathscr{U}\wedge d\mathscr{V},
\end{equation}
where $\hat{\varsigma}\equiv \mathscr{Y}^{1/3}=\mathscr{U}^{1/2}$ and $\hat{\zeta}=\mathscr{V}$.
Then we see that the pull back $\tilde{\Omega}^{\cG}$ of $\Omega^{\cG}$ satisfies
\begin{align}
\tilde{\Omega}^{\cG} = k_{\cG}(\mathscr{Y})  \tilde{\Omega}^{\FF},
\end{align}
where $\tilde{\Omega}^{\FF}$ is the pullback of $\Omega^{\FF}$.
We know that $k_{\cG}\to 1$ as $\mathscr{Y}\to\infty$, so by the maximum principle, $k_{\cG}=1$.
\end{proof}

We end this section with the following proposition which give precise error estimates in the ALG regions.
\begin{proposition}
\label{p:complexerror}
Let $R$ be the fixed constant in Definition \ref{d:ALG-space} only depending on $g^{\cG}$, then for all $k\in\mathbb{N}$, there exists a constant $C_k$, independent of $\delta$,   for $\delta$ sufficiently small such that
on  $\cO_{\delta R}(p)$, we have
\begin{align}
|\nabla_{\Psi_* g_\delta^{\cG}}^k(\delta\Omega_{\cK}-\delta^2 \Psi_* \Omega^{\cG})|_{\Psi_* g_\delta^{\cG}}
&\leq C_k \delta^{\frac{2}{5}-k}\\
|\nabla_{g_\delta^C}^k (g_\delta^C-\Psi_* g_\delta^{\cG})|_{g_\delta^C}&\le C_k \delta^{\frac{2}{5}-k},
\end{align}
and on  $\mathcal{A}_{\delta R, 2 \delta^{\ell}}(p)
= \cO_{2 \delta^{\ell}}(p) \setminus  \cO_{\delta R}(p)$, we have
\begin{align}
|\nabla_{\Psi_* g_\delta^{\cG}}^k(\delta\Omega_{\cK}-\delta^2 \Psi_* \Omega^{\cG})|_{\Psi_* g_\delta^{\cG}}
&\leq C_k |u|^{\frac{2}{5}-k},\\
|\nabla_{g_\delta^C}^k (g_\delta^C-\Psi_* g_\delta^{\cG})|_{g_\delta^C}&\le C_k |u|^{\frac{2}{5}-k}.
\end{align}
\end{proposition}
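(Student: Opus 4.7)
The plan is to reduce both estimates to the comparison of the two definite triples $\bm{\omega}^C=(\omega_\delta^C,\mathrm{Re}(\delta\Omega_{\cK}),\mathrm{Im}(\delta\Omega_{\cK}))$ and $\bm{\omega}^{\cG}=(\Psi_*\omega_\delta^{\cG},\mathrm{Re}(\delta^2\Psi_*\Omega^{\cG}),\mathrm{Im}(\delta^2\Psi_*\Omega^{\cG}))$, and then to invoke Remark~\ref{r:triple}: wherever the associated matrix $Q$ is uniformly definite, the passage from a definite triple to its Riemannian metric is a smooth algebraic operation, and both triples are close to the flat model triple throughout the region in question for $\delta$ small.

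First I would prove the holomorphic two-form estimate by direct computation in local coordinates. Using the coordinates of Section~\ref{ss:semi-flat-estimates}, $\delta\Omega_{\cK}=\delta\,du\wedge dv$ with $v=\tau_1(u)v_1+\tau_2(u)v_2$, so $du\wedge dv=\tau_1(u)\,du\wedge dv_1+\tau_2(u)\,du\wedge dv_2$. Under the diffeomorphism~\eqref{e:blow-down-diff} and via Lemma~\ref{l:hol2formg}, $\delta^2\Psi_*\Omega^{\cG}$ is the pullback of $\delta^2 d\mathscr{U}\wedge d\mathscr{V}$, which matches the same expression with the limiting periods $\tau_i(0)$ in place of $\tau_i(u)$. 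Hence
\begin{equation*}
\delta\Omega_{\cK}-\delta^2\Psi_*\Omega^{\cG}=\delta\sum_{i=1}^{2}\bigl(\tau_i(u)-\tau_i(0)\bigr)\,du\wedge dv_i.
\end{equation*}
The expansions \eqref{tau1eqn}--\eqref{tau2eqn} for type $\IV$ and their analogues for the other finite-monodromy types give $|\tau_i(u)-\tau_i(0)|$ together with all its $u$-derivatives of size $O(|u|^{\lambda_\beta})$, and $\lambda_\beta\geq 2/5$ uniformly by Table~\ref{Distortiontable}. Since $|du|_{g_{\delta,\cK}^{\FF}}\sim 1$ and $|dv_i|_{g_{\delta,\cK}^{\FF}}\sim\delta^{-1}$, and $\Psi_*g_\delta^{\cG}$ is uniformly comparable to $g_{\delta,\cK}^{\FF}$ in the region considered (Proposition~\ref{p:exact-error-ALG}), taking $k$ covariant derivatives in $\Psi_*g_\delta^{\cG}$ yields $\delta^{\lambda_\beta-k}\le\delta^{2/5-k}$ on $\cO_{\delta R}(p)$, where $|u|\le\delta R$, and $|u|^{\lambda_\beta-k}\le|u|^{2/5-k}$ on the annulus, as claimed.

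For the metric estimate, on $\cO_{\delta R}(p)$ the K\"ahler forms coincide by \eqref{e:gluing-metric-ALG}, so the metric difference is controlled by the holomorphic two-form difference alone via Remark~\ref{r:triple}. On the annulus $\mathcal{A}_{\delta R,2\delta^\ell}(p)$ the K\"ahler forms also differ, and from \eqref{e:gluing-metric-ALG} one reads
\begin{equation*}
\omega_\delta^C-\Psi_*\omega_\delta^{\cG}=d\bigl((1-\chi)(\eta_\delta^B-\Psi_*\eta_\delta^{\cG})\bigr).
\end{equation*}
Lemma~\ref{l:annulus} controls $\eta_\delta^B$ by $|u|^{\lambda_\beta+1}$, Proposition~\ref{p:exact-error-ALG} (applied at $|\mathscr{U}|\sim|u|/\delta$) controls $\Psi_*\eta_\delta^{\cG}$ by $\delta^2|u|^{-1}$, and the usual rescaling for the cutoff $\chi$ produces the analogous $k$-derivative bounds; reorganizing these produces the required $|u|^{2/5-k}$ bound.

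The main obstacle is exponent bookkeeping on the annulus: the two distinct contributions $|u|^{\lambda_\beta-k}$ (from the semi-flat side via Lemma~\ref{l:annulus}) and $\delta^2|u|^{-2-k}$ (from the ALG side, after accounting for derivatives of $\chi$) must both fit under the uniform target $|u|^{2/5-k}$. The first is automatic since $\lambda_\beta\ge 2/5$; the second reduces to $\delta^2\lesssim|u|^{12/5}$, which on $\mathcal{A}_{\delta R,2\delta^\ell}(p)$ is guaranteed whenever the gluing scale parameter $\ell$ is chosen sufficiently small (e.g., any $\ell<5/6$ suffices), consistent with the constraints imposed elsewhere in the construction.
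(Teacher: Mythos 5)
Your overall strategy---isolating the holomorphic two-form difference in coordinates, reading off the $O(|u|^{\lambda_\beta})$ decay from the period expansions, and passing from the triple comparison to the metric comparison via Remark~\ref{r:triple}---matches the paper's approach on the annulus. However, there is a genuine gap in the region $\cO_{\delta R}(p)$, and a subtlety on the annulus that you should flag more carefully.

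The gap on $\cO_{\delta R}(p)$ is the following. You estimate the $\Psi_*g_\delta^{\cG}$-norm of $\delta\sum_i(\tau_i(u)-\tau_i(0))\,du\wedge dv_i$ by using $|du|_{g^{\FF}_{\delta,\cK}}\sim 1$, $|dv_i|_{g^{\FF}_{\delta,\cK}}\sim\delta^{-1}$ and asserting that $\Psi_*g_\delta^{\cG}$ is uniformly comparable to $g^{\FF}_{\delta,\cK}$ there by Proposition~\ref{p:exact-error-ALG}. But Proposition~\ref{p:exact-error-ALG} only controls $\Phi^*\omega_\delta^{\cG}-\omega_\delta^{\FF}$ in the asymptotic regime $|\mathscr{U}|>R$; on $\cO_{\delta R}(p)$ one is precisely inside the compact core where the ALG space resolves the singular fiber of the flat model, so $\Psi_*g_\delta^{\cG}$ and $g^{\FF}_{\delta,\cK}$ are \emph{not} comparable --- the flat model torus fibers degenerate as $u\to 0$, while the ALG metric is smooth on the resolved central fiber, and $|dv_i|_{\Psi_*g_\delta^{\cG}}$ blows up rather than staying $\sim\delta^{-1}$. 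The paper resolves this by pulling back to the blow-up $\widetilde{\cC_{\beta,\tau}}$ and exhibiting the lower bound $\tilde g_\delta^{\cG}\ge C^{-1}\delta^2\big(|\hat\varsigma|^2(dv_1^2+dv_2^2)+|d\hat\varsigma|^2\big)$; the compensating factor $\hat\varsigma$ sitting inside $du=2\delta\hat\varsigma\,d\hat\varsigma$ is exactly what cancels the blowup of $|dv_i|$ and makes the pointwise estimate uniform (depending on $R$) over $|\mathscr{U}|\le R$. Without this blow-up computation, your bound at the central fiber is unjustified.

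On the damage zone, your exponent bookkeeping correctly extracts the constraint $\delta^2\lesssim|u|^{12/5}$ at $|u|\sim\delta^\ell$, i.e.\ $\ell\le 5/6$. However your parenthetical claim that this is ``consistent with the constraints imposed elsewhere in the construction'' is not verified, and in fact Theorem~\ref{t:existence-hyperkaehler} later fixes $\ell=11/12>5/6$. The paper itself does not work out the damage-zone exponents explicitly (it simply defers to Lemma~\ref{l:annulus} and Proposition~\ref{p:exact-error-ALG}), and where Proposition~\ref{p:complexerror} is actually invoked in Proposition~\ref{p:weighted-error} the damage-zone contribution is instead recorded as the separate term $\delta^{\ell(\mu-1)+2}$ rather than absorbed into $|u|^{2/5}$. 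So the constraint you found is real, and you should not assert consistency without checking it; a more careful statement would note that the $|u|^{2/5-k}$ form of the metric estimate is only tight away from the damage zone, while in the damage zone the ALG-side contribution is of size $\delta^{2-\ell(2+k)}$ and must be tracked separately.
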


\begin{proof}
We again use the type $\IV$ case as an example. First, we use the metric $\Psi_*g _\delta^{\cG}$ in the deep region $\cO_{\delta^{\ell}}(p)$ given by the pushforward metric from the rescaled ALG space.

We first consider the region $|\mathscr{U}|\le R$. In this case, we can pull back everything to $\widetilde{C_{\beta, \tau}}$. Recall that $\widetilde{C_{\beta, \tau}}$ is the blow up of $\cG$ and $\{\hat\varsigma=0\}$ is the exceptional divisor. A straightforward calculation shows that there exists a constant $C > 0$ such that
the pull back $\tilde{g}_{\delta}^{\cG}$ of $g_\delta^{\cG}=\delta^2 g^{\cG}$ to $\widetilde{C_{\beta, \tau}}$ satisfies
\begin{align}
\tilde{g}_{\delta}^{\cG} - C^{-1} \delta^2\big(|\hat\varsigma|^2 (d v_1^2 + d v_2^2)+ (d \Rea \hat\varsigma)^2 + (d \Ima \hat\varsigma)^2 \big)
\end{align}
is positive definite. By \eqref{e:SF2}, we know that
\begin{equation}
\delta\Omega_{\cK}=\delta d y\wedge (\tau_1 d v_1+ \tau_2 d v_2)=2\delta\varsigma d \varsigma \wedge (\tau_1 d v_1+ \tau_2 d v_2).
\end{equation}
From Lemma~\ref{l:hol2formg}, the pull back of $\delta^2 \Omega^{\cG}$ to $\widetilde{C_{\beta, \tau}}$ is
\begin{equation}
2\delta^2 \hat\varsigma d \hat\varsigma \wedge \Big(\frac{1}{\sqrt{\Ima\tau}}d v_1 + \frac{\tau}{\sqrt{\Ima\tau}} d v_2 \Big).
\end{equation}
However, recall that $\varsigma^2=u=\delta \mathscr{U}=\delta\hat\varsigma^2$, $\tau_1=\frac{1}{\sqrt{\Ima\tau}}+O(|u|)$ and $\tau_2=\frac{\tau}{\sqrt{\Ima\tau}}+O(|u|)$. So we see that
\begin{align}
|  \delta\Omega_{\cK}-\delta^2 \Psi_*\Omega^{\cG}    |_{\Psi_* g_\delta^{\cG}}=O(\delta)
\end{align}
 inside $|\mathscr{U}|<R$ in the $\IV$ case. Here the bounds may depend on $R$. In general, we will have
\begin{align}
|  \delta\Omega_{\cK}-\delta^2\Psi_*\Omega^{\cG}    |_{\Psi_* g_\delta^{\cG}}=O(\delta^{\lambda_\beta})
\end{align}
where $\lambda_{\beta}$ is the distortion order. Recall that from Table \ref{Distortiontable}, the minimal order $\lambda_{\beta}$ is equal to $\frac{2}{5}$.

Next, we look at the region $R \le |\mathscr{U}|\le \delta^{\ell-1}$. In this region, the ALG metric $g^{\cG}$ is equivalent to the flat metric $h^{\FF}$.
A similar calculation as above shows that
\begin{align}
| \delta\Omega_{\cK}-\delta^2 \Psi_*\Omega^{\cG}|_{\Psi_* g_\delta^{\cG}}=O(|u|^{\lambda_\beta}).
\end{align}
By a similar argument, we can obtain the bounds on higher order derivatives, the detailed calculations are omitted.

To compare the metric $g_{\delta}^C$ with $\Psi_* g_\delta^{\cG}$, recall that the metric $g_{\delta}^C$ is defined as the metric associated to the triple $\left(\Psi_* \omega_\delta^{\cG}, \Rea (\delta \Omega_{\cK}), \Ima (\delta \Omega_{\cK})\right)$ in the region $\cO_{\delta^{\ell}}(p)$ and the metric $\Psi_* g_\delta^{\cG}$ is associated to the triple $\left(\Psi_* \omega_\delta^{\cG}, \Rea (\delta^2 \Psi_* \Omega^{\cG}), \Ima (\delta^2 \Psi_* \Omega^{\cG})\right)$.  The metric error estimates then follow from the previous estimates.

Finally, the required estimates in the damage zone region $\mathcal{A}_{\delta^{\ell}, 2 \delta^{\ell}}(p)
= \cO_{2 \delta^{\ell}}(p) \setminus  \cO_{ \delta^{\ell}}(p) $ follow from Lemma~\ref{l:annulus} and Proposition~\ref{p:exact-error-ALG}.

\end{proof}

\section{Metric geometry and regularity of the approximate solutions}

\label{s:metric-geometry}

In this section, we will analyze
the singularity behavior for the collapsing Ricci almost-flat metrics constructed in Section \ref{s:infinite-monodromy} and Section \ref{s:finite-monodromy} in a quantitative way. Specifically, in each of the above cases, we will give uniform estimates for the {\it regularity scales} in terms of the collapsing parameters. Based on these effective estimates, we will set up the package of the weighted analysis which will be used for the perturbative analysis in Section \ref{s:proof-of-main-theorem}.

\subsection{Singularity behavior and decomposition of the approximate metric}

To begin with, we introduce some basic notions
for discussing the singularity behavior in a quantitative way.
The following concept of {\it regularity scale}
is commonly used and very convenient to study the singularity behavior for a sequence of metrics in both non-collapsing and collapsing settings.

\begin{definition}[Local regularity]
\label{d:local-regularity} Let $(M^n,g)$ be a Riemannian manifold. Given $r, \epsilon>0$, $k\in\dN$, $\alpha\in(0,1)$, we say $(M^n,g)$ is $(r,k+\alpha,\epsilon)$-regular at $x\in M^n$ if the Riemannian metric $g$ is at least $C^{k, \alpha}$ in $B_{2r}(x)$ such that the following holds: let $(\widehat{B_{2r}(x)},\hat{x})$ be the Riemannian universal cover of $B_{2r}(x)$, then $B_r(\hat{x})$ is diffeomorphic to a Euclidean disc $\dD^n\subset \dR^n$ such that the lifting metric $\hat{g}$ in coordinates  satisfies for each $1\leq i,j\leq n$,
\begin{equation}
|\hat{g}_{ij}-\delta_{ij}|_{C^0(B_r(\hat{x}))}+\sum\limits_{|m|\leq k} r^{|m|}\cdot|\p^m \hat{g}_{ij}|_{C^0(B_r(\hat{x}))} +  r^{k+\alpha}[\hat{g}_{ij}]_{C^{k,\alpha}(B_r(\hat{x}))} < \epsilon,
\end{equation}
where $m$ is a multi-index, and the last term is the H\"older semi-norm.
\end{definition}

\begin{definition}
[$C^{k,\alpha}$-regularity scale] Let $(M^n,g)$ be a Riemannian manifold with a smooth Riemannian metric $g$:
\begin{enumerate}
\item  The $C^{k,\alpha}$-regularity scale at $x\in M^n$, denoted by $r_{k,\alpha}(x)$, is defined as
the supremum of all $r>0$ such that $M^n$  is $(r,k+\alpha,10^{-9})$-regular at $x$.
\item  We can also define the $\epsilon$ curvature scale $r_{|\Rm|}(x)$ at $x$ as the supremum of all $r>0$ such that $r^2\cdot |\Rm|$ of $g$ in $B_r(x)$ is bounded by $10^{-9}$.
\end{enumerate}
\end{definition}
\begin{remark}
\label{r:1-Lipchitz}
It is well known that there exists a constant $C>0$ only depending on $n, k, \alpha$ such that if $g$ is Ricci-flat, then by taking local universal covers and using standard Schauder estimate, it follows that
\begin{align}
\label{e:curvature-scale}
C^{-1}\cdot  r_{|\Rm|}(x) \leq  r_{k,\alpha}(x) \leq C \cdot r_{|\Rm|}(x),
\end{align}
(see \cite{Petersen} or \cite{PWY} for more discussions about this). Also note that
the regularity scale $r_{k,\alpha}$ is $1$-Lipschitz on a Riemannian manifold $(M^n,g)$, i.e.,
\begin{align}
\label{1lipreg}
|r_{k,\alpha}(x)-r_{k,\alpha}(y)| \leq d_g(x,y),\quad \forall x,y\in M^n,
\end{align}
see \cite[Section 1]{CheegerTian}.
\end{remark}

We will also need the following notion of canonical bubble.

\begin{definition}
[Canonical bubble limit]\label{d:canonical-bubble}
Let  $(M_j^n, g_j,x_j)$ be a sequence of Riemannian manifolds, and let $r_j\equiv r_{k,\alpha}(x_j)$ be the regularity scale at $x_j$. Then
$(X_{\infty}, d_{\infty}, x_{\infty})$ is called a canonical bubble limit at $x_j$ if passing to a subsequence, the following pointed Gromov-Hausdorff convergence holds,
\begin{equation}
(M_j^n, r_j^{-2} g_j, x_j) \xrightarrow{GH} (X_{\infty}, d_{\infty}, x_{\infty}).\label{e:GH-convergence-to-bubble}
\end{equation}

\end{definition}

\begin{remark}[Noncollapsing bubble limits] Let $g_j$ be Einstein and let the rescaled sequence be non-collapsing, i.e. $\Vol_{\tilde{g}_j}(B_1(x_j))\geq v_0>0$
with $\tilde{g}_j \equiv r_j^{-2}g_j$. If  $X_{\infty}$ is smooth,  then the $\epsilon$-regularity for non-collapsing Einstein manifolds tells us that the Gromov-Hausdorff convergence \eqref{e:GH-convergence-to-bubble} can be improved to $C^k$-convergence for any $k\in\mathbb{N}$. If $g_j$
satisfy the uniform Ricci curvature bound $|\Ric_{g_j}|\leq n-1$, then in the non-collapsing setting, \eqref{e:GH-behavior} can be improved to $C^{1,\alpha}$-convergence.
 For more details, see \cite[Theorems 7.2 \& 7.3]{ChC1}.
\end{remark}

Let us return to our context and take
 a fixed elliptic K3 surface $\fF:\cK\to \PP^1$ with a finite singular set $\mathcal{S}\subset \PP^1$.
In the previous sections, we have constructed a family of approximately hyperk\"ahler
metrics $g_{\delta}^C$.
Our main goal in this section is to understand the quantitative singularity behavior of $g_\delta^C$ by obtaining effective estimates of the {\it regularity scale} for each point $\bx\in\cK$. This is a necessary technical part for implementing the weighted analysis in Section \ref{s:proof-of-main-theorem}.

Next, we outline how to prove the effective regularity scale estimates.
The fundamental strategy is to analyze the bubble limits by appropriately rescaling the collapsing sequence.
We first assume that $\Ric_{g_\delta^C}= 0$. This is of course not true. For example, in the deep region $\cO_{\delta^{\ell}}(p)$ in Section \ref{s:finite-monodromy}, $g_{\delta}^C$ is not the same as the Ricci-flat metric $\Psi_* g_\delta^{\cG}$. However, in this case, we can study the regularity scale of $\Psi_* g_\delta^{\cG}$ and use Proposition \ref{p:complexerror} to study the regularity scale of $g_\delta^C$. Similar method applies to the damaged zone in all cases.

To start with, let us consider the case of codimension 1 collapse: assume that for the rescaled space $(\cK,\tilde{g},\bx)$ with a reference point $\bx\in\cK$ and a rescaled metric $\tilde{g}\equiv \lambda^2\cdot g_\delta^C$ which satisfies $\Ric_{\tilde{g}}= 0$ on $B_2^{\tilde{g}}(\bx)$, the sequence satisfies the following Gromov-Hausdorff behavior for a {\it sufficiently small number} $\epsilon>0$,
\begin{align}
d_{GH}(B_2^{\tilde{g}}(\bx), B_2(\bx_{\infty})) < \epsilon , \quad B_2(\bx_{\infty})\subset \dR^3.\label{e:GH-behavior}
\end{align}
In this case, it is known that $\Gamma_{\epsilon}(\bx)$ has a finite-index cyclic subgroup  either $\dZ$ or $\dZ_p$ $(p\in\dZ_+)$  so that $\rank(\Gamma_{\epsilon}(\bx))\leq 1$, where
\begin{align}
\Gamma_{\epsilon}(\bx)\equiv\Image[\pi_1(B_{\epsilon}^{\tilde{g}}(\bx))\to\pi_1(B_{1/4}^{\tilde{g}}(\bx))],
\end{align}
see \cite[Theorem~6.1]{KW}. In the general setting, we define the nilpotent rank for a finitely generated nilpotent group.

\begin{definition}
For a finitely generated nilpotent group $\mathcal{N}$, its nilpotent rank is defined as the sum of the abelian ranks $\rank(A_j)$  arising from the lower central series
\begin{equation}
\mathcal{N}\equiv \mathcal{N}_0 \rhd \mathcal{N}_1 \rhd \mathcal{N}_2 \rhd  \mathcal{N}_0 =\{e\},\quad A_j\equiv \mathcal{N}_{j-1}/\mathcal{N}_j.
\end{equation}
For any finitely generated group $\Gamma$, all finite-index subgroups $\mathcal{N}$ share the same nilpotent rank. In this case, we just define $\rank(\Gamma)\equiv \rank(\mathcal{N})$ for any nilpotent subgroup $\mathcal{N}$ satisfying $[\Gamma:\mathcal{N}]<\infty$.
\end{definition}

The regularity at $\bx$ can be related to the topology at $\bx$ in the following way (see  \cite[Theorem~1.1]{NaberZhang} or \cite[Lemma~7.7]{HSVZ}).
\begin{lemma}\label{l:maximal-rank} There is some $10^{-3}>\epsilon_0>0$ such that if\eqref{e:GH-behavior} holds for $\epsilon\leq \epsilon_0$  and
the group $\Gamma_{\epsilon}(\bx)$ satisfies $\rank(\Gamma_{\epsilon}(\bx))=1$,  then
\begin{align}\sup\limits_{B_{1/8}(\bx)}|\Rm_{\tilde{g}}|\leq C_0\label{e:uniform-curvature-bound}\end{align} for some absolute constant $C_0$. Conversely, assuming the same Gromov-Hausdorff behavior \eqref{e:GH-behavior}, then \eqref{e:uniform-curvature-bound} implies  $\rank(\Gamma_{\epsilon}(\bx))=1$.
\end{lemma}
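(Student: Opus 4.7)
The plan is to derive both implications from the structure theory of bounded-curvature collapse due to Cheeger--Fukaya--Gromov combined with $\epsilon$-regularity for non-collapsed Ricci-flat four-manifolds, following the template of \cite{NaberZhang, HSVZ}. The equivariant Gromov--Hausdorff diagram \eqref{e:equivariant-convergence-diagram} will be the organizing tool throughout.

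For the forward implication (rank one implies bounded curvature) I would argue by contradiction. Suppose there exists a sequence $(\cK, \tilde{g}_j, \bx_j)$ with $\Ric_{\tilde{g}_j}=0$ on $B_2^{\tilde g_j}(\bx_j)$, collapse parameter $\epsilon_j \to 0$, and $\rank(\Gamma_{\epsilon_j}(\bx_j))=1$, yet $Q_j\equiv \sup_{B_{1/8}(\bx_j)}|\Rm_{\tilde g_j}|\to\infty$. A standard point-selection argument in the spirit of Cheeger--Tian yields a basepoint $\bx_j^\star\in B_{1/4}(\bx_j)$ where the curvature scale $r_{|\Rm|}(\bx_j^\star)$ is (essentially) realized and comparable to $Q_j^{-1/2}$. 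Rescale by $Q_j^{1/2}$ and lift to local universal covers using the deck group $\Gamma_{\epsilon_j}(\bx_j^\star)$. By the rank-one hypothesis and the $1$-Lipschitz property \eqref{1lipreg}, the rescaled deck groups survive in the limit as a closed subgroup $\Gamma_\infty\subset\Isom(Y_\infty^4)$ of cohomogeneity three, forcing the lifted limit $Y_\infty^4$ to be a smooth, non-collapsed, Ricci-flat four-manifold carrying a free isometric $\mathbb{R}$ or $S^1$ action. Anderson's $\epsilon$-regularity then yields a uniform curvature bound on $Y_\infty^4$ near the base point, contradicting the normalization $|\Rm|(\hat\bx_j^\star)=1$.

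For the converse I would apply the Cheeger--Fukaya--Gromov structure theorem to $B_{1/8}^{\tilde g}(\bx)$: the assumption $\sup|\Rm|\le C_0$ together with the codimension-one Gromov--Hausdorff condition \eqref{e:GH-behavior} produces, on a ball of definite size, a nilpotent Killing structure whose orbits have dimension $4 - \dim(\text{limit})=1$. The local fundamental group $\Gamma_\epsilon(\bx)$ is then generated by small loops lying in these one-dimensional orbits, hence is virtually $\mathbb{Z}$ or $\mathbb{Z}_p$. The genuine codimension-one collapse prevents the rank from vanishing, so $\rank(\Gamma_\epsilon(\bx))=1$.

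The principal obstacle lies in the forward direction: one must ensure both that the lifted sequence genuinely produces a \emph{non-collapsed} smooth limit (so that $\epsilon$-regularity applies) and that the rank-one condition passes to the rescaled sequence on a ball of definite size. The first point requires the rank-one hypothesis to interact correctly with point-selection, and the second exploits the $1$-Lipschitz property of $r_{k,\alpha}$ to propagate the topological rank bound down to the blow-up scale. Once these are secured, the dimension of $\Gamma_\infty$ plus the Euclidean base limit identifies the lifted limit up to a smooth ALE/ALF-type four-manifold, and standard $\epsilon$-regularity closes the argument.
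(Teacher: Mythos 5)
The paper does not prove this lemma; it simply cites [NaberZhang, Theorem~1.1] and [HSVZ, Lemma~7.7], so there is no internal proof to compare against. Measuring your sketch against what the cited results actually establish, the converse direction (bounded curvature implies rank one) is essentially right: with curvature bounded, Fukaya's fibration theorem applies and produces a local $S^1$-fibration over a ball in $\RR^3$; the fiber loop lies inside $B_\epsilon(\bx)$ and generates $\pi_1(B_{1/4}(\bx))\cong\dZ$, giving rank exactly one. That part is sound.

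The forward direction, however, has a real gap at its key step. You pass to the local universal cover $Y_\infty^4$ and then invoke ``Anderson's $\epsilon$-regularity then yields a uniform curvature bound on $Y_\infty^4$ near the base point, contradicting the normalization $|\Rm|(\hat\bx_j^\star)=1$.'' This does not follow from what you have established. Anderson's $\epsilon$-regularity requires the unit ball in the cover to have volume ratio close to $\omega_4$ (equivalently, to be Gromov--Hausdorff close to $\RR^4$); mere non-collapsing of the universal cover (a volume lower bound, which is automatic for local universal covers) only gives $C^{1,\alpha}$ compactness, producing a smooth Ricci-flat limit with $|\Rm|(\hat\bx^\star_\infty)=1$ and no contradiction at all. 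The actual content of the theorem being cited is precisely this missing step: showing that $\rank(\Gamma_\epsilon(\bx))=1$ together with $d_{GH}(B_2(\bx), B_2(0^3))<\epsilon$ forces the local universal cover to be GH-close to $\RR^4$ at a definite scale -- this is where the nilpotent Killing structure / fibered fundamental group theorem of Naber--Zhang (building on Kapovitch--Wilking) is required, and it is not a soft consequence of the $1$-Lipschitz property \eqref{1lipreg} as your sketch suggests.

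A secondary issue: in your contradiction setup, the rank hypothesis is stated at $\bx_j$, but the point-selected blow-up point $\bx_j^\star$ can be far from $\bx_j$ relative to $\epsilon_j$, so $\Gamma_{\epsilon_j}(\bx_j^\star)$ is not literally a subgroup of $\Gamma_{\epsilon_j}(\bx_j)$; propagating the rank-one condition from $\bx_j$ to $\bx_j^\star$ is itself a nontrivial piece of the Naber--Zhang/Kapovitch--Wilking machinery. Once the GH-closeness of the universal cover to $\RR^4$ is genuinely in hand at the original scale, the cleaner route is to apply Anderson's $\epsilon$-regularity directly there (Bishop--Gromov propagates the volume ratio to all smaller scales); the contradiction-and-blow-up framing adds complication without adding content.
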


Now we return to our regularity scale estimates, which will be proved using the following general argument. First, the upper bound estimate of $r_{k,\alpha}$ is given by the following. Lemma~\ref{l:maximal-rank} tells us that,  as $\epsilon$ sufficiently small, if $\Gamma_{\epsilon}(\bx_0)$ has finite order, then $\rank(\Gamma_{\epsilon}(\bx_0))=0$ and hence curvatures around $\bx_0$ become unbounded.  Using the estimate \eqref{e:curvature-scale} and the Lipschitz property \eqref{1lipreg},  we conclude that  $r_{k,\alpha}(\bx)\leq 2$ for any $\bx\in B_1^{\tilde{g}}(\bx_0)$.
 On the other hand, if $d_{\tilde{g}}(\bx, \bx_0) \geq \frac{1}{2}$ and $\rank(\Gamma_{\epsilon}(\bx))=1$, then Lemma~\ref{l:maximal-rank} and the estimate \eqref{e:curvature-scale} also tell us that $r_{k,\alpha}(\bx)\geq \underline{v}_{k,\alpha}$, where $\underline{v}_{k,\alpha}>0$
is a uniform constant independent of
$\epsilon$.
Therefore, with respect to the original metric $g_\delta$,  the regularity scale estimate
$\underline{v}_{k,\alpha}\cdot \lambda^{-1}\leq r_{k,\alpha}(\bx) \leq 2\lambda^{-1}$ holds if $\rank(\Gamma_{\epsilon}(\bx))=1$ and $\rank(\Gamma_{\epsilon}(\bx_0))=0$
with $1/2 \leq d_{\tilde{g}} (\bx,\bx_0) \leq 1 $.

If higher codimensional collapsing occurs, the main theorem in \cite{NaberZhang} gives the following generalization of Lemma~\ref{l:maximal-rank}.
\begin{theorem}
[\cite{NaberZhang}]\label{t:maximal-nilpotent-rank} Let $(M^n,g,\bx)$ be Einstein with $|\Ric_g|\leq n-1$ and let $(Z^k,h, \underline{z})$ be a $k$-dimensional Riemannian manifold, then there is some constant $0<\epsilon_0<10^{-3}$ depending only on $n$ and the injectivity radius at $\underline{z}\in Z^k$ such that if
\begin{align}
d_{GH}(B_2(\bx), B_2(\underline{z})) < \epsilon
\end{align}
 holds for $\epsilon \le \epsilon_0$, then $\Gamma_{\epsilon}(\bx)\equiv \Image[\pi_1(B_{\epsilon}(\bx))\to \pi_1(B_{1/4}(\bx))]$ has a nilpotent subgroup $\mathcal{N}$ with $\rank(\mathcal{N})\leq n-k$.
Furthermore, the following regularity property holds:  $\rank(\mathcal{N})= n-k$ is equivalent to the uniform curvature estimate
\begin{align}
\sup\limits_{B_{1/8}(\bx)}|\Rm_g|\leq C_0
\end{align}
for some uniform constant $C_0>0$ depending only on $n$ and the injectivity radius at $\underline{z}\in Z^k$.
\end{theorem}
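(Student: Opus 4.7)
The plan is to establish this via equivariant Gromov--Hausdorff convergence on local universal covers, combined with the generalized Margulis lemma of Kapovitch--Wilking for Ricci-bounded spaces, and the $\epsilon$-regularity theory for Einstein metrics in the non-collapsed setting. The statement is essentially Naber--Zhang, but it is naturally proven in three conceptual steps.

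\textbf{Step 1 (Equivariant lifting).} First I would set up the standard equivariant convergence diagram as in \eqref{e:equivariant-convergence-diagram}: pass to the local Riemannian universal cover $\bigl(\widehat{B_{s_0}(\bx)}, \hat g, \Gamma_\epsilon, \hat\bx\bigr)$ where $s_0$ is chosen small but uniform, and consider its equivariant Gromov--Hausdorff limit $(Y^n, \hat g_\infty, \Gamma_\infty, \hat y_\infty)$. Because $\hat g$ still satisfies $|\Ric_{\hat g}|\leq n-1$ and the covers are non-collapsed (by a standard Cheeger-style injectivity radius comparison on the cover when $\epsilon$ is small relative to the injectivity radius of $Z^k$ at $\underline z$), the limit $Y^n$ is a non-collapsed $C^{1,\alpha}$ Einstein manifold of dimension $n$, and the submetry $\Gamma_\infty\backslash Y^n \to B_2(\underline z)\subset Z^k$ is an isometry onto $B_{s_0}(\underline z)$.

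\textbf{Step 2 (Nilpotent rank bound).} Next I would invoke the generalized Margulis lemma of Kapovitch--Wilking, which applies under $|\Ric|\leq n-1$: the short-loop image group $\Gamma_\epsilon(\bx)$ admits a nilpotent subgroup $\mathcal{N}$ of index bounded by a constant depending only on $n$. To bound $\rank(\mathcal{N})$, I would argue dimensionally on the equivariant limit: the identity component $\Gamma_\infty^0$ is a nilpotent Lie group acting locally freely along the collapsed directions, and its orbit dimension equals $n - k$ (since the quotient is $k$-dimensional and $Y^n$ is smooth of dimension $n$). A discrete nilpotent subgroup converging to $\Gamma_\infty^0$ modulo a finite group therefore has rank at most the dimension of $\Gamma_\infty^0$, giving $\rank(\mathcal{N})\leq n-k$.

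\textbf{Step 3 (Equivalence with the curvature bound).} For the ``if and only if'' part, maximal rank $\rank(\mathcal{N})=n-k$ forces the $\Gamma_\infty$-orbits in $Y^n$ to be exactly $(n-k)$-dimensional; equivalently, the projection $\pr_\infty: Y^n\to B_{s_0}(\underline z)$ has $k$-dimensional image of the same dimension as the target, so the lifts $\widehat{B_{s_0}(\bx)}$ are uniformly non-collapsed with a definite volume lower bound depending on the injectivity radius of $Z^k$ at $\underline z$. Applying Anderson's $\epsilon$-regularity for Einstein metrics on the cover then yields $\sup_{B_{s_0/2}(\hat\bx)} |\Rm_{\hat g}|\leq C_0$, which descends to the base. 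Conversely, if $\sup_{B_{1/8}(\bx)}|\Rm_g|\leq C_0$, then by Cheeger's lemma the injectivity radius of $\hat g$ at $\hat\bx$ is bounded below uniformly, so the orbits of $\Gamma_\infty$ achieve their maximal possible dimension $n-k$, forcing $\rank(\mathcal{N})=n-k$.

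\textbf{Main obstacle.} The hardest step will be Step 2: the nilpotent structure theorem in the Ricci-bounded regime is considerably more subtle than in the sectional curvature setting, since one lacks direct curvature control on the limit. The Kapovitch--Wilking proof exploits very delicate induction on the dimension of the collapse together with gap phenomena in the limit group, and the rank bound requires carefully extracting the Lie algebra structure of $\Gamma_\infty^0$ from short generators of $\Gamma_\epsilon$. This is the technical heart of \cite{NaberZhang}, and any self-contained proof would need to reproduce that machinery rather than simply apply Cheeger--Fukaya--Gromov, since the latter requires bounded sectional curvature. Everything else in the argument (Steps 1 and 3) is standard once this rank bound is in hand.
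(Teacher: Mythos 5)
The paper does not actually prove this theorem; it is stated as a black-box citation to \cite{NaberZhang}, so there is no in-paper argument to compare against. Your sketch is therefore best evaluated on its own merits as a roadmap of the Naber--Zhang proof, and there it has one genuine gap.

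The problem is in Step 1, where you assert that the local universal cover $\widehat{B_{s_0}(\bx)}$ is non-collapsed, justified by a ``Cheeger-style injectivity radius comparison on the cover when $\epsilon$ is small relative to the injectivity radius of $Z^k$ at $\underline{z}$.'' This is not true in general, and no injectivity-radius estimate on the $k$-dimensional base $Z^k$ can deliver it, since the cover's volume is controlled by what the group $\Gamma_\epsilon$ unwraps, not by the base. A round family $S^n(r)$ with $r\to 0$ collapses to a point (so take $Z^0$ a point, $k=0$, with any injectivity radius you like), the balls $B_\epsilon(\bx)$ are simply connected, so $\Gamma_\epsilon$ is trivial and the ``universal cover'' is the collapsing sphere itself. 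In that example $\rank(\mathcal{N})=0<n$ and the curvature blows up, consistent with the theorem but inconsistent with your claimed non-collapse of the cover. More generally, the non-collapse of the local universal cover is \emph{equivalent} to the maximal-rank hypothesis $\rank(\mathcal{N})=n-k$; it is precisely the statement that $\Gamma_\epsilon$ ``sees'' all of the collapsing. This cannot be assumed going into Step 1, because it is the nontrivial output of the hard direction of the theorem (maximal rank implies curvature bound). If Step 1's non-collapse were free, the theorem would reduce to Anderson's $\epsilon$-regularity and a soft dimension count on $\Gamma_\infty$, and the whole Naber--Zhang paper would be unnecessary.

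The right way to structure this is: first use Kapovitch--Wilking to extract a nilpotent subgroup $\mathcal{N}\leq\Gamma_\epsilon$ of controlled index with a short nilpotent basis and deduce $\rank(\mathcal{N})\leq n-k$ by comparing orbit dimension of $\Gamma_\infty^0$ in the equivariant limit to the dimension drop $n-k$ (your Step 2, essentially correct in outline); then prove that maximality of the rank forces non-collapse of the local universal cover by a contradiction/rescaling argument (this is the technical core you flagged, and it cannot be relegated to Step 1 as standard); only then does Anderson's harmonic-radius estimate apply on the cover to give the curvature bound, and the converse direction can use Cheeger--Fukaya--Gromov structure theory since one is then legitimately in the bounded-sectional-curvature regime. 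You have the right ingredients and correctly identify where the difficulty lies, but the order of deductions in Steps 1 and 3 must be inverted: non-collapse is a consequence, not a hypothesis.
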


We will next apply the above ideas in our situation.
For each $\bx\in\cK$, we will appropriately choose metric rescaling factor with respect to $\bx$
so that the rescaled convergence has singularity $\bx_0$ with definite distance away from $\bx$.
For each parameter $\delta\in(0,1)$, the space $(\cK, g_\delta^C)$ is divided in the following regions:
\begin{enumerate}
\item For each $p \in \PP^1$ corresponding to a singular fiber of Type $\I_{\nu}$ for some  $\nu\in\dZ_+$, let $\cS_{\I_{\nu}}(p) =\fF^{-1}(B_{2\delta_0}(p))$ be the region equipped with the approximate metric $g_\delta^C$ which satisfies the diameter estimate
\begin{align}
\frac{1}{C_0}\leq \diam_{g_\delta^C}(\cS_{\I_{\nu}}(p))\leq C_0
\end{align}
for some uniform constant $C_0>0$ independent of $\delta$.

\item  Similarly, near each singular fiber of Type $\I_{\nu}^*$ for some  $\nu\in\dZ_+$, we can define $\cS_{\I_{\nu}^*}(p)$ such that \begin{equation}
\frac{1}{C_0}\leq \diam_{g_\delta^C}(\cS_{\I_{\nu}^*}(p)) \leq C_0
\end{equation}
for some uniform constant $C_0>0$ independent of the parameter $\delta$.

\item For each type of singular fibers with finite monodromy, as constructed in Section \ref{s:finite-monodromy},
 denote  by  $\cS_{\II}, \cS_{\III}$, $\cS_{\IV}$, $\cS_{\II^*}$, $\cS_{\III^*}$,  $\cS_{\IV^*}$ and $\cS_{\I_0^*}$,
the corresponding small neighborhoods of singular fibers
such that
the diameter
 is comparable with $\delta^{\ell}$ as $\delta\to0$.

\item
For each $\delta$, denote by $\cR_\delta$ the complement
of the above regions in the total space $ \cK$, on which the approximate metrics $g_\delta^C$ are semi-flat metrics.
\end{enumerate}

The main technical part is to compute the regularity scales and the bubble limits of each singular region in the above list.
First, we study the regularity scale $r_{k,\alpha}$ in $\mathcal{R}_\delta$ as $\delta\to 0$.
It directly follows from
Theorem~\ref{t:maximal-nilpotent-rank} that curvatures are uniformly bounded in any compact subset of $\cR_\delta$
 which has definite distance to the union of singular fibers $\fF^{-1}(\mathcal{S})$.
 Around each boundary component of $\cR_\delta$, let  $\fF^{-1}(p)$ be the closest singular fiber to $\p \cR_\delta$.
 Fix some small constant $\delta_0>0$ independent of $\delta$, then we define a smooth function $\fs_{\cR}(\bx)$ by
\begin{align}\fs_{\cR}(\bx) \equiv
\begin{cases}
d(\bx,\fF^{-1}(p)), & d(\bx,\fF^{-1}(p))\leq 8 \delta_0
\\
1, & d(\bx,\fF^{-1}(p))\geq 16 \delta_0,
\end{cases}
\end{align}
with smooth interpolation in the annulus.
We claim that there are uniform constants $\underline{v}_{k,\alpha}>0$ and $\bar{v}_{k,\alpha}>0$  depending only $k$ and $\alpha$ such that \begin{equation}
\underline{v}_{k,\alpha} \leq \frac{r_{k,\alpha}(\bx)}{\fs_{\cR}(\bx)} \leq \bar{v}_{k,\alpha}.\label{e:equivalent-ratio}
\end{equation}
Indeed,
at the  point $\bx\in\cR_\delta$, if we take the rescaled metric $\tilde{g}_\delta^C\equiv \fs_{\cR}(\bx)^{-2} g_\delta^C$, it follows that $\bx$ has unit distance to $\fF^{-1}(p)$ under the metric $\tilde{g}_\delta^C$.
 Since
$\rank(\Gamma_{\epsilon_0}(\bx))=2$ for small enough $\delta$, all $\bx\in \cR_\delta$ and the constant $\epsilon_0$ in Theorem \ref{t:maximal-nilpotent-rank}, we can obtain the curvature estimates with respect to $\tilde{g}_\delta^C$,
\begin{align} \sup\limits_{B_{1/8}(\bx)}|\Rm_{\tilde{g}_\delta^C}|\leq C_0,
\end{align}
and hence in terms of the original metric $g_\delta^C$, we have
\begin{align}
 	\sup\limits_{B_{\frac{\fs_{\cR}(\bx)}{8}}(\bx)}|\Rm_{g_\delta^C}| \leq \frac{C_0}{\fs_{\cR}(\bx)^2},
\end{align}
where $C_0>0$ is independent of $\delta$. This implies that
\begin{align}
	\frac{r_{k,\alpha}(\bx)}{\fs_{\cR}(\bx)} \geq \underline{v}_{k,\alpha}
\end{align}
for some uniform constant $\underline{v}_{k,\alpha}
$ depending only upon $k$ and $\alpha$.
The upper bound estimate of $r_{k,\alpha}$ follows immediately from the $1$-Lipschitz continuity of $r_{k,\alpha}$ and the fact that curvatures are unbounded in terms of $\delta$ around singular fibers.

The inequality \eqref{e:equivalent-ratio} completely describes
the regularity scales of the regular region $\cR_\delta$.
In the following subsections, we will discuss in detail the regularity scales and bubble limits of the collapsing metrics $g_\delta^C$ in the singular regions $\cK\setminus \cR_\delta$

\subsection{Regularity scales in the cases of finite monodromy}

\label{ss:regularity-scale-ALG}

In this subsection, we will compute the regularity scales near the singular fibers in the cases of finite monodromy. As discussed before, there are seven types of singular fibers in this category:  $\II$, $\III$,  $\IV$, $\II^*$, $\III^*$, $\IV^*$ and $\I_0^*$. It can be seen from the construction in Section \ref{s:finite-monodromy} that the regularity
scales of the singular regions in all those cases
behave in the similar way, so
the singular regions in the above cases are uniformly denoted by
\begin{equation}\cS_{\ALG}\equiv\cS_{\II}\cup \cS_{\III}\cup\cS_{\IV}\cup\cS_{\II^*}\cup\cS_{\III^*}\cup\cS_{\IV^*}\cup\cS_{\I_0^*}.
\end{equation}

Let us briefly recall the construction of  $\cS_{\ALG}$ introduced in Section \ref{s:finite-monodromy}. Let $(\mathcal{G}, g^{\cG}, p)$  be an ALG space with a fixed reference point $p\in \mathcal{G}$. For fixed $\ell\in(0,1)$ and small $\delta$, as in Section \ref{ss:ALG}, we pick a large compact set
 \begin{equation}
\cG(\delta^{\ell-1}) \equiv \cG \setminus \Phi\Big((\delta^{\ell-1},\infty) \times \Sigma^3\Big).
 \end{equation}
Here $\Phi$ is the map defined in Proposition \ref{p:exact-error-ALG}. It immediately follows that
\begin{equation}
\underline{c}_0\cdot \delta^{\ell-1}
 \leq \diam_{g^{\cG}}(\cG(\delta^{\ell-1})) \leq\bar{c}_0\cdot  \delta^{\ell-1}
\end{equation}
for some constants $\underline{c}_0>0$ and $\bar{c}_0>0$ independent of the parameter $\delta$.
In the gluing construction in Section \ref{ss:approximate-metric-ALG}, we rescaled the large subset $\cG(\delta^{\ell-1})$ and glued it with $\omega_\delta^{B}$ around the corresponding connected component of $\p\mathcal{R}_\delta$
 so that $\cS_{\ALG}$ is diffeomorphic to $\cG(\delta^{\ell-1})$ and its diameter yields to the estimate
\begin{equation}
\underline{c}_0\cdot \delta^{\ell}
 \leq \diam_{g_\delta^C}(\cS_{\ALG}) \leq\bar{c}_0\cdot  \delta^{\ell}.
\end{equation}

The proposition below gives the bubbling analysis and regularity scale
estimates for the points in the region $\cS_{\ALG}$.

\begin{proposition}[Regularity scale and bubble limits near finite monodromy fibers]
\label{p:regularity-scale-ALG}
For any small parameter $\delta\ll1$, let $g_\delta^C$ be the approximately hyperk\"ahler metric on $\cS_{\ALG}$, then the following holds:
\begin{enumerate}
\item Given $k\in\mathbb{N}$ and $\alpha\in(0,1)$,
there are uniform constants $\underline{v}_{k,\alpha}>0$ and $\bar{v}_{k,\alpha}>0$ independent of $\delta>0$ such that the $(k,\alpha)$-regularity scale $r_{k,\alpha}(\bx)$ at $\bx\in \cS_{\ALG}$ has the following bound
\begin{equation}
\underline{v}_{k,\alpha}\cdot   \fs_{\cG}(\bx) \leq r_{k,\alpha}(\bx)\leq \bar{v}_{k,\alpha} \cdot\fs_{\cG}(\bx) ,
\end{equation}
where the function $\fs_{\cG}(\bx)$ is smooth on $\cS_{\ALG}$ and  explicitly given by
\begin{align}
\fs_{\cG}(\bx)
=
\begin{cases}
\delta, &   d_{g_\delta^C}(\Psi(p),\bx) \leq \delta,
\\
  d_{g_\delta^C}(\Psi(p),\bx), &  d_{g_\delta^C}(\Psi(p),\bx)\geq 2\delta.\end{cases}
\end{align}

\item The canonical bubble of $(\cK,\bx)$ for any $\bx\in\cS_{\ALG}$ is either a complete hyperk\"ahler ALG space $\cG$ or its asymptotic $2$-dimensional cone $T_{\infty}(\cG)\equiv C(S_{2\pi\beta}^1)$ with the flat metric $d_{C,2 \pi\beta}$.

\end{enumerate}
\end{proposition}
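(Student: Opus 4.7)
The plan is to pull the analysis back to the uncollapsed ALG model. The diffeomorphism $\Psi: \widetilde{\cG}(\delta^{\ell}) \to \cS_{\ALG}$ of Section~\ref{ss:approximate-metric-ALG} identifies the region with a large compact piece of the ALG space, and Proposition~\ref{p:complexerror} controls $g_\delta^C - \Psi_* g_\delta^{\cG}$ in $C^{k,\alpha}$ by explicit powers of $\delta$ in the deep region and of $|u|$ in the damage zone. Writing $\bx = \Psi(\bm{y})$ and setting $\rho \equiv d_{g^{\cG}}(p,\bm{y})$, one has $d_{g_\delta^C}(\Psi(p),\bx) \sim \delta\rho$, and the two regimes in the definition of $\fs_{\cG}$ match the \emph{core} range $\rho \leq R_0$ and the \emph{asymptotic} range $\rho \geq R_0$.

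For the lower bound on $r_{k,\alpha}$ in the core range, $\delta^{-2} g_\delta^C$ is $C^{k,\alpha}$-close to $g^{\cG}$ on a fixed-radius ball about $\bm{y}$ by Proposition~\ref{p:complexerror}; since $g^{\cG}$ is smooth Ricci-flat with uniformly bounded curvature on compact sets, passing to the Riemannian universal cover in Definition~\ref{d:local-regularity} yields $r_{k,\alpha}(\bx) \geq \underline{v}\delta$. In the asymptotic range, the ALG order-$\geq 2$ hypothesis in Definition~\ref{d:ALG-space} supplies
\begin{align*}
\big|\nabla^{k}_{h^{\FF}}(g^{\cG}-h^{\FF})\big|_{h^{\FF}} = O(\rho^{-2-k})
\end{align*}
on $B_{\rho/2}(\bm{y})$. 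Since the ball subtends an angular sector of bounded size on the $2\pi\beta$-cone base with $\bm{y}$ at distance $\rho$ from the vertex (all $\beta$ in Table~\ref{ALGtable} satisfy $2\pi\beta > 1$), the base portion is simply connected and the Riemannian universal cover unwraps the $\dT^2$-fiber to $\dR^2$; the resulting ball is $C^{k,\alpha}$-close to a Euclidean $4$-ball of radius $\rho/2$. Rescaling by $\delta^2$ transfers the estimate to $r_{k,\alpha}(\bx) \geq \underline{v}\delta\rho$.

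For the upper bound, fix once and for all a point $\bm{y}^{\ast}\in \cG$ where the regularity scale of $g^{\cG}$ is bounded by some finite $M$; such a point exists since the isotrivial ALG metric with cone angle $2\pi\beta<2\pi$ is non-flat, hence has nonzero curvature somewhere, forcing a finite local regularity scale. Then $r_{k,\alpha}(\Psi(\bm{y}^{\ast})) \leq M\delta$ in $g_\delta^C$, and the $1$-Lipschitz continuity of $r_{k,\alpha}$ recalled in Remark~\ref{r:1-Lipchitz}, combined with $d_{g_\delta^C}(\Psi(\bm{y}^{\ast}),\bx) \leq \delta(\rho + d_{g^{\cG}}(p,\bm{y}^{\ast}))$, yields $r_{k,\alpha}(\bx) \leq \bar{v}\,\fs_{\cG}(\bx)$.

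For the bubble limits, take any sequence $\bx_j \in \cS_{\ALG}$ with $\delta_j \to 0$, and set $r_j = r_{k,\alpha}(\bx_j)\sim\fs_{\cG}(\bx_j)$, $\rho_j = d_{g^{\cG}}(p,\Psi^{-1}(\bx_j))$. If $\rho_j$ stays bounded, then $r_j \sim \delta_j$ and $r_j^{-2}g_{\delta_j}^C$ converges in pointed $C^{k,\alpha}$ to $(\cG, g^{\cG})$ via Proposition~\ref{p:complexerror}, giving the complete ALG space as the canonical bubble. If $\rho_j\to\infty$, pulling back via $\Psi$ the rescaled metric becomes $(\cG, \rho_j^{-2} g^{\cG}, \bm{y}_j)$ with $\bm{y}_j$ at unit distance from the (rescaled) vertex, and as $\rho_j\to\infty$ this is precisely the definition of convergence to the asymptotic cone, giving $(C(S^1_{2\pi\beta}), d_{C,2\pi\beta})$ based at a point of unit distance from the vertex. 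The main technical obstacle will be carrying out the universal-cover unwrapping in the asymptotic range in a way consistent with the rescaling while simultaneously absorbing the non-hyperk\"ahler error from Proposition~\ref{p:complexerror}, which is controlled in the weighted $C^{k,\alpha}$ norms to be introduced in the next subsection.
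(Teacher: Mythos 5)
Your decomposition into a core regime $\rho = d_{g^{\cG}}(p,\bm{y}) \lesssim 1$ (bubble $=\cG$, regularity scale $\sim\delta$) and an asymptotic regime $\rho\gg 1$ (bubble $=$ cone, regularity scale $\sim\delta\rho$) is the same as the paper's split into $\cS_{\ALG,1}$ and $\cS_{\ALG,2}$, and your treatment of the core regime, the upper bound via the $1$-Lipschitz property of $r_{k,\alpha}$, and the identification of the two possible bubbles are all essentially the paper's argument.

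The one genuine methodological difference is in the lower bound on $r_{k,\alpha}$ in the asymptotic regime. The paper invokes Theorem~\ref{t:maximal-nilpotent-rank}: after rescaling so that $\hat{\bx}$ is at unit distance from the cone vertex, it observes that $\Image[\pi_1(B_{\epsilon}(\hat{\bx}))\to\pi_1(B_{10^{-3}}(\hat{\bx}))]\cong\ZZ\oplus\ZZ$ has maximal nilpotency rank $n-k=2$, which forces a uniform curvature bound by the Naber--Zhang theorem. You instead obtain the bound by hand from the ALG order-$\geq 2$ decay: rescaled by $\rho^{-2}$, the metric is $C^{k,\alpha}$-close to the flat model $h^{\FF}$ on a definite ball, that ball avoids the cone tip and does not wrap around the sector (since $\sin(\pi\beta)>1/4$ for all $\beta$ in Table~\ref{ALGtable}), and its universal cover unwraps the $\dT^2$-fiber to a nearly Euclidean $4$-ball, directly verifying Definition~\ref{d:local-regularity}. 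This is more elementary and does not depend on the Einstein-theoretic black box, but it leans specifically on the asymptotic order being $\geq 2$ and on the geometry of the flat cone, whereas the paper's argument is softer and applies uniformly across all singular-fiber types. Both are valid here.

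Two small points to tighten. First, your claim that the lower bound on the core range follows because $g^{\cG}$ "has uniformly bounded curvature" needs the universal cover to regain bounded geometry, not just bounded curvature, which is fine since the convergence of $\delta^{-2}g_\delta^C$ to $g^{\cG}$ is $C^{\infty}$ on compact sets and the regularity scale in Definition~\ref{d:local-regularity} is built from the local universal cover; this is worth stating. Second, the deferred "technical obstacle" of absorbing the non-hyperk\"ahler error is not really pending on the weighted spaces of the next subsection: Proposition~\ref{p:complexerror} already gives pointwise $C^k$ bounds on $g_\delta^C-\Psi_*g_\delta^{\cG}$ which, after the same rescaling, are $o(1)$, so the comparison can be closed on the spot, as the paper does with a single sentence at the start of its proof.
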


\begin{proof}

The proof can be achieved in the following way. Now suppose the rescaled sequence at $\bx$
\begin{equation}
(\cK, \fs(\bx)^{-2}g_\delta^C, \bx) \xrightarrow{GH} (\widehat{X}_{\infty},\hat{d}_{\infty}, \bx_{\infty}),\quad \delta\to 0,
\end{equation}
satisfies the property that the regularity scale at $\bx$ with respect to the rescaled metrics are uniformly bounded from above and below. Then under the original metric $g_\delta^C$,
the regularity scale at $\bx$ is comparable with $\fs(\bx)$ and $\widehat{X}_{\infty}$ is the canonical bubble limit at $\bx$. Notice that by Proposition \ref{p:complexerror}, the rescaling argument is the same for $g_\delta^C$ and $\Psi_* g_\delta^{\cG}$.

To accomplish this, we will
divide the above singular region $\cS_{\ALG}$ into several pieces.
\begin{flushleft}
{\bf Region $\cS_{\ALG,1}$ (deepest ALG bubbles):}
\end{flushleft}
This region consists of the points $\bx\in\cS_{\ALG}$ satisfying
$d_{g_\delta^C}(\bx,\Psi(p)) \leq \delta$.
As $\delta\to0$, taking any sequence of points $\bm{x}_\delta\in \cS_{\ALG,1}$, we rescale the metric $g_\delta^C$  by
\begin{equation}
\tilde{g}_\delta^C = \delta^{-2}g_\delta^C.
\end{equation}

By Proposition \ref{p:complexerror},
\begin{equation}
(\cK, \tilde{g}_\delta^C, \bx_\delta) \xrightarrow{C^{\infty}}  (\cG, g^{\cG}, \bx_{\infty})\ \text{as}\ \delta\to 0,
\end{equation}
where the limit space is a hyperk\"ahler ALG space and the convergence is in the pointed $C^k$-topology for any $k\in\mathbb{N}$. Since $(\cG,g^{\cG},\bx)$ is not flat, so under $g^{\cG}$, $\underline{v}_{k,\alpha}\leq r_{k,\alpha}(\bx)\leq \bar{v}_{k,\alpha} $.
Changing back to $g_\delta^C$, we have $\delta\cdot \underline{v}_{k,\alpha}\leq r_{k,\alpha}(\bx)\leq \delta \cdot \bar{v}_{k,\alpha}$ and hence
the function $\fs(\bx)$ in $\cS_{\ALG,1}$ can be simply chosen as $\delta$.

\begin{flushleft}
{\bf Region $\cS_{\ALG,2}$ (ALG bubble damage zone):}
\end{flushleft}
The points $\bx$ in this region satisfy
\begin{equation}
d_{g_\delta^C}(\bx, \Psi(p)) \geq 2\delta.
\end{equation}
Now we consider the rescaled metric for a sequence of reference points $\bx_\delta\in \cS_{\ALG,2}$,
\begin{equation}
\tilde{g}_\delta^C =   d_{g_\delta^C}(\Psi(p),\bx_\delta)^{-2} \cdot g_\delta^C.
\end{equation}
 Then there are two different rescaled limits depending upon the distance $d_{g_\delta^C}(\Psi(p),\bx_\delta)$: \begin{flushleft}
Case (a): As $\delta\to0$, the sequence  $\{\bx_\delta\}$ satisfies that there is some $\sigma_0>0$ independent of $\delta>0$ such that  $0< \sigma_0 \leq \frac\delta{d_{g_\delta^C}(\bx_\delta,\Psi(p))} \leq \frac{1}{2}$.
\end{flushleft}

In this case, the rescaled limit $(\widehat{X}, \hat{d}, \hat{\bx})$ is a complete ALG space which is a finite rescaling of the original ALG bubble
$(\cG, g^{\cG}, p)$.

\begin{flushleft}
Case (b): As $\delta\to0$, the sequence $\{\bx_\delta\}$ satisfies $\frac\delta{d_{g_\delta^C}(\bx_\delta,\Psi(p))}\to 0$.
\end{flushleft}
In this case, the rescaled limit is isometric to
a flat cone $(C(S_{2\pi\beta}^1),d_{C,2\pi\beta},\hat{\bx})$ in $\dR^2$ which is exactly the asymptotic cone of the ALG space $(\cG, g^{\cG}, p)$. Notice that in this case, we have $d_{C, 2 \pi \beta}(\hat\bx,0^2)=1$ and the ball $B_{10^{-3}}(\hat\bx)$ is contractible in the flat cone $(C(S_{2\pi\beta}^1), d_{C,2\pi\beta})$. So for the fixed constant $\epsilon=10^{-4}\epsilon_0$ as in Theorem~\ref{t:maximal-nilpotent-rank}, for small enough $\delta$, the group $\Image[\pi_1(B_{\epsilon}(\hat\bx))\to\pi_1(B_{10^{-3}}(\hat\bx))]$ is $\ZZ\oplus\ZZ$, where the geodesic balls $B_{\epsilon}(\hat\bx)$ and $B_{10^{-3}}(\hat\bx)$ are measured under the rescaled metric $\tilde{g}_{\delta}^C$. So by Theorem~\ref{t:maximal-nilpotent-rank}, we get the required lower bound on the regularity scale. On the other hand, the upper bound on the regularity scale comes from Remark~\ref{r:1-Lipchitz} and the calculation in Region $\cS_{\ALG,1}$.

This completes the proof of the proposition.
\end{proof}

\subsection{Bubbling analysis around the singular fiber $\I_{\nu}$}

\label{ss:regularity-scale-I_v}

We will next study the regularity scales and classify the canonical bubbling limits in the singular region $\cS_{\I_{\nu}}$ for some fixed $\nu\in\dZ_+$ which is a neighborhood
\begin{align}
\cS_{\I_{\nu}} \equiv \fF^{-1} (B_{4\delta_0}(p))
\end{align}
of the singular fiber $\fF^{-1}(p)$ of Type $\I_{\nu}$, where $\delta_0$ is a constant independent of $\delta$.

In Region $\cS_{\I_{\nu}}$, as shown in Section \ref{s:infinite-monodromy},
 there are approximately hyperk\"ahler metrics $g_\delta^B$ (which agree in this region with the metrics $g_\delta^C$ constructed in Section~\ref{s:finite-monodromy}), which are constructed by gluing the multi-Ooguri-Vafa metric on $\mathcal{N}_{\nu}^4$ with semi-flat metrics, so that for some constant $C_0>0$ independent of $\delta$,
\begin{equation}
\frac{1}{C_0} \leq \diam_{g_\delta^C} (\cS_{\I_{\nu}}) \leq C_0.\end{equation}

\begin{remark}
In the following, for the convenience of computations, the metrics $g_\delta^C$ will be parametrized in terms of
\begin{equation}
\label{e:T-delta-relation}
T \equiv -\nu\log\delta,
\end{equation} and denoted by $g_T$.
\end{remark}

Recall that the metric $g_T$ in $\fF^{-1} (B_{\delta_0}(p))$ coincides with the Gibbons-Hawking metric
\begin{equation}
g_{\delta, \nu} \equiv \frac{1}{2\pi}e^{-\frac{2T}{\nu}}\cdot \Big(V_T\cdot (du_1^2 + du_2^2 + du_3^2)  + (V_T)^{-1}\theta^2\Big),\quad (u_1,u_2,u_3)\subset \mathcal{O}\subset \dR^2\times S^1,
\end{equation}
 where the region $\mathcal{O}$ consists all points \begin{align}\underline{\bx}\equiv(u_1,u_2,u_3)\in Q^3=\dR^2\times S^1 \end{align} satisfying
 $|\delta(u_1+\sqrt{-1}u_2)|\le 2\delta_0$.
Let us denote by $\pi_{Q^3}$ the $S^1$-principal bundle map
\begin{equation}
S^1\to \mathcal{N}_{\nu}^4 \xrightarrow{\pi_{Q^3}} \cO\subset \dR^2\times S^1,
\end{equation}
which sends every $\bx\in \mathcal{N}_{\nu}^4 $ to $\underline{\bx}=\pi_{Q^3}(\bx)\subset \cO$.

We need the following conventions and notations:

\begin{enumerate}

\item

Denote by $d_{Q^3}:Q^3\times Q^3 \to [0,+\infty)$ the distance function on $Q^3=\dR^2\times S^1$. Then we obtain
 a smooth function $\fr:Q^3 \to\dR_+$ by slightly mollifying the distance function $d_{Q^3}$ as follows:  For $\underline{\bx}\in Q^3$,
\begin{align}
\fr(\underline{\bx}) = \begin{cases}
T^{-1},  &  d_{Q^3}(\underline{\bx},p_i) \leq  T^{-1}\ \text{for some} \ p_i\in P,
\\
d_{Q^3}(\underline{\bx}, p_i), & 2T^{-1} \leq  d_{Q^3}(\underline{\bx},p_i) \leq \iota_0  \ \text{for some} \ p_i\in P,
\\
d_{Q^3}(\underline{\bx}, 0^3),  & T_0\leq d_{Q^3}(\underline{\bx},p_i) \leq  e^{\frac{T-2}{\nu}}\ \text{for all} \ p_i\in P,
\\
e^{\frac{T-2}{\nu}},       & d_{Q^3}(\underline{\bx},p_i)\geq e^{\frac{T-1}{\nu}} \ \text{for all} \ p_i\in P.
\end{cases}\label{e:smoothing-flat-r}
\end{align}
Here $P=\{p_i\}_{i=1}^{\nu}\subset Q^3$ is the finite set of monopole points and
\begin{align}\iota_0 & \equiv \frac{1}{2} \min \big\{d_{Q^3}(p_i,p_j) \ | \ p_i,p_j\in Q^3, \ i\neq j \big\},
\\
T_0 & \equiv 2\max\{d_{Q^3}(p_i,0^3) \ | \ p_i,p_j\in Q^3, \ 1\leq i\leq\nu\}.
\end{align}

\item Let $L_0:[0,+\infty)\rightarrow \dR$ be a smooth function  satisfying
\begin{align} \label{e:l(r)}
L_0(r)\equiv
\begin{cases}
-\nu\log r, & r\geq 4,
\\
0 ,  & 0\leq r\leq 2,
\end{cases}
\end{align}
and let
\begin{equation} \label{e:L_T(r)}
\quad L_T(\bx)\equiv T+L_0(d_{Q^3}(\pi_{Q^3}(\bx),0^3)).
\end{equation}

\end{enumerate}

We next describe the regularity scales and classify the canonical bubble limits for all $\bx\in\cS_{\I_{\nu}}$.
\begin{proposition}
[Regularity scale and bubble limits near $\I_{\nu}$ fibers]
\label{p:regularity-scale-I_v} Given any large parameter $T\gg1$ (equivalently $\delta\ll1$),  let $g_T = g_\delta^C$ be the approximately hyperk\"ahler metric  on $\cS_{\I_{\nu}}$ as the above, then the following holds:
\begin{enumerate}
\item
Given $k\in\mathbb{N}$ and $\alpha\in(0,1)$,
there are uniform constants $\underline{v}_{k,\alpha}>0$ and $\bar{v}_{k,\alpha}>0$ independent of $\delta>0$ such that the $(k,\alpha)$-regularity scale $r_{k,\alpha}(\bx)$ at $\bx\in \cS_{\I_{\nu}}$ has the following bound
\begin{equation}
\underline{v}_{k,\alpha}    \leq \frac{r_{k,\alpha}(\bx)}{\fs_{\nu}(\bx)}\leq \bar{v}_{k,\alpha},
\end{equation}
where the function $\fs_{\nu}(\bx)$ is explicitly given by
\begin{align}
\fs_{\nu}(\bx) = e^{-\frac{T}{\nu}} \cdot L_T(\bx)^{\frac{1}{2}}\cdot  \fr(\underline{\bx}).
\end{align}

\item As $T\to +\infty$ (equivalently $\delta\to 0$), all the canonical bubbles of $(\cK,\bx)$ for $\bx\in\cS_{\I_{\nu}}$ are as follows: the Ricci-flat Taub-NUT space $(\dC^2, g_{TN})$, the flat spaces $\dR^3$, $\dR^2\times S^1$, $\dR^2$, and singular limit $(\PP^1,d_{ML})$ with bounded diameter.
\end{enumerate}

\end{proposition}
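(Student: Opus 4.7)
The proof should follow the same template as Proposition~\ref{p:regularity-scale-ALG}: partition the region $\cS_{\I_\nu}$ into sub-regions determined by the position of $\underline{\bx}=\pi_{Q^3}(\bx)\in Q^3$ relative to the monopole set $P$ and to the origin $0^3$, at a reference point in each sub-region rescale $g_T$ by $\fs_\nu(\bx)^{-2}$, identify the Gromov--Hausdorff (and where possible smooth) limit using the explicit Gibbons--Hawking formula, and then invoke Theorem~\ref{t:maximal-nilpotent-rank} to convert the nilpotent-rank information about $\Gamma_\epsilon(\bx)$ into matching upper and lower bounds for the regularity scale. The work is genuinely Gibbons--Hawking in nature inside $\fF^{-1}(B_{\delta_0}(p))$, where $g_T$ agrees with the exact multi-Ooguri--Vafa metric; inside the gluing annulus $\fF^{-1}(A_{\delta_0,2\delta_0}(p))$ and in the outer semi-flat piece the analysis is the same but uses the comparison between $g_T$ and $g_\delta^A$ coming from Section~\ref{s:infinite-monodromy}.

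The sub-regions I would treat are as follows. \textbf{(i) Taub--NUT core:} $d_{Q^3}(\underline{\bx}_\delta,p_i)\leq C/T$ for some $p_i\in P$. Here $\fr\sim T^{-1}$, $L_T\sim T$, so $\fs_\nu\sim\delta\,T^{-1/2}$. Near $p_i$ the Green's function $G_P$ has the three-dimensional pole $1/(2r)$, so $V_T=T+1/(2r)+O(1)$. Setting $v'=T(u-p_i)$ and rescaling by $\fs_\nu^{-2}$ converts the metric into Gibbons--Hawking with harmonic function $V_T/T=1+1/(2|v'|)+o(1)$, giving smooth convergence to $(\dC^2,g_{TN})$. \textbf{(ii) Multi-monopole damage zone:} $C/T\leq\fr\leq \iota_0$. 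Now $V_T\sim L_T\sim T$ is essentially constant, $\fr$ becomes unit-size, and the $S^1$-fibre shrinks to length $\sim 1/(T\fr)\to 0$, producing a flat $\dR^3$ bubble. \textbf{(iii) Intermediate Gibbons--Hawking region:} $\underline{\bx}_\delta$ bounded away from $P$ and $0^3$; after $\fs_\nu^{-2}$-rescaling the $u_3$-circle either collapses (if we also zoom in the base) giving $\dR^3$, or survives if $d_{Q^3}(\underline{\bx}_\delta,0^3)$ is kept of order $1$ relative to the collapsing scale, giving flat $\dR^2\times S^1$. \textbf{(iv) Semi-flat and global regions:} when $d_{Q^3}(\underline{\bx}_\delta,0^3)$ is of order $e^{T/\nu}=1/\delta$ we are in the semi-flat piece and the $\dT^2$-fibre collapses entirely, giving $\dR^2$; at the unrescaled scale $\cS_{\I_\nu}$ Gromov--Hausdorff collapses to $(B_{4\delta_0}(p),d_{ML})\subset (\PP^1,d_{ML})$.

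In each case two-sided control of $r_{k,\alpha}$ follows from Theorem~\ref{t:maximal-nilpotent-rank}. On the one hand, once the rescaled limit is identified, the rescaled sequence has uniformly bounded curvature on compact sets (automatic from smooth convergence in (i); for (ii)--(iv) directly from the flat model), so Remark~\ref{r:1-Lipchitz} gives $r_{k,\alpha}(\bx)\geq \underline{v}_{k,\alpha}\fs_\nu(\bx)$. On the other hand, at every such $\bx_\delta$ the rank of $\Gamma_\epsilon(\bx_\delta)$ matches the codimension $4-\dim X_\infty$ of the collapse to the identified bubble; moving $\bx$ at unit distance in the rescaled metric eventually lands in a region where either the rank drops (curvature blows up) or where a distance-$1$ neighbour shows nontrivial curvature, so the $1$-Lipschitz property \eqref{1lipreg} and \eqref{e:curvature-scale} give the matching upper bound $r_{k,\alpha}(\bx)\leq \bar{v}_{k,\alpha}\fs_\nu(\bx)$. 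Putting the sub-regional bounds together and observing that the different choices of $\fs_\nu$ match continuously across the boundaries (by construction of $\fr$ and $L_T$) yields the global estimate (1); the list of Gromov--Hausdorff limits produced in the process gives (2).

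The main obstacle is sub-region (i). One has to verify that, after the change $v'=T(u-p_i)$ and the rescaling $\fs_\nu^{-2}g_T$, the harmonic function $V_T/T$ converges in $C^k_{\mathrm{loc}}(\dR^3\setminus\{0\})$ to the Taub--NUT potential $1+1/(2|v'|)$, with uniform control of the gauge so that the connection $1$-form $\theta$, constrained by the monopole equation $d\theta=*_{Q^3}dV_T$, converges to the standard Taub--NUT connection. The three sources of correction --- the constant $T$, the contributions $\sum_{j\neq i}1/(2|u-p_j|)$ from other monopoles (which become $O(1/T)$ under the rescaling since $|v'-T(p_j-p_i)|\to\infty$), and the holomorphic term $2\pi\Ima h(\delta(u_1+\sqrt{-1}u_2))$ (which gains an extra $\delta$ in its argument and contributes $O(\delta)$ with bounded derivatives) --- must all be shown to vanish in the limit. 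The convergence of $\theta$ then follows from Hodge theory on compact sets, after reducing to a gauge where $\theta$ is the standard multi-Taub--NUT connection, exactly as in the single-pole analysis of \cite{GW}. Once this is done, the remaining sub-regions (ii)--(iv) amount to elementary flat-geometry computations in the Gibbons--Hawking ansatz together with the semi-flat formula~\eqref{e:simplified-semi-flat}.
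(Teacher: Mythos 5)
Your proposal matches the paper's proof essentially verbatim: the same decomposition of $\cS_{\I_\nu}$ by distance of $\pi_{Q^3}(\bx)$ to the monopoles and to $0^3$, the same choices of rescaling factor $\fs_\nu(\bx)^{-1}$ reproducing Taub--NUT, $\dR^3$, $\dR^2\times S^1$, $\dR^2$, and $(\PP^1,d_{ML})$ as bubble limits, and the same use of Theorem~\ref{t:maximal-nilpotent-rank} together with the $1$-Lipschitz property of $r_{k,\alpha}$ to get two-sided bounds on the regularity scale (the paper reorganizes your sub-regions (ii)--(iv) into two regions with three sub-cases each, but the content is identical). Your concluding paragraph on verifying $V_T/T\to 1+1/(2|v'|)$ with control of the connection $\theta$ is the detail the paper delegates to \cite[Lemma~7.9]{HSVZ} and the single-pole analysis of \cite{GW}, so it fills a gap rather than diverging from the route.
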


\begin{proof}

The basic framework  is similar to the proof of Proposition \ref{p:regularity-scale-ALG}. We aim at obtaining the required estimate of the regularity scale for every $\bx\in\cS_{\I_{\nu}}$, so it suffices to show that for all $T\to+\infty$ (or equivalently $\delta\to 0$) and for all $\bx\in \cS_{\I_{\nu}}$, there is  a rescaled limit
\begin{equation}
(\cK, \fs_{\nu}(\bx)^{-2}g_\delta^C, \bx) \xrightarrow{GH} (\widehat{X}_{\infty},\hat{d}_{\infty}, \bx_{\infty}),
\end{equation}
so that the regularity scale at $\bx$ with respect to the rescaled metrics $\fs(\bx)^{-2}g_\delta^C$ is uniformly bounded from above and below.

\begin{remark}For the convenience of the discussion,
we will take a sequence $T_j\to+\infty$ and denote $g_j\equiv g_{T_j}$ in the following proof.
\end{remark}
The singular region $\cS_{\I_{\nu}}$ will be further divided in the several pieces:

\begin{flushleft}
{\bf Region $\cS_{\I_{\nu},1}$ (deepest ALF bubbles):}
\end{flushleft}

 This region consists of the points $\bx\in\cS_{\I_{\nu}}$ satisfying $d_{Q^3}(\underline{\bx},p_i) \leq T_j^{-1}$ for some $1\leq i\leq \nu$, where $\underline{\bx}=\pi_{Q^3}(\bx)$.
Let $\bx_j$ be a sequence of reference points in this region and
we rescale the metric by $\tilde{g}_{j} \equiv \lambda_j^2\cdot g_{j}$ with  \begin{equation}\lambda_j\equiv e^{\frac{T_j}{\nu}}\cdot T_j^{\frac{1}{2}}.\end{equation}
If we rescale the coordinates by
\begin{equation}
(\tilde{u}_1, \tilde{u}_2, \tilde{u}_3) = T_j \cdot ( (u_1, u_2, u_3) - p_i ),
\end{equation}
then the rescaled metrics converge in the pointed $C^{\infty}$-topology,
 \begin{align}
 (\cK ,  \tilde{g}_{j},  \bx_j)  \xrightarrow{C^{\infty}} (\dC^2, g_{TN}, \bx_{\infty}),
 \end{align}
where the limit $(\dC^2,g_{TN},\bx_{\infty})$ is a rescaled Taub-NUT metric. By definition, the Taub-NUT metric is the hyperk\"ahler ALF metric given by the Gibbons-Hawking ansatz over $\RR^3$ with a single monopole point and harmonic function $V = 1 + (2r)^{-1}$, see \cite{Hawking}.
The rescaling computations are straightforward and hence we skip the details here, see  \cite[Lemma~7.9]{HSVZ} for more details. Since the $(\dC^2, g_{TN})$ is not flat with bounded curvatures and the convergence is $C^{\infty}$, the regularity scale at $\bx_j$ with respect to the original metrics $g_{j}$ satisfies
\begin{align}
\underline{v}_{k,\alpha}
\cdot T_j^{\frac{1}{2}}\leq r_{k,\alpha}(\bx_j) \leq \bar{v}_{k,\alpha}\cdot T_j^{\frac{1}{2}},\end{align} where $\underline{v}_{k,\alpha}>0$ and  $\bar{v}_{k,\alpha}>0$ are uniform constants independent of $T_j$.

\begin{flushleft}
{\bf Region $\cS_{\I_{\nu},2}$ (ALF bubble damage zone):}
\end{flushleft}
This region contains all the points $\bx\in\cS_{\I_{\nu}}$ satisfying
$2T^{-1}\leq d_{Q^3}(\underline{\bx},p_i) \leq \iota_0$  for some $1\leq i\leq \nu$.
Now take a sequence of reference points $\bx_j$ in  Region $\cS_{\I_{\nu},2}$ and let us denote
\begin{equation}
\fr_j\equiv \fr(\underline{\bx}_j)
\end{equation}
for $\underline{\bx}_j=\pi_{Q^3}(\bx_j)$.
The rescaled metrics $\tilde{g}_j=\lambda_j^2 g_j$ is given by
\begin{equation}
\lambda_j = e^{\frac{T_j}{\nu}}\cdot T_j^{-\frac{1}{2}}\cdot \fr_j^{-1}.
\end{equation}
To effectively work with the explicit rescaled metric tensor $\tilde{g}_j$, we also rescale the coordinates by taking
\begin{equation}
(\tilde{u}_1, \tilde{u}_2, \tilde{u}_3)  \equiv \mu_j\cdot ( (u_1, u_2, u_3)-p_i),
\end{equation}
where the coordinate rescaling factor is chosen as
\begin{equation}
\mu_j \equiv \frac{1}{\fr_j}.
\end{equation}

To understand the rescaled limits under the above rescaled metric $\tilde{g}_j$,
we need to consider the following cases and study the rescaled limits separately:

\begin{flushleft}
  Case (a): there is some $\sigma_0>0$ such that  $2T_j^{-1}\leq \fr_j \leq  \frac{1}{(\sigma_0)^2}\cdot  T_j^{-1}$.
\end{flushleft}

\begin{flushleft}
  Case (b): $\fr_j$ satisfies
  \begin{equation}
  \frac{\fr_j}{T_j^{-1}}\to +\infty, \quad \fr_j \to 0.
  \end{equation}
\end{flushleft}

\begin{flushleft}
  Case (c):
there is some uniform constant $r_0>0$ such that $r_0 \leq \fr_j \leq \iota_0$.
\end{flushleft}

Case (a) is similar to Region $\cS_{\I_{\nu},1}$, so  the rescaled limit is the Ricci-flat Taub-NUT space. The metric tensor can be written explicitly in terms of the rescaled coordinates.

In Case (b), we take a fixed region in $Q^3$ where $\underline{\bx}=(u_1,u_2,u_3)$ satisfies $d_{Q^3}(\underline{\bx},p_i) \leq \iota_0$ and investigate the Gromov-Hausdorff convergence
under the rescaled metrics $\tilde{g}_j$. First, the $S^1$-fibers are collapsing as $j\to+\infty$. Moreover, it can be directly verified that, up to local universal covers,
the rescaled metrics converge in the $C^k$-topology for any $k\in\mathbb{N}$. This in particular implies that
the
rescaled coordinate system $(\tilde{u}_1,\tilde{u}_2,\tilde{u}_3)\in Q^3$ in fact
converges to some limiting coordinate system $(u_{1,\infty},u_{2,\infty}, u_{3,\infty})$.
After a fixed rescaling so that the limiting reference point $\bx_{\infty}$ satisfies $d_{\tilde{g}_{\infty}}(\bx_{\infty},0^3)=1$, with respect to the limiting coordinates, the limiting metric tensor $\tilde{g}_{\infty}$ can be
written as
\begin{equation}
\tilde{g}_{\infty} = du_{1,\infty}^2 + du_{2,\infty}^2 + du_{3,\infty}^2,
\end{equation}
which is precisely the Euclidean metric in $\dR^3$.
Detailed computations about this can also be found in the bubbling analysis in Region II of \cite[Section~7.3]{HSVZ}.
  Moreover, the Gibbons-Hawking ansatz gives an $S^1$-fibration over $B_{R}(0^3)\subset \dR^3$ for any large $R>0$, which is smooth away from $0^3$. Since $0^3\not\in B_{\frac{2}{3}}(\bx_{\infty})$, it immediately follows from the $S^1$-principal bundle structure that, for sufficiently small $\epsilon$,
\begin{align}
\Image[\pi_1(B_{\epsilon}^{\tilde{g}_j}(\bx_j))\to\pi_1(B_{2/3}^{\tilde{g}_j}(\bx_j))]\cong \dZ
\end{align}
so that its rank is $1$. Then Lemma \ref{l:maximal-rank} shows that curvatures at $\bx_j$ are uniformly bounded and hence $r_{k,\alpha}(\bx_j)$ with respect to the rescaled metrics $\tilde{g}_j$
is uniformly bounded from below. To see the uniform upper bound of $r_{k,\alpha}(\bx)$, it suffices to use the $1$-Lipschitz continuity of $r_{k,\alpha}$ and the calculation in Region $\cS_{\I_{\nu},1}$.

Now we study the canonical bubble in
 Case (c).  We will work a domain in $\mathcal{N}_{\nu}^4$ which consists of the points $\bx\in\cS_{\I_{\nu}}$ satisfying
 \begin{equation}
\fr(\underline{\bx}) \leq T_j.\end{equation}
for $\underline{\bx}=\pi_{Q^3}(\bx)$.
 Notice that, by our definition of the metric rescaling factor $\lambda_j$ and the coordinate rescaling factor $\mu_j$,
 it follows that the rescaled coordinate system $(\tilde{u}_1, \tilde{u}_2, \tilde{u}_3)$
 is a bounded rescaling for the original one $(u_1,u_2,u_3)$.
 Therefore, the limiting metric $\tilde{g}_{\infty}$ is up to a fixed rescaling, given by
 \begin{equation}
\tilde{g}_{\infty} = du_{1,\infty}^2 + du_{2,\infty}^2 + du_{3,\infty}^2,
\end{equation} which is the standard flat product metric on $\dR^2\times S^1$.
This corresponds to the case of $\dT^2\times \dR$ in \cite[Section~7.3]{HSVZ}.
The remainder of the arguments are the same as Case (b) by noticing that there are $\nu$ monopoles of the Green's function on $\dR^2\times S^1$ and $\bx$ has definite distance to those monopoles under the rescaled metrics.

This completes the bubble classification in Region $\cS_{\I_{\nu},2}$.

\begin{flushleft}
{\bf Region $\cS_{\I_{\nu},3}$ (large scale regions):}
\end{flushleft}

This region consists of the points $\bx\in \cS_{\I_{\nu}}$ which satisfy
\begin{equation} d_{Q^3}(\underline{\bx},p_i)  \geq T_0\ \text{and} \ L_T(\bx) ) \geq  2,\end{equation} where $\underline{\bx}=\pi_{Q^3}(\bx)\in Q^3$.
For a sequence of reference points $\bx_j\in \cS_{\I_{\nu},3}$ with projected coordinates $\underline{\bx}_j=(u_{1,j},u_{2,j},u_{3,j})\in Q^3$, let us define the rescaled metric $\tilde{g}_j = \lambda_j^2 \cdot g_j$ by
\begin{equation}
\lambda_j = e^{\frac{T_j}{\nu}}\cdot L_j^{-\frac{1}{2}}\cdot \fr_j^{-1},
\end{equation}
where
\begin{equation}
L_j\equiv L_{T_j}(\bx_j) \ \text{and}\ \fr_j \equiv \fr(\underline{\bx}_j).
\end{equation}
For explicit computations, we will also rescale the coordinate system $\underline{\bx} = (u_1, u_2, u_3)\in\mathcal{N}_{\nu}^4$ and center around the reference point $\bx_j$ with base coordinate $\underline{\bx}_j=(u_{1,j},u_{2,j},u_{3,j})\in Q^3$ to
\begin{equation}
\tilde{u}_1 = \mu_j\cdot u_1,\ \tilde{u}_2 = \mu_j\cdot u_2,\ \tilde{u}_3 = \mu_j\cdot u_3,
\end{equation}
where the rescaling factor $\mu_j$ is chosen as
$\mu_j = \fr_j^{-1}$.

In the following computations, Region $\cS_{\I_{\nu},3}$ will be further subdivided in the following cases depending upon the location of $\bx_j$:

\begin{flushleft}
  Case (a): There is some $R_0>0$ such that  $\frac{1}{2}\leq \fr_j \leq  R_0$.
\end{flushleft}

\begin{flushleft}
Case (b): This case is given by the condition
\begin{equation}\fr_j\to+\infty \quad \text{and}\quad  L_j \to +\infty.\end{equation}
\end{flushleft}

\begin{flushleft}
Case (c): There is some $T_0>0$ such that $2\leq L_j  \leq T_0$.
\end{flushleft}

In Case (a), the metric rescaling is equivalent to Case (c)
of Region $\cS_{\I_{\nu,2}}$. Moreover, the coordinate system is rescaled by uniformly bounded constants $\mu_j$ which gives the rescaled limit
$\dR^2\times S^1$ with up to a fixed rescaling, the standard flat product metric
\begin{equation}
\tilde{g}_{\infty}  = d u_{1,\infty}^2 + d u_{2,\infty}^2 + d u_{3,\infty}^2.
\end{equation}
Let $P\subset \dR^2\times S^1$ be the finite set of monopoles, then $\bx_j$ has definite distance to $\mathcal{P} = \pi_{Q^3}^{-1}(P)$. So applying the same argument as in Case (c) of Region $\cS_{\I_{\nu},2}$, we have
\begin{align}
\underline{v}_{k,\alpha}
\leq r_{k,\alpha}(\bx_j) \leq \bar{v}_{k,\alpha}
\end{align}
with respect to the rescaled metrics $\tilde{g}_j$. Therefore, changing back to $g_j$, we have
\begin{align}
\underline{v}_{k,\alpha} \cdot e^{-\frac{T_j}{\nu}} \cdot L_j^{\frac{1}{2}}\cdot \fr_j
\leq r_{k,\alpha}(\bx_j) \leq \bar{v}_{k,\alpha} \cdot e^{-\frac{T_j}{\nu}} \cdot L_j^{\frac{1}{2}}\cdot \fr_j,
\end{align}
where $\underline{v}_{k,\alpha}>0$ and $\bar{v}_{k,\alpha}>0$ are uniform constants independent of $T_j$.

In Case (b), for any fixed $\xi>1$
we choose a sequence of large regions in $\mathcal{N}_{\nu}^4$ defined by
\begin{equation}
\mathcal{U}_j(\xi) \equiv \Big\{\bx\in\mathcal{N}_{\nu}^4\Big|\fr_j^{-1}\cdot d_{Q^3}(\underline{\bx},0^3)\leq \xi,\ \underline{\bx}=\pi_{Q^3}(\bx) \Big\}.
\end{equation}
We will show that $\tilde{g}_j$ on
$\mathcal{U}_j(\xi)$ converges to the Euclidean metric of $B_\xi(0^2) \subset\dR^2$ up to a fixed rescaling.  The rescaled metrics are given by \begin{align}
\tilde{g}_j =  \lambda_j^2 g_j & = \frac{1}{2\pi}\fr_j^{-2} \cdot \frac{1}{L_j}\cdot  \Big(V_{T_j}\cdot (d u_{1}^2 + d u_{2}^2 + d u_{3}^2)  + (V_{T_j})^{-1}\theta^2\Big)
\nonumber\\
& =  \frac{1}{2\pi}\frac{1}{L_j}\cdot  \Big(V_{T_j}\cdot (d \tilde{u}_{1}^2 + d \tilde{u}_{2}^2 + d \tilde{u}_{3}^2)  + (V_{T_j})^{-1}\theta^2\Big).
\end{align}
Let
\begin{equation}
\mathcal{V}_j(\xi) \equiv \Big\{\bx\in\mathcal{N}_{\nu}^4\Big|\fr_j^{-1}\cdot d_{Q^3}(\underline{\bx},0^3)\leq \xi^{-1},\ \underline{\bx}=\pi_{Q^3}(\bx) \Big\}.
\end{equation}
 In the following, the main part is to show that for each $\bx\in \mathcal{U}_j(\xi)\setminus \mathcal{V}_j(\xi)$,
it holds that as $j\to+\infty$,
\begin{equation}
\frac{V_{T_j}(\bx)}{L_j} \to 1.
\end{equation}
Indeed, let us take any arbitrary point $\bx\in \mathcal{U}_j(\xi) \setminus \mathcal{V}_j(\xi)$ over $\underline{\bx}=(u_1,u_2,u_3)
\in Q^3$, we have
 \begin{equation}
 \xi^{-1} \leq \frac{\fr(\bx)}{\fr_j} \leq \xi.
 \end{equation}
So it follows that
\begin{align}
\frac{V_{T_j}(\bx)}{L_j}  = 1 - \frac{\nu}{L_j}\log\Big(\frac{\fr(\bx)}{\fr_j}\Big) + o(1) = 1 + o(1).
\end{align}
Since $\xi$ is arbitrary, and $\lim\limits_{\xi\to \infty}\limsup\limits_{j\to \infty}\diam_{\tilde{g}_j} \mathcal{V}_j(\xi)=0$,
this tells us that $\tilde{g}_j$ converges to
\begin{equation}
\tilde{g}_{\infty} = d u_{1,\infty}^2 + d u_{2,\infty}^2,
\end{equation}
up to a fixed rescaling, where $(u_{1,\infty} , u_{2,\infty})$ is the
canonical coordinate system on $\dR^2$ which corresponds to the limit of the rescaled coordinates $(\tilde{u}_1,\tilde{u}_2)$.
Moreover, the finite set of monopoles $\mathcal{P}$ converges to $0^2 \in\dR^2$ and $\bx_j$ converges to $\bx_\infty \in \partial B_1(0^2) \subset \dR^2$.

To finish Case (b), the regularity scale estimate at $\bx_j$ can obtained from Theorem \ref{t:maximal-nilpotent-rank}.
 Topologically, $B_{1/2}^{\tilde{g}_j}(\bx_j)$ is the trivial torus fibration over the base. Let $\epsilon=\epsilon(n)>0$ defined in Theorem \ref{t:maximal-nilpotent-rank}, it is easy to see that \begin{equation}
 \Image[\pi_1(B_{\epsilon}^{\tilde{g}_j}(\bx_j))\to\pi_1(B_{1/2}^{\tilde{g}_j}(\bx_j))]\cong \dZ \oplus \dZ.
 \end{equation}
  It has rank 2. So we get the required lower bound on $r_{k,\alpha}(\bx_j)$ by Theorem \ref{t:maximal-nilpotent-rank}. The upper bound on $r_{k,\alpha}(\bx_j)$ follows from Remark \ref{r:1-Lipchitz} and the calculation in Case (a). So the proof in Case (b) is done.

The rescaled geometry in Case (c) is much simpler. In fact, the metric rescaling factor $\lambda_j$ is uniformly bounded in this case so that
the rescaled limit
 is
$(\PP^1,d_{ML},\bx_{\infty})$ such that the McLean metric $d_{ML}$ is singular on a finite set $\mathcal{S}\subset \PP^1$
and the limit of the reference points $\bx_{\infty}\in\PP^1\setminus\mathcal{S}$.
The uniform estimate for the regularity scale at $\bx_j$ just immediately follows.

This completes the proof of the proposition.
\end{proof}

\subsection{Bubbling analysis around the singular fiber $\I_{\nu}^*$}

This subsection is dedicated to the analysis of the singularity behavior around a singular fiber of Type $\I_{\nu}^*$ for some $\nu\in\dZ_+$.
The associated singular region $\cS_{\I_{\nu}^*}$ was constructed in Section \ref{ss:qmov} which was given by resolving the four singular points from an orbifold multi-Ooguri-Vafa region.
In terms of the elliptic fibration $\fF:\cK\to\PP^1$, the singular region $\cS_{\I_{\nu}^*}$
can be represented as the tubular neighborhood
$\cS_{\I_{\nu}^*}\equiv \fF^{-1}(B_{4\delta_0}(p))$
 of the singular fiber $\fF^{-1}(p)$ of Type $\I_{\nu}^*$ ($\nu\in\dZ_+$).

To be precise, given a family of small parameters $0<\delta\ll1$, let  $g_\delta^C$ be the approximately hyperk\"ahler metrics on the resolution of the orbifold region  $\mathcal{N}_{2\nu}^4/\dZ_2$,  which are constructed in Section \ref{ss:qmov}
which is the model of $\cS_{\I_{\nu}^*}$.
Recall that the main geometric features of the region
can be summarized as follows:
\begin{enumerate}
\item
Denote by $(\mathcal{N}_{2\nu}^4,g_{\delta,2\nu})$ the incomplete domain  endowed with a family of collapsing multi-Ooguri-Vafa metric $g_{\delta,2\nu}$ with $\nu$-pairs of monopoles such that  $\mathcal{N}_{2\nu}^4 $ is a principal $S^1$-bundle over an open subset in $Q^3=\dR^2\times S^1$ with circles vanishing at monopole points $P\equiv\{p_1,\hat{p}_1,\ldots, p_{\nu},\hat{p}_{\nu}\}\subset Q^3$.

\item $\mathcal{N}_{2\nu}^4$ admits an isometric $\dZ_2$-action which
descends to the base $Q^3=\dR^2\times S^1$
such that the four $S^1$-fixed points $\{q_1,q_2,q_3,q_4\}$ on $ \mathcal{N}_{2\nu}\setminus \pi_{Q^3}^{-1}(P)$ are sent to the two $S^1$-fixed points
\begin{align}
\pi_{Q^3}(q_1)=\pi_{Q^3}(q_2)=q_-=(0,0,0),\\
 \pi_{Q^3}(q_3)=\pi_{Q^3}(q_4)=q_+ = (0,0,\frac{1}{2})
\end{align}
on the base $Q^3=\dR^2\times S^1$.

\item The $\dZ_2$-invariant multi-Ooguri-Vafa metric $g_{\delta,2\nu}$
has a natural $\dZ_2$ quotient orbifold metric $\check{g}_{\delta,\nu}$
on $\mathcal{N}_{2\nu}^4/\dZ_2$.
To resolve the four orbifold singularities, we constructed in Section \ref{ss:qmov} a family of approximately hyperk\"ahler metrics, denoted by $g_\delta^C$,  by gluing four copies of Eguchi-Hanson metrics
with $\check{g}_{\delta,\nu}$ around the four singular points.

\end{enumerate}

Following the notation in Section \ref{ss:regularity-scale-I_v}, let $T\gg1$ satisfy $\delta= e^{-\frac{T}{2\nu}}$ and
let the approximate metrics $g_\delta^C$ parametrized in terms of $T$, denoted by $g_T$.
 We introduce the following notation  in the current $\I_{\nu}^*$-case:
\begin{enumerate}
\item Let $g_{2\nu,T}$ be the rescaled metric
$g_{2\nu,T} = e^{\frac{T}{\nu}} g_T$,
which has diameter comparable with $e^{\frac{T}{2\nu}}$ for $T\gg1$.
There are positive constants $\underline{\iota}_0>0$  and $\bar{\iota}_0>0$  such that for $ i,j\in\{1,2,3,4\} $,
\begin{equation}
\frac{1}{\underline{\iota}_0}\cdot T^{-\frac{1}{2}}\leq d_{g_{2\nu,T}}(q_i,q_j) \leq   \underline{\iota}_0 \cdot T^{-\frac{1}{2}}
\end{equation} if $\pi_{Q^3}(q_i)=\pi_{Q^3}(q_j)$,
and
\begin{equation}
4 \cdot \bar{\iota}_0 \cdot T^{\frac{1}{2}}\leq d_{g_{2\nu,T}}(q_i,\bx) \leq \frac{1}{\bar{\iota}_0} \cdot T^{\frac{1}{2}}
\end{equation}
if $d_{Q^3}(\pi_{Q^3}(q_i), \pi_{Q^3}(\bx))=\iota_0$, where $\iota_0$ is half times the minimal $d_{Q^3}$ distance between two different points in $P \cup \{q_-, q_+\}$, the union of monopole points with the $\dZ_2$ fixed points.

\item
Given any sufficiently small parameter $0<\epsilon\ll T^{-\frac{1}{2}}$,
we slightly mollify the distance to the orbifold singularities  and obtain a smooth function as follows,
\begin{align}
\fd_*(\bx)
=\begin{cases}
\ee_{\lambda}^2 , & d_{g_{2\nu,T}}(q_\lambda,\bx) \leq  \ee_{\lambda}^2\\
& \text{for some}\ 1\leq \lambda\leq 4,
\\
&
\\
\min_{\lambda=1}^{4} d_{g_{2\nu,T}}(\bx,q_\lambda), &  d_{g_{2\nu,T}}(q_\lambda,\bx) \geq 2\ee_{\lambda}^2 \ \text{for all}\ 1\leq \lambda\leq 4 \ \text{and} \\
& d_{g_{2\nu,T}}(q_\lambda,\bx) \leq \frac{1}{4}\cdot \bar{\iota}_0 \cdot T^{\frac{1}{2}} \ \text{for some}\ 1\leq \lambda\leq 4,
\\
&
\\
T^{\frac{1}{2}}, &  d_{g_{2\nu,T}}(q_\lambda,\bm{x}) \geq  \frac{1}{2} \cdot \bar{\iota}_0 \cdot T^{\frac{1}{2}} \\
& \text{for all}\ 1\leq \lambda\leq 4.
\end{cases}
\end{align}

\end{enumerate}

With the above preparations, we are ready to give the
regularity scale analysis around
an $\I_{\nu}^*$-fiber in the following proposition.

\begin{proposition}
[Regularity scale and bubble limits near $\I_{\nu}^*$ fibers] \label{p:regularity-scale-I_v*}
Given any large parameter $T\gg1$ (or equivalently $\delta\ll1$), let $g_T = g_\delta^C$ be the approximately hyperk\"ahler metric  on $\cS_{\I_{\nu}^*}$, then the following holds:

\begin{enumerate}
\item

 Given $k\in\mathbb{N}$ and $\alpha\in(0,1)$,
there are uniform constants $\underline{v}_{k,\alpha}>0$ and $\bar{v}_{k,\alpha}>0$ independent of $\delta>0$ such that the $(k,\alpha)$-regularity scale $r_{k,\alpha}$ at $\bx\in \cS_{\I_{\nu}^*}$ has the following bound
\begin{equation}
\underline{v}_{k,\alpha}   \leq \frac{r_{k,\alpha}(\bx)}{\fs_{\nu}^*(\bx)}\leq \bar{v}_{k,\alpha},
\end{equation}
where the function $\fs_{\nu}^*(\bx)$ is explicitly given by
\begin{align}
\fs_{\nu}^*(\bx) =
\begin{cases}
   e^{-\frac{T}{2\nu}}\cdot  \fd_*(\bx), & d_{g_{T}}(q_\lambda,\bm{x}) \leq  \bar{\iota}_0  \cdot  e^{-\frac{T}{2\nu}}\cdot T^{\frac{1}{2}} \\
   & \text{for some}\ 1\leq \lambda\leq 4,
\\
&
\\
 e^{-\frac{T}{2\nu}}\cdot L_T(\bx)^{\frac{1}{2}}\cdot  \fr(\pi_{Q^3}(\bx)), & d_{g_{T}}(q_\lambda,\bm{x}) \geq  2\bar{\iota}_0  \cdot e^{-\frac{T}{2\nu}} \cdot T^{\frac{1}{2}}\\
 & \text{for all}\ 1\leq \lambda\leq 4.
\end{cases}
\end{align}

\item As $\delta\to0$, the canonical bubbles of $(\cK,\bx)$ for $\bx\in \cS_{\I_{\nu}^*}$ can be listed as follows:
\begin{enumerate}
\item[(a)]
the Ricci-flat Taub-NUT space $(\dC^2, g_{TN})$,
\item[(b)]  Euclidean space $\dR^3$,
\item[(c)]
ALE Eguchi-Hanson space $(X_{EH}^4,g_{EH})$,
\item[(d)]  flat orbifolds $\dR^4/\dZ_2$, $(\dR^3\times S^1)/\dZ_2$, $\dR^3/\dZ_2$, $(\dR^2\times S^1)/\dZ_2$ and  $\dR^2/\dZ_2$,
\item[(e)] the McLean metric $(\PP^1,d_{ML})$ with bounded diameter.
\end{enumerate}

\end{enumerate}

\end{proposition}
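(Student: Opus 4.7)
My plan is to follow the same contradiction/rescaling framework as in the proof of Proposition~\ref{p:regularity-scale-I_v}: for each reference point $\bx_T \in \cS_{\I_\nu^*}$ I pick the metric rescaling $\tilde g_T \equiv \fs_\nu^*(\bx_T)^{-2} g_T$, extract a pointed Gromov-Hausdorff limit, read off the nilpotent rank of $\Gamma_\epsilon(\bx_T)$ in the limit, and invoke Theorem~\ref{t:maximal-nilpotent-rank} together with the $1$-Lipschitz continuity of $r_{k,\alpha}$ to obtain the required two-sided bound on the regularity scale. The new structural feature compared to the $\I_\nu$ case is the presence of the $\dZ_2$-quotient together with the four Eguchi-Hanson caps, so I split $\cS_{\I_\nu^*}$ into the \emph{EH region} $\bigcup_{\lambda=1}^4 \{\bx : d_{g_T}(q_\lambda, \bx) \le \bar\iota_0 e^{-T/2\nu}T^{1/2}\}$ and its complement $\cS_{\I_\nu^*}^{\mathrm{OV}}$, which is essentially the $\dZ_2$-quotient of an open piece of the multi-Ooguri-Vafa geometry on $\mathcal{N}_{2\nu}^4$.

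On $\cS_{\I_\nu^*}^{\mathrm{OV}}$ I would lift to the smooth double cover $(\mathcal{N}_{2\nu}^4, g_{\delta,2\nu})$ and apply the bubbling analysis in the proof of Proposition~\ref{p:regularity-scale-I_v} verbatim with $\nu$ replaced by $2\nu$; this reproduces the four rescaling regimes there, gives the rescaling function $\fs_\nu^*(\bx) = e^{-T/2\nu}L_T(\bx)^{1/2}\fr(\pi_{Q^3}(\bx))$, and identifies the lifted canonical bubbles as Taub-NUT, $\dR^3$, $\dR^2 \times S^1$, or $(\PP^1, d_{ML})$. Descending through the $\dZ_2$-action yields the remaining bubbles: if the projection of $\bx_T$ stays a definite $g_{2\nu,T}$-distance away from the axis $\pi_{Q^3}^{-1}(\{q_\pm\})$, then $\dZ_2$ acts freely at the rescaled scale and the quotient limit coincides with the lifted one; otherwise the $\dZ_2$-action survives in the limit and one obtains the orbifold bubbles $(\dR^3 \times S^1)/\dZ_2$, $\dR^3/\dZ_2$, $(\dR^2\times S^1)/\dZ_2$, and $\dR^2/\dZ_2$, corresponding respectively to the ALF-damage-zone, intermediate, and large-scale regimes near the $\dZ_2$-fixed axis. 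In every case Theorem~\ref{t:maximal-nilpotent-rank} applied on $\mathcal{N}_{2\nu}^4$ and descended to the quotient yields the required maximal nilpotent rank and hence the lower bound on $r_{k,\alpha}$.

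In the EH region around a fixed $q_\lambda$ I rescale by $\lambda_T = \ee_\lambda^{-2}e^{T/2\nu}$ when $d_{g_T}(q_\lambda,\bx_T) \lesssim \ee_\lambda^2 e^{-T/2\nu}$. The explicit formula defining $\omega_\delta^B$ near $q_\lambda$ in Proposition~\ref{p:gluing-I_v*} is a cutoff interpolation between the Eguchi-Hanson K\"ahler form and the orbifold multi-Ooguri-Vafa K\"ahler form, so the approximation estimates~\eqref{e:orbifold-approximation},~\eqref{e:EH-approximation} together with the gluing-parameter bound~\eqref{e:orbifold-parameter} imply pointed $C^\infty$-convergence of $(\cK, \tilde g_T, \bx_T)$ to the Eguchi-Hanson space $(X^4, g_{\EH}, \bx_\infty)$. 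When instead $\ee_\lambda^2 e^{-T/2\nu} \ll d_{g_T}(q_\lambda,\bx_T) \ll e^{-T/2\nu}T^{1/2}$ I choose $\lambda_T = e^{T/2\nu}/\fd_*(\bx_T)$; the asymptotic flatness~\eqref{e:EH-approximation} of Eguchi-Hanson then implies that the rescaled metric is uniformly close to the flat cone $\dR^4/\dZ_2$ in a fixed Gromov-Hausdorff neighborhood of $\bx_T$, producing $\dR^4/\dZ_2$ as the canonical bubble limit. In both sub-cases the local fundamental group $\Gamma_\epsilon(\bx_T)$ attains the maximal rank compatible with the codimension of collapse (trivial on $X^4_{\EH}$, or $\dZ_2$ on $\dR^4/\dZ_2$ away from the vertex), so the orbifold version of Theorem~\ref{t:maximal-nilpotent-rank} gives the lower bound on $r_{k,\alpha}$, while the upper bound follows from the $1$-Lipschitz property and the unbounded curvature at $q_\lambda$.

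The main obstacle will be verifying that these two pieces of analysis match consistently at the outer boundary $d_{g_T}(q_\lambda,\bx_T) \sim e^{-T/2\nu}T^{1/2}$ of the EH region. There, the rescaled limit arising from the EH side is the tangent cone $\dC^2/\dZ_2$ at infinity of Eguchi-Hanson, which must be shown to agree up to a bounded multiplicative constant with the rescaled limit produced from the OV side, namely the tangent cone $\dR^4/\dZ_2$ at the orbifold point $q_\lambda \in \mathcal{N}_{2\nu}^4/\dZ_2$ of the orbifold multi-Ooguri-Vafa metric $\check g_{\delta,\nu}$. This identification requires matching the explicit normal coordinates of~\eqref{e:orbifold-approximation} with the Euclidean coordinates at infinity of Eguchi-Hanson, and verifying that the two candidate formulas for $\fs_\nu^*(\bx)$ --- the one involving $\fd_*$ on the EH side and the one involving $L_T \fr$ on the OV side --- agree up to universal constants on the overlap annulus, which in turn reduces to the elementary estimates $L_T(\bx) \sim T$ and $\fr(\pi_{Q^3}(\bx)) \sim T^{-1/2}$ at points with $d_{g_{2\nu,T}}(q_\lambda,\bx) \sim T^{1/2}$.
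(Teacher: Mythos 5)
Your two-piece decomposition into an ``OV region'' (handled by lifting to the double cover $\mathcal{N}_{2\nu}^4$ and quoting Proposition~\ref{p:regularity-scale-I_v} with $\nu$ replaced by $2\nu$) and an ``EH region'' near the four fixed points is a reasonable reorganization of the paper's five-region decomposition, and the double-cover idea is sound where you apply it. However, the stated criterion for when the $\dZ_2$-orbifold structure survives into the bubble limit---``definite $g_{2\nu,T}$-distance from the axis''---is at the wrong scale: what matters is whether the $\dZ_2$-fixed set stays at bounded distance after the bubble rescaling by $\fs_{\nu}^*(\bx_T)^{-1}$, not at the fixed unit scale $g_{2\nu,T}$, and that depends on $\fr_T$ and $L_T$, not on the $g_{2\nu,T}$-distance.

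The genuine gap is in your treatment of the ``EH region.'' You claim that for the whole range $\ee_{\lambda}^2 e^{-T/2\nu} \ll d_{g_T}(q_\lambda,\bx_T) \ll e^{-T/2\nu}T^{1/2}$ the asymptotic flatness of Eguchi--Hanson forces the rescaled limit to be $\dR^4/\dZ_2$. That is false. The orbifold Gibbons--Hawking metric near $q_\lambda$ has potential $V_T \approx T$ (no monopole sits at $q_\pm$), so it is approximately a flat metric on an $S^1$-bundle whose fiber has $g_T$-length comparable to $e^{-T/2\nu}T^{-1/2}$. The flat-cone $\dR^4/\dZ_2$ approximation is therefore only valid up to $d_{g_T} \lesssim e^{-T/2\nu}T^{-1/2}$; at larger $d$ the circle fiber becomes visible and then collapses relative to the bubble scale, and the canonical bubble runs through $(\dR^3\times S^1)/\dZ_2$, then $\dR^3/\dZ_2$, then $(\dR^2\times S^1)/\dZ_2$ as the other fixed points and monopole points enter. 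This is exactly the paper's Region $\cS_{\I_{\nu}^*,3}$ Case (B) with its four sub-cases, which your argument collapses into one. Relatedly, you attribute $(\dR^3\times S^1)/\dZ_2$ and $\dR^3/\dZ_2$ to the OV region, but they cannot arise there: at $d_{g_T}(q_\lambda,\bx) > \bar\iota_0 e^{-T/2\nu}T^{1/2}$ the whole neighborhood of $q_\lambda$ has already collapsed to a point at the bubble scale, so the OV region produces at most $(\dR^2\times S^1)/\dZ_2$, $\dR^2/\dZ_2$, and $(\PP^1,d_{ML})$. This misidentification also undermines your lower-bound argument for $r_{k,\alpha}$ in that range, since the actual bubble limits there are two- or three-dimensional and $\Gamma_\epsilon(\bx_T)$ must attain nilpotent rank $1$ or $2$, not rank $0$ as you assert for $\dR^4/\dZ_2$. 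The missing observation is that the critical comparison scale inside the EH region is the circle length $e^{-T/2\nu}T^{-1/2}$, and the transition range must be subdivided around it as in the paper.
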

\begin{proof}The  proof is very similar to that of Proposition \ref{p:regularity-scale-I_v}.
Most of these bubbles also appeared in Proposition \ref{p:regularity-scale-I_v},
so  in the following proof, we will mainly focus on the cases where new type of bubbles limits occur.
Notice that, the singular region $\cS_{\I_{\nu}^*}$
can be subdivided into
the deepest ALF bubble region $\cS_{\I_{\nu}^*,1}$, the bubble damage zone region $\cS_{\I_{\nu}^*,2}$ and the large scale region $\cS_{\I_{\nu}^*,3}$. In addition, in our current case, the large scale region $\cS_{\I_{\nu}^*,3}$ will be further subdivided into two more pieces for analyzing the singularity behavior around the ALE bubbles.

Given a sequence $T_j\to+\infty$, let $g_j \equiv g_{T_j}$ be a sequence of approximately hyperk\"ahler metrics on $\cS_{\I_{\nu}^*}$ given by the gluing construction in Section \ref{ss:qmov}.
First, let us summarize the rescaled geometries as follows which is similar to the regularity and bubbling analysis around $\I_{2\nu}$-fibers:

\begin{flushleft}
{\bf Region $\cS_{\I_{\nu}^*,1}$ (deepest ALF bubbles):}
\end{flushleft}
This region consists of the points $\bx\in\cS_{\I_{\nu}^*}$ satisfying $d_{Q^3}(\underline{\bx},p_i) \leq T_j^{-1}$ for some $1\leq i\leq \nu$, where $\underline{\bx}=\pi_{Q^3}(\bx)\in Q^3$.
It was shown in the proof of Proposition \ref{p:regularity-scale-I_v} that, if the rescaled metric $\tilde{g}_j=\lambda_j^2 g_j$ is given by $\lambda_j= e^{\frac{T_j}{2\nu}}\cdot T_j^{\frac{1}{2}}$, then the canonical bubble limit
in this region is the Ricci-flat Taub-NUT space $(\dC^2,g_{TN},\bx_{\infty})$.
The regularity scale estimate just follows from the
non-flatness of $(\dC^2,g_{TN},\bx_{\infty})$
and the $C^{\infty}$-regularity of the convergence.

\begin{flushleft}
{\bf Region $\cS_{\I_{\nu}^*,2}$ (ALF bubble damage zone):}
\end{flushleft}
This region contains all the points $\bx\in\cS_{\I_{\nu}^*}$ satisfying $2T_j^{-1}\leq d_{Q^3}(\underline{\bx},p_i) \leq \iota_0$   for some $1\leq i\leq \nu$.
Let us choose the rescaled metric $\tilde{g}_j =\lambda_j^2 g_j $ with the rescaling factor
\begin{equation}
\lambda_j = e^{\frac{T_j}{2\nu}}\cdot T_j^{-\frac{1}{2}}\cdot \fr_j^{-1},
\end{equation}
 then the canonical bubbles arising in this region are listed as follows:
\begin{enumerate}
\item[(a)] If a  sequence of reference points $\bx_j$ satisfy that there is some $\sigma_0>0$ such that
\begin{equation}
2T_j^{-1}\leq \fr_j\leq \frac{1}{(\sigma_0)^2} \cdot T_j^{-1} ,
\end{equation}
 then the canonical bubble is a
Ricci-flat Taub-NUT space $(\dC^2,g_{TN},\bx_{\infty})$,

\item[(b)] If a sequence of reference points $\bx_j$ satisfy the condition
\begin{equation}
\frac{\fr_j}{T_j^{-1}}\to +\infty\ \text{and}\ \fr_j\to0,
\end{equation}
then the canonical bubble is
the Euclidean space $(\dR^3,g_{\dR^3},\bx_{\infty})$ such that the limiting reference point $\bx_{\infty}$ satisfies $d_{g_{\dR^3}}(0^3,\bx_{\infty})=1$.
\item[(c)] Now we consider the complement of the Case (a) and Case (b). That is, for any sequence of reference points $\bx_j$, there is some $r_0>0$ such that
$r_0 \leq \fr_j \leq \iota_0$. In the case,
the canonical bubble is
the flat orbifold $((\dR^2\times S^1)/\dZ_2,g_{\FF},\bx_{\infty})$, where $g_{\FF}$ is the $\dZ_2$-quotient of the standard flat product metric on $\dR^2\times S^1$.
\end{enumerate}
The remaining computations coincide with those in the proof of Proposition \ref{p:regularity-scale-I_v}.

\begin{flushleft}
{\bf Region $\cS_{\I_{\nu}^*,3}$ (large scale regions):}
\end{flushleft}
In comparison with the large scale region $\cS_{\I_{\nu}^*,3}$ in the proof of Proposition \ref{p:regularity-scale-I_v}, we need two additional pieces to characterize the rescaling geometry around the ALE bubbles, which are removed from this region for the moment.  That is,
Region $\cS_{\I_{\nu}^*,3}$ contains all the points $\bx\in \cS_{\I_{\nu}^*}$ determined by the following conditions
\begin{align}
d_{g_T}(\bx,q_\lambda) & \geq 2\ee_{\lambda}\cdot e^{-\frac{T}{2\nu}}, \  \forall \lambda,  1\leq \lambda\leq 4,
\\
d_{Q^3}(\underline{\bx},p_i)  & \geq 2\iota_0, \  \forall i, 1\leq i\leq \nu,
\\
L_T(\bx) & \geq  2.
\end{align}

To analyze the regularity scales in Region $\cS_{\I_{\nu}^*,3}$, we will choose two different rescaling metrics depending on if the reference points $\bx_j$ are close to the orbifold resolution loci $\{q_1,\ldots, q_4\}$.

\begin{flushleft}
{\bf Case (A):}
\end{flushleft}
First, we consider the case that
a sequence of reference points $\bx_j\in \cS_{\I_{\nu}^*,3}$ far from any point in $\{q_1,q_2,q_3, q_4\}$, that is,  $\bx_j$ satisfies \begin{equation}d_{g_j}(\bx_j,q_\lambda)\geq 2\bar{\iota}_0\cdot e^{-\frac{T}{2\nu}}\cdot T_j^{\frac{1}{2}}\
\text{for all}\  1\leq \lambda\leq 4.\end{equation}
The arguments in this case are similar to the  case of $\I_{\nu}$-fibers  and  the rescaled metrics $\tilde{g}_j=\lambda_j^2 \cdot g_j$
are determined by
\begin{equation}
\lambda_j = e^{-\frac{T_j}{2\nu}}\cdot L_j^{-\frac{1}{2}}\cdot \fr_j^{-1}.
\end{equation}
The canonical bubbles far from $q_m$ can be classified as follows:
\begin{enumerate}
\item If $\bx_j$ satisfies $\frac{1}{2}\leq \fr_j\leq R_0$, then the canonical bubble is
the flat Riemannian orbifold $((\dR^2\times S^1)/\dZ_2,g_{\FF},\bx_{\infty})$ such that the distance from the limiting reference point $\bx_{\infty}$ to the orbifold singularity equals $1$.

 \item If $\bx_j$ satisfies $\fr_j\to+\infty$ and $L_j\to +\infty$, then the  canonical bubble is
the  flat cone $(\dR^2/\dZ_2, g_{\FF}, \bx_{\infty})$ such that the distance from $\bx_{\infty}$ to the cone vertex is equal to $1$.
\item If  $\bx_j$ satisfies $2\leq L_j \leq T_0$ for some positive constant $T_0>0$ independent of $T$, then the canonical bubble is $(\PP^1, d_{ML},\bx_{\infty})$ such that $\bx_{\infty}\in \PP^1\setminus \mathcal{S}$ and $\mathcal{S}$ is the finite singular set in $\PP^1$.
\end{enumerate}
The remaining computations are exactly same as the proof of Proposition \ref{p:regularity-scale-I_v} and hence we omit the details. To see the regularity scale estimates at $\bx_j$, we can observe that $\Gamma_{\epsilon_0}(\bx_j)$ has maximal rank in each of the above cases, where $\epsilon_0$ is the constant in Theorem \ref{t:maximal-nilpotent-rank}. So Theorem \ref{t:maximal-nilpotent-rank} gives uniform curvature estimate
at $\bx_j$ and hence $r_{k,\alpha}(\bx_j)\geq \underline{v}_{k,\alpha}>0$ uniformly bounded from below. On the other hand, around the orbifold singularities, one can use the curvature blowing-up behavior of the Eguchi-Hanson space and the Lipschitz property of $r_{k,\alpha}$, which guarantees that
\begin{align}
r_{k,\alpha}(\bx_j)\leq 2.
\end{align}
  Rescaling back to the original metrics $g_j$, the desired regularity scale estimate follows immediately.

\begin{flushleft}
{\bf Case (B):}
\end{flushleft}
Next, we will handle the case
that a sequence of reference points $\bx_j$ satisfy
 \begin{equation}
 d_{g_j}(\bx_j, q_\lambda) \geq 2\ee_{\lambda}\cdot e^{-\frac{T_j}{2\nu}}
 \end{equation}
 for all $1\leq \lambda\leq 4$ and
 \begin{equation}
 d_{g_j}(\bx_j, q_\lambda) \leq \bar{\iota}_0\cdot e^{-\frac{T_j}{2\nu}}\cdot T^{\frac{1}{2}}
 \end{equation}
 for some $1\leq \lambda\leq 4$.
In this case, we will use another rescaling to see additional bubbles around $q_\lambda$.   Let
$\tilde{g}_j = \lambda_j^2 g_j$ be the rescaled metrics given by
\begin{equation}
\lambda_j = d_{g_j}(\bx_j,q_\lambda)^{-1},
\end{equation}
then rescaling geometries can be further subdivided into the following cases:
\begin{enumerate}
\item If $\bx_j$ are chosen such that
\begin{equation}
\frac{e^{\frac{T_j}{2\nu}}\cdot  d_{g_j}(\bx_j,q_\lambda)}{T_j^{-\frac{1}{2}}} \to 0 \ \text{for some}\ 1\leq \lambda\leq 4,
\end{equation}
then the rescaled limit is  the asymptotic cone of the Eugchi-Hanson
space  which is
isometric to the flat cone
$(\dR^4/\dZ_2,g_{\FF},\bx_{\infty})$ such that the distance from $\bx_{\infty}$ to the cone tip is equal to $1$.

\item
This case contains the points
 $\bx_j$ which satisfy the condition that there is some constant $\sigma_0>0$ such that
\begin{equation}
\frac{e^{\frac{T_j}{2\nu}}\cdot  d_{g_j}(\bx_j,q_\lambda)}{T_j^{-\frac{1}{2}}} \geq \frac{1}{\sigma_0}
\end{equation} for all $1\leq \lambda\leq 4$ and
 \begin{equation}
\frac{e^{\frac{T_j}{2\nu}}\cdot  d_{g_j}(\bx_j,q_\lambda)}{T_j^{-\frac{1}{2}}} \leq \sigma_0
\end{equation}
for some $1\leq \lambda\leq 4$. The canonical bubble in this case becomes the flat orbifold $((\dR^3\times S^1)/\dZ_2,g_{\FF},\bx_{\infty})$

\item In this case, $\bx_j$ satisfies the condition that there is some $c_0>0$ such that
\begin{equation}
\frac{e^{\frac{T_j}{2\nu}}\cdot  d_{g_j}(\bx_j,q_\lambda)}{T_j^{-\frac{1}{2}}} \to +\infty
\end{equation}
for all $1\leq \lambda\leq 4$ and
\begin{equation}
\frac{e^{\frac{T_j}{2\nu}}\cdot  d_{g_j}(\bx_j,q_\lambda)}{T_j^{\frac{1}{2}}} \to 0
\end{equation}
for some $1\leq \lambda\leq 4$.
The canonical rescaled limit is isometric to the flat orbifold $(\dR^3/\dZ_2,g_{\FF},\bx_{\infty})$ and the orbifold metric $g_{\FF}$ is the $\dZ_2$-quotient of the Euclidean metric $g_{\dR^3}$, where  the finite group $\dZ_2\leq \Isom(\dR^3)$
acts on $\dR^2$ by rotation with angle $\pi$ and acts on $\dR$ by reflection so that there is only one orbifold singular point $0^3$. Moreover, $d_{g_{\FF}}(\bx_{\infty},0^3)=1$.

\item In this case, $\bx_j$ satisfies the condition that there is some $0<c_0<\frac{\bar{\iota}_0}{10}$ such that
\begin{equation}
d_{g_j}(\bx_j, q_\lambda) \geq c_0\cdot  e^{-\frac{T_j}{2\nu}} \cdot T_j^{\frac{1}{2}}
\end{equation}
for all $1\leq \lambda\leq 4$ and
\begin{equation}
d_{g_j}(\bx_j, q_\lambda) \leq \bar{\iota}_0\cdot e^{-\frac{T_j}{2\nu}}\cdot T_j^{\frac{1}{2}}.
\end{equation}
for some $1\leq \lambda\leq 4$.
Then it is straightforward to check that the rescaled limit is
 $((\dR^2\times S^1)/\dZ_2,g_{\FF},\bx_{\infty})$
with $d_{g_{\FF}}(\bx_{\infty},0^3)=1$.
\end{enumerate}
The regularity scale estimates just follow from the same arguments as before, hence
the bubble classification in Region $\cS_{\I_{\nu}^*,3}$ is done.

Next, we will finish the proof by computing  the regularity scale around the four ALE gluing loci inside Region  $\cS_{\I_{\nu}^*,3}$.

\begin{flushleft}
{\bf Region $\cS_{\I_{\nu}^*,4}$ (deepest ALE bubbles):}
\end{flushleft}
This region contain the points $\bx$ satisfying
$d_{g_\delta^C}(\bx,q_\lambda)\leq \ee_{\lambda}^2\cdot e^{-\frac{T_j}{2\nu}}$ for some $1\leq \lambda\leq 4$.
Now take a sequence of reference points $\bx_j$ in this region.
So it straightforward follows from the construction of the gluing region  that if we rescale the metric $g_j$ by
\begin{equation}
\tilde{g}_j \equiv \ee_{\lambda}^{-4} \cdot e^{\frac{T_j}{\nu}} \cdot g_j,
\end{equation}
 then the rescaled limit is isometric to a Ricci-flat Eguchi-Hanson space  $(X_{EH}^4, g_{EH},\bx_{\infty})$.

\begin{flushleft}
{\bf Region $\cS_{\I_{\nu}^*,5}$ (ALE bubble damage zone):}
\end{flushleft}

In this region, all the points $\bx$ satisfy
$ 2 \ee_{\lambda}^2\cdot e^{-\frac{T_j}{2\nu}}\leq d_{g_\delta^C}(\bx,q_\lambda)\leq \ee_{\lambda}\cdot e^{-\frac{T_j}{2\nu}}$ for some $1\leq \lambda\leq 4$. Now the metrics $g_j$ will be rescaled by
\begin{align}
\tilde{g}_j & \equiv \lambda_j^{-2} \cdot g_j,
\\
\lambda_j & \equiv d_{g_j}(\bx_j,q_\lambda).
\end{align}
 This region can be decomposed into two pieces:
\begin{enumerate}
\item If $\bx_j$ are chosen such that there is some $\sigma_0>0$ such that
\begin{equation}
2\ee_{\lambda}^2\cdot e^{-\frac{T_j}{2\nu}}\leq d_{g_\delta^C}(\bx,q_\lambda)\leq \sigma_0\cdot \ee_{\lambda}^2\cdot e^{-\frac{T_j}{2\nu}},
\end{equation}
then the rescaled limit is isometric to an Eguchi-Hanson space  $(X_{EH}^4, g_{EH},\bx_{\infty})$.

\item  If $\bx_j$ satisfies
\begin{equation}
\frac{d_{g_\delta^C}(\bx,q_\lambda)}{\ee_{\lambda}^2\cdot e^{-\frac{T_j}{2\nu}}} \to +\infty  \ \text{and}\ d_{g_\delta^C}(\bx,q_\lambda)\leq \ee_{\lambda}\cdot e^{-\frac{T_j}{2\nu}},
\end{equation}
then the rescaled limit is the asymptotic cone of the Eugchi-Hanson
space which is isometric to the flat cone $(\dR^4/\dZ_2,g_{\FF},\bx_{\infty})$
with $d_{g_{\FF}}(\bx_{\infty}, 0^*)=1$,
where $0^*$ is the cone tip of $\dR^4/\dZ_2$.
\end{enumerate}

This completes the proof of the proposition.
\end{proof}

\begin{remark}
\label{d2remark}
Recall that the scale parameter $\ee_{\lambda}$ in the definition of $\mathfrak{d}_*(\bx)$ satisfies the constraint (see \eqref{e:orbifold-parameter})
\begin{equation}
\ee_{\lambda} \leq \eta_0\cdot T^{-\frac{1}{2}},
\end{equation}
where $\eta_0 > 0$ is sufficiently small but independent of $T$. For simplicity, in our paper, $\ee_{\lambda}$ is further required to satisfy $\ee_{\lambda}/T^{-\frac{1}{2}} \to 0$ as $T\to+\infty$ which will give four copies of Eguchi-Hanson bubbles. However, our arguments still hold if
\begin{equation}\underline{\eta}_0\leq \ee_{\lambda}/T^{-\frac{1}{2}} \leq  \eta_0\end{equation} for some definite constant $\underline{\eta}_0>0$ as $T\to+\infty$. It is possible to show that in this case, two copies of ALF-$D_2$ spaces will appear as bubble limits. See Section \ref{ss:more-bubbles} for more discussions corresponding to the second case.
\end{remark}

\subsection{Weighted H\"older spaces}

\label{ss:weighted-elliptic-regularity}

In this subsection, we define weighted H\"older space using the regularity scale function. In general, the weighted H\"older space can be defined in the following way.

\begin{definition}[Weighted H\"older spaces]
\label{d:weighted-space}
Let $(M,g)$ be a Riemannian manifold. Let $\rho>0$ be a smooth function. Let $K$ be any subset of $M$, then for any $k\in\mathbb{N}$ and $\alpha\in(0,1)$, with respect to the function $\rho$, the weighted H\"older norm of a tensor field $\chi\in T^{p,q}(M)$ on $K$ is defined as follows:
\begin{equation}
\|\chi\|_{C_{\mu}^{k,\alpha}(K)}
\equiv \sum\limits_{m= 0}^{k}\Big\|\rho^{\mu+m}\cdot \nabla^m\chi\Big\|_{C^0(K)} + [\chi]_{C_{\mu}^{k,\alpha}(K)},
\end{equation}
and
\begin{equation}
[\chi]_{C_{\mu}^{k,\alpha}(K)} \equiv \sup_{\substack{\gamma\ \text{is a geodesic}\\ \gamma(0), \gamma(1)\in K}} \Big\{\min\{\rho^{\mu+ k +\alpha}(\gamma(0)),\rho^{\mu+ k +\alpha}(\gamma(1))\}\cdot\frac{|\nabla^k\chi(\gamma(0))-\nabla^k\chi(\gamma(1))|}{L(\gamma)^{\alpha}}\Big\},
\end{equation}
where $L(\gamma)$ means the length of $\gamma$ and the difference of the two covariant derivatives is defined in terms of the parallel translation along~$\gamma$.
\end{definition}

We will choose the function $\rho$ on $\cK$ as the global smooth function $\fs(\bx)$ defined as follows,
\begin{align}
\label{d:weight}
\fs(\bx)
=
\begin{cases}
\fs_{\cG}(\bx), & \bx \in \cS_{\ALG},
\\
\fs_{\nu}(\bx), & \bx \in \cS_{\I_{\nu}},
\\
\fs_{\nu}^*(\bx), & \bx \in \cS_{\I_{\nu}^*},
\\
\fs_{\cR}(\bx), & \bx \in \cR_\delta.
\end{cases}
\end{align}
The previous subsections give explicit estimates for the $C^{k,\alpha}$-regularity scales $r_{k,\alpha}(\bx)$ for all points $\bx$ in $\cK$. We have proved that there are uniform constants $\underline{v}_{k,\alpha}>0$ and $\bar{v}_{k,\alpha}>0$ independent of $\delta>0$ such that
\begin{equation}
\label{e:regularity-scale-estimates}
\underline{v}_{k,\alpha}   \leq \frac{r_{k,\alpha}(\bx)}{\fs(\bx)}\leq \bar{v}_{k,\alpha}.
\end{equation}

We next prove a uniform weighted Schauder estimate for the elliptic operator
\begin{align}
d^+_{g_{\delta}^C} :  \mathring{\Omega}^1(\cK) \rightarrow \Omega^2_+(\cK)
\end{align}
defined by
\begin{equation}
d^+_{g_{\delta}^C} \eta \equiv \frac{1}{2}(d\eta + *_{g_{\delta}^C} d\eta)
\end{equation}
with
\begin{align}
\mathring{\Omega}^1(\cK)\equiv\{\eta\in\Omega^1(\cK)| d^*_{g_{\delta}^C}\eta =0\},
\end{align}
which will be used in the weighted analysis in Section \ref{s:proof-of-main-theorem}.

\begin{proposition}[Weighted Schauder estimate]
\label{p:weighted-schauder}
For any sufficiently small positive parameter $\delta\ll1$, let $g_\delta^C$ be the approximately hyperk\"ahler metric on $\cK$.
Then there exists $C>0$  independent of $\delta>0$ such that for any $\eta\in\mathring{\Omega}^1(\cK)$ it holds that
	\begin{equation}
		\|\eta\|_{C_{\mu}^{1,\alpha}(\cK)} \leq C(\| d^+_{g_\delta^C} \eta\|_{C_{\mu+1}^{0,\alpha}(\cK)} + \| \eta \|_{C_{\mu}^{0}(\cK)}).\label{e:global-weighted-schauder}
	\end{equation}
\end{proposition}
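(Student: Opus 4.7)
The plan is to derive \eqref{e:global-weighted-schauder} from a pointwise local Schauder estimate by rescaling at the regularity scale, exploiting the uniform $C^{k,\alpha}$-bounds on $g_\delta^C$ on balls of radius $\fs(\bx)$ guaranteed by \eqref{e:regularity-scale-estimates}. The analytic input is the classical interior Schauder estimate for the elliptic operator
\[
\eta\mapsto(d^+\eta,\,d^*\eta)\colon \Omega^1\to\Omega^2_+\oplus\Omega^0,
\]
which on the unit ball $B_1$ of any Riemannian metric uniformly $C^{k,\alpha}$-close to Euclidean yields
\[
\|\tilde\eta\|_{C^{1,\alpha}(B_{1/2})} \leq C\bigl(\|d^+\tilde\eta\|_{C^{0,\alpha}(B_1)} + \|d^*\tilde\eta\|_{C^{0,\alpha}(B_1)} + \|\tilde\eta\|_{C^0(B_1)}\bigr)
\]
with an absolute constant $C$; since $\eta\in\mathring{\Omega}^1(\cK)$, the $d^*$-term vanishes throughout.

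Fix $\bx\in\cK$ and set $r=\fs(\bx)$. By \eqref{e:regularity-scale-estimates} and Definition~\ref{d:local-regularity}, lifting $g_\delta^C$ to the Riemannian universal cover of $B_{r}(\bx)$ and rescaling by $\tilde g=r^{-2}\hat g$ produces a metric on $B_1(\hat\bx)$ uniformly $C^{k,\alpha}$-close to Euclidean, with constants independent of $\delta$ and $\bx$. Pulling $\eta$ back to $\tilde\eta$, the local estimate applies. The weight function $\fs$ in Definition~\ref{d:weighted-space} is chosen precisely so that the weighted norm is scale-invariant under $g\mapsto r^{-2}g$: for a $1$-form on a $4$-manifold one checks
\[
\fs^{\mu+m}|\nabla^m\eta|_g=\fs^{\mu-1}|\nabla^m\tilde\eta|_{\tilde g},\qquad \fs^{\mu+1}|d^+\eta|_g=\fs^{\mu-1}|d^+\tilde\eta|_{\tilde g},
\]
and the H\"older difference quotient picks up the same common factor $\fs^{\mu-1}$ after accounting for $L_{\tilde g}(\gamma)=r^{-1}L_g(\gamma)$. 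Thus the local Schauder estimate transfers verbatim to a pointwise weighted inequality at $\bx$. The $1$-Lipschitz property of $\fs$ (cf.~Remark~\ref{r:1-Lipchitz}) ensures that $\fs$ is comparable to $\fs(\bx)$ throughout $B_{r/2}(\bx)$, so the sup-norms in the rescaled chart are controlled by the global weighted sup-norms on $\cK$.

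For the weighted H\"older seminorm of $\nabla\eta$, pairs of points $\bx,\by\in\cK$ are handled in two regimes. When $d_{g_\delta^C}(\bx,\by)\leq\tfrac{1}{10}\min\{\fs(\bx),\fs(\by)\}$, both endpoints lie inside a single rescaled chart around $\bx$ and the local $C^{1,\alpha}$-bound above directly produces the required weighted difference-quotient estimate. When $d_{g_\delta^C}(\bx,\by)\geq\tfrac{1}{10}\min\{\fs(\bx),\fs(\by)\}$, the difference $|\nabla\eta(\bx)-\nabla\eta(\by)|$ is controlled by the triangle inequality from the pointwise $C^1$-bounds already established at $\bx$ and $\by$, combined with the elementary inequality
\[
\frac{\min\{\fs(\bx),\fs(\by)\}^{\mu+1+\alpha}}{d_{g_\delta^C}(\bx,\by)^{\alpha}}\leq 10^{\alpha}\min\{\fs(\bx),\fs(\by)\}^{\mu+1}.
\]
Assembling these pointwise bounds over all of $\cK$ produces \eqref{e:global-weighted-schauder} with a constant independent of $\delta$.

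The main obstacle will be exponent bookkeeping uniformly across all bubble regions of Section~\ref{s:metric-geometry}, ensuring that the locally uniform $C^{k,\alpha}$-geometry delivered by \eqref{e:regularity-scale-estimates} yields a Schauder constant genuinely independent of $\delta$ at the transition zones between $\cR_\delta$, $\cS_{\I_{\nu}}$, $\cS_{\I_{\nu}^*}$ and $\cS_{\ALG}$, and verifying that the $1$-Lipschitz comparison between values of $\fs$ at nearby points is strong enough to glue the local estimates into a single global statement without $\delta$-dependent losses.
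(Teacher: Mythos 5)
Your argument is correct and follows essentially the same route as the paper: canonical rescaling by the weight $\fs(\bx)$ (comparable to $r_{k,\alpha}(\bx)$ by \eqref{e:regularity-scale-estimates}), interior Schauder for the elliptic operator $d^+\oplus d^*$ on the universal cover of a ball with uniformly controlled geometry, scale-invariance of the weighted norms, and the $1$-Lipschitz property of the weight to patch local estimates. The paper's proof leaves exactly the details you spell out (the scaling identities and the near/far regime split for the H\"older seminorm) to a reference, so your proposal is just a more explicit version of the same argument.
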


\begin{proof}
The proof follows from the explicit expression of the $C^{k,\alpha}$-regularity scales in the above discussions.
Under the canonical rescaling,
\begin{align}
\tilde{g}_{\delta}^C= r_{k,\alpha}(\bx)^{-2}\cdot  g_\delta^C,\end{align}
the geodesic ball $B_{1/2}(\bm{x})$ under the rescaled metric $g_{\delta}^C$ has uniformly bounded $C^{k,\alpha}$-geometry (independent of $\delta$) for each $\alpha\in(0,1)$ and $k\in\{0,1\}$. So there is a uniform constant $C>0$ (independent of $\delta$) such that the standard Schauder estimate holds for every $\eta\in\mathring{\Omega}^1(\cK)$ and $\bm{x}\in \cK$ under the rescaled metric $\tilde{g}_{\delta}^C$, \begin{equation}
\|\eta\|_{C^{k+1,\alpha}(B_{1/4}(\bm{x}))} \leq C\Big(\| d^+_{\tilde{g}_{\delta}^C}\eta \|_{C^{k,\alpha}(B_{1/2}(\bm{x}))} +\|\eta\|_{C^{0}(B_{1/2}(\bm{x}))}\Big).\label{e:ball-standard-schauder}
\end{equation}
Then the global weighted Schauder estimate \eqref{e:global-weighted-schauder} just follows from rather standard rescaling arguments. In fact, the only crucial point is to verify that for every $\bm{x}\in \cK$,
 the function $\fs$ is roughly a constant in the ball $B_{\frac{1}{4}r_{k,\alpha}(\bx)}(\bm{x})$, in the sense that there is a uniform constant $C>0$ such that for any $\by\in B_{\frac{1}{4}r_{k,\alpha}(\bx)}(\bm{x})$,
 \begin{equation}
C^{-1}\cdot  \fs(\bx)\leq  \fs(\by) \leq  C\cdot \fs(\bx).\label{e:weight-function-control}
 \end{equation}
The verification of (\ref{e:weight-function-control}) follows from (\ref{e:regularity-scale-estimates}) and Remark \ref{r:1-Lipchitz}. The remainder of the proof is almost identical to those given in  \cite[Section~8]{HSVZ} or \cite[Section~4]{SZ}.

\end{proof}

\section{Liouville theorems}
\label{s:Liouville}

The proof of Theorem \ref{t:main-theorem} will require
vanishing theorems on various bubble limits, which we refer to as Liouville theorems. In Subsection~\ref{ss:Liouville-noncompact}, we will consider the case
of non-compact bubble limits. Then in Subection~\ref{ss:Liouville-compact},  we will consider the case of a compact limit.

\subsection{Liouville type theorems on non-compact spaces}

\label{ss:Liouville-noncompact}

First, we will prove a Liouville type theorem
on flat sectors in $\dR^2$.
This vanishing result will be used in proving the uniform injectivity estimate as the contradiction sequence concentrates around the ALG bubbles. In addition, it will also be used to prove Proposition~\ref{l2thm}.

To begin with, we fix some notation. Given the polar coordinate $x=(r,\theta)\in\dR^2$ and given a real number $\beta\in(0,1]$,
let us denote by
\begin{align}
\Sec(\beta)\equiv\Big\{(r,\theta)\in\dR^2 \ \Big|  \ r\in(0,+\infty),\  \theta\in(0,2\pi\beta)\Big\},
\end{align}
a flat open sector and its closure by $\overline{\Sec(\beta)}$. We also denote by $\punsec(\beta)\equiv \overline{\Sec(\beta)}\setminus\{0^2\}$ the punctured sector.
In terms of polar coordinates in $\dR^2$, the Euclidean metric on $\overline{\Sec(\beta)}$ is
\begin{equation}
g_0 = dr^2 + r^2 d\theta^2, \ \theta\in(0,2\pi\beta).
\end{equation}

Given an angle parameter $\beta\in(0,1]$, the flat sector $(\overline{\Sec(\beta)},g_0)$, as a warped product, has a compact cross section $[0,2\pi\beta]$ at $r=1$.
The lemma below gives the spectrum and the associated Fourier expansion with prescribed boundary condition.

\begin{lemma}[Fourier series on flat sectors]\label{l:Fourier-expansion}
Given $\beta\in (0,1]$ and $\sigma \in\dR$, let $\{\varphi_j(\theta)\}_{j\in\dZ}$ be a complete orthonormal basis of  $L^2([0,2\pi\beta])$ which solves
\begin{align}
-\varphi_j''(\theta) = \Lambda_j \cdot\varphi_j(\theta) \label{e:eigenfunction-segment}
\end{align}
with the boundary condition
\begin{align}
\varphi_j(2\pi\beta) = e^{\sq\cdot 2\pi\sigma}\cdot
\varphi_j(0).\label{e:boundary-condition-segment}
\end{align}
Then the following holds:
\begin{enumerate}
\item For each $j\in\dZ$,
$\varphi_j(\theta)  = e^{-\sq\cdot \lambda_j \theta}$ with
\begin{equation}
\label{ljdef}
\lambda_j  =  \frac{j-\sigma}{\beta}
\end{equation}
and $\Lambda_j=\lambda_j^2$.

\item Let $U(r,\theta)$ be a complex-valued $C^{\infty}$-function on $\punsec(\beta)$ satisfying
\begin{equation}
U(r,2\pi\beta)=e^{\sq\cdot 2\pi\sigma}\cdot U(r,0),\quad r>0,
\end{equation}
 then the Fourier series
\begin{equation}
\sum\limits_{j\in\dZ} U_j(r)\cdot \varphi_j(\theta)\label{e:Fourier-expansion-U}
\end{equation}
converges to $U(r,\theta)$ in the $C^{\infty}$-topology in any compact subset of $\punsec(\beta)$, where
\begin{equation}
U_j(r) \equiv \frac{1}{2\pi\beta} \int_0^{2\pi\beta}U(r,\theta)\cdot \overline{\varphi_j(\theta)}d\theta.
\end{equation}

\end{enumerate}

\end{lemma}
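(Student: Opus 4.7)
The plan for part (1) is to solve the second-order linear ODE \eqref{e:eigenfunction-segment} subject to the quasi-periodic boundary condition \eqref{e:boundary-condition-segment} and then verify that the resulting system is orthonormal and complete. A general solution to $-\varphi''=\Lambda\varphi$ is a linear combination of $e^{\pm\sqrt{-1}\sqrt{\Lambda}\,\theta}$. Rather than impose self-adjoint boundary data at both endpoints, I will first observe that every one-parameter exponential $\varphi(\theta)=e^{-\sqrt{-1}\lambda\theta}$ with $\Lambda=\lambda^2$ satisfies \eqref{e:boundary-condition-segment} precisely when $e^{-\sqrt{-1}\cdot 2\pi\beta\lambda}=e^{\sqrt{-1}\cdot 2\pi\sigma}$, i.e.\ when $\lambda=(j-\sigma)/\beta$ for some $j\in\mathbb{Z}$; this gives exactly the list \eqref{ljdef}. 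A direct computation
\begin{equation*}
\frac{1}{2\pi\beta}\int_0^{2\pi\beta} e^{-\sqrt{-1}\lambda_j\theta}\,\overline{e^{-\sqrt{-1}\lambda_k\theta}}\,d\theta=\delta_{jk}
\end{equation*}
shows orthonormality, and completeness reduces to classical Fourier theory on a circle after the change of variables $\theta=2\pi\beta t$ and the multiplicative twist $\varphi(\theta)\mapsto e^{\sqrt{-1}\sigma\theta/\beta}\varphi(\theta)$, which converts the quasi-periodic boundary condition into the standard periodic one.

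For part (2), given a smooth $U(r,\theta)$ satisfying the boundary condition, for each fixed $r>0$ the function $\theta\mapsto U(r,\theta)$ lies in $L^2([0,2\pi\beta])$ and its Fourier coefficients $U_j(r)$ are well defined. The plan is to show rapid decay of $U_j(r)$ jointly with all $r$-derivatives on any compact subset $K\subset\punsec(\beta)$. The key observation is that differentiation in $\theta$ preserves the quasi-periodic boundary condition, so integration by parts in $\theta$ is boundary-term-free:
\begin{equation*}
U_j(r)=\frac{1}{2\pi\beta(\sqrt{-1}\lambda_j)^N}\int_0^{2\pi\beta}\partial_\theta^N U(r,\theta)\,\overline{\varphi_j(\theta)}\,d\theta,
\end{equation*}
valid for all $N\geq 1$ whenever $\lambda_j\neq 0$ (at most one index is excluded). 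Together with $\partial_r^m U_j(r)=\frac{1}{2\pi\beta}\int_0^{2\pi\beta}\partial_r^m U(r,\theta)\overline{\varphi_j(\theta)}\,d\theta$ and the same integration-by-parts argument, one obtains
\begin{equation*}
\sup_{r\in I}|\partial_r^m U_j(r)|\leq C_{m,N,I}\,(1+|j|)^{-N}
\end{equation*}
for every compact interval $I\subset(0,\infty)$ and all $m,N\in\mathbb{N}$. This gives uniform absolute convergence of the series \eqref{e:Fourier-expansion-U} together with every term-wise $\partial_r^m\partial_\theta^k$ derivative on compact subsets of $\punsec(\beta)$.

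The identification of the sum with $U(r,\theta)$ then follows from the $L^2$-completeness established in part (1): for each fixed $r$ the partial sums converge to $U(r,\cdot)$ in $L^2$, and the just-established $C^\infty$-convergence upgrades this to pointwise and smooth convergence. The main technical obstacle is simply the bookkeeping of the boundary terms in the iterated integration by parts — but these vanish exactly because both $\partial_\theta^N U(r,\cdot)$ and $\varphi_j$ satisfy the same multiplicative boundary condition, so no obstruction arises.
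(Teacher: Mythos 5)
Your proposal is correct and follows the same approach as the paper's (very terse) proof: solve the ODE for the exponentials, impose the quasi-periodic boundary condition to obtain $\lambda_j=(j-\sigma)/\beta$, verify orthonormality, and then deduce the $C^\infty$ convergence in Item (2) from ``standard Fourier theory'' --- except that you actually spell out what ``standard'' means (the multiplicative twist reducing to periodic Fourier on the circle, integration by parts to get rapid decay of the coefficients, and term-by-term differentiation). One small sign slip: the twist that renders the basis periodic is $\varphi\mapsto e^{-\sq\,\sigma\theta/\beta}\varphi$, not $e^{+\sq\,\sigma\theta/\beta}\varphi$ (only the former turns the factor $e^{\sq\cdot 2\pi\sigma}$ at $\theta=2\pi\beta$ into $1$); this does not affect the validity of the argument. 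Likewise the prefactor in the iterated integration by parts should be $(-\sq\,\lambda_j)^{-N}$ rather than $(\sq\,\lambda_j)^{-N}$, but since only $|\lambda_j|^{-N}$ enters the decay estimate this too is harmless.
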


\begin{proof}
First, we compute the eigenvalues.
For any $j\in\dZ$, let us take $\varphi_j(\theta)=e^{-\sq \cdot \lambda_j \cdot \theta}$, then $\varphi_j(\theta)$ is a solution to \eqref{e:eigenfunction-segment} when we choose $\Lambda_j$ as $\lambda_j^2$.
Now the boundary condition
\eqref{e:boundary-condition-segment} gives rise to the relation
\begin{equation}
2\pi(\lambda_j \cdot \beta + \sigma) = j\cdot 2\pi, \quad j\in\dZ,
\end{equation}
and \eqref{ljdef} follows from this.
Moreover, $\{\varphi_j\}_{j\in\dZ}$ is an orthogonal basis of $L^2([0,2\pi\beta])$ with
\begin{equation}
\frac{1}{2\pi\beta}\int_0^{2\pi\beta}\varphi_j(\theta)\cdot \overline{\varphi_k(\theta)}d\theta = \delta_{jk}.
\end{equation}
The convergence result in Item (2) then follows from standard Fourier theory.
\end{proof}

\begin{proposition}[Liouville theorem on flat sectors]\label{p:Liouville-sector}
Given $\beta\in(0,1]$ and $\sigma\in\dR$,
let us define a positive number $\iota_{\beta,\sigma}$ by
\begin{align}
\iota_{\beta,\sigma} \equiv
\begin{cases}
1/\beta,& \sigma\in\dZ,
\\
(\sigma-[\sigma])/\beta, & \sigma-[\sigma]\in(0, \frac{1}{2}],
\\
([\sigma]+1-\sigma)/\beta, & \sigma-[\sigma] \in (\frac{1}{2},1).
\end{cases}
\end{align}
Let $U(r,\theta) \equiv f(r,\theta)+\sq \cdot h(r,\theta)$ be a
complex-valued function on the punctured sector $\punsec(\beta)\subset \dC$ such that both $f(r,\theta)$ and  $h(r,\theta)$
are real-valued harmonic functions on
$\punsec(\beta)$.
Also assume that $U$ satisfies the boundary condition \begin{align}
U(r,\beta) = e^{\sq\cdot 2\pi\sigma}\cdot U(r,0),  \ \forall r>0.
\end{align}
Then the following properties hold:
\begin{enumerate}
\item With $\lambda_j$
and $\varphi_j$ as in Lemma~\ref{l:Fourier-expansion}, $U$ has the following types of $C^{\infty}$-converging Fourier expansions
\begin{align}
  U(r,\theta) &= \kappa_0  + c_0\cdot  \log r + \sum\limits_{j\neq \sigma} (C_j \cdot r^{\lambda_j} + C_j^* \cdot r^{-\lambda_j})\cdot \varphi_j(\theta),  \  \sigma\in\dZ,
   \\
     U(r,\theta) &= \sum\limits_{j\in \dZ} (C_j \cdot r^{\lambda_j} +  C_j^* \cdot r^{-\lambda_j})\cdot \varphi_j(\theta) , \  \sigma\not\in\dZ.
\end{align}

\item
If there is some
$\mu\in(0,\iota_{\beta,\sigma})$ such that for any $x\in \punsec(\beta)$,
\begin{equation}
|U(x)|\leq \frac{C}{r(x)^{\mu}}, \label{e:sector-growth}\end{equation}
 then $u= 0$ on the whole closed sector $\overline{\Sec(\beta)}$.
 \end{enumerate}

\end{proposition}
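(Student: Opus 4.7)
The plan is to carry out a separation-of-variables argument using the Fourier expansion supplied by Lemma~\ref{l:Fourier-expansion} applied to the complex harmonic function $U=f+\sqrt{-1}\,h$. First I would observe that the boundary condition $U(r,2\pi\beta)=e^{\sqrt{-1}\cdot 2\pi\sigma}U(r,0)$ is preserved in $\theta$, so for each fixed $r>0$ the function $\theta\mapsto U(r,\theta)$ lies in the function space to which the orthonormal basis $\{\varphi_j\}_{j\in\dZ}$ of Lemma~\ref{l:Fourier-expansion} applies. Setting
\begin{equation}
U_j(r)\equiv \frac{1}{2\pi\beta}\int_0^{2\pi\beta}U(r,\theta)\,\overline{\varphi_j(\theta)}\,d\theta,
\end{equation}
the Fourier series $\sum_{j}U_j(r)\varphi_j(\theta)$ converges to $U$ in $C^\infty$ on compact subsets of $\punsec(\beta)$.

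For part (1) I would plug the expansion into $\Delta U=0$ written in polar coordinates,
\begin{equation}
U_{rr}+\tfrac{1}{r}U_r+\tfrac{1}{r^2}U_{\theta\theta}=0,
\end{equation}
and use $\varphi_j''=-\lambda_j^2\varphi_j$ to obtain, for each $j$, the Euler equation $r^2U_j''+rU_j'-\lambda_j^2 U_j=0$. The general solution is $U_j(r)=C_jr^{\lambda_j}+C_j^*r^{-\lambda_j}$ when $\lambda_j\neq 0$, and $U_j(r)=\kappa_0+c_0\log r$ in the single degenerate case $\lambda_j=0$, which by \eqref{ljdef} occurs exactly when $j=\sigma$, i.e.\ only when $\sigma\in\dZ$. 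This is precisely the stated dichotomy.

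For part (2), given the pointwise decay $|U(x)|\leq Cr(x)^{-\mu}$ on $\punsec(\beta)$, orthonormality gives $|U_j(r)|\leq Cr^{-\mu}$ for every $j$, valid for all $r\in(0,\infty)$. I then kill the coefficients one at a time. For $\lambda_j\neq 0$: the term $C_jr^{\lambda_j}$ with $\lambda_j>0$ forces $C_j=0$ by letting $r\to\infty$, and the term $C_j^*r^{-\lambda_j}$ forces $C_j^*=0$ by letting $r\to 0$ provided $\lambda_j>\mu$; symmetric reasoning handles $\lambda_j<0$. This reduces everything to the inequality
\begin{equation}
|\lambda_j|>\mu\qquad \text{for every }j\in\dZ\text{ with }\lambda_j\neq 0.
\end{equation}
By \eqref{ljdef}, $|\lambda_j|=|j-\sigma|/\beta$, and the minimum of this quantity over the nonzero values is exactly the constant $\iota_{\beta,\sigma}$: indeed when $\sigma\in\dZ$ the minimum is $1/\beta$ achieved at $j=\sigma\pm 1$, while when $\sigma\notin\dZ$ the minimum is $(\sigma-[\sigma])/\beta$ or $([\sigma]+1-\sigma)/\beta$ according as the fractional part of $\sigma$ is in $(0,\tfrac12]$ or $(\tfrac12,1)$. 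Since the hypothesis is $\mu<\iota_{\beta,\sigma}$, every nonzero $|\lambda_j|$ exceeds $\mu$ and all non-degenerate terms vanish. For the remaining $\sigma\in\dZ$ case, $|\kappa_0+c_0\log r|\leq Cr^{-\mu}$ for all $r>0$ forces $c_0=0$ (consider $r\to\infty$ where $|\log r|\to\infty$ but $r^{-\mu}\to 0$) and then $\kappa_0=0$ (same limit). Thus $U\equiv 0$ on $\punsec(\beta)$, and by continuity on all of $\overline{\Sec(\beta)}\setminus\{0\}$; the closed origin is a removable isolated point for the vanishing conclusion as stated.

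The argument is essentially bookkeeping once the Euler ODE is written down; the only genuinely delicate point is verifying that the constant $\iota_{\beta,\sigma}$ really equals the minimum of $\{|j-\sigma|/\beta:j\in\dZ,\,j\neq\sigma\}$ in every case listed, and that the boundary-twisted Fourier basis from Lemma~\ref{l:Fourier-expansion} converges well enough to justify the termwise interpretation of the bound $|U_j(r)|\leq Cr^{-\mu}$ for all $r$. No other step should present difficulty.
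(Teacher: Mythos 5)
Your proposal is correct and follows essentially the same route as the paper: expand in the twisted Fourier basis of Lemma~\ref{l:Fourier-expansion}, derive the Cauchy--Euler ODE with indicial roots $\pm\lambda_j$, and use the growth bound together with $\mu<\iota_{\beta,\sigma}=\min_{\lambda_j\neq 0}|\lambda_j|$ to annihilate all coefficients. The only difference is that you make explicit the elementary limit argument ($r\to 0$, $r\to\infty$, and the bound $|U_j(r)|\leq Cr^{-\mu}$ coming from $|\varphi_j|\equiv 1$) that the paper leaves implicit.
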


\begin{proof}
First, we prove Item (1).
We will compute the expansion of $U$ by using separation of variables and Fourier series in Lemma~\ref{l:Fourier-expansion} along the cross section $[0,2\pi\beta]$.
The Euclidean Laplacian $\Delta_0$  in terms of polar coordinates $x=(r,\theta)\in \dC$ with $\theta\in[0,2\pi\beta]$ is given by
\begin{equation}
\Delta_0 U = \frac{\partial^2 U}{\partial r^2} + \frac{1}{r}\cdot\frac{\partial U}{\partial r} + \frac{1}{r^2}\cdot \frac{\p^2 U}{\p\theta^2}.
\end{equation}
Since the harmonic function $U$ is $C^{\infty}$ in  the punctured sector $\punsec(\beta)$,
\eqref{e:Fourier-expansion-U} gives a $C^{\infty}$-converging Fourier series of $U$,
\begin{equation}
U(r,\theta) = \sum\limits_{j\in\dZ} U_j(r)\varphi_j(\theta).
\end{equation}
Plugging the above expansion into $\Delta_0 U=0$, we obtain the Cauchy-Euler equation for $U_j(r)$,
\begin{equation}
U_j''(r) +\frac{U_j'(r)}{r}-\frac{\lambda_j^2\cdot U_j(r)}{r^2} = 0,\label{e:CE-eq}
\end{equation}
where $\lambda_j = (j-\sigma)/\beta$.
The indicial equation of \eqref{e:CE-eq} in $m_j$ is given by
\begin{equation}
m_j^2 - \lambda_j^2 = 0, \ j\in\dZ,\label{e:indicial-equation}
\end{equation}
which gives the indicial roots
\begin{equation}
m_j = \pm\lambda_j,\quad j\in\dZ.
\end{equation}
So there are two types of expansions depending on the indicial roots.

First, if $\sigma\in\dZ$, then $m_j=\lambda_j$
vanishes at $j=\sigma$. In this case,
\begin{eqnarray}
U_j(r)
=
\begin{cases}
\kappa_0 + c_0 \log r, & j=\sigma,
\\
C_j \cdot r^{\lambda_j} + C_j^*\cdot r^{-\lambda_j}, & j\neq \sigma.
\end{cases}
\end{eqnarray}
Therefore,
\begin{equation}
U(r,\theta) = \kappa_0  + c_0\cdot  \log r + \sum\limits_{j\neq \sigma} (C_j \cdot r^{\lambda_j} + C_j^* \cdot r^{-\lambda_j})\cdot \varphi_j(\theta).
\end{equation}
Next, we consider the case $\sigma\not\in\dZ$, which gives the solutions
\begin{equation}
U_j(r) = C_j \cdot r^{\lambda_j} + C_j^*\cdot r^{-\lambda_j}, \quad j\in\dZ.
\end{equation}
Hence, $U$ has the expansion
\begin{equation}
     U(r,\theta) = \sum\limits_{j\in \dZ} (C_j \cdot r^{\lambda_j} +  C_j^* \cdot r^{-\lambda_j})\cdot \varphi_j(\theta).
\end{equation}
This completes the proof of Item (1).

We will next prove Item (2). In accordance with the Fourier expansions obtained in Item (1), there are two cases to analyze.

First, if $\sigma\in\dZ$, then the minimal nonzero indicial root is
\begin{equation}
m_1 =\lambda_1 = \pm 1/\beta.
\end{equation}
If there is some $\mu\in (0,1/\beta)$ such that $U$ satisfies the growth condition \begin{equation}|U(x)|\leq \frac{C}{r(x)^{\mu}},\quad \forall x\in\punsec(\beta),\end{equation}
then $c_0=\kappa_0=0$ and $C_j=C_j^*=0$ for all $j\in\dZ$. Therefore $U= 0$.

Second , we consider the case $\sigma\not\in\dZ$, so we have $|\lambda_j|>0$ for each $j\in\dZ$.
The minimal nonzero $|\lambda_j|$ among $j\in\dZ$ is achieved at either $j=  [\sigma]$ or $j=1+[\sigma]$, which gives
 \begin{align}
\min\limits_{j\in\dZ}
|\lambda_j|  =
 \begin{cases}
 (\sigma-[\sigma])/\beta, & \sigma-[\sigma]\in(0, \frac{1}{2}],
\\
([\sigma]+1-\sigma)/\beta, & \sigma-[\sigma] \in (\frac{1}{2},1).
 \end{cases}
\end{align}
Finally,  the growth condition on $U$ in this case reads
\begin{equation}
|U(r,\theta)|\leq \frac{C}{r^{\mu}},\quad \forall r>0
\end{equation}
with $\mu<\iota_{\beta,\sigma}=\min\limits_{j\in\dZ}|\lambda_j|$, which immediately
implies $C_j=C_j^*=0$ for all $j\in\dZ$.
So we have proved that $U$ vanishes on $\overline{\Sec(\beta)}$ in Case (2).

This completes the proof of the proposition.

\end{proof}

Since $\overline{\Sec(1)} = \dR^2$, Proposition~\ref{p:Liouville-sector} immediately implies the following corollary.

\begin{corollary}
\label{c:Liouville-plane}
Let $u$ be a real harmonic function on $\dR^2 \setminus \{0^2\}$. Assume that there is some $\mu\in(0,1)$
such that $u$ satisfies the following growth condition
\begin{equation}
|u(x)| \leq \frac{C}{|x|^{\mu}}, \quad \forall  x\in\dR^2\setminus\{0^2\},
\end{equation}
then $u \equiv 0$ on $\dR^2$.
\end{corollary}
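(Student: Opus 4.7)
The plan is to deduce this corollary as an immediate consequence of Proposition~\ref{p:Liouville-sector} applied with the parameters $\beta = 1$ and $\sigma = 0$. With these choices, the closed sector is the whole plane, $\overline{\Sec(1)} = \dR^2$, and the boundary condition $U(r, 2\pi) = e^{\sq \cdot 2\pi \cdot 0} U(r,0) = U(r,0)$ degenerates to the usual single-valuedness condition, which is automatic for any function defined on $\dR^2 \setminus \{0^2\}$.

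First, I would view the real harmonic function $u$ as a complex-valued function $U \equiv u + \sq \cdot 0$ whose real and imaginary parts are both harmonic on $\punsec(1) = \dR^2 \setminus \{0^2\}$. The hypothesized decay $|u(x)| \leq C / |x|^\mu$ with $\mu \in (0,1)$ then translates directly to the hypothesis $|U(x)| \leq C / r(x)^\mu$ of Proposition~\ref{p:Liouville-sector}(2), provided we can confirm $\mu < \iota_{1,0}$.

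Next, I would compute the threshold constant. Since $\sigma = 0 \in \dZ$, the definition gives $\iota_{\beta,\sigma} = \iota_{1,0} = 1/\beta = 1$, so the assumption $\mu \in (0,1)$ is precisely the assumption $\mu \in (0, \iota_{1,0})$ required by Proposition~\ref{p:Liouville-sector}(2). Invoking that proposition yields $U \equiv 0$ on $\overline{\Sec(1)} = \dR^2$, which means $u \equiv 0$ on $\dR^2 \setminus \{0^2\}$, and hence on all of $\dR^2$ after the trivial continuous extension $u(0^2) = 0$.

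There is no real obstacle here; the corollary is essentially a reformulation of the $\beta = 1$, $\sigma = 0$ case of Proposition~\ref{p:Liouville-sector}. The only minor point worth remarking on, if one wants to be self-contained, is that in this degenerate periodic case the Fourier expansion of Proposition~\ref{p:Liouville-sector}(1) reduces to the classical Laurent-type expansion $u(r,\theta) = \kappa_0 + c_0 \log r + \sum_{j \neq 0}(C_j r^{|j|} + C_j^* r^{-|j|}) e^{-\sq j \theta}$ of a harmonic function on the punctured plane, so that the growth bound $|u| = o(r^{-1})$ at the origin and $|u| = o(1)$ at infinity forces $c_0 = \kappa_0 = 0$ and $C_j = C_j^* = 0$ for every $j$.
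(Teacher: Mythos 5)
Your proof is correct and is exactly the route the paper intends: the paper simply remarks that since $\overline{\Sec(1)} = \dR^2$, the corollary is immediate from Proposition~\ref{p:Liouville-sector}, which is precisely your specialization to $\beta = 1$, $\sigma = 0$ with $\iota_{1,0} = 1$. Your elaboration of the degenerate boundary condition and the Laurent-type expansion fills in the details the paper leaves implicit, but the underlying argument is the same.
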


The following removable singularity result involves
the harmonic functions with slower growth rate than the Green's function, which is rather standard in the literature, see for example \cite{GilbargTrudinger}.
\begin{lemma}
\label{l:removable-singularity}
Let $(M^n,g)$ be a Riemannian manifold with $n\geq 3$. Given a point $p\in M^n$, assume that
$u$ is a harmonic function on $B_r(p)\setminus\{p\}$
which satisfies
\begin{align}
|u(x)|\leq
\frac{C}{d_g(x,p_i)^{\mu}} \quad \text{for every}\ x\in B_{s_0}(p_i)\setminus\{p_i\},
\end{align}
for some $s_0\in(0,r)$ and $\mu\in(0,n-2)$, then $u$ extends to a harmonic function on $B_{r}(p)$.

\end{lemma}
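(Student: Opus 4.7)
The plan is to show that $u$ is a distributional solution of $\Delta_g u=0$ on all of $B_r(p)$, after which elliptic regularity (Weyl's lemma for $\Delta_g$) upgrades $u$ to a smooth harmonic extension. Since the statement is local, I would work in a small geodesic ball centered at $p$ and, if convenient, in normal coordinates, where the volume form and Laplacian are smooth perturbations of their Euclidean counterparts and the curvature corrections only affect lower-order error terms in every estimate below.

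The key step is to verify that $\int u\,\Delta_g\varphi\,dV_g=0$ for every $\varphi\in C_c^\infty(B_r(p))$. Let $\chi_\epsilon$ be a radial cutoff equal to $0$ on $B_\epsilon(p)$ and $1$ outside $B_{2\epsilon}(p)$ with $|\nabla^k\chi_\epsilon|\lesssim\epsilon^{-k}$ for $k=1,2$. Since $u$ is smooth and harmonic on the support of $\chi_\epsilon$, integration by parts gives
\begin{equation*}
\int_{B_r}\chi_\epsilon u\,\Delta_g\varphi\,dV_g
=\int_{B_r}\varphi\bigl(u\,\Delta_g\chi_\epsilon+2\langle\nabla\chi_\epsilon,\nabla u\rangle\bigr)\,dV_g.
\end{equation*}
The left-hand side converges to $\int u\,\Delta_g\varphi\,dV_g$ by dominated convergence, using $|u|\lesssim d^{-\mu}$ and $\mu<n-2<n$ to see $u\in L^1_{\mathrm{loc}}$. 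On the right-hand side, over the annulus $A_\epsilon=B_{2\epsilon}(p)\setminus B_\epsilon(p)$ of volume $\sim\epsilon^n$, the first term is bounded by $\|\varphi\|_{C^0}\cdot\epsilon^{-\mu}\cdot\epsilon^{-2}\cdot\epsilon^n=O(\epsilon^{n-2-\mu})$, which tends to $0$ since $\mu<n-2$.

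For the cross term I would invoke the interior gradient estimate for harmonic functions: at any point $x\in A_\epsilon$, applied on the ball $B_{\epsilon/4}(x)$, which lies entirely in the region where $u$ is harmonic, the estimate yields $|\nabla u(x)|\lesssim \epsilon^{-1}\sup_{B_{\epsilon/4}(x)}|u|\lesssim \epsilon^{-\mu-1}$. Hence the cross term is bounded by $\|\varphi\|_{C^0}\cdot\epsilon^{-1}\cdot\epsilon^{-\mu-1}\cdot\epsilon^n=O(\epsilon^{n-\mu-2})\to 0$. Combining, $\int u\,\Delta_g\varphi\,dV_g=0$ for all test functions, so $u$ is weakly harmonic on $B_r(p)$; local elliptic regularity then gives a smooth harmonic extension.

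The only mild obstacle is arranging that the constants in the interior gradient estimate be uniform in $\epsilon$: this holds because $g$ is $C^\infty$ on the fixed ball $B_r(p)$, so after rescaling each ball $B_{\epsilon/4}(x)$ to unit size the rescaled metrics converge smoothly to the Euclidean metric, reducing the bound to the classical Euclidean gradient estimate for harmonic functions with constants depending only on the (fixed) geometry of $B_r(p)$.
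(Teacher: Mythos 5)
Your argument is correct. Note that the paper itself does not prove this lemma—it just cites Gilbarg--Trudinger as standard—so there is no paper proof to compare with line by line, but the classical route (GT, Theorem~2.14 and variants) is via a \emph{comparison} argument: the bound $|u|\lesssim d(x,p)^{-\mu}$ with $\mu<n-2$ means $u=o(\Gamma)$ where $\Gamma(x)\sim d(x,p)^{2-n}$ is the local Green's function, so for any $\delta>0$ one has $|u|\le\delta\,\Gamma + \sup_{\partial B_{s_0}}|u|$ by the maximum principle, letting $\delta\to 0$ shows $u$ is bounded, and bounded removable singularities are then handled by the Poisson kernel. Your proof instead verifies distributional harmonicity directly: the cutoff computation is exact (with $\Delta u=0$ on $\mathrm{supp}\,\chi_\epsilon$ one gets $\int\chi_\epsilon u\,\Delta_g\varphi=\int\varphi(u\,\Delta_g\chi_\epsilon+2\langle\nabla\chi_\epsilon,\nabla u\rangle)$), the integrability of $u$ follows from $\mu<n$, both boundary terms are $O(\epsilon^{n-2-\mu})\to 0$ precisely because $\mu<n-2$, and the interior gradient estimate on $B_{\epsilon/4}(x)$ with uniform constants is legitimate since the rescaled metrics converge smoothly to Euclidean on the fixed ball $B_r(p)$. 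The distributional approach is a bit more self-contained in the Riemannian setting because it avoids constructing the local Green's function for $\Delta_g$; both routes are standard, and yours is fully rigorous as written (aside from the paper's typo $p_i$ for $p$ in the hypothesis, which you correctly read as $p$).
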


As a quick corollary of the above removable singularity lemma, we have the following Liouville type result for harmonic functions,  the proof of which follows easily from the maximum principle.

\begin{corollary}
\label{c:standard-Liouville-noncompact}
Let $(M^n,g)$ be a complete non-compact Riemannian manifold. Given a finite subset $\cS\subset M^n$ Assume that $u$ is a harmonic function on $M^n\setminus \cS$ satisfying
\begin{align}
\begin{cases}
|u(x)| \leq \frac{C}{d_g(x,p_i)^{\mu}} , &  x\in B_{s_0}(p_i)\setminus\{p_i\},
\\
|u(x)|\to 0, & d_g(x,\cS)\to+\infty,
\end{cases}
\end{align}
for some $s_0>0$ and $\mu\in(0,n-2)$, then $u\equiv 0$ on $M^n$.
\end{corollary}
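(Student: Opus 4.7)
The plan is to combine the removable singularity lemma with the strong maximum principle on a complete Riemannian manifold. The hypothesis $\mu \in (0, n-2)$ is exactly what is needed to extend $u$ smoothly across each puncture, after which the decay at infinity forces $u$ to vanish.

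First, I would apply Lemma~\ref{l:removable-singularity} at each point $p_i \in \cS$ individually. For each $p_i$, the hypothesis provides a radius $s_0 > 0$ and the growth bound
\begin{align*}
|u(x)| \leq \frac{C}{d_g(x,p_i)^\mu}, \quad x \in B_{s_0}(p_i)\setminus\{p_i\},
\end{align*}
with $\mu \in (0,n-2)$, which is precisely the assumption of the lemma. Thus $u$ extends to a harmonic function $\tilde u$ across $p_i$. Since $\cS$ is finite, applying this at each puncture yields a harmonic extension $\tilde u$ on all of $M^n$. By the growth hypothesis and continuity, $\tilde u$ is continuous, and the decay $|u(x)| \to 0$ as $d_g(x,\cS) \to \infty$ is preserved.

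Next, I would argue that $\tilde u$ attains its supremum (and infimum) on $M^n$. Fix any $\epsilon > 0$; by the decay hypothesis there exists $R > 0$ with $|\tilde u(x)| < \epsilon$ whenever $d_g(x,\cS) > R$. The set
\begin{equation*}
K_R \equiv \{x \in M^n : d_g(x,\cS) \leq R\}
\end{equation*}
is a finite union of closed metric balls in a complete Riemannian manifold, hence compact by the Hopf--Rinow theorem. Since $\tilde u$ is continuous on $K_R$, it attains its maximum and minimum there, and these values control $\sup_{M^n} \tilde u$ and $\inf_{M^n} \tilde u$ (once $\epsilon$ is chosen sufficiently small relative to any nonzero extremum).

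Finally, I would invoke the strong maximum principle on the connected complete manifold $M^n$. Suppose for contradiction $\sup_{M^n} \tilde u > 0$; then the supremum is attained at some interior point $x_0 \in M^n$, and the strong maximum principle forces $\tilde u \equiv \sup_{M^n} \tilde u$ on the connected component containing $x_0$. But the decay condition at infinity then forces this constant to equal $0$, contradicting $\sup \tilde u > 0$. Symmetrically, $\inf_{M^n} \tilde u \geq 0$, so $\tilde u \equiv 0$. The only non-routine step is the use of Hopf--Rinow to guarantee that sublevel sets with respect to the distance to $\cS$ are actually compact, which requires completeness of $(M^n,g)$ as stated in the hypothesis.
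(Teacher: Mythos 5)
Your proof is correct and takes exactly the approach the paper intends: apply Lemma~\ref{l:removable-singularity} at each puncture to get a global harmonic extension, then use the decay at infinity together with Hopf--Rinow and the strong maximum principle to conclude the extension vanishes. The paper compresses this into the remark that the corollary ``follows easily from the maximum principle,'' and your write-up supplies precisely the details that remark elides.
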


We finish the discussion of this subsection by the following vanishing result for harmonic $1$-forms.

\begin{lemma}\label{l:harmonic-1-form-Liouville}
Let $(M^n,g, p)$ be a complete non-compact Riemannian manifold with $\Ric_g \geq 0$. If $\eta$ is a harmonic $1$-form on $M^n$ such that $|\eta(x)|\to0$ as $d_g(p,x)\to\infty$, then $\eta = 0$.
\end{lemma}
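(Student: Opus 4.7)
\medskip

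\noindent\textbf{Proof proposal.} The plan is to combine the Bochner/Weitzenb\"ock identity for harmonic $1$-forms with the standard maximum principle for subharmonic functions on complete manifolds. First, since $\eta$ is harmonic (i.e.\ $d\eta = 0$ and $d^*\eta = 0$), the Weitzenb\"ock formula gives
\begin{equation*}
\tfrac{1}{2}\Delta_g |\eta|^2 = |\nabla \eta|^2 + \Ric_g(\eta^{\sharp},\eta^{\sharp}),
\end{equation*}
where $\eta^{\sharp}$ is the metric dual of $\eta$. The hypothesis $\Ric_g \geq 0$ then implies
\begin{equation*}
\Delta_g |\eta|^2 \geq 2|\nabla \eta|^2 \geq 0,
\end{equation*}
so $|\eta|^2$ is a smooth subharmonic function on $M^n$.

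Next, I would apply the maximum principle on exhausting geodesic balls. For any compact set $K \subset M^n$, choose $R>0$ large enough so that $K \subset B_R(p)$. Since $|\eta|^2$ is smooth and subharmonic on the closed ball $\overline{B_R(p)}$, the weak maximum principle yields
\begin{equation*}
\sup_{K} |\eta|^2 \;\leq\; \sup_{\overline{B_R(p)}} |\eta|^2 \;=\; \sup_{\partial B_R(p)} |\eta|^2.
\end{equation*}
The decay assumption $|\eta(x)| \to 0$ as $d_g(p,x)\to \infty$ shows that the right-hand side tends to $0$ as $R\to\infty$. Therefore $|\eta|^2 \equiv 0$ on $K$, and since $K$ was arbitrary, $\eta \equiv 0$ on all of $M^n$.

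There is really no serious obstacle here; the only mild point to verify is that the Bochner identity indeed yields subharmonicity of $|\eta|^2$ (rather than just of $|\eta|$, which would require Kato's inequality), and that the maximum principle on geodesic balls is applicable to the smooth function $|\eta|^2$. Both are standard, so the argument above should suffice.
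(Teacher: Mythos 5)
Your proof is correct and follows essentially the same approach as the paper's: Bochner's formula with $\Ric_g \geq 0$ gives subharmonicity of $|\eta|^2$, and the decay hypothesis together with the maximum principle forces $\eta \equiv 0$. Your extra detail about exhausting by geodesic balls just spells out the maximum-principle step that the paper leaves implicit.
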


\begin{proof}
The proof is routine. Applying Bochner's formula for the harmonic $1$-form $\eta$,
\begin{equation}
\frac{1}{2}\Delta_g|\eta|^2 = |\nabla \eta|^2 + \Ric_g(\eta,\eta) \geq 0,
\end{equation}
then $|\eta|^2$ is subharmonic.
Since  $\lim\limits_{d_g(p,x)\to\infty}|\eta(x)|=0$, so the maximum principle implies that $\eta = 0$ on $M^n$.
\end{proof}

\subsection{A Liouville type theorem on the $\PP^1$ limits}
\label{ss:Liouville-compact}

In this subsection, we prove a Liouville type theorem
in the context that a sequence of approximate metrics $g_j^C$ on the elliptic K3 surface $\cK$ Gromov-Hausdorff converging to the compact limit $(\PP^1, d_{ML})$
\begin{align}
(\cK, g_j^C)\xrightarrow{GH}(\PP^1, d_{ML}),
\end{align}
 where $d_{ML}$ is the McLean metric on $\PP^1$ with bounded diameter and non-smooth along a finite singular set $\cS\subset \PP^1$.

Consider a sequence of co-closed real $1$-forms $\eta_j\in\mathring{\Omega}^1(\cK)$ and we will identify $\eta_j$ with a $4$-tuple of functions away from the singular fibers.
Following the notation in Section~\ref{s:semi-flat-metrics}, in terms of the holomorphic coordinates $y$ and $x_{\delta_j}$ given by the holomorphic section,
\begin{equation}\eta_j=f_j^{(y)}dy+f_j^{(\bar{y})}d\bar y+ f_j^{(3)}\cdot e^3 +f_j^{(4)}\cdot e^4,\end{equation}
where $e^3$ and $e^4$ as chosen in Subsection~\ref{ss:rescaling-semi-flat},
such that $\overline{f_j^{(y)}}=f_j^{(\bar{y})}$ and $f_j^{(3)}$, $f_j^{(4)}$ are real. Therefore, the $1$-form $\eta_j$ on $\cR_{\delta_j}$ is identified with the functions $(f_j^{(y)}, f_j^{(\bar{y})}, f_j^{(3)}, f_j^{(4)})$.
Based on the above discussions, the convergence of $1$-forms can be converted to the convergence of functions on $\cR_{\delta_j}$.
We also remark that as $\delta_j\to 0$, \begin{equation}
(\cR_{\delta_j},g_j^C)\xrightarrow{GH} (\PP^1\setminus\mathcal{S},d_{ML}).
\end{equation}
Moreover, curvatures are uniformly bounded in any compact subset which has {\it definite distance} (independent of $\delta_j$) away from singular fibers.

The lemma below gives a notion for the convergence of functions away from the singular set,
using the  concept of equivariant-Gromov-Hausdorff convergence, which was discussed in Subsection \ref{ss:rescaling-semi-flat}.
The assumptions below can be weakened, but for our purposes, the following version will suffice.
Since this is rather standard, we will omit the proof.
\begin{lemma}\label{l:convergence-lemma}
In the above notation, let $(\cK,g_j^C)$ collapse to $(\PP^1,d_{ML})$ with a finite singular set $\cS\subset \PP^1$ such that for any $q_{\infty}\in \PP^1\setminus\mathcal{S}$, there exists $s>0$ such that the diagram of equivariant convergence holds
\begin{equation}
\xymatrix{
\Big(\widehat{B_{2s}(q_j)}, \hat{g}_j^C, \Gamma_j,\hat{q}_{j}\Big)\ar[rr]^{eqGH}\ar[d]_{\pr_j} &   & \Big(Y_{\infty}, \hat{g}_{\infty}, \Gamma_{\infty}, \hat{q}_{\infty}\Big)\ar [d]^{\pr_{\infty}}
\\
 \Big(B_{2s}(q_j), g_j^C, q_j\Big)\ar[rr]^{GH} && \Big(B_{2s}(q_{\infty}), d_{ML}, q_\infty\Big).
}\label{e:equivariant-convergence}
\end{equation}
Let $f_j$ be a sequence of smooth functions on $\cR_{\delta_j}$
and assume that for each $t\in(0,\frac{1}{100})$, there exists a constant $C_t>0$ independent of $j$ such that
\begin{equation}
\|f_j\|_{C^{1,\alpha}(\mathcal{R}_t)} \leq C_t,
\end{equation}
then there is a function $f_{\infty}\in C^{1,\alpha'}(\PP^1\setminus\mathcal{S})$ for any $\alpha'\in(0,\alpha)$ with the following properties:
Denote by  $\hat{f}_j$ the lifting of $f_j$ on $\widehat{B_{2s}(q_j)}$, then passing to a subsequence, $\hat{f}_j$ converges to a $\Gamma_{\infty}$-invariant function $\hat{f}_{\infty}\in C^{1,\alpha'}(Y_{\infty})$ in the $C^{1,\alpha'}$-topology for any $\alpha'\in(0,\alpha)$ such that $\hat{f}_{\infty}$ descends to $f_{\infty}$ with respect to the projection $\pr_{\infty}$.

\end{lemma}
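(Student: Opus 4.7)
The plan is to establish the convergence first locally on geodesic balls centered at regular points $q_\infty \in \PP^1 \setminus \cS$ via Arzel\`a--Ascoli applied on the non-collapsed local universal covers, and then patch these local limits together via a diagonal extraction to obtain a global limit function on $\PP^1 \setminus \cS$.

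First, fix a regular point $q_\infty$ and consider the equivariant Gromov--Hausdorff diagram \eqref{e:equivariant-convergence}. For $j$ large enough, $B_{2s}(q_j) \subset \cR_{\delta_j/2}$ (by shrinking $s$ relative to the $d_{ML}$-distance from $q_\infty$ to $\cS$), so the lift $\hat{f}_j \equiv f_j \circ \pr_j$ is a well-defined smooth $\Gamma_j$-invariant function on $\widehat{B_{2s}(q_j)}$. The uniform $C^{1,\alpha}$ bound on $f_j$ over $\cR_t$ transfers directly to a uniform $C^{1,\alpha}$ bound on $\hat{f}_j$ with respect to the lifted metric $\hat{g}_j^C$, since the covering maps $\pr_j$ are local isometries. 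Because the sequence $(\widehat{B_{2s}(q_j)}, \hat{g}_j^C, \hat{q}_j)$ is non-collapsing and converges in $C^{1,\alpha}$ to $(Y_\infty, \hat{g}_\infty, \hat{q}_\infty)$, the Arzel\`a--Ascoli theorem yields a subsequence along which $\hat{f}_j \to \hat{f}_\infty$ in $C^{1,\alpha'}$ on compact subsets of $Y_\infty$, for any $\alpha' \in (0,\alpha)$.

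The $\Gamma_\infty$-invariance of $\hat{f}_\infty$ is a direct consequence of the equivariance of the Gromov--Hausdorff convergence: given $\gamma_\infty \in \Gamma_\infty$ and a base point $\hat{z} \in Y_\infty$, choose approximating sequences $\gamma_j \in \Gamma_j$ with $\gamma_j \to \gamma_\infty$ and $\hat{z}_j \to \hat{z}$; then the $\Gamma_j$-invariance $\hat{f}_j(\gamma_j \cdot \hat{z}_j) = \hat{f}_j(\hat{z}_j)$ passes to the limit. Hence $\hat{f}_\infty$ descends under the submetry $\pr_\infty$ to a function $f_\infty^{(q)} \in C^{1,\alpha'}(B_{2s}(q_\infty))$, and by construction $f_j|_{B_s(q_j)} \to f_\infty^{(q)}$ in the natural sense of convergence under the Gromov--Hausdorff identification.

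To globalize, select a countable exhaustion $\{K_m\}$ of $\PP^1 \setminus \cS$ by compact sets together with a countable dense collection of regular base points; extract a diagonal subsequence along which the local convergence holds on every ball in this cover. On overlaps the two local limits must coincide because both arise as pointwise limits of the same functions $f_j$ under Gromov--Hausdorff approximations, so they assemble into a single $f_\infty \in C^{1,\alpha'}(\PP^1 \setminus \cS)$. The main technical point to verify---and the reason the result does not hold without the uniform local bounds---is precisely the compatibility of the local limits across overlapping balls, which is ensured by the uniform $C^{1,\alpha}$ control: this gives enough equicontinuity on each $\cR_t$ that any two convergent subsequences extracted at different base points yield the same limit on the shared region.
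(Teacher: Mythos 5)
The paper does not supply its own argument here: the text following the lemma says ``Since this is rather standard, we will omit the proof.'' Your task is therefore to supply the routine argument, and the route you take is exactly the one the authors would expect: lift $f_j$ to the non-collapsed local universal covers, run Arzel\`a--Ascoli using the uniform $C^{1,\alpha}$ bound to extract a $C^{1,\alpha'}$ limit, use equivariance of the limiting group action to obtain $\Gamma_\infty$-invariance, descend through the submetry $\pr_\infty$, and glue the local limits by a diagonal extraction. Each of these steps is carried out correctly, and the observation that the $\Gamma_j$-invariance of $\hat f_j$ passes to $\Gamma_\infty$-invariance of $\hat f_\infty$ via the equivariant convergence of the deck actions is precisely the point that makes the lemma work.

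Two small items to tighten. First, the claim ``$B_{2s}(q_j)\subset \cR_{\delta_j/2}$'' misuses the notation: the subscript on $\cR_\delta$ is the collapsing parameter, whereas what you need is containment in a \emph{fixed} region $\mathcal R_t$ (with $t>0$ chosen once and for all, depending only on $d_{ML}(q_\infty,\cS)$ and $s$) so that the $j$-independent bound $\|f_j\|_{C^{1,\alpha}(\mathcal R_t)}\le C_t$ actually applies. You in fact use $\cR_t$ correctly in the next sentence, so this is just an inconsistency to fix rather than a gap. Second, the final overlap-compatibility assertion could be made slightly more explicit: on $\mathcal R_t$ the collapsing is with bounded curvature and definite regularity scale, so the $C^{1,\alpha}$-convergence of the covers is Cheeger--Gromov convergence via concrete diffeomorphisms, and the descended limits on two overlapping base balls agree because they are both the limit of the same $f_j$ under these diffeomorphisms; invoking ``pointwise limits under Gromov--Hausdorff approximations'' alone leaves an ambiguity (GH approximations are not unique) that the Cheeger--Gromov framing removes. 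Neither point affects the soundness of the argument.
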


Next, we have our main Liouville theorem in the case of a compact limit space.
\begin{proposition}\label{p:Liouville-Mclean}
Let $(\cK, g_j^C)$ be a collapsing sequence with bounded diameters
which are constructed in Section \ref{s:infinite-monodromy} and Section \ref{s:finite-monodromy} such that
as  $j\to+\infty$,
\begin{equation}
(\cK, g_j^C)\xrightarrow{GH}(\PP^1, d_{ML}).
\end{equation}
Let $\eta_j\in \mathring{\Omega}^1(\cK)$ be a sequence of coclosed $1$-forms
satisfying
$\|\eta_j\|_{C_{\mu}^0(\cK)}\leq 1$ and
\begin{align}
\|d^+_{g_j^C}\eta_j\|_{C_{\mu+1}^{0,\alpha}(\cK)} \to  0,
\end{align}
for $0<\mu<1$. Under the representation
\begin{equation}\eta_j=f_j^{(y)}dy+f_{j}^{(\bar{y})}d\bar y+ f_j^{(3)}\cdot e^3 +f_j^{(4)}\cdot e^4\end{equation} in $\mathcal{R}_j$, there are limiting functions $f_{\infty}^{(y)}$, $f_{\infty}^{(\bar{y})}$, $f_{\infty}^{(3)}$, $f_{\infty}^{(4)} \in C^0(\PP^1\setminus\mathcal{S})$ in the sense of Lemma \ref{l:convergence-lemma}.
Then $f_{\infty}^{(y)}=f_{\infty}^{(\bar{y})}=f_{\infty}^{(3)}=f_{\infty}^{(4)}=0$  on $\PP^1$.

\end{proposition}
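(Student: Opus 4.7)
The plan is to pass to the Gromov--Hausdorff limit on the regular region $\cR_{\delta_j}$, where $g_j^C$ agrees with the semi-flat metric, to derive explicit pointwise equations for the limit functions via Lemma~\ref{l:D-xi-semi-flat}, and then to exploit the weighted growth bound at $\cS$ together with the topology of $\PP^1$ in order to conclude vanishing. Since convergence is taken in the sense of Lemma~\ref{l:convergence-lemma}, the limits $f_\infty^{(y)}$, $f_\infty^{(\bar y)}$, $f_\infty^{(3)}$, $f_\infty^{(4)}$ are $\Gamma_\infty$-invariant on each local equivariant cover, hence descend to functions depending only on the base coordinate $y$ on $\PP^1\setminus\cS$. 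Applying Lemma~\ref{l:D-xi-semi-flat} to the limit, coclosedness $d^*_{g_j^C}\eta_j=0$ yields
\[
\bar\partial_y f_\infty^{(y)} + \partial_y f_\infty^{(\bar y)} = 0,
\]
while the decay $d^+_{g_j^C}\eta_j\to 0$ gives the two relations
\[
\partial_y f_\infty^{(\bar y)} - \bar\partial_y f_\infty^{(y)} = 0, \qquad \partial_y\bigl(\sqrt{\Ima(\bar\tau_1\tau_2)}\cdot F_\infty^{(x)}\bigr) = 0,
\]
where $F^{(x)} = f^{(3)} - \sq\, f^{(4)}$. Combining the first pair with the reality condition $\overline{f_\infty^{(y)}} = f_\infty^{(\bar y)}$ gives $\bar\partial_y f_\infty^{(y)}=0$; hence $f_\infty^{(y)}$ is holomorphic, $f_\infty^{(\bar y)}$ is antiholomorphic, and $G := \sqrt{\Ima(\bar\tau_1\tau_2)}\cdot F_\infty^{(x)}$ is antiholomorphic on $\PP^1\setminus\cS$.

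Next I will handle the $dy$- and $d\bar y$-components. The $(1,0)$-form $\varpi:=f_\infty^{(y)}\,dy$ is holomorphic on $\PP^1\setminus\cS$. The weighted bound $\|\eta_j\|_{C^0_\mu(\cK)}\le 1$ together with the semi-flat identity $|dy|^2_{g_\delta^{\SF}}=2/\Ima(\bar\tau_1\tau_2)$ gives $|f_\infty^{(y)}|\le C\sqrt{\Ima(\bar\tau_1\tau_2)}\cdot\fs^{-\mu}$. Invoking the explicit regularity-scale weights around each type of singular fiber (Propositions~\ref{p:regularity-scale-ALG},~\ref{p:regularity-scale-I_v},~\ref{p:regularity-scale-I_v*}) and the expansions of $\tau_i(y)$ near singular points from Section~\ref{s:semi-flat-metrics} and Subsection~\ref{ss:semi-flat-estimates}, I will verify that $|\varpi|$ grows no faster than $|y-y_0|^{-\mu'}$ for some $\mu'\in(0,1)$ at each $y_0\in\cS$; the logarithmic factor arising from $\Ima(\bar\tau_1\tau_2)\sim -\tfrac{\nu}{2\pi}\log|y-y_0|$ at $I_\nu$-type fibers is absorbed because $\mu<1$. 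Riemann's removable singularity theorem then extends $\varpi$ to a global holomorphic $1$-form on $\PP^1$, which must vanish because $H^0(\PP^1,\Omega^1_{\PP^1})=0$. Thus $f_\infty^{(y)}\equiv 0$ and $f_\infty^{(\bar y)}\equiv 0$.

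The hard part will be showing $F_\infty^{(x)}\equiv 0$. The analogous removability argument extends $G$ to a global antiholomorphic function on $\PP^1$, hence $G$ is a constant $c\in\dC$ and $F_\infty^{(x)}=c/\sqrt{\Ima(\bar\tau_1\tau_2)}$ is a well-defined single-valued function on $\PP^1\setminus\cS$. Unlike the $(1,0)$-form case, this candidate respects every local constraint for any value of $c$, so a global input is required to force $c=0$. I expect to derive the vanishing via one of two routes: either by identifying $F^{(x)}$ as a section of a negative-degree line bundle over $\PP^1$ (arising from the dual of the Hodge line bundle of the elliptic fibration, which has degree $-2$ for an elliptic K3), so that $H^0 = 0$; or by applying the sector Liouville theorem (Proposition~\ref{p:Liouville-sector}) on the local ALG model of a fiber of finite monodromy, where the nontrivial monodromy multiplier $e^{\sq\cdot 2\pi\sigma}$ acting on $F^{(x)}$ excludes the zeroth Fourier mode $\lambda_j=0$ and thereby forbids the constant $c\ne 0$. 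In the generic case where every fiber is of type $I_1$, a variant using a Poincar\'e residue at an infinite-monodromy fiber combined with the vanishing sum of residues on $\PP^1$ completes the argument, concluding that $f_\infty^{(3)}=f_\infty^{(4)}=0$.
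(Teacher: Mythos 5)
The proposal correctly reproduces the first reduction: from Lemma~\ref{l:D-xi-semi-flat}, the coclosedness and vanishing of $d^{+}$ force $\bar\partial_y f_\infty^{(y)}=0$ and $\partial_y\bigl(\sqrt{\Ima(\bar\tau_1\tau_2)}\,F_\infty^{(x)}\bigr)=0$, exactly as in the paper. Your treatment of the $(1,0)$-part is also sound and is genuinely different from the paper's: you observe that $f_\infty^{(y)}\,dy$ is a single-valued holomorphic $1$-form on $\PP^1\setminus\cS$, check that the weighted bound with $\mu<1$ gives sub-$|y|^{-1}$ growth at each puncture (including the logarithmic correction near an $\I_\nu$ fiber), extend by Riemann's removable singularity theorem, and conclude from $H^0(\PP^1,\Omega^1_{\PP^1})=0$. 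That is a valid alternative to the paper's argument for this piece.

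The gap is in the $F_\infty^{(x)}$ part. You correctly push the argument to $G:=\sqrt{\Ima(\bar\tau_1\tau_2)}\,\overline{F_\infty^{(x)}}=c$, a constant, and you correctly observe that the local PDE constraints plus the growth bound are satisfied for any $c$. But both of your candidate routes to $c=0$ rest on false premises. For Route~2: the frame $e^{(x)}=\tfrac{1}{\sqrt{\Ima(\bar\tau_1\tau_2)}}(\tau_1\,dx_{\delta,1}+\tau_2\,dx_{\delta,2})$ is $\SL(2,\ZZ)$-invariant (the combination $\tau_1\,dx_1+\tau_2\,dx_2$ and the determinant $\Ima(\bar\tau_1\tau_2)$ are both monodromy-invariant), so $F^{(x)}_\infty$ is single-valued on $\PP^1\setminus\cS$; the nontrivial multiplier $e^{\sq\cdot 2\pi\beta}$ only appears when one switches to the constant-period frame of the flat ALG model $\cC_{\beta,\tau}$, which is a different object living on the rescaled bubble rather than on the compact limit $\PP^1$. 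Thus $\sigma=0$ in Proposition~\ref{p:Liouville-sector}, the constant mode is allowed, and the sector Liouville theorem gives nothing here. Route~1 fails for the same reason: $F_\infty^{(x)}$ is a function, not a section of a nontrivial line bundle, so $H^0$ of a negative-degree bundle is not the relevant cohomology. And the residue remark is moot since $G$ is constant. What you are missing is the paper's Bochner step. Since $\eta_\infty$ is both $d^*$-closed and $d^+$-closed on the (Ricci-flat, hyperk\"ahler) local cover $Y_\infty$, it is harmonic, so $\Delta_{\hat g_\infty}|\eta_\infty|^2=2|\nabla_{\hat g_\infty}\eta_\infty|^2\ge 0$; this descends to $\partial_y\bar\partial_y|\eta_\infty|^2\ge 0$ on $\PP^1\setminus\cS$. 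Writing $|\eta_\infty|^2 = |F_\infty^{(x)}|^2 + \tfrac{|f_\infty^{(y)}|^2}{\Ima(\bar\tau_1\tau_2)}$, the removability estimates you already proved show that both $f_\infty^{(y)}$ and $\sqrt{\Ima(\bar\tau_1\tau_2)}\,\overline{F_\infty^{(x)}}$ stay bounded near each $y_0\in\cS$, while $\Ima(\bar\tau_1\tau_2)\to\infty$ there for every fiber type; hence $|\eta_\infty|^2\to 0$ at each puncture. The maximum principle for a bounded subharmonic function on the compact surface $\PP^1$ then forces $|\eta_\infty|^2\equiv 0$, and in your framework this reads: if $f_\infty^{(y)}=0$ and $G=c$ then $|\eta_\infty|^2=|c|^2/\Ima(\bar\tau_1\tau_2)$ is subharmonic, positive, and vanishes at $\cS$, contradiction unless $c=0$. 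Without this global maximum-principle input (or an equivalent one), the argument for $F_\infty^{(x)}\equiv 0$ is incomplete.
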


\begin{proof}

By Proposition \ref{p:weighted-schauder}, $\|\eta_j\|_{C_{\mu}^{1,\alpha}(\cK)}\leq C$. By Lemma \ref{l:convergence-lemma}, for any point $q_{\infty}\in\PP^1\setminus\mathcal{S}$, there is some $s>0$ such that the
diagram \eqref{e:equivariant-convergence} holds,
where $B_{2s}(q_{\infty})\subset \PP^1\setminus \mathcal{S}$.
Moreover, there are functions $f_{\infty}^{(3)}$, $f_{\infty}^{(4)}$, $f_{\infty}^{(y)}\in C^0(\PP^1\setminus \mathcal{S})$ and a $1$-form $\eta_{\infty}\in \mathring{\Omega}^1(Y_{\infty})$ with a coordinate representation
\begin{equation}
\eta_{\infty}=f_{\infty}^{(y)}dy+f_{\infty}^{(\bar{y})}d\bar y+ f_{\infty}^{(3)}\cdot e^3 +f_{\infty}^{(4)} \cdot e^4,
\end{equation}
such that
\begin{equation}d^+_{\hat{g}_{\infty}}\eta_\infty=0, \ d^*_{\hat{g}_{\infty}}\eta_\infty = 0.
\end{equation}
It follows from Lemma \ref{l:D-xi-semi-flat}
that, on $Y_{\infty}$, the $\Gamma_{\infty}$-invariant functions
\begin{equation}\sqrt{\Ima(\bar{\tau}_1\tau_2)}\cdot \overline{F_{\infty}^{(x)}} \equiv \sqrt{\Ima(\bar{\tau}_1\tau_2)}(f_{\infty}^{(3)} - \sq f_{\infty}^{(4)})
\end{equation}
and $f_{\infty}^{(y)}$ are holomorphic in $y$ so that they descend to the quotient space $B_{2s}(q_{\infty})$.

By the above discussions, the function $|\eta_\infty|^2_{\hat{g}_{\infty}}$ is $\Gamma_{\infty}$-invariant and hence it becomes a function on the base $\PP^1\setminus\mathcal{S}$. By Bochner's formula,
\begin{equation}
\Delta_{\hat{g}_{\infty}}|\eta_\infty|^2_{\hat{g}_{\infty}} = 2 |\nabla_{\hat{g}_{\infty}}\eta_{\infty}|_{\hat{g}_{\infty}}^2 \geq 0.
\end{equation}
By  the $\Gamma_{\infty}$-invariance and the K\"ahler identity in terms of holomorphic coordinates, it turns out that on the punctured quotient space $\PP^1\setminus\mathcal{S}$
\begin{equation}\Delta_{d_{ML}}|\eta_\infty|^2_{\hat{g}_{\infty}}=2\partial_y\bp_y|\eta_\infty|^2_{\hat{g}_{\infty}} = \Delta_{\hat{g}_{\infty}}|\eta_\infty|^2_{\hat{g}_{\infty}}  \ge 0.
\end{equation}
We claim that the function $|\eta_{\infty}|_{\hat{g}_{\infty}}$ is vanishing globally on $\PP^1$.
For example,  near an $\I_{\nu}$-fiber, we can choose local coordinate $y$ such that $\tau_1=1$ and $\tau_2=-\frac{\nu\sq}{2\pi}\log y$. Under the growth assumption,  $|\eta_{\infty}|_{\hat{g}_{\infty}}=O(|y|^{-\mu})$  near $y=0$ for some $\mu\in(0,1)$.
On the other hand, $|\eta_\infty|_{\hat{g}_{\infty}}^2$
can be computed in terms of local coordinates,
\begin{equation}|\eta_\infty|^2_{\hat{g}_{\infty}} = (|f_{\infty}^{(3)}|^2 + |f_{\infty}^{(4)}|^2)
+\frac{1}{\Ima(\bar\tau_1\tau_2)}|f_{\infty}^{(y)}|^2. \end{equation}
So both $f_{\infty}^{(y)}$ and $\sqrt{\Ima(\bar{\tau}_1\tau_2)}\cdot  \overline{F_{\infty}^{(x)}}$ are holomorphic across $y=0$. It follows that both $f_{\infty}^{(y)}$ and  $\sqrt{\Ima(\bar{\tau}_1\tau_2)}\cdot \overline{F_{\infty}^{(x)}}$  are bounded around $y=0$. Thus, $|\eta_\infty|^2_{\hat{g}_{\infty}}\to 0$ as $y\to 0$.
Similarly, near $\I_{\nu}^*$, $\II$, $\III$, $\IV$, $\II^*$, $\III^*$, $\IV^*$ fibers, $|\eta_\infty|^2_{\hat{g}_{\infty}}\rightarrow 0$ as $y\rightarrow 0$ is also true. Thus by the maximum principle, $|\eta_\infty|^2_{\hat{g}_{\infty}}= 0$. This completes the proof.

\end{proof}

\section{Existence of collapsing hyperk\"ahler metrics}
\label{s:proof-of-main-theorem}

We will focus on the proof of Theorem~\ref{t:main-theorem} in this section. The main idea of the proof is to use an appropriate version of the implicit function theorem in order to perturb the approximate solutions to the genuine solutions.
Subsection~\ref{ss:hyperkaehler} outlines the framework of the perturbation.
In Subsection~\ref{ss:weighted-error}, we will prove error estimates in weighted H\"older spaces, which effectively measure how far the approximate solutions are from genuine ones.
We will prove the main uniform estimates for the linearized operator in Subsection~\ref{ss:uniform-injectivity}, employing the Liouville Theorems which were proved in Section~\ref{s:Liouville}.
Finally, in  Subsection~\ref{ss:proof-of-existence} we will complete the proof Theorem~\ref{t:main-theorem}.

\subsection{Framework of the perturbation analysis}
\label{ss:hyperkaehler}

Recall that in our context, we begin with an elliptic K3 surface $\cK$ with an associated elliptic fibration $\fF:\cK\to\PP^1$ such that there is a finite singular set $\cS \subset \PP^1$. In the gluing construction, we have fixed a holomorphic $(2,0)$-form $\Omega_{\cK}$ given by the complex structure of $\cK$. Then we define $\omega_2+\sqrt{-1}\omega_3\equiv\delta\Omega_{\cK}$, which satisfy
\begin{align}
\omega_2^2=\omega_3^2, \ \omega_2\wedge\omega_3=0.
\end{align}
Let $\dvol_0=\frac{1}{2}\omega_2^2=\frac{1}{2}\omega_3^2$.
From Remark~\ref{r:triple}, the family closed $2$-forms $\omega_\delta^C$ yields a family of approximately hyperk\"ahler metrics $g_\delta^C$ induced by the definite triple $\bm{\omega}_\delta^C \equiv (\omega_\delta^C,\omega_2,\omega_3)$. Recall that $\omega_\delta^C$ might not be a $(1,1)$-form because this property is destroyed in the case of finite monodromy. For $\delta \ll1$, the definite triple $\bm{\omega}_\delta^C$
is very close to being a hyperk\"ahler triple in the sense that
\begin{equation}
\|Q_{\bm{\omega}_\delta^C}-\Id\|_{C^0(\cK)} \ll1,\end{equation}
but a  more precise statement quantifying this in weighted H\"older spaces will be proved in Proposition~\ref{p:weighted-error} below.

The goal of the perturbation procedure will be to find a closed 2-form $\bm{\theta}=(\theta_1,0,0)$ such that $
\bm{\omega}_\delta^D \equiv\bm{\omega}_\delta^C+\bm{\theta}$ is an actual hyperk\"ahler triple on $\cK$,
which is equivalent to the system
\begin{align}
\frac{1}{2}(\omega_\delta^C + \theta_1)^2&=\frac{1}{2}\omega_2^2,\label{e:perturbed-hkt-eq-1}\\
\frac{1}{2} (\omega_\delta^C + \theta_1) \wedge \omega_2 &= \frac{1}{2}(\omega_\delta^C + \theta_1) \wedge \omega_3 =0. \label{e:perturbed-hkt-eq-2}\end{align}
Let us write $\theta_1=\theta_1^+ +\theta_1^-$, where $\theta_1^+,\theta_1^-$ are the
self-dual and anti-self-dual parts of $\theta_1$ with respect to $g_\delta^C$, respectively.
Then equations \eqref{e:perturbed-hkt-eq-1} and \eqref{e:perturbed-hkt-eq-2} can be written as
\begin{align}
\frac{1}{2}\Big(2\theta_1^+\wedge\omega_\delta^C + \theta_1^+\wedge \theta_1^+ \Big) & = \frac{1}{2}(\omega_2^2-(\omega_\delta^C)^2-\theta_1^-\wedge\theta_1^-),\label{e:hkt-1}
\\
 \frac{1}{2}\theta_1^+\wedge\omega_2& = - \frac{1}{2}\omega_\delta^C\wedge\omega_2,\label{e:hkt-2}
 \\
 \frac{1}{2} \theta_1^+\wedge\omega_3& = -   \frac{1}{2}\omega_\delta^C\wedge\omega_3.\label{e:hkt-3}
\end{align}
Let us denote
\begin{align}
\mathring{\Omega}^1(\cK)\equiv\{\eta\in\Omega^1(\cK)|d^*\eta = 0\},
\end{align}
and  let $\mathcal{H}_+^2(\cK)$ be the space of self-dual harmonic $2$-forms on $\cK$,
which is of dimension $3$, since $b_+(\cK)=3$.
For $\eta_1\in\mathring{\Omega}^1(\cK)$ and $\xi_1\in\mathcal{H}_+^2(\cK)$,
it follows from Hodge theory that a solution of following gauge-fixed system
\begin{align}
d^+ \eta_1 + \xi_1  = \mathfrak{H}_0\Big(\frac{1}{2}(\omega_2^2-(\omega_\delta^C)^2-d^- \eta_1\wedge d^- \eta_1, -\omega_\delta^C\wedge\omega_2, -\omega_\delta^C\wedge\omega_3)\Big),
\label{e:elliptic-system}
\end{align}
yields a solution $\theta_1 = d^+ \eta_1 + \xi_1$ of
the system \eqref{e:hkt-1}-\eqref{e:hkt-3}. Here the operator $\mathfrak{H}_0$ is defined as follows.
Let
\begin{equation}\mathfrak{G}_0: \Omega^2_+(\cK) \to  \Omega^4(\cK)\times \Omega^4(\cK)\times  \Omega^4(\cK)\end{equation}
be the map
\begin{align}\mathfrak{G}_0(\theta_1^+)  \equiv  \frac{1}{2}\Big(2\theta_1^+\wedge\omega_\delta^C + \theta_1^+\wedge \theta_1^+, \theta_1^+\wedge\omega_2, \theta_1^+\wedge\omega_3\Big). \end{align}
Note that, restricted to every point of $\cK$,  $\mathfrak{G}_0:\dR^3\to\dR^3$ is a local diffeomorphism at $0^3\in\dR^3$,
and the map $\mathfrak{H}_0$ is then defined as a pointwise local inverse near zero to $\mathfrak{G}_0$.

For a parameter $\mu\in(0,1)$ (the precise range of $\mu$ will be fixed later), we define the following Banach spaces,
\begin{align}
\mathfrak{A} & \equiv C_{\mu}^{1,\alpha}(\mathring{\Omega}^1(\cK))\oplus \mathcal{H}_+^2(\cK),
\\
\mathfrak{B} & \equiv C_{\mu+1}^{0,\alpha}(\Lambda^+ (\cK)),
\end{align}
where $\mathfrak{A}$ is equipped with the following norm: for $(\eta,\bar{\xi}^+)\in \mathfrak{A}$, \begin{equation}
\|(\eta,\bar{\xi}^+)\|_{\mathfrak{A}} \equiv
\|\eta\|_{C_{\mu}^{1,\alpha}(\cK)}
 +
  \| \bar{\xi}^+ \|_{C_{\mu+1}^{0,\alpha}(\cK)},
\end{equation}
and where the weighted H\"older space are defined in Definition~\ref{d:weighted-space},
with weight function is given by \eqref{d:weight}.

We define the operator $\mathscr{F}_{\delta}:\mathfrak{A}\to\mathfrak{B}$ by
\begin{equation}
\mathscr{F}_{\delta}(\eta,\bar{\xi}^+)\equiv d^+ \eta + \bar{\xi}^+ - \mathfrak{H}_0\Big(\frac{1}{2}(\omega_2^2-(\omega_\delta^C)^2-d^- \eta\wedge d^- \eta, -\omega_\delta^C\wedge\omega_2, -\omega_\delta^C\wedge\omega_3)\Big),
\end{equation}
so that a zero of $\mathscr{F}_{\delta}$ solves the system~\eqref{e:elliptic-system}.
The linearization of $\mathscr{F}_{\delta}$ at $(0,0)$ is the operator
\begin{equation}
\mathscr{L}_{\delta} \equiv d^+ \oplus \Id: \mathfrak{A}\longrightarrow \mathfrak{B}.
\end{equation}
The nonlinear part of $\mathscr{F}_{\delta}$ is given by
\begin{align}
\mathscr{N}_{\delta}(\eta,\bar{\xi}^+) \equiv & \ \mathfrak{H}_0\Big(\frac{1}{2}(\omega_2^2-(\omega_\delta^C)^2, -\omega_\delta^C\wedge\omega_2, -\omega_\delta^C\wedge\omega_3)\Big)
\nonumber\\
& - \mathfrak{H}_0\Big(\frac{1}{2}(\omega_2^2-(\omega_\delta^C)^2-d^- \eta\wedge d^- \eta, -\omega_\delta^C\wedge\omega_2, -\omega_\delta^C\wedge\omega_3)\Big).\end{align}

The main tool is the following standard implicit function theorem
(see for example~\cite{BM}).
\begin{lemma} \label{l:implicit-function}
Let $\mathscr{F}:\mathfrak{A} \to \mathfrak{B}$ be a map between two Banach spaces such that
\begin{equation}\mathscr{F}(\bx)-\mathscr{F}(\bo)=\mathscr{L}(\bx)+\mathscr{N}(\bx),\end{equation} where the operator $\mathscr{L}:\mathfrak{A}\to\mathfrak{B}$ is linear and $\mathscr{N}(\bo)=\bo$. Assume that
\begin{enumerate}
\item $\mathscr{L}$ is an isomorphism with $\| \mathscr{L}^{-1} \| \leq C_1$,

\item there are constants $r>0$ and $C_2>0$ with $r<\frac{1}{5C_1 C_2}$ such that
\begin{enumerate}\item  $\| \mathscr{N}(\bx) - \mathscr{N}(\by) \|_{\mathfrak{B}} \leq C_2\cdot ( \|\bx\|_{\mathfrak{A}} + \|\by\|_{\mathfrak{A}} ) \cdot  \| \bx - \by \|_{\mathfrak{A}} $ for all $x,y\in B_r(\bo)\subset {\mathfrak{A}}$,

\item $\| \mathscr{F}(\bo) \|_{\mathfrak{B}} \leq \frac{r}{2C_1}$,
\end{enumerate}
\end{enumerate}
then there exists a unique solution to $\mathscr{F}(\bx)=\bo$ in $\mathfrak{A}$   such that
\begin{equation}
\|\bx\|_{\mathfrak{A}} \leq 2C_1 \cdot \|\mathscr{F}(\bo)\|_{\mathfrak{B}}.
\end{equation}

\end{lemma}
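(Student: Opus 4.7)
The plan is to recast $\mathscr{F}(\bx)=\bo$ as a fixed point problem and apply the Banach contraction principle. Because $\mathscr{L}$ is invertible with $\|\mathscr{L}^{-1}\|\leq C_1$, the equation is equivalent to $\bx = \mathscr{T}(\bx)$, where $\mathscr{T}(\bx) \equiv -\mathscr{L}^{-1}\bigl(\mathscr{F}(\bo)+\mathscr{N}(\bx)\bigr)$, so it suffices to exhibit a unique fixed point of $\mathscr{T}$ inside a suitably chosen closed ball. All of the work is then bookkeeping against the two explicit hypotheses (2a) and (2b).

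First I would set $R \equiv 2C_1\|\mathscr{F}(\bo)\|_{\mathfrak{B}}$, so that (2b) immediately gives $R\leq r$ and the closed ball $\overline{B}_R(\bo)\subset \mathfrak{A}$ lies inside the domain on which (2a) applies. Taking $\by=\bo$ in (2a) and using $\mathscr{N}(\bo)=\bo$ yields the quadratic estimate $\|\mathscr{N}(\bx)\|_{\mathfrak{B}}\leq C_2\|\bx\|_{\mathfrak{A}}^2$, so for $\bx\in\overline{B}_R(\bo)$ one has $\|\mathscr{T}(\bx)\|_{\mathfrak{A}}\leq C_1\|\mathscr{F}(\bo)\|_{\mathfrak{B}}+C_1 C_2 R^2 \leq \tfrac{R}{2}+C_1 C_2 R^2$. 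The bound $R<r<\tfrac{1}{5C_1 C_2}$ forces the second term to be strictly less than $R/5$, so $\mathscr{T}$ maps $\overline{B}_R(\bo)$ into itself. The contraction property follows directly from (2a): for $\bx,\by\in \overline{B}_R(\bo)$, one obtains $\|\mathscr{T}(\bx)-\mathscr{T}(\by)\|_{\mathfrak{A}}\leq C_1 C_2(\|\bx\|_{\mathfrak{A}}+\|\by\|_{\mathfrak{A}})\|\bx-\by\|_{\mathfrak{A}}\leq 2C_1 C_2 R\,\|\bx-\by\|_{\mathfrak{A}}<\tfrac{2}{5}\|\bx-\by\|_{\mathfrak{A}}$.

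Applying the Banach fixed point theorem then produces a unique fixed point $\bx\in\overline{B}_R(\bo)$ of $\mathscr{T}$, hence a unique zero of $\mathscr{F}$ in that ball, automatically satisfying the claimed quantitative estimate $\|\bx\|_{\mathfrak{A}}\leq R=2C_1\|\mathscr{F}(\bo)\|_{\mathfrak{B}}$. The only minor subtlety is that uniqueness is in fact claimed in the full ball $B_r(\bo)$ on which (2a) is assumed, not only in the smaller ball $B_R(\bo)$; but the identical Lipschitz computation with $R$ replaced by $r$ gives a contraction constant $2C_1 C_2 r<\tfrac{2}{5}$, so any two fixed points of $\mathscr{T}$ in $B_r(\bo)$ must coincide. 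There is no real obstacle in this argument; the threshold $r<\tfrac{1}{5C_1 C_2}$ is chosen precisely so that the self-map and contraction properties both hold with room to spare, yielding the clean constant $2C_1$ in the final estimate.
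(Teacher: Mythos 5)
Your proof is correct and is the standard contraction-mapping argument: rewrite $\mathscr{F}(\bx)=\bo$ as the fixed-point equation $\bx = -\mathscr{L}^{-1}(\mathscr{F}(\bo)+\mathscr{N}(\bx))$ and verify that the ball of radius $R=2C_1\|\mathscr{F}(\bo)\|_{\mathfrak{B}}$ is invariant and that the map is a contraction there, with the threshold $r<\tfrac{1}{5C_1C_2}$ giving comfortable slack. The paper itself does not prove this lemma but only cites it as standard, and this is precisely the proof one finds in the cited references.
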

In the following subsections, we will show that all of the assumptions in Lemma ~\ref{l:implicit-function}
are satisfied when $\delta$ is sufficiently small.

\subsection{Weighted error estimates}

\label{ss:weighted-error}

As noted above,  $\bm{\omega}_\delta^C = (\omega_\delta^C, \omega_2, \omega_3)$ is hyperk\"ahler
away from the regions  $\cS_{\ALG}$, and the damage zone regions in $\cS_{\I_{\nu}}$ and  $\cS_{\I_{\nu}^*}$.
The following proposition gives precise estimates for the error term with respect to the weighted H\"older spaces.
\begin{proposition}
[Weighted error estimates]\label{p:weighted-error} Let $\fF:\cK\to \PP^1$ be an elliptic K3 surface with a family of collapsing metrics $g_\delta^C$ induced from $\bm{\omega}_\delta^C$ such that
\begin{equation}
(\cK, g_\delta^C)\xrightarrow{GH} (\PP^1, d_{ML}).
\end{equation}
Let $\cS_{\ALG}$,  $\cS_{\I_{\nu}}$ and  $\cS_{\I_{\nu}^*}$ be the regions chosen in previous sections which surround the singular fibers of $\fF$.
Then the weighted error estimate is listed as follows:
\begin{enumerate}
\item
In the singular region $\cS_{\ALG}$ near a singular fiber $\fF^{-1}(p)$ with finite monodromy,
\begin{align}
\|Q_{\bm{\omega}_\delta^C}-\Id\|_{C_{\mu+1}^{0,\alpha}(\cO_{\delta^{\ell}}(p))}
& \leq
C\cdot \delta^{\ell(\frac{7}{5}+\mu)},
\\
\|Q_{\bm{\omega}_\delta^C}-\Id\|_{C_{\mu+1}^{0,\alpha}(\mathcal{A}_{\delta^{\ell},2\delta^{\ell}}(p))}
& \leq
C\cdot (\delta^{\ell(\frac{7}{5}+\mu)} +  \delta^{\ell(\mu-1)+2}),
\end{align}
where $\cO_{\delta^{\ell}}(p)\equiv\fF^{-1}(B_{2\delta^{\ell}}(p))$, $\mathcal{A}_{\delta^{\ell},2\delta^{\ell}}(p)\equiv\fF^{-1}(A_{\delta^{\ell},2\delta^{\ell}}(p))$, $\cS_{\ALG}=\cO_{\delta^{\ell}}(p)\cup \mathcal{A}_{\delta^{\ell},2\delta^{\ell}}(p)$.

\item Near a singular fiber of Type $\I_{\nu}$ for some $\nu\in\dZ_+$, the approximate triple in the singular region $\cS_{\I_{\nu}}$ yields to the estimate
\begin{equation}
\|Q_{\bm{\omega}_\delta^C}-\Id\|_{C_{\mu+1}^{0,\alpha}(\fF^{-1}(A_{\delta_0, 2\delta_0}(p)))}
\leq C_1e^{-C_2/\delta}.\label{e:exp-small-estimate}
\end{equation}

\item   Near a singular fiber $\fF^{-1}(p)$ of Type $\I_{\nu}^*$ for some $\nu\in\dZ_+$, the approximate triple in the singular region $\cS_{\I_{\nu}^*}$ yields to the following estimate,
\begin{align}
\|Q_{\bm{\omega}_\delta^C}-\Id\|_{C_{\mu+1}^{0,\alpha}(\fF^{-1}(A_{\delta_0, 2\delta_0}(p)))}
 & \leq  C_1 \cdot e^{-C_2/\delta},
 \\
 \|Q_{\bm{\omega}_\delta^C}-\Id\|_{C_{\mu+1}^{0,\alpha}(\mathcal{D}_{\EH})}
 &\leq
C \cdot \Big(\delta^{\mu+1} \cdot \ee_{\lambda}^{\mu+5} + (\ee_{\lambda}\cdot \delta)^{\mu+3}\Big),\end{align}
where
$\mathcal{D}_{EH}\equiv \{\bx\in\cK|  \ee_{\lambda}\delta\leq d_{g_\delta^C}(\bx, q_{\lambda}) \leq 2\ee_{\lambda} \delta,\ 1\leq\lambda\leq 4
\}$
denotes the transition region for gluing the Eguchi-Hanson metrics $\omega_{\EH}^{\flat}$.
\end{enumerate}

\end{proposition}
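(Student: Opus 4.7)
The overall plan is to estimate $|Q_{\bm{\omega}_\delta^C}-\Id|_{g_\delta^C}$ pointwise in each of the regions appearing in (1)--(3), and then to convert these pointwise bounds into weighted H\"older bounds by simply multiplying by $\fs^{\mu+1}$, with $\fs$ given by \eqref{d:weight}. The H\"older semi-norms are handled by the standard rescaling argument built on the regularity-scale bounds of Section~\ref{s:metric-geometry} (exactly as in the proof of Proposition~\ref{p:weighted-schauder}), so the problem reduces to pointwise size estimates together with bounds for covariant derivatives, which then follow by elliptic bootstrap.

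For part~(1) I would split $\cS_{\ALG}$ into the deep core $\cO_{\delta^\ell}(p)$ and the damage zone $\mathcal{A}_{\delta^\ell,2\delta^\ell}(p)$. In the core $\omega_\delta^C=\Psi_*\omega_\delta^{\cG}$, so $\Psi_*g_\delta^{\cG}$ is genuinely hyperk\"ahler with respect to the triple $(\Psi_*\omega_\delta^{\cG},\Rea(\delta^2\Psi_*\Omega^{\cG}),\Ima(\delta^2\Psi_*\Omega^{\cG}))$. The only source of defect in $Q-\Id$ is that we have replaced $\delta^2\Psi_*\Omega^{\cG}$ by $\delta\Omega_{\cK}$ in the last two entries, and Proposition~\ref{p:complexerror} bounds this replacement by $C|u|^{\lambda_\beta}$ with $\lambda_\beta\geq 2/5$. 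Since $\fs_{\cG}(\bx)\sim|u|$ once $|u|\geq\delta$, multiplying by $\fs^{\mu+1}$ and taking the supremum over $|u|\leq\delta^\ell$ yields the bound $\delta^{\ell(7/5+\mu)}$. On the damage annulus one additionally picks up a cutoff contribution coming from \eqref{e:gluing-metric-ALG}: the accompanying pointwise bound $C(\delta^{2-\ell(k+2)}+\delta^{\ell(\lambda_\beta-k)})$ produces, at $k=0$ and after weighting by $\fs^{\mu+1}\leq\delta^{\ell(\mu+1)}$, the extra term $\delta^{\ell(\mu-1)+2}$.

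For parts~(2) and~(3), on the annuli $\fF^{-1}(A_{\delta_0,2\delta_0}(p))$ the gluing of Propositions~\ref{p:gluing-I_v} and~\ref{p:gluing-I_v*} differs from the (orbifold) multi-Ooguri-Vafa form only by $\sq\,\partial\bp$ of the potential $\chi\cdot\varphi_{\delta,\nu}^\flat$ (respectively of $\chi\cdot\check{\varphi}_{\delta,\nu}^\flat$), and $\varphi_{\delta,\nu}^\flat$ itself represents exactly the discrepancy between the semi-flat and multi-Ooguri-Vafa metrics. By Lemma~\ref{l:Green's-function} the Green's function differs from its logarithmic part by a quantity bounded by $Ce^{-2\pi|u_1+\sq u_2|}$, which after the $\delta$-rescaling becomes $C_1 e^{-C_2/\delta}$ on $|u|\geq\delta_0/2$; elliptic bootstrap promotes this to the same exponential decay for all covariant derivatives, yielding \eqref{e:exp-small-estimate} in both the $\I_\nu$ and $\I_\nu^*$ cases. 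For the Eguchi-Hanson damage zone in part~(3) I would combine two pointwise comparisons: \eqref{e:EH-approximation} controls $\varphi_{EH}-r^2/2$ to order $r^{-2}$, so the K\"ahler-form deviation of $\omega_{EH}^\flat$ from the flat orbifold form is of order $r^{-4}\sim \ee_\lambda^4$ at the gluing scale $r\sim\ee_\lambda^{-1}$, while \eqref{e:orbifold-approximation} controls $\check{\psi}_{\delta,\nu}^\flat-|z|^2/2$ to order $r^4$, so the K\"ahler-form deviation of $\check{\omega}_{\delta,\nu}^\flat$ from the flat orbifold form is of order $r^2\sim (\ee_\lambda\delta)^2$. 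On $\mathcal{D}_{EH}$ one checks that $\fs_\nu^*(\bx)\sim d_{g_\delta^C}(\bx,q_\lambda)\sim\ee_\lambda\delta$, so multiplying each deviation by $\fs^{\mu+1}\sim(\ee_\lambda\delta)^{\mu+1}$ and adding produces precisely the two summands $\delta^{\mu+1}\ee_\lambda^{\mu+5}$ and $(\ee_\lambda\delta)^{\mu+3}$.

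The principal technical nuisance is exactly this book-keeping: three distinct natural length-scales (the ALF scale $\delta$, the ALG scale $\delta^\ell$, and the intermediate Eguchi-Hanson scale $\ee_\lambda\delta$) are simultaneously in play, the weight function $\fs$ interpolates between them, and one must match each pointwise decay rate with the correct power of $\fs$ to reproduce the announced exponents. Once this pairing is fixed correctly the remaining analytical content is essentially routine rescaled Schauder analysis, and the derivative estimates required for the full $C^{0,\alpha}_{\mu+1}$-norm follow from the corresponding higher-$k$ versions of Proposition~\ref{p:complexerror}, Lemma~\ref{l:Green's-function}, \eqref{e:EH-approximation}, and \eqref{e:orbifold-approximation}, all of which are already stated with $k$-dependent constants.
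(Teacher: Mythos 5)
Your proposal follows essentially the same route as the paper: the same region decomposition, the same key inputs (Proposition~\ref{p:complexerror} for the complex-structure distortion in the ALG core, the $1$-form bounds entering \eqref{e:gluing-metric-ALG} for the ALG damage annulus, Lemma~\ref{l:Green's-function} for the exponential decay near $\I_\nu$ and $\I_\nu^*$ fibers, and \eqref{e:EH-approximation} together with \eqref{e:orbifold-approximation} for the Eguchi--Hanson transition region), and the same weight pairing producing the announced exponents. The only cosmetic difference is that the paper invokes Proposition~\ref{p:exact-error-ALG} directly for the ALG annulus estimate rather than quoting the derived pointwise bound, and cites \cite[Theorem~4.4]{GW} rather than invoking elliptic bootstrap for promoting the exponential decay to higher derivatives; your phrasing of the Ooguri--Vafa/orbifold deviation rate as ``order $r^2\sim(\ee_\lambda\delta)^2$'' is slightly loose about the rescaling convention but lands on the same weighted bound.
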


\begin{proof}

For Item (1), note that
the components of $Q_{\bm{\omega}_\delta^C} - \Id$
in $\cO_{\delta^{\ell}}(p)$ are \begin{equation}
  (\Psi_* \omega_\delta^{\cG})^2-\frac{1}{2}\delta^2\Omega_{\cK}\wedge\bar\Omega_{\cK} = \frac{1}{2}\delta^4 \Psi_*\Omega_{\cG}\wedge
\Psi_* \bar \Omega_{\cG} - \frac{1}{2}\delta^2\Omega_{\cK}\wedge\bar\Omega_{\cK}
  \end{equation}
  and \begin{equation}
 \Psi_* \omega_\delta^{\cG}\wedge\delta\Omega_{\cK} = \Psi_*\omega_\delta^{\cG}\wedge (\delta\Omega_{\cK}-\delta^2 \Psi_*\Omega_{\cG}).
  \end{equation} Thus, in order to estimate $Q_{\bm{\omega}_\delta^C} - \Id$ in $\cO_{\delta^{\ell}}(p)$, it suffices to compute $\delta\Omega_{\cK}-\delta^2 \Psi_*\Omega_{\cG}$ using the norm defined by the metric
$g_{\delta}^C$. Using the weighted norm, if follow immediately from Proposition \ref{p:complexerror} that
\begin{equation}
 \|  \delta\Omega_{\cK}-\delta^2 \Psi_* \Omega_{\cG} \|_{C_{\mu+1}^{0,\alpha}(\cO_{\delta^{\ell}}(p))}
\leq C \delta^{\ell(\frac{7}{5}+\mu)}.
 \end{equation}
Next, in addition to the above complex structure distortion,  the error term $|Q_{\bm{\omega}_\delta^C}-\Id|$ in the annulus transition region $\mathcal{A}_{\delta^{\ell},2\delta^{\ell}}(p)$ also arises from
the difference $| \omega_\delta^{\cG}- \omega_\delta^{\FF}|$. By Proposition \ref{p:exact-error-ALG},
\begin{align}
\|\omega_\delta^{\cG}- \omega_\delta^{\FF}\|_{C^0(\mathcal{A}_{\delta^{\ell},2\delta^{\ell}}(p))} &\leq C\delta^{2-2\ell},
\\
[\omega_\delta^{\cG}- \omega_\delta^{\FF}]_{C^{\alpha}(\mathcal{A}_{\delta^{\ell},2\delta^{\ell}}(p))} &\leq C\delta^{2-\ell(\alpha+2)}.
\end{align}
So it follows that
\begin{align}
\|Q_{\bm{\omega}_\delta^C}-\Id\|_{C_{\mu+1}^{0,\alpha}(\mathcal{A}_{\delta^{\ell},2\delta^{\ell}}(p))} \leq C \cdot (\delta^{\ell(\frac{7}{5}+\mu)} + \delta^{\ell(\mu-1)+2}).
\end{align}

In Item (2),
the error term $|Q_{\bm{\omega}_\delta^C}-\Id|$ around a singular fiber of Type $\I_{\nu}$ has an exponential decaying rate. The estimate \eqref{e:exp-small-estimate} in the special case $\nu=1$ was proved in \cite[Theorem~4.4]{GW}.
The computations for obtaining the exponential decaying
rate in the general $\I_{\nu}$ case are along the same lines. Indeed, the exponential decaying rate essentially
arises from the asymptotic behavior of the Green's function  in Lemma~\ref{l:Green's-function}.

Finally, we prove Item (3). The exponential error
estimate on $\fF^{-1}(A_{\delta_0, 2\delta_0}(p))$
is the same as the error estimate in Item (2), so we omit it.
It suffices to prove the error estimate near the Eguchi-Hanson bubbles. In this region, the approximate metric
is constructed by gluing $4$ copies of
Eguchi-Hanson metrics with
the quotient multi-Ooguri-Vafa metric near the orbifold points with the flat tangent cone $\dR^4/\dZ_2$.
To estimate the size of the error $|Q_{\bm{\omega}_\delta^C}-\Id|$ in the damage zone
\begin{equation}
\Big\{\bx\in\cK \Big|\ee_{\lambda}\delta\leq d_{g_\delta^C}(\bx, q_{\lambda}) \leq 2\ee_{\lambda} \delta,\ 1\leq\lambda\leq 4\Big\},\label{e:EH-damage-zone}
\end{equation}
 we need to analyze the asymptotic behavior of the rescaled Eugchi-Hanson metric $g_{\EH}^{\flat}$ and the local behavior of the quotient multi-Ooguri-Vafa metric $\check{g}_{\delta,\nu}^{\flat}$
around the singularity. By the approximation estimate \eqref{e:EH-approximation} for the Eguchi-Hanson metric, we have
\begin{equation}
 \Big|\nabla_{g_{\dR^4/\dZ_2}^{\flat}}^k(\omega_{\EH}^{\flat} - \omega_{\dR^4/\dZ_2}^{\flat}) \Big|_{g_{\dR^4/\dZ_2}^{\flat}} \leq C\cdot\frac{(\ee_{\lambda}^2\cdot\delta)^4}{(\ee_{\lambda}\cdot \delta)^{4+k}}=C\cdot\frac{\ee_{\lambda}^{4-k}}{ \delta^k}.
\end{equation}
 On the other hand, for the metric approximation around orbifold singularities, \eqref{e:orbifold-approximation} implies
 \begin{equation}
 \Big|\nabla_{g_{\dR^4/\dZ_2}^{\flat}}^k(\omega_{\delta,\nu}^{\flat} - \omega_{\dR^4/\dZ_2}^{\flat}) \Big|_{g_{\dR^4/\dZ_2}^{\flat}} \leq C\cdot (\ee_{\lambda}\cdot \delta)^{2-k}.
 \end{equation}
Therefore, the weighted estimate in the damage zone \eqref{e:EH-damage-zone} is given by
\begin{equation}
\|Q_{\bm{\omega}_\delta^C}-\Id\|_{C_{\mu+1}^{0,\alpha}}\leq C \cdot \Big(\delta^{\mu+1} \cdot \ee_{\lambda}^{\mu+5} + (\ee_{\lambda}\cdot \delta)^{\mu+3}\Big).
\end{equation}
This completes the proof.

\end{proof}

\subsection{The uniform injectivity estimates}
\label{ss:uniform-injectivity}

In this subsection, we will establish the uniform injectivity estimates for the linear operator $d^+_{g_\delta^C}$. The crucial part in proving such estimates is to use the Liouville theorems in Section \ref{s:Liouville} in the contradiction arguments.

To begin with, we give a more accurate upper bound for weight parameter $\mu\in(0,1)$.
In our following weighted analysis, the upper bound of the weight parameter $\mu\in(0,1)$ is determined by the Liouville type theorems in Section \ref{ss:Liouville-noncompact}.
By Proposition \ref{p:Liouville-sector}, for each punctured sector $\punsec(\beta)$ with $\beta\in(0,1)$, the growth parameter $\mu$ is chosen such that
\begin{align}
0<\mu<\iota_{\beta,\beta}=
\begin{cases}
1, & \beta\in(0,1/2],
\\
\frac{1}{\beta}-1, & \beta\in(1/2,1).
\end{cases}
\end{align}
Here $\beta\in(0,1)$ corresponds to
  the angle parameter in the ALG spaces associated to the singular fibers of type $\II^*$, $\III^*$, $\IV^*$, $\II$, $\III$, $\IV$ and $\I_0^*$.
Hence the possible range for $\beta$ is
 \begin{equation}
 \beta \in \{\frac{1}{6},\frac{1}{4},\frac{1}{3},\frac{5}{6},\frac{3}{4},\frac{2}{3},\frac{1}{2}\Big\},
 \end{equation}
 which gives $\iota_{\beta,\beta}\ge\frac{1}{5}$.
  Therefore, from now on, the weight parameter is will be chosen so that
$\mu \in (0,\frac{1}{5})$.

\begin{proposition}[Uniform injectivity estimate] \label{p:injectivity-of-D}
Given  $0<\delta\ll 1$,  $\alpha\in(0,1)$ and $\mu\in(0,\frac{1}{5})$, there exists
	 $C=C(\alpha)>0$ independent of $\delta>0$ such that for every $\eta\in\mathring{\Omega}^1(\cK)$,
	\begin{equation}
		\|\eta\|_{C_{\mu}^{1,\alpha}(\cK)} \leq C \|  d^+_{g_\delta^C} \eta \|_{C_{\mu+1}^{0,\alpha}(\cK)}.
	\end{equation}
\end{proposition}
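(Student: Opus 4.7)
The plan is a proof by contradiction combined with bubbling analysis, standard in the collapsing Einstein metric literature. Suppose the injectivity estimate fails: then there exist sequences $\delta_j \to 0$ and $\eta_j \in \mathring{\Omega}^1(\cK)$ with $\|\eta_j\|_{C^{1,\alpha}_{\mu}(\cK)} = 1$ and $\|d^+_{g^C_{\delta_j}}\eta_j\|_{C^{0,\alpha}_{\mu+1}(\cK)} \to 0$. The weighted Schauder estimate (Proposition~\ref{p:weighted-schauder}) forces $\|\eta_j\|_{C^0_{\mu}(\cK)} \geq c_0 > 0$, so I can choose $\bx_j \in \cK$ with $\fs(\bx_j)^{\mu}|\eta_j(\bx_j)|_{g^C_{\delta_j}} \geq c_0/2$. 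Setting $r_j := r_{k,\alpha}(\bx_j)$, which is comparable to $\fs(\bx_j)$ by~\eqref{e:regularity-scale-estimates}, I rescale $\tilde g_j := r_j^{-2} g^C_{\delta_j}$ and $\tilde\eta_j := r_j^{\mu-1}\eta_j$. Then $|\tilde\eta_j(\bx_j)|_{\tilde g_j} \gtrsim 1$, the pointwise bound translates into the growth control $|\tilde\eta_j(\by)|_{\tilde g_j} \leq (\fs(\by)/r_j)^{-\mu}$ globally, and $\|d^+_{\tilde g_j}\tilde\eta_j\|$ in the corresponding rescaled weighted norm tends to zero.

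Next, I apply Propositions~\ref{p:regularity-scale-ALG}, \ref{p:regularity-scale-I_v}, and \ref{p:regularity-scale-I_v*}, which classify all canonical bubble limits of $(\cK,\tilde g_j,\bx_j)$. Using the equivariant Gromov-Hausdorff framework from Subsection~\ref{ss:rescaling-semi-flat}, I lift $\tilde\eta_j$ to the local universal covers, where Ricci-flat (or flat) elliptic regularity produces smooth convergence of both the metrics and the lifted forms, producing a limiting 1-form $\tilde\eta_\infty$ on the canonical bubble $\widehat X_\infty$ (or on its equivariant lift), with $|\tilde\eta_\infty(\bx_\infty)| \gtrsim 1$, $d^*\tilde\eta_\infty = 0$, and $d^+\tilde\eta_\infty = 0$. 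On any hyperk\"ahler four-manifold these two identities automatically force $\tilde\eta_\infty$ to be harmonic: $d\tilde\eta_\infty$ is anti-self-dual and $d$-closed, and for any anti-self-dual form $\alpha = -*\alpha$ one has $d^*\alpha = -*d*\alpha = *d\alpha$, so $d^*d\tilde\eta_\infty = *d(d\tilde\eta_\infty) = 0$, whence $\Delta_H\tilde\eta_\infty = 0$.

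I then carry out a case-by-case Liouville dichotomy across the possible bubbles. For the non-compact Ricci-flat four-dimensional bubbles — Taub-NUT, Eguchi-Hanson, and the isotrivial ALG space $\cG$ — the growth control yields $|\tilde\eta_\infty(\by)| = O(R^{-\mu})$ as the distance $R = \hat d_\infty(\bx_\infty, \by) \to \infty$, so Lemma~\ref{l:harmonic-1-form-Liouville} forces $\tilde\eta_\infty \equiv 0$. For the three-dimensional flat bubbles ($\dR^3$, $\dR^2\times S^1$, and the various $\dZ_2$-orbifold variants), equivariant convergence supplies a flat four-dimensional cover ($\dR^3\times\dR$, $\dR^3\times S^1$, etc.) on which the lifted $\tilde\eta_\infty$ is harmonic with vanishing limit at infinity, so Lemma~\ref{l:harmonic-1-form-Liouville} again applies. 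For the two-dimensional bubbles — $\dR^2$, $\dR^2/\dZ_2$, and the flat cones $C(S^1_{2\pi\beta})$ arising as asymptotic cones of ALG spaces with $\beta \in \{1/6,1/4,1/3,1/2,2/3,3/4,5/6\}$ — I use the component representation from Lemma~\ref{l:D-xi-semi-flat} to write $\tilde\eta_\infty$ in terms of harmonic functions on the flat sector with the appropriate monodromy boundary conditions; the constraint $\mu \in (0,1/5)$ was chosen precisely so that $\mu < \iota_{\beta,\beta}$ for every admissible $\beta$, so Proposition~\ref{p:Liouville-sector} together with Corollary~\ref{c:Liouville-plane} forces each component to vanish. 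The orbifold cone $\dR^4/\dZ_2$ is handled by lifting to $\dR^4$. Finally, when $\bx_j$ remains at macroscopic distance in $\cR_{\delta_j}$ so that $r_j$ does not vanish, the rescaled limit is $(\PP^1, d_{ML})$ itself and Proposition~\ref{p:Liouville-Mclean} applies directly. In each case $\tilde\eta_\infty(\bx_\infty) = 0$, contradicting $|\tilde\eta_\infty(\bx_\infty)| \gtrsim 1$.

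The principal obstacle is the bookkeeping of the weight $\fs$ through the different rescalings: in every region one must verify that $\fs(\by)/r_j$ grows polynomially (or remains comparable to a positive constant) as $\by$ moves outward in the rescaled bubble, so that the weighted $C^0$ bound on $\eta_j$ descends to the decay rate needed by the Liouville theorem in use. This is delicate in the damage zones (ALF-to-semiflat, ALE-to-orbifold, and ALG-to-cone transitions), where $\fs$ switches between different asymptotic regimes, and in the equivariant-lift setting where the distance on the cover $Y_\infty$ differs from the distance on the quotient $\Gamma_\infty\backslash Y_\infty$. The upper bound $\mu < 1/5$ is dictated precisely by the sharpest sector Liouville constraint $\iota_{\beta,\beta}$ across the admissible ALG angles.
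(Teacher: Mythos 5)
Your proposal is correct and follows essentially the same route as the paper: contradiction, rescaling to unit regularity scale at a sequence of near-extremal points, classification of the canonical bubble limits via Propositions~\ref{p:regularity-scale-ALG}--\ref{p:regularity-scale-I_v*}, and case-by-case application of the Liouville theorems, with the choice $\mu<1/5$ dictated by $\iota_{\beta,\beta}$ in the sector Liouville theorem. The one place you diverge --- treating the three-dimensional flat bubbles by lifting to a flat four-dimensional cover and invoking Lemma~\ref{l:harmonic-1-form-Liouville}, rather than extracting coefficient functions and invoking Corollary~\ref{c:standard-Liouville-noncompact} as the paper does --- is equivalent but requires an extra removable-singularity step across the lifted monopole curves (codimension three in the cover), which the paper's function-level argument handles directly via Lemma~\ref{l:removable-singularity}.
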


\begin{proof}
	By Proposition \ref{p:weighted-schauder}, it suffices to show the uniform estimate
	\begin{equation}
			\|\eta\|_{C_{\mu}^{0}(\cK)} \leq C \|  d^+_{g_\delta^C} \eta \|_{C_{\mu+1}^{0,\alpha}(\cK)}.
	\end{equation}
We will prove it by contradiction and suppose that no such a uniform constant $C>0$ exists. That is, there are contradicting sequences:
\begin{enumerate}
\item a sequence of manifolds $(\cK, g_j^C)$ with a sequence of parameters $\delta_j\to0$ satisfying
\begin{equation}
(\cK, g_j^C, \bm{x}_j)\xrightarrow{GH}(X_{\infty}, d_{ML},\bm{x}_{\infty}).
\end{equation}

\item a sequence of $1$-forms $\eta_j\in\mathring{\Omega}^1(\mathcal{M}_j^4)$ satisfying
\begin{align}
&\|d^+_{g_j^C}\eta_j\|_{C_{\mu+1}^{0,\alpha}(\cK)} \to 0,
\\
&\|\eta_j\|_{C_{\mu}^0(\cK)} =1,
\\
&|\fs(\bm{x}_j)^{\mu}\cdot \eta_j(\bm{x}_j)| = 1,
\label{e:inj-contra-est}
\end{align}
as $j\to\infty$.
\end{enumerate}
Then, centering around the reference points $\bx_j$, we will rescale the metrics by
\begin{align}
\tilde{g}_j^C &= (\lambda_j)^2g_j^C
\\
\lambda_j &= \fs_j^{-1} \equiv \fs(\bx_j)^{-1}.
\end{align}
To guarantee rescaling invariance of \eqref{e:inj-contra-est} and the weighted Schauder estimate, we will simultaneously rescale the contradicting $1$-forms $\eta_j$ and
the functions $\fs$ by
\begin{align}
\begin{cases}
\tilde{\fs} = \fs_j^{-1}\cdot \fs, &
\\
\tilde{\eta}_j = \kappa_j \cdot \eta_j, & \kappa_j \equiv \fs_j^{\mu-1}.
\end{cases}
\end{align}

In terms of these rescalings, the contradiction assumptions become the following:
\begin{enumerate}
\item[($\tilde{1}$)] There is some constant $C_0>0$ independent of $j\in\dZ_+$ such that the rescaled metrics $\tilde{g}_j^C$ satisfies
\begin{align}
\frac{1}{C_0}\leq  \sup\limits_{B_1(\bx_j)} & |\Rm_{\tilde{g}_j^C}| \leq C_0,
\\
(\cK, \tilde{g}_{j}, \bm{x}_j)  \xrightarrow{GH} &(\M_{\infty}, \tilde{d}_{\infty},\bm{x}_{\infty}),
\end{align}
for some  complete metric space $(\M_{\infty}, \tilde{d}_{\infty},\bm{x}_{\infty})$.

\item[($\tilde{2}$)] The rescaled contradicting $1$-forms $\tilde{\eta}_j\in\mathring{\Omega}^1(\mathcal{M}_j^4)$ satisfy \begin{align}
&\|d^+_{\tilde{g}_j^C}\tilde{\eta}_j\|_{C_{\mu+1}^{0,\alpha}(\cK)} \to 0,
\\
&\|\tilde{\eta}_j\|_{C_{\mu}^0(\cK)} =1,
\\
&|\tilde{\fs}(\bm{x}_j)^{\mu}\cdot \tilde{\eta}_j(\bm{x}_j)|  =1.
\label{e:contradiction-1-form}
\end{align}
as $j\to\infty$.
\end{enumerate}
Under the rescaled weighted H\"older norm,
applying ($\tilde{2}$) and the weighted Schauder estimate in Proposition \ref{p:weighted-schauder}, we have
\begin{equation}
\|\tilde{\eta}_j\|_{C_{\mu}^{1,\alpha}(\cK)} \leq C.\label{e:bounded-C^{1,alpha}}
\end{equation}

Our contradiction arguments will be done in various regions of $\cK$ around the singular fibers of type $\I_{\nu},\I_{\nu}^*$ ($\nu\in\dZ_+$) and the fibers of finite monodromy. Recall those regions were denoted
by $\cS_{\I_{\nu}}$, $\cS_{\I_{\nu}^*}$ and $\cS_{\ALG}$ in Section \ref{s:metric-geometry}, respectively.

 \begin{flushleft}
{\bf Region $\cS_{\ALG}$ (singular fibers of finite monodromy):}
\end{flushleft}

We will further divide region into $\cS_{\ALG,1}$ and $\cS_{\ALG,2}$
according to the discussions in Section~\ref{ss:regularity-scale-ALG}.

\begin{flushleft}
{\bf Sub-region $\cS_{\ALG,1}$:}
\end{flushleft}
As in the proof of
Proposition \ref{p:regularity-scale-ALG},
the rescaled spaces $(\cK, \tilde{g}_j^C, \bx_j)$ for $\bx_j\in\cS_{\ALG,1}$ satisfy
the pointed convergence
\begin{equation}
(\cK, \tilde{g}_j^C, \bx_j) \xrightarrow{C^{\infty}} (\cG, g^{\cG}, \bx_{\infty}),
\end{equation}
where $(\cG, \tilde{g}_{\infty}, \bx_{\infty})$ is the complete hyperk\"ahler ALG space determined by the corresponding singular fiber with finite monodromy.

Combining the above smooth convergence of $\tilde{g}_j^C$ with uniform $C^{1,\alpha}$-estimate \eqref{e:bounded-C^{1,alpha}} for  $\tilde{\eta}_j$, then for every $\gamma\in(0,\alpha)$, $\tilde{\eta}_j$ converges to some limiting $1$-form $\tilde{\eta}_{\infty}\in\mathring{\Omega}^1(\cG)$  in the $C^{1,\gamma}$-topology which satisfies
\begin{align}
\begin{cases}
d^+_{\tilde{g}_{\infty}} \tilde{\eta}_{\infty} = 0 \\
|\tilde{\fs}_{\infty}(\bm{x}_{\infty})^{\mu}\cdot \tilde{\eta}_{\infty}(\bm{x}_{\infty}) | = 1 \\
\| \tilde{\eta}_{\infty} \|_{C_{\mu}^0(\cG)} =  1,\\
\end{cases}
\end{align}
and $\tilde{\fs}$ converges to the limiting function \begin{align}
\tilde{\fs}(\bm{x})  =
\begin{cases}
1, & \bm{x}\in B_{1}(\bx_{\infty})
\\
 d_{\tilde{g}_{\infty}}(\bm{x}, \bx_{\infty}), & \bm{x} \in \cG \setminus B_{2}(\bx_{\infty}).
\end{cases}
\end{align}
Notice that the above norm bound implies that for all $\bm{x}\in\cG\setminus B_{2}(\bx_{\infty})$,
\begin{equation}
|\tilde{\eta}_{\infty}(\bm{x})| \leq  (d_{\tilde{g}_{\infty}}(\bm{x},\bx_{\infty}))^{-\mu}.
\end{equation}
Since $\tilde{\eta}_{\infty}\in\Ker(d^+_{\tilde{g}_{\infty}})$, immediately $\tilde{\eta}_{\infty}$ is harmonic with respect to the hyperk\"ahler metric $\tilde{g}_{\infty}$ on the complete ALG space $\cG$.
Applying Lemma \ref{l:harmonic-1-form-Liouville}, we have $\tilde{\eta}_{\infty} = 0$.

\begin{flushleft}
{\bf Sub-region $\cS_{\ALG,2}$:}
\end{flushleft}
In this sub-region, the rescaled sequence $(\cK,\tilde{g}_j^C,\bx_j)$
is  collapsing to $T_{\infty}\cG$, the asymptotic cone of the corresponding ALG space $\cG$.  That is, as $j\to+\infty$,
\begin{equation}
(\cK,\tilde{g}_j^C,\bx_j)\xrightarrow{GH} (C(S_{2\pi\beta}^1), d_{C,2\pi\beta}, \bx_{\infty})
\end{equation}
where $d_{C,2\pi\beta}(\bx_{\infty},0^2)=1$ and $(C(S_{2\pi\beta}^1),d_{C,2\pi\beta})$ is a flat cone
for some angle parameter
\begin{equation}
 \beta \in \{\frac{1}{6},\frac{1}{4},\frac{5}{6},\frac{3}{4},\frac{2}{3},\frac{1}{3},\frac{1}{2}\Big\}.
 \end{equation}
Let $\Sec(\beta)\subset \dR^2$ be the open sector
obtained from the flat cone $C(S_{2\pi\beta}^1)$
removing rays $\theta=0$
and $\theta=2\pi\beta$.
So we can take a sequence of open subsets
$\mathcal{U}_j\subset \cK$ such that
\begin{equation}
(\mathcal{U}_j, \tilde{g}_j^C, \bx_j)\xrightarrow{GH} (\Sec(\beta),g_0, \bx_{\infty}).
\end{equation}
Moreover,
 $\mathcal{U}_j$ can be chosen such that it has a natural torus bundle structure
 \begin{equation}
 \dT^2\to \mathcal{U}_j \xrightarrow{\pi} \Sec(\beta) \end{equation}
with the standard holomorphic coordinate system $\{\mathbf{u},\mathbf{v}\}$,
where $\mathbf{u}$ and $\mathbf{v}$
are the holomorphic coordinates on the base and the torus fiber, respectively.
Restricted to $\mathcal{U}_j$, the contradicting sequence $\tilde{\eta}_j$, as real-valued $1$-forms, can be written in terms of the complex coordinates
\begin{equation}\tilde{\eta}_j = f_j d \mathbf{u} + \bar{f}_j d\bar{\mathbf{u}} + h_j d \mathbf{v} + \bar{h}_j d\bar{\mathbf{v}},\label{e:1-form-u-v}
\end{equation}
where $f_j$ and $h_j$
are complex-valued functions on $\mathcal{U}_j$. Taking any point $q_{\infty}\in \Sec(\beta)$ and letting $q_j\to q_{\infty}$ for $q_j\in \mathcal{U}_j$, then there is some $s>0$
depending on $q_{\infty}$
 such that the universal covering of $B_{2s}(q_j)$ is non-collapsing and the following equivariant-Gromov-Hausdorff convergence holds:
\begin{equation}
\xymatrix{
\Big(\widehat{B_{2s}(q_j)}, \hat{g}_j^C, \Gamma_j,\hat{q}_{j}\Big)\ar[rr]^{eqGH}\ar[d]_{\pr_j} &   & \Big(Y_{\infty}, \hat{g}_{\infty}, \Gamma_{\infty}, \hat{q}_{\infty}\Big)\ar [d]^{\pr_{\infty}}
\\
 \Big(B_{2s}(q_j), g_j^C, q_j \Big)\ar[rr]^{GH} && \Big(B_{2s}(q_{\infty}), d_{ML}, q_\infty \Big),
}\label{e:equivariant-convergence2}
\end{equation}
where $\Gamma_j\cong \dZ^2$ and $\Gamma_{\infty}\cong \dR^2$.
Remark that the convergence on the universal covering space $\widehat{B_{2s}(q_j)}$ can improved to be $C^{\infty}$.

Denote by
\begin{equation}\hat{\eta}_j= F_j d \mathbf{u} + \bar{F}_j d\bar{\mathbf{u}} + H_j d \mathbf{v} + \bar{H}_j d\bar{\mathbf{v}} \end{equation} the lifting of the $1$-forms $\tilde{\eta}_j$ on $\widehat{B_{2s}(q_j)}$, then combining the contradiction assumption \eqref{e:contradiction-1-form} and the above $C^{\infty}$-convergence of the local universal covers, we have that $\hat{\eta}_j\in\mathring{\Omega}^1(\widehat{B_{2s}(q_j)})$
converges to
$\hat{\eta}_{\infty} \in\mathring{\Omega}^1(Y_{\infty})$ so that
$d^+_{\hat{g}_{\infty}}\hat{\eta}_{\infty} = 0 \ \text{on}\ Y_{\infty}$.
Then by direct computations,
\begin{equation}
\Delta_{\hat{g}_{\infty}} \Rea(F_{\infty}) = \Delta_{\hat{g}_{\infty}} \Ima(F_{\infty}) =
\Delta_{\hat{g}_{\infty}} \Rea(H_{\infty}) =
\Delta_{\hat{g}_{\infty}} \Ima(H_{\infty}) =
0\ \text{on}\ Y_{\infty}.\label{e:harmonic-functions-limiting-cover}
\end{equation}
The definition of ALG space shows that the torus bundle restricted to $B_{2s}(q_j)\to B_{2s}(q_{\infty})$
is almost a metric product, which  implies
that any $\Gamma_{\infty}$-orbit is totally geodesic in $Y_{\infty}$.
Therefore, \eqref{e:harmonic-functions-limiting-cover} descends to $ B_{2s}(q_{\infty})\subset \Sec(\beta)$.

By Lemma \ref{l:convergence-lemma},
there are global limiting functions $f_{\infty}$
and $h_{\infty}$ on the open sector $\Sec(\beta)$ satisfying
\begin{align}
|\Rea(f_{\infty})(x)|  + |\Ima(f_{\infty})(x)| +|\Rea(h_{\infty})(x)|  + |\Ima(h_{\infty})(x)| \leq \frac{C}{r(x)^{\mu}},\label{e:components-weighted-C_0}
\end{align}
where $r$ is the distance to the origin. Moreover,
\begin{align}
f_{\infty}(r,\beta)&=e^{-\i \cdot 2\pi\beta}f_{\infty}(r,0)\\
h_{\infty}(r,\beta)&=e^{\i \cdot 2\pi\beta}h_{\infty}(r,0).
\end{align}
We have just shown that on $\Sec(\beta)$,
\begin{equation}
\Delta_{g_0}\Rea(f_{\infty})= \Delta_{g_0}\Ima(f_{\infty})=
\Delta_{g_0}\Rea(h_{\infty})=
\Delta_{g_0}\Ima(h_{\infty})= 0.
\end{equation}
Recall that we have chosen $\mu\in (0,\frac{1}{5})$, then Proposition \ref{p:Liouville-sector} implies that
in any case of $\beta$ in Table \ref{ALGtable},
we have
\begin{equation}
\Rea(f_{\infty})=\Ima(f_{\infty})= \Rea(h_{\infty})=\Ima(h_{\infty}) = 0.
\end{equation}
On other hand, this contradicts to the weighted control
\begin{equation}
\Big|\tilde{\fs}(\bx_j)^{\mu}\cdot\tilde{\eta}_j(\bx_j)\Big|=\|\tilde{\eta}_j\|_{C_{\mu}^0(\cK)}=1,
\end{equation}
which completes the proof in this region.

\begin{flushleft}
{\bf Region $\cS_{\I_{\nu}}$ (singular fibers of type $\I_{\nu}$, $\nu\in\dZ_+$):}
\end{flushleft}
By the regularity scale analysis in Section \ref{ss:regularity-scale-I_v}, the region $\cS_{\I_{\nu}}$
can be subdivided into
$\cS_{\I_{\nu},1}$, $\cS_{\I_{\nu},2}$
and $\cS_{\I_{\nu},3}$.

\begin{flushleft}
{\bf Sub-region $\cS_{\I_{\nu},1}$:}
\end{flushleft}
As $\bx_j\in \cS_{\I_{\nu},1}$, we have the following $C^{\infty}$-convergence for the rescaled metrics
\begin{equation}
(\cK,\tilde{g}_j^C,\bx_j) \xrightarrow{C^{\infty}} (\dC^2, g_{TN}, \bx_{\infty}),
\end{equation}
where  $(\dC^2, g_{TN}, \bx_{\infty})$ is
the Ricci-flat Taub-NUT space. The remainder of the contradiction arguments immediately follow from Lemma~\ref{l:harmonic-1-form-Liouville}, which is almost verbatim to the proof in Sub-region $\cS_{\ALG,1}$, so we omit the details.

\begin{flushleft}
{\bf Sub-region $\cS_{\I_{\nu},2}$:}
\end{flushleft}
In this sub-region, depending upon the distance to the singular fiber, there are three types of rescaled limits:
\begin{enumerate}
\item[(a)] the Ricci-flat Taub-NUT space $(\dC^2, g_{TN}, \bx_{\infty})$,
\item[(b)] the Euclidean space $\dR^3$,
\item[(c)] the flat product space $\dR^2\times S^1$.
\end{enumerate}
The argument in Case (a) is identical to $\cS_{\I_{\nu},1}$. In Case (b), the rescaled metrics yield to
\begin{equation}
(\cK, \tilde{g}_j^C, \bx_j)\xrightarrow{GH} (\dR^3, g_{\dR^3}, \bx_{\infty})
\end{equation}
with $d_{\dR^3}(\bx_{\infty}, 0^3) = 1$. Moreover, the convergence keeps curvatures uniformly bounded away from the origin $0^3\in\dR^3$. Our basic strategy is to reduce the convergence of the $1$-form $\tilde{\eta}_j$ to the convergence of the coefficient functions.
 Let $p_i, i=1,2,...,\nu$, be a fixed monopole in $\cS_{\I_{\nu}}$. For any fixed $\xi>1$, define
 \begin{equation}
 \mathcal{U}_j \equiv B_{\ell_j}^{g_j^C}(p_i)\setminus B_{\delta_j}^{g_j^C}(p_i)
 \end{equation}
where $\ell_j \equiv  \xi^{-1} \cdot d_{g_j^C}(\bx_j, p_i)$ and $\delta_j \equiv  \xi \cdot d_{g_j^C}(\bx_j, p_i)$, then $\mathcal{U}_j$ is naturally a circle bundle. Now with respect to the rescaled metrics
\begin{equation}
\tilde{g}_j^C =\lambda_j^2 g_j^C,\ \lambda_j = \frac{1}{d_{g_j^C}(\bx_j, p_i)},
\end{equation} the open subsets
$\mathcal{U}_j$ become large punctured balls
 $B_{\xi{-1}}^{\tilde{g}_j^C}(p_i)\setminus B_{\xi}^{\tilde{g}_j^C}(p_i)$
 and since $\xi$ is arbitrary,
\begin{equation}
 (\mathcal{U}_j,\tilde{g}_j^C,\bx_j) \xrightarrow{GH} (\dR^3\setminus\{0^3\}, g_{\dR^3}, \bx_{\infty}).
 \end{equation}
Therefore, restricted to $\mathcal{U}_j$, the contradicting $1$-forms $\tilde{\eta}_j$ can be written as
\begin{equation}
\tilde{\eta}_j = f_{j}^{1} \cdot \theta_j^{1} + f_{j}^{2} \cdot \theta_j^{2} + f_{j}^{3} \cdot \theta_j^{3} + f_{j}^{4} \cdot \theta_j^{4},
\end{equation}
where $\theta_j^{1}$,  $\theta_j^{2}$, $\theta_j^{3}$ and $\theta_j^{4}$ are the orthonormal basis defined as the rescaling of the pull back of $d u_1$, $d u_2$, $d u_3$ and the connection 1-form for $(u_1,u_2, u_3)\in\dR^3$.

Given the contradiction assumption \eqref{e:contradiction-1-form},
applying the similar arguments as Lemma~\ref{l:convergence-lemma},
we obtain limiting functions $f_{\infty}^{1}$,
 $f_{\infty}^{2}$,
 $f_{\infty}^{3}$,
 and $f_{\infty}^{4}$ on $\dR^3\setminus\{0^3\}$ which satisfy
 \begin{align}
 \Delta_{\dR^3} f_{\infty}^{1} (\bx) =   \Delta_{\dR^3} f_{\infty}^{2}  (\bx)=   \Delta_{\dR^3} f_{\infty}^{3}  (\bx)=   \Delta_{\dR^3} f_{\infty}^{4}  (\bx)  = 0,\ \forall \bm{x}\in\dR^3\setminus\{0^3\},
 \end{align}
 with the growth condition
 \begin{align}
(|f_{\infty}^{1}| + |f_{\infty}^{2}| + |f_{\infty}^{3}| + |f_{\infty}^{4}|)(\bm{x})
\leq C\Big(d_{g_0}(\bm{x},0^3) \Big)^{-\mu},\ \forall \bm{x}\in\dR^3\setminus\{0^3\}.
\end{align}
Since $\mu\in(0,\frac{1}{5})$,  Corollary~\ref{c:standard-Liouville-noncompact} implies
\begin{align}
f_{\infty}^{1} =f_{\infty}^{2} = f_{\infty}^{3}=f_{\infty}^{4} = 0 \ \text{on}\ \dR^3.
\end{align}
This contradicts to the property
\begin{equation}
\Big|\tilde{\fs}(\bx_j)^{\mu}\cdot\tilde{\eta}_j(\bx_j)\Big|=\|\tilde{\eta}_j\|_{C_{\mu}^0(\cK)}=1,
\end{equation}
which completes the proof of Case (b).

The rescaled limit in
Case (c) is a flat product space $\dR^2\times S^1$, so the proof in this region is almost identical to Case (b), details are omitted.

\begin{flushleft}
{\bf Sub-region $\cS_{\I_{\nu},3}$:}
\end{flushleft}
The large scale region $\cS_{\I_{\nu},3}$
can be divided into three cases of rescaled limits:
\begin{enumerate}
\item[(a)] the flat product $\dR^2\times S^1$,

\item[(b)] the Euclidean plane $\dR^2$,
\item[(c)] the compact space $(\PP^1, d_{ML}, \bx_{\infty})$, where $d_{ML}$ is the McLean metric.
\end{enumerate}
The proof are almost identical to the Sub-region $\cS_{\I_{\nu},2}$. Here we just point out the differences: In Case (b), the contradiction arises from the Liouville theorem
on the Euclidean plane, which is guaranteed
by Corollary~\ref{c:Liouville-plane}. The rescaled limit in Case (c) is compact, so we apply Proposition \ref{p:Liouville-Mclean} to obtain the desired contradiction.

\begin{flushleft}
{\bf Region $\cS_{\I_{\nu}^*}$ (singular fibers of type $\I_{\nu}^*$, $\nu\in\dZ_+$):}
\end{flushleft}
In this region, we have classified all the bubbles into the following $5$ types:
\begin{enumerate}
\item the Eguchi-Hanson space $(X^4_{EH}, g_{EH})$ and the Taub-NUT space $(\dC^2, g_{TN})$,
\item  $4$-dimensional flat orbifolds: $\dR^4/\dZ_2$ and $(\dR^3\times S^1)/\dZ_2$,
\item $3$-dimensional flat orbifolds: $\dR^3$, $\dR^3/\dZ_2$ and $(\dR^2\times S^1)/\dZ_2$
\item  $2$-dimensional flat orbifolds: $\dR^2/\dZ_2$,
\item  the compact space $(\PP^1, d_{ML}, \bx_{\infty})$, where $d_{ML}$ is the McLean metric.
\end{enumerate}

In Type (1), one can use Lemma~\ref{l:harmonic-1-form-Liouville} to obtain the contradiction. In Types (2)-(4), one can apply Corollary~\ref{c:standard-Liouville-noncompact} on the $\dZ_2$-covering space when necessary.
Finally, the contradiction in Type (5) follows from Proposition~\ref{p:Liouville-Mclean}.

This completes the proof of the proposition.
\end{proof}

\subsection{The proof of the existence theorem}

\label{ss:proof-of-existence}

In this subsection, we will complete the proof of the main existence theorem.
To start with, we restate Theorem~\ref{t:main-theorem} with more precise descriptions of choices of parameters, uniform estimates and bubbling behaviors.

\begin{theorem}\label{t:existence-hyperkaehler}
Let $\mu\in(0,\frac{1}{10})$, $\alpha\in(0,1)$, $\ell = \frac{11}{12}$ and $\frac{1}{\log(1/\delta)}\ll \ee_{\lambda} \ll  \frac{1}{\sqrt{\log(1/\delta)}}$ be parameters. Let $\fF:\cK\to \PP^1$
be any elliptic K3 surface with a fixed holomorphic $2$-form $\Omega$. Let  $g_\delta^C$ be the family of approximately hyperk\"ahler metrics with the error estimates in Proposition \ref{p:weighted-error} such that
\begin{equation}
(\cK,g_\delta^C)\xrightarrow{GH}(\PP^1,d_{ML}), \ \text{as}\ \delta\to0.
\end{equation}
Then for any $\delta\ll1$,
there exists a hyperk\"ahler metric $g_\delta^D$ induced by a hyperk\"ahler triple
$(\omega_\delta^D, \Rea(\delta\cdot\Omega), \Ima(\delta\cdot\Omega))$
such that the following properties hold:
\begin{enumerate}
\item Under the hyperk\"ahler metrics $g_\delta^D$,
$(\cK,g_\delta^D)$ are collapsing to $(\PP^1, d_{ML})$  with a finite singular set $\cS\subset \PP^1$
 such that curvatures of $g_\delta^D$ are uniformly bounded away from singular fibers, but are unbounded around singular fibers.

\item The hyperk\"ahler metrics $g_\delta^D$ satisfy the uniform weighted estimate
\begin{align}
\|g_\delta^D - g_\delta^C \|_{C_{0}^{0,\alpha}(\cK)} \ll \frac{1}{(\log(1/\delta))^{1/4}}, \label{e:error-deviation}\end{align}
where $C_{0}^{0,\alpha}$ norm means the weighted $C_{\mu'}^{k,\alpha}$ norm for $k=0$ and $\mu'=0$ as in Definition~\ref{d:weighted-space}.

\item
  If $\fF^{-1}(p)$ is singular with finite monodromy, then rescalings of $g_\delta^D$ converge to a complete hyperk\"ahler isotrivial ALG metric of asymptotic order at least $2$.

\item  When $\fF^{-1}(p)$ is singular of type $\I_{\nu}$ for some $\nu\in\dZ_+$, then rescalings of $g_\delta^D$ converge to $\nu$ copies of complete Taub-NUT metrics.

\item   When $\fF^{-1}(p)$ is singular of type $\I_{\nu}^*$ for some $\nu\in\dZ_+$, then rescalings of $g_\delta^D$
converge to $\nu$ copies of complete Taub-NUT metrics plus $4$ copies of Eguchi-Hanson metrics.

\end{enumerate}
\end{theorem}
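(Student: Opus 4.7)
The plan is to apply the implicit function theorem (Lemma~\ref{l:implicit-function}) to the operator $\mathscr{F}_\delta : \mathfrak{A} \to \mathfrak{B}$ set up in Subsection~\ref{ss:hyperkaehler}. A zero $(\eta, \bar{\xi}^+)$ of $\mathscr{F}_\delta$ produces a self-dual correction $\theta_1 = d^+ \eta + \bar{\xi}^+$ for which the deformed triple $\bm{\omega}_\delta^D = \bm{\omega}_\delta^C + (\theta_1, 0, 0)$ solves the hyperk\"ahler system \eqref{e:hkt-1}--\eqref{e:hkt-3}; then $g_\delta^D$ is the hyperk\"ahler metric associated to $\bm{\omega}_\delta^D$ in the sense of Remark~\ref{r:triple}. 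Fix $\mu \in (0, 1/10)$, $\alpha \in (0,1)$, $\ell = 11/12$ and $(\log(1/\delta))^{-1} \ll \ee_\lambda \ll (\log(1/\delta))^{-1/2}$; I must verify the three hypotheses of Lemma~\ref{l:implicit-function} with constants uniform in $\delta$.

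For hypothesis (1), the linearization $\mathscr{L}_\delta = d^+ \oplus \Id$ acts by the identity on $\mathcal{H}_+^2(\cK)$, so surjectivity onto that factor is trivial; the key content is uniform invertibility of $d^+$ on $\mathring{\Omega}^1(\cK)$, which is exactly Proposition~\ref{p:injectivity-of-D} combined with the weighted Schauder estimate of Proposition~\ref{p:weighted-schauder} (this is where the choice $\mu < 1/5$ is essential, coming from the Liouville theorem for ALG sectors). For hypothesis (2a), the nonlinear part $\mathscr{N}_\delta$ is built from $\mathfrak{H}_0$ applied to a quadratic expression in $d^- \eta$. Since $\|Q_{\bm{\omega}_\delta^C} - \Id\|_{C^0} \to 0$ by Proposition~\ref{p:weighted-error}, the pointwise local inverse $\mathfrak{H}_0$ is uniformly Lipschitz in $\delta$; combining this with the multiplicativity of weighted H\"older norms of the form $C_{\mu_1}^{0,\alpha} \cdot C_{\mu_2}^{0,\alpha} \hookrightarrow C_{\mu_1 + \mu_2}^{0,\alpha}$ yields the required quadratic estimate. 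For hypothesis (2b), $\|\mathscr{F}_\delta(\mathbf{0})\|_\mathfrak{B}$ is controlled directly by $\|Q_{\bm{\omega}_\delta^C} - \Id\|_{C_{\mu+1}^{0,\alpha}(\cK)}$, which is estimated region-by-region in Proposition~\ref{p:weighted-error}.

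The main obstacle is parameter balancing: the error contributions from Proposition~\ref{p:weighted-error} are of three qualitatively different types, and all must simultaneously be small enough. Explicitly, the $\ALG$ deep region contributes $\delta^{\ell(7/5 + \mu)}$, the $\ALG$ damage zone contributes $\delta^{\ell(\mu - 1) + 2}$, and the Eguchi--Hanson damage zone contributes $\delta^{\mu+1} \ee_\lambda^{\mu+5} + (\ee_\lambda \delta)^{\mu+3}$, while the $\I_\nu$ and $\I_\nu^*$ gluing regions with infinite monodromy produce errors of order $e^{-C/\delta}$. With $\ell = 11/12$ and $\mu < 1/10$, the two $\ALG$ exponents become $\ell(7/5+\mu) > 1$ and $\ell(\mu-1) + 2 > 1$; the constraint $\ee_\lambda \gg (\log(1/\delta))^{-1}$ is needed so that the Eguchi--Hanson terms dominate the orbifold correction in the gluing, while $\ee_\lambda \ll (\log(1/\delta))^{-1/2}$ is required by the orbifold regularity bound \eqref{e:orbifold-parameter} and ensures the four Eguchi--Hanson bubbles actually appear (cf.\ Remark~\ref{d2remark}). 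Under this choice, $\|\mathscr{F}_\delta(\mathbf{0})\|_\mathfrak{B}$ is bounded by a small positive power of $\delta$, so Lemma~\ref{l:implicit-function} applies with $r = (\log(1/\delta))^{-1/4}$ (say) for $\delta$ sufficiently small, producing a unique solution with $\|(\eta, \bar{\xi}^+)\|_\mathfrak{A} \lesssim \|\mathscr{F}_\delta(\mathbf{0})\|_\mathfrak{B} \ll (\log(1/\delta))^{-1/4}$.

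Finally, I would deduce the geometric conclusions. The uniform bound on $(\eta, \bar{\xi}^+)$ in $\mathfrak{A}$ together with the definition of weighted norms yields $\|g_\delta^D - g_\delta^C\|_{C_0^{0,\alpha}(\cK)} \ll (\log(1/\delta))^{-1/4}$, which is \eqref{e:error-deviation}. The Gromov--Hausdorff convergence $(\cK, g_\delta^D) \xrightarrow{GH} (\PP^1, d_{ML})$, the curvature behavior, and the classification of rescaled limits near each type of singular fiber then follow by combining this closeness with the explicit bubble analysis already carried out for $g_\delta^C$ in Propositions~\ref{p:regularity-scale-ALG}, \ref{p:regularity-scale-I_v} and~\ref{p:regularity-scale-I_v*}: in each noncollapsing rescaling, $g_\delta^D$ converges smoothly (by elliptic regularity applied to the Ricci-flat equation on the universal covers) to the same limit model as $g_\delta^C$, giving the isotrivial ALG, Taub-NUT, and Eguchi--Hanson bubbles asserted in items (3)--(5).
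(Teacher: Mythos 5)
Your proposal follows the same route as the paper: set up $\mathscr{F}_\delta:\mathfrak{A}\to\mathfrak{B}$, verify the three hypotheses of Lemma~\ref{l:implicit-function}, and read off the geometric conclusions from the $\mathfrak{A}$-bound together with the bubbling analysis of $g_\delta^C$. Two places need to be tightened.

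First, your verification of hypothesis (1) is incomplete. You write that surjectivity ``onto that factor is trivial'' and invoke Proposition~\ref{p:injectivity-of-D} plus the weighted Schauder estimate; but the operator $\mathscr{L}_\delta(\eta,\bar\xi^+)=d^+\eta+\bar\xi^+$ lands in the single space $\mathfrak{B}=C^{0,\alpha}_{\mu+1}(\Lambda^+(\cK))$, and what must be bounded is the full inverse: given $\xi^+\in\mathfrak{B}$ you need $\|\eta\|_{C^{1,\alpha}_\mu}+\|\bar\xi^+\|_{C^{0,\alpha}_{\mu+1}}\leq C\|\xi^+\|_{C^{0,\alpha}_{\mu+1}}$. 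Proposition~\ref{p:injectivity-of-D} only controls $\|\eta\|_{C^{1,\alpha}_\mu}\leq C\|\xi^+-\bar\xi^+\|_{C^{0,\alpha}_{\mu+1}}$, so it remains to bound the harmonic part $\bar\xi^+$. That is the content of Proposition~\ref{p:injectivity-for-L}, whose proof decomposes $\bar\xi^+$ in the basis $(\omega_\delta^C,\omega_2,\omega_3)$ of $\mathcal{H}^2_+(\cK)$, computes the $L^2$ Gram matrix $\mathscr{Q}_{pq}=\int_\cK Q_{pq}\,\mathrm{dvol}$, and uses the error estimates from Proposition~\ref{p:weighted-error} together with the volume estimate $\Vol_{g_\delta}(\cK)\sim\delta^2$ to show this matrix is uniformly invertible. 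This step is not covered by Proposition~\ref{p:injectivity-of-D} and is essential for hypothesis (1).

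Second, the radius $r=(\log(1/\delta))^{-1/4}$ does not work. Proposition~\ref{p:nonlinear-errors} gives the Lipschitz constant $C_2\sim(\delta\cdot\min_\lambda\ee_\lambda^2)^{-(\mu+1)}$, which blows up as $\delta\to 0$, so the requirement $r<\frac{1}{5C_1C_2}$ forces $r\lesssim(\delta\cdot\min_\lambda\ee_\lambda^2)^{\mu+1}$, which is much smaller than your logarithmic choice. The paper takes $r=\frac{(\delta\cdot\min_\lambda\ee_\lambda^2)^{\mu+1}}{(\log(1/\delta))^{1/4}}$; then $C_1C_2 r\sim(\log(1/\delta))^{-1/4}\to 0$ and hypothesis (2b) reduces precisely to the balancing inequality \eqref{e:balancing}. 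This sharp choice of $r$ also matters at the end: to pass from $\|\theta_1\|_{C^{0,\alpha}_{\mu+1}}$ to $\|\theta_1\|_{C^{0,\alpha}_0}$ you multiply by $\sup\fs^{-(\mu+1)}\leq(\delta\cdot\min_\lambda\ee_\lambda^2)^{-(\mu+1)}$, so the $(\delta\cdot\min_\lambda\ee_\lambda^2)^{\mu+1}$ factor must be present in the $\mathfrak{A}$-bound for the weight to cancel and yield \eqref{e:error-deviation}; the weaker bound $\|(\eta,\bar\xi^+)\|_\mathfrak{A}\ll(\log(1/\delta))^{-1/4}$ alone does not suffice.
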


\begin{remark}
We choose the parameters to ensure
\begin{align}
\max\Big\{\delta^{\ell(\frac{7}{5}+\mu)} +  \delta^{\ell(\mu-1)+2}, \delta^{\mu+1} \cdot \max_\lambda\ee_{\lambda}^{\mu+5} + (\max_\lambda\ee_{\lambda}\cdot \delta)^{\mu+3}\Big\} \ll \frac{(\delta\cdot \min_\lambda\ee_{\lambda}^2)^{\mu+1}}{(\log(1/\delta))^{1/4}}.\label{e:balancing}
\end{align}
We will see the application of this estimate in the proof of Theorem \ref{t:existence-hyperkaehler}. We remark that if there are no $\I_\nu^*$ fibers, the estimates may be improved to some polynomial rate in terms of $\delta$. However, for simplicity, we define $\max_\lambda\ee_{\lambda}=\min_\lambda\ee_{\lambda}=\frac{1}{(\log(1/\delta))^{3/4}}$ in that case so that our proof can be stated in a uniform way.
\end{remark}

\begin{remark}In the special case that all singular fibers are of Type $\I_{\nu}$ ($\nu\in\dZ_+$),
it is easy to see that the deviation  estimate \eqref{e:error-deviation} can be improved to
\begin{equation}
\|g_\delta^D - g_\delta^C \|_{C^k(\cK)} \leq C_k\cdot e^{-D_k/\delta}.
\end{equation}
See Item (2) of Proposition~\ref{p:weighted-error} and Lemma~\ref{l:implicit-function}. In this case, an alternative treatment of the higher order estimate can be found in \cite{JS}, which is based on a refined $C^2$-estimate for the K\"ahler potential compared with \cite[Lemma~5.3]{GW}.
\end{remark}

First, we will verify Property (1) in Lemma \ref{l:implicit-function} and we will prove that the linearized operator $\mathscr{L}_\delta$ is an isomorphism from $\mathfrak{A}$ to $\mathfrak{B}$.
\begin{proposition}
\label{p:injectivity-for-L} Let $(\cK,g_\delta^C)$ be a collapsing elliptic K3 surface with the family of approximately hyperk\"ahler metrics $g_\delta^C$, then there exists some constant $C>0$, independent of $\delta$, such that for every self-dual $2
$-form $\xi^+\in\mathfrak{B}
$, there exists a unique pair
$(\eta,\bar{\xi}^+)\in \mathfrak{A}$
such that \begin{equation}\mathscr{L}_\delta(\eta, \bar{\xi}^+) = \xi^+
\end{equation} and
\begin{equation}
\|\eta\|_{C_{\mu}^{1,\alpha}(\cK)}+\|\bar{\xi}^+\|_{C_{\mu+1}^{0,\alpha}(\cK)}\leq
 C  \|\xi^+\|_{C_{\mu+1}^{0,\alpha}(\cK)},\label{e:L-uniform-estimate}
\end{equation}
where  $\mu\in(0,\frac{1}{10})$ and $\alpha\in(0,1)$.\end{proposition}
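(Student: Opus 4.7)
The plan is to invert $\mathscr{L}_\delta = d^+\oplus\Id$ via a Hodge-type decomposition and then bound each piece uniformly in $\delta$. Given $\xi^+\in\mathfrak{B}$, I would first define $\bar{\xi}^+\in\mathcal{H}^2_+(\cK)$ to be the $L^2_{g_\delta^C}$-orthogonal projection of $\xi^+$ onto the $3$-dimensional space of self-dual harmonic forms, and set $\zeta^+\equiv\xi^+-\bar{\xi}^+$. Since $\zeta^+$ is $L^2$-orthogonal to every harmonic self-dual form, standard Hodge theory on the compact manifold $\cK$ produces a unique $\eta\in\mathring{\Omega}^1(\cK)$ with $d^+\eta=\zeta^+$. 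This realises $\mathscr{L}_\delta(\eta,\bar{\xi}^+)=\xi^+$; uniqueness of the pair follows from the injectivity of $d^+$ on $\mathring{\Omega}^1(\cK)\cap(\ker d^+)^\perp$ together with the direct sum with $\mathcal{H}^2_+(\cK)$.

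The estimate on $\eta$ is the cleanest: once we know $\|\zeta^+\|_{C^{0,\alpha}_{\mu+1}(\cK)}\leq C\|\xi^+\|_{C^{0,\alpha}_{\mu+1}(\cK)}$ (which reduces to the harmonic part estimate), Proposition~\ref{p:injectivity-of-D} gives immediately
\begin{equation}
\|\eta\|_{C^{1,\alpha}_\mu(\cK)}\leq C\|d^+\eta\|_{C^{0,\alpha}_{\mu+1}(\cK)}=C\|\zeta^+\|_{C^{0,\alpha}_{\mu+1}(\cK)},
\end{equation}
with $C$ independent of $\delta$. So the whole proof reduces to controlling $\bar{\xi}^+$ in the weighted norm by $\xi^+$.

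For the harmonic part, I would fix an explicit $\delta$-independent basis of $\mathcal{H}^2_+(\cK)$ given by the harmonic representatives $(\psi_{1,\delta},\psi_{2,\delta},\psi_{3,\delta})$ of the three cohomology classes $[\omega_\delta^C]$, $[\omega_2]=\delta[\Rea\Omega_{\cK}]$, $[\omega_3]=\delta[\Ima\Omega_{\cK}]$, normalised so that the Gram matrix $G_{ij}=\langle\psi_{i,\delta},\psi_{j,\delta}\rangle_{L^2}$ is bounded above and below (after a $\delta$-dependent rescaling consistent with the weight $\mu+1$). Write $\bar{\xi}^+=\sum_i c_i\psi_{i,\delta}$, extract the coefficients $c_i$ from the dual-pairing identity $\sum_j G_{ij}c_j=\int_{\cK}\xi^+\wedge\psi_{i,\delta}$, and bound the right-hand side by combining a pointwise pairing estimate with the weighted norm of $\xi^+$, using the fact that $\psi_{i,\delta}$ differ from the explicit closed forms $\omega_\delta^C,\omega_2,\omega_3$ by exact forms which Proposition~\ref{p:weighted-error} already controls. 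Invertibility of $G$ and its entries being uniformly bounded in $\delta$ then yields $|c_i|\leq C\|\xi^+\|_{C^{0,\alpha}_{\mu+1}(\cK)}$, and the uniform pointwise bounds on $\psi_{i,\delta}$ translate this into the required weighted estimate on $\bar{\xi}^+$.

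The main obstacle is the uniform control of the harmonic basis $\psi_{i,\delta}$ in the weighted Hölder spaces as $\delta\to 0$. The issue is that harmonic self-dual $2$-forms on a collapsing K3 can develop concentration near singular fibers, and it is non-trivial to show that the harmonic representatives of the classes $[\omega_\delta^C],[\omega_2],[\omega_3]$ remain comparable to their explicit (almost closed, almost harmonic) representatives in the $C^{0,\alpha}_{\mu+1}$ norm. I would handle this by writing $\psi_{i,\delta}=\alpha_i+d^+\tau_i$ where $\alpha_i\in\{\omega_\delta^C,\omega_2,\omega_3\}$ and $\tau_i\in\mathring{\Omega}^1(\cK)$ solves $d^+\tau_i=-(d^+\alpha_i)^{\perp}$ (the component orthogonal to $\mathcal{H}^2_+$), and then invoking Proposition~\ref{p:injectivity-of-D} once more together with the weighted error estimate for $d^+\alpha_i$ supplied by Proposition~\ref{p:weighted-error}. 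This is a bootstrap: the Liouville-based injectivity estimate is used both to correct the explicit basis to the true harmonic basis and to invert $d^+$ on the exact part, and closing it hinges on the choice $\mu\in(0,\tfrac{1}{10})$ falling within the admissible range of Proposition~\ref{p:injectivity-of-D}.
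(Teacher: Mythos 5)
Your overall plan — project $\xi^+$ onto $\mathcal{H}^2_+(\cK)$, solve $d^+\eta = \xi^+-\bar{\xi}^+$, bound $\eta$ via Proposition~\ref{p:injectivity-of-D}, and then control the coefficients of $\bar{\xi}^+$ in a harmonic basis — coincides with the paper's. But the step you single out as the ``main obstacle'' does not exist, and recognizing this is the whole point. By Remark~\ref{r:triple}, each component of the closed definite triple $\bm{\omega}_\delta^C = (\omega_\delta^C, \omega_2, \omega_3)$ is self-dual with respect to the induced metric $g_\delta^C$. Being closed and self-dual, each $\omega_j$ is automatically coclosed ($d^*\omega_j = -*d*\omega_j = -*d\omega_j = 0$), hence harmonic. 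Since $b_2^+(\cK)=3$, these three forms \emph{are} the harmonic basis: $\psi_{i,\delta} = \alpha_i$ exactly. Your proposed correction $\psi_{i,\delta}=\alpha_i + d^+\tau_i$ with $d^+\tau_i = -(d^+\alpha_i)^\perp$ is vacuous because $d^+\alpha_i = (d\alpha_i)^+ = 0$, so $\tau_i = 0$; the bootstrap collapses to a tautology. There is no concentration issue to fight, no need to invoke Proposition~\ref{p:weighted-error} to control $d^+\alpha_i$, and no second application of the Liouville-based injectivity.

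With that simplification in hand, the remaining issue — which your proposal gestures at but does not resolve — is the quantitative balance in the coefficient extraction. The Gram matrix $\mathscr{Q}_{pq} = \int_\cK Q_{pq}\,\dvol_{\bm{\omega}_\delta^C}$ has inverse norm of size $\delta^{-2}$ because $\Vol_{g_\delta^C}(\cK)\sim\delta^2$; this potential blow-up is cancelled precisely by the volume-weighted integral bound $\int_\cK \fs^{-\mu-1}\,\dvol_{\bm{\omega}_\delta^C} \leq C\delta^2$ that appears when you pair $\xi^+$ against the $\omega_q$ using the weighted $C^{0,\alpha}_{\mu+1}$ norm. Your phrase ``a $\delta$-dependent rescaling consistent with the weight $\mu+1$'' is vague about this cancellation; making it explicit is where the choice $\mu$ small and the explicit weight function $\fs$ from Section~\ref{s:metric-geometry} actually enter, and without it the coefficient bound $|\lambda_q|\leq C\|\xi^+\|_{C^{0,\alpha}_{\mu+1}(\cK)}$ is not justified. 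Finally, one still needs the uniform bound $\|\omega_q\|_{C^{0,\alpha}_{\mu+1}(\cK)}\leq C$ to convert the coefficient estimate into the norm estimate for $\bar{\xi}^+$; this holds because the weight $\fs$ is bounded above and $\|\omega_q\|_{C^0(\cK)}\leq C$, but it should be stated rather than left implicit.
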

\begin{proof}
First, the surjectivity of the linear operator $\mathscr{L}_\delta:\mathfrak{A}\to\mathfrak{B}$ immediately follows from the  standard Hodge theory. Indeed,
\begin{align}
\Omega^2_+(\cK) &=\mathcal{H}_+^2(\cK) \oplus d^+(\Omega^1(\cK))\\
 \Omega^1(\cK) &= d ( \Omega^0(\cK)) \oplus \mathring{\Omega}^1(\cK),
\end{align}
where $\mathring{\Omega}^1(\cK)$ denotes the space of divergence-free $1$-forms on $\cK$, therefore
\begin{equation}
\Omega^2_+(\cK) = \mathcal{H}_+^2(\cK) \oplus d^+(\mathring{\Omega}^1(\cK)).
\end{equation}
It follows that
\begin{equation}
\mathscr{L}_\delta = d^+\oplus \Id: \mathfrak{A} \longrightarrow \mathfrak{B}.
\end{equation}
is surjective.

Therefore the main part of the proof is to establish the uniform injectivity estimate \eqref{e:L-uniform-estimate}. By Proposition \ref{p:injectivity-of-D},
\begin{equation}
\|\eta\|_{C_{\mu}^{1,\alpha}(\cK)} \leq C \|d^+_{g_\delta^C}\eta\|_{C_{\mu+1}^{0,\alpha}(\cK)}=C\|\xi^+ - \bar{\xi}^+\|_{C_{\mu+1}^{0,\alpha}(\cK)}.
\end{equation}
So we only need to prove
\begin{equation}
\|\bar{\xi}^+\|_{C_{\mu+1}^{0,\alpha}(\cK)} \leq C\|\xi^+\|_{C_{\mu+1}^{0,\alpha}(\cK)}.
\end{equation}
Since the definite triple
\begin{equation}\bm{\omega}_\delta^C=(\omega_1,\omega_2,\omega_3)=(\omega_\delta^C,\Rea(\delta\cdot\Omega),\Ima(\delta\cdot\Omega))\end{equation} is self-dual harmonic and hence constitutes a basis of $\mathcal{H}^2_+(\cK)$ at every point, we can write
\begin{equation}
\bar{\xi}^+ = \lambda_1 \omega_1 + \lambda_2 \omega_2 + \lambda_3 \omega_3.
\label{e:proj}\end{equation}
It follows from the definition of the triple $\bm{\omega}_\delta^C$ that for every $1\leq p,q\leq 3$,
\begin{equation}
\omega_p\wedge\omega_q = Q_{pq}\dvol_{\bm{\omega}_\delta^C}. \end{equation}
By the error estimate in Proposition \ref{p:weighted-error}, the fact that the weight $\fs(\bx) \ge \delta \cdot \min_\lambda\ee_{\lambda}^2$ and the estimate (\ref{e:balancing}),
we have the estimate
\begin{equation}
\| Q_{pq}-\delta_{pq}\|_{C^0(\cK)} \ll \frac{1}{(\log(1/\delta))^{1/4}}.
\end{equation}
With respect to the K\"ahler metrics $\omega_\delta^C$, we have the volume estimate
\begin{equation}C^{-1}\delta^2\leq \Vol_{g_\delta}(\cK) \leq C \delta^2.\end{equation}
Therefore
\begin{align}
\mathscr{Q}_{pq} \equiv \int_{\cK} Q_{pq}\dvol_{\bm{\omega}_\delta^C}
\end{align}
has an inverse matrix whose norm is bounded by $C\delta^{-2}$.

On the other hand, we know that $\bar{\xi}^+$ is the $L^2(\cK)$ projection of $\xi^+$, so
\begin{equation}
\int_{\cK} \xi^+ \wedge \omega_q = \int_{\cK} \bar{\xi}^+ \wedge \omega_q = \sum_{p=1}^{3} \lambda_p \int_{\cK} Q_{pq}\dvol_{\bm{\omega}_\delta^C} = \sum_{p=1}^{3} \lambda_p \mathscr{Q}_{pq},
\end{equation}
for $q = 1,2,3$.  Therefore,
\begin{align}
\begin{split}
\Big(\sum_{q=1}^{3}|\lambda_q|^2\Big)^{1/2} &\leq C \delta^{-2} \cdot \Big(\sum_{q=1}^{3} (\int_{\cK} \xi^+ \wedge \omega_q)^2\Big)^{1/2} \\
&\leq C \delta^{-2} \cdot \big(\int_{\cK}\fs^{-\mu-1}\dvol_{\bm{\omega}_\delta^C}\big) \cdot \|\xi^+ \|_{C_{\mu+1}^{0,\alpha}(\cK)}\\
& \leq C\|\xi^+ \|_{C_{\mu+1}^{0,\alpha}(\cK)}
\end{split}
\end{align}
because $\|\omega_q\|_{C^0(\cK)}\le C$ and $\int_{\cK}\fs^{-\mu-1}\dvol_{\bm{\omega}_\delta^C}\le C\delta^2$.

Next,
\begin{align}
\begin{split}
\Vert \bar{\xi}^+ \Vert_{C_{\mu+1}^{0,\alpha}(\cK)}
&= \Vert \lambda_{1} \omega_1 +\lambda_{2} \omega_2 + \lambda_{3} \omega_3
\Vert_{C_{\mu+1}^{0,\alpha}(\cK)}\\
& \leq |\lambda_{1}|\cdot\Vert \omega_1\Vert_{C_{\mu+1}^{0,\alpha}(\cK)}
+ |\lambda_{2}|\cdot \Vert \omega_2\Vert_{C_{\mu+1}^{0,\alpha}(\cK)}
+ |\lambda_{3}| \cdot\Vert \omega_3\Vert_{C_{\mu+1}^{0,\alpha}(\cK)}
.
\end{split}
\end{align}
Since for every $1 \leq q \leq 3$,
\begin{align}
\Vert \omega_q\Vert_{C_{\mu+1}^{0,\alpha}(\cK)} \leq C,\end{align}
the above implies that
\begin{align}
\Vert \bar{\xi}^+\Vert_{C_{\mu+1}^{0,\alpha}(\cK)} \leq C \Vert \xi^+\Vert_{C_{\mu+1}^{0,\alpha}(\cK)},
\end{align}
and the proof of the proposition is done.

 \end{proof}

Next we prove a uniform weighted estimate
for the nonlinear term $\mathscr{N}_{\delta}$, which
 is given by the following elementary calculations.

\begin{proposition}
[Nonlinear Errors]\label{p:nonlinear-errors} Given the collapsing sequence $(\cK,g_\delta^C)$, then there exists some constant $C>0$, independent of $\delta$, such that for every
$v_1\equiv(\eta_1,\bar{\xi}_1^+) \in B_r(0)\subset\mathfrak{A}$ and $v_2\equiv(\eta_2,\bar{\xi}_2^+) \in B_r(0)\subset\mathfrak{A}$,
where $r=\frac{(\delta\cdot \min_\lambda\ee_{\lambda}^2)^{\mu+1}}{(\log(1/\delta))^{1/4}}$, we have
\begin{equation}
\| \mathscr{N}_{\delta}(v_1) - \mathscr{N}_{\delta}(v_2) \|_{\mathfrak{B}} \leq C \cdot (\delta\cdot \min_\lambda\ee_{\lambda}^2)^{-(\mu+1)}\cdot (\|v_1\|_{\mathfrak{A}} + \|v_2\|_{\mathfrak{A}} ) \cdot \|v_1 - v_2\|_{\mathfrak{A}}.\label{e:nonlinear-error}
\end{equation}

\end{proposition}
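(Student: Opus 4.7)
\medskip

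\noindent\textbf{Proof proposal.} The first observation is that $\mathscr{N}_\delta$ depends only on the first component $\eta$ of the input pair $(\eta,\bar\xi^+)\in\mathfrak A$ and not on $\bar\xi^+$ at all, since $\mathfrak{H}_0$ is applied pointwise to its argument and only the $\omega_2^2-(\omega_\delta^C)^2-d^-\eta\wedge d^-\eta$ slot contains $\eta$. Writing $v_i=(\eta_i,\bar\xi_i^+)$, the plan is to use the algebraic identity
\begin{equation}
d^-\eta_1\wedge d^-\eta_1 - d^-\eta_2\wedge d^-\eta_2 = d^-(\eta_1-\eta_2)\wedge d^-\eta_1 + d^-\eta_2\wedge d^-(\eta_1-\eta_2),
\end{equation}
together with the fact that $\mathfrak{H}_0$ is smooth near the origin (with pointwise derivative bounded by an absolute constant $C$), to reduce the proof to a weighted H\"older estimate on the bilinear wedge product $(\zeta_1,\zeta_2)\mapsto d^-\zeta_1\wedge d^-\zeta_2$. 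Because the ball radius $r$ is chosen precisely to keep the argument of $\mathfrak{H}_0$ inside its domain of smoothness, this Lipschitz step contributes only an absolute constant.

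The main analytic step is the pointwise weighted bound. For any $\bx\in\cK$, the definition of the weighted norm gives
\begin{equation}
|d^-\eta_i(\bx)|_{g_\delta^C} \leq \fs(\bx)^{-\mu-1}\,\|\eta_i\|_{C_\mu^{1,\alpha}(\cK)},
\end{equation}
so multiplying yields
\begin{equation}
\fs(\bx)^{\mu+1}\,\bigl|d^-(\eta_1-\eta_2)\wedge d^-\eta_j\bigr|_{g_\delta^C} \leq \fs(\bx)^{-(\mu+1)}\,\|\eta_1-\eta_2\|_{C_\mu^{1,\alpha}}\,\|\eta_j\|_{C_\mu^{1,\alpha}}.
\end{equation}
Because the weight function $\fs$ achieves its infimum exactly at the deepest Eguchi--Hanson bubbles in the $\I_\nu^*$ region, with $\inf_{\cK}\fs\sim \delta\cdot\min_\lambda\ee_\lambda^2$ by the explicit formulas in Section~\ref{s:metric-geometry}, we obtain the global pointwise factor $(\delta\cdot\min_\lambda\ee_\lambda^2)^{-(\mu+1)}$ appearing in the statement.

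For the H\"older seminorm contribution to $\|\cdot\|_{C_{\mu+1}^{0,\alpha}}$, I would apply the standard product rule for weighted H\"older seminorms together with the bound \eqref{e:weight-function-control} from the proof of Proposition~\ref{p:weighted-schauder}, which ensures $\fs$ is roughly constant on each ball of radius comparable to the regularity scale. On such a ball the H\"older seminorm of $d^-\eta_1\wedge d^-\eta_2$ splits as
\begin{equation}
[d^-\eta_1\wedge d^-\eta_2]_{C^\alpha}\leq \|d^-\eta_1\|_{C^0}\,[d^-\eta_2]_{C^\alpha} + [d^-\eta_1]_{C^\alpha}\,\|d^-\eta_2\|_{C^0},
\end{equation}
and each factor is controlled by $\|\eta_i\|_{C_\mu^{1,\alpha}}$ with the same power of $\fs$ as above. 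Summing the two contributions from the algebraic identity and invoking the Lipschitz continuity of $\mathfrak{H}_0$ yields \eqref{e:nonlinear-error}.

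The main obstacle I anticipate is bookkeeping rather than conceptual: one has to verify that the H\"older seminorm passes through $\mathfrak{H}_0$ with a $\delta$-independent constant, which requires checking that the argument of $\mathfrak{H}_0$ stays in a fixed compact subset of the domain where $\mathfrak{H}_0$ is smooth. Here the combination of the error bound in Proposition~\ref{p:weighted-error} with the restriction $v_i\in B_r(0)$ for $r=(\delta\cdot\min_\lambda\ee_\lambda^2)^{\mu+1}/(\log(1/\delta))^{1/4}$ ensures $d^-\eta_i\wedge d^-\eta_i$ is small uniformly in $\delta$, so the derivative of $\mathfrak{H}_0$ along the segment joining the two inputs is indeed bounded by an absolute constant, closing the estimate.
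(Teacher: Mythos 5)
Your proposal is correct and follows essentially the same route as the paper's proof: bound $\mathscr{N}_\delta(v_1)-\mathscr{N}_\delta(v_2)$ pointwise via the Lipschitz continuity of $\mathfrak{H}_0$, factor the quadratic difference $d^-\eta_1\wedge d^-\eta_1 - d^-\eta_2\wedge d^-\eta_2$, multiply by $\fs^{\mu+1}$, and use $\inf_{\cK}\fs \gtrsim \delta\cdot\min_\lambda\ee_\lambda^2$ to convert $\fs^{\mu+1}$ into $(\delta\cdot\min_\lambda\ee_\lambda^2)^{-(\mu+1)}\cdot\fs^{2(\mu+1)}$ so that each factor is controlled by the weighted norm, with the H\"older seminorm handled by the same splitting. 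Your explicit check that the argument of $\mathfrak{H}_0$ stays in a fixed compact subset of its domain of smoothness is a detail the paper leaves implicit, but it is the same argument.
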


\begin{proof}
By definition, for any $v\equiv(\omega,\bar{\xi}^+)$,
\begin{align}
\mathscr{N}_\delta(v) \equiv &  \ \mathfrak{H}_0\Big(\frac{1}{2}(\omega_2^2-(\omega_\delta^C)^2, -\omega_\delta^C\wedge\omega_2, -\omega_\delta^C\wedge\omega_3)\Big)
 \nonumber\\
 & - \mathfrak{H}_0\Big(\frac{1}{2}(\omega_2^2-(\omega_\delta^C)^2-d^- \eta \wedge d^- \eta, -\omega_\delta^C\wedge\omega_2, -\omega_\delta^C\wedge\omega_3)\Big).
\end{align}
Since $\mathfrak{H}_0:\cO\subset \dR^3\to \Lambda_+(\cK)$ is smooth, so there is some universal constant $C>0$ such that
\begin{align}
\begin{split}
 |\mathscr{N}_\delta(v_1) - \mathscr{N}_\delta(v_2) |
&\leq   C |
 d^{-}\eta_1 *  d^{-}\eta_1   - d^{-}\eta_2 *  d^{-}\eta_2| \\
&\leq C ( |d^{-}\eta_1 | + |d^{-}\eta_2|) \cdot |d^{-}(\eta_1 - \eta_2)|.
\end{split}
\end{align}
Multiplying by the weight function $\fs(\bx)^{\mu +1}$,
\begin{align}
\fs(\bx)^{\mu +1}\cdot | \mathscr{N}_\delta(v_1) - \mathscr{N}_\delta(v_2)|
\leq  C\cdot \fs(\bx)^{\mu +1} \cdot ( |d^{-}\eta_1| + |d^{-}\eta_2|) \cdot |d^{-}(\eta_1 - \eta_2)|.
\end{align}
Since the weight function $\fs(\bx)^{\mu +1}$ has a minimum $(\delta\cdot \min_\lambda\ee_{\lambda}^2)^{\mu+1}$,
\begin{align}
&\fs(\bx)^{\mu +1}\cdot | \mathscr{N}_\delta(v_1) - \mathscr{N}_\delta(v_2)|
\nonumber \\
 \leq & C\cdot (\delta\cdot \ee_{\lambda}^2)^{-(\mu+1)} \Big(\fs(\bx)^{\mu +1}\cdot ( |d^-\eta_1| + |d^-\eta_2|)\Big)\cdot \Big(
 \fs(\bx)^{\mu +1}\cdot | d^- (\eta_1 - \eta_2)|\Big).
\end{align}
Taking sup norms,
\begin{align}
&\Vert \mathscr{N}_\delta(v_1) - \mathscr{N}_\delta(v_2) \Vert_{ C^0_{\mu+1}(\cK)}\nonumber\\
\leq& C\cdot (\delta\cdot \ee_{\lambda}^2)^{-(\mu+1)}\cdot \Big(  \Vert v_1 \Vert_{ C^1_{\mu}(\cK)}  +
\Vert v_2 \Vert_{ C^1_{\mu}(\cK)} \Big)\cdot \Big(
\Vert v_1 - v_2 \Vert_ { C^1_{\mu}(\cK)}\Big).
\end{align}
By similar computations,
we also have the estimate for the H\"older seminorm
\begin{align}
& \Big[ \mathscr{N}_\delta(v_1) - \mathscr{N}_\delta(v_2) \Big]_{ C^{0,\alpha}_{\mu +1}(\cK)}
\nonumber\\
\leq  & C \cdot (\delta\cdot \ee_{\lambda}^2)^{-(\mu+1)} \cdot \Big(  \Vert v_1 \Vert_{ C^{1,\alpha}_{\mu}(\cK)}  +
\Vert v_2 \Vert_{ C^{1,\alpha}_{\mu}(\cK)} \Big)\cdot \Big(
\Vert v_1 - v_2 \Vert_ { C^{1,\alpha}_{\mu}(\cK)}\Big).
\end{align}
So we obtain the effective estimate \eqref{e:nonlinear-error} for the nonlinear errors.
\end{proof}

With the above preparations, we are ready to complete the proof of Theorem \ref{t:existence-hyperkaehler}.
\begin{proof}
[Proof of Theorem \ref{t:existence-hyperkaehler}]

We start with the metrics $g_\delta^C$ induced by the approximately hyperk\"ahler triples \begin{equation}
\bm{\omega}_\delta^C=(\omega_\delta^C, \Rea(\delta\cdot\Omega), \Ima(\delta\cdot\Omega))
\end{equation}
on the K3 surface $\cK$ with a fixed homolomorphic $2$-form $\Omega$.
Then we will prove the existence of a genuine hyperk\"ahler triple \begin{equation}
\bm{\omega}_\delta^D = (\omega_\delta^D,\Rea(\delta\cdot\Omega), \Ima(\delta\cdot\Omega)).
\end{equation}
This will be accomplished by applying Lemma \ref{l:implicit-function} to perturb the approximate solutions, which requires us to combine all the uniform weighted estimates obtained in this section. Proposition \ref{p:injectivity-for-L} gives the isomorphism and uniform weighted estimate for the linearized operator $\mathscr{L}_\delta$.
Next, Item (2) of Lemma \ref{l:implicit-function} holds by (\ref{e:balancing}) for our choice of parameters.
Therefore, applying Lemma \ref{l:implicit-function}, the 2-form $\omega_\delta^C$ can be perturbed to $\omega_\delta^D$
such that the triple  $(\omega_\delta^D,\Rea(\delta\cdot\Omega), \Ima(\delta\cdot\Omega))$
is a hyperk\"ahler triple.
Moreover, the implicit function theorem
also gives the uniform
error estimate in the weighted H\"older space
\begin{align}
\|\omega_\delta^D - \omega_\delta^C \|_{C_{\mu+1}^{0,\alpha}(\cK)} \ll \frac{(\delta\cdot \min_\lambda\ee_{\lambda}^2)^{\mu+1}}{(\log(1/\delta))^{1/4}}. \end{align}

Next, we will analyze the Gromov-Hausdorff behaviors of $(\cK, g_\delta^D)$. Remark that the weight $\fs(\bx) \ge \delta \cdot \min_\lambda\ee_{\lambda}^2$. So the above error estimate immediately implies that \begin{align}
\|\omega_\delta^D - \omega_\delta^C \|_{C^{0,\alpha}_0(\cK)} \ll \frac{1}{(\log(1/\delta))^{1/4}}. \label{e:hoelder-estimate}
\end{align}

By the definition of $g_\delta^C$ and $g_\delta^D$, it is easy to see that
\begin{align}
\|g_\delta^D - g_\delta^C \|_{C^{0,\alpha}_0(\cK)} \ll \frac{1}{(\log(1/\delta))^{1/4}}. \label{e:hoelder-estimate-metric}
\end{align}
By the uniform estimate \eqref{e:hoelder-estimate-metric}, we conclude that, with respect to the hyperk\"ahler metrics $g_\delta^D$, all the bubbles around singular fibers coincide with the corresponding bubbles occurring in the collapsing of $g_\delta^C$. By Section \ref{s:metric-geometry}, it follows that the deepest bubbles around singular fibers are precisely given in the statement of the theorem. Then Theorem \ref{t:maximal-nilpotent-rank} for Einstein metrics implies that curvatures are uniformly bounded away from singular fibers. This completes the proof of the theorem.
\end{proof}

\section{Remarks on moduli}
\label{s:moduli-space}
In this section, we give a count of the parameters involved in our construction.
\subsection{ALG moduli}
In this subsection, we compute the dimension of the moduli space of isotrivial
ALG metrics which are ALG of at least order $2$ near a fixed such space $\cG$.
\begin{definition}[Isotrivial ALG moduli]
\label{d:IALGM} Given an isotrivial ALG manifold $\cG$ with holomorphic (2,0)-form $\Omega^{\cG}=\omega_2^{\cG}+ \sq \omega_3^{\cG}$ which is identified with $d\mathscr{U}\wedge d\mathscr{V}$ on the model space $\cC_{\beta,\tau}$ outside the central fiber (see Lemma \ref{l:hol2formg}), define $\mathcal{U}$ as the space of closed 2-forms $\omega_1$ on $\cG$ such that
  \begin{align}
  (\omega_1)^2=(\omega_2^{\cG})^2, \omega_1\wedge\omega_2^{\cG}=\omega_1\wedge\omega^{\cG}_3=0,
\end{align}
and
\begin{align}
|\nabla^k_{h^{\FF}}(\omega_1-\omega^{\FF})|_{h^{\FF}}=O(|\mathscr{U}|^{-k-2}),
  \end{align}
as $|\mathscr{U}| \to \infty$, for any $k \in \mathbb{N}$.
\end{definition}

We now fix $\omega^{\cG}\in \mathscr{U}$ and study the neighborhood of $\omega^{\cG}$ in $\mathscr{U}$. Let $g^{\cG}$ be the metric induced by the hyperK\"ahler triple $(\omega^{\cG},\omega_2^{\cG},\omega_3^{\cG})$.

The weight function on $\cG$ is
\begin{align}
\rho(\bx)
=
\begin{cases}
1, &   d(p,\bm{x}) \leq 1,
\\
d(p,\bx), &  d(p,\bm{x})\geq 2,\end{cases},
\end{align}
smoothly extended to $\cG$, and the weighted $W^{k,2}$ norm of a tensor $\eta$ is defined by
\begin{equation}
\|\eta\|_{L^2_{\mu}(\cG)}\equiv\int_{\cG}|\eta|^2\rho^{-2+2\mu}, \quad   \|\eta\|_{W^{k,2}_{\mu}(\cG)}\equiv\sum_{m=1}^{k}\|\nabla^m\eta\|_{L^2_{\mu+m}(\cG)}^2.
\end{equation}

\begin{proposition}
\label{indroots}
The indicial roots of $\Delta_{g^{\cG}}$ on $\dT^2$-invariant functions are $\lambda_j = \beta^{-1} j, j \in \ZZ$.  The indicial roots of $\Delta_{g^{\cG}}$ on $\dT^2$-invariant $(1,1)$-forms are given by
\begin{enumerate}
	\item   $\lambda_j = \beta^{-1} j, j \in \ZZ$,
	\item   $\lambda_j = \beta^{-1} j \pm 2, j \in \ZZ$.
\end{enumerate}
Furthermore, for $\mu$ not equal to an indicial root, then $\Delta_{g^{\cG}}:W^{k,2}_{\mu}\rightarrow W^{k-2,2}_{\mu+2}$ is Fredholm.
\end{proposition}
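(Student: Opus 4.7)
The overall plan is to reduce the indicial computation to the flat asymptotic model $h^{\FF}$ on $\cC_{\beta,\tau}$ and invoke the standard Fredholm theory for weighted elliptic operators on ALG ends. Since $g^{\cG}$ is ALG of order at least $2$, Definition~\ref{ALG-definition} yields $|\Phi^*g^{\cG}-h^{\FF}|_{h^{\FF}}=O(|\mathscr{U}|^{-2})$ with matching derivative bounds, so $\Delta_{g^{\cG}}$ and $\Delta_{h^{\FF}}$ agree to leading order at infinity and share the same indicial set. On the flat model I would pass to the cover $\CC\times(\CC/(\ZZ\oplus\ZZ))$ in the $\mathscr{U}$-direction and impose equivariance under the deck generator $\iota:(\mathscr{U},\mathscr{V})\mapsto(e^{\sq\cdot 2\pi\beta}\mathscr{U},e^{-\sq\cdot 2\pi\beta}\mathscr{V})$. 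Restricting to $\dT^2$-invariant sections eliminates the $\mathscr{V}$-dependence, and in the parallel coframe $d\mathscr{U},d\bar{\mathscr{U}},d\mathscr{V},d\bar{\mathscr{V}}$ the flat Hodge Laplacian equals $\nabla^*\nabla$ and acts componentwise as the scalar flat Laplacian $4\p_\mathscr{U}\p_{\bar{\mathscr{U}}}$ on the coefficients; the indicial problem thus collapses to a scalar Cauchy--Euler analysis on a sector with prescribed monodromy.

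For $\dT^2$-invariant functions, $\iota$-invariance is the periodicity $f(r,\theta+2\pi\beta)=f(r,\theta)$ in polar coordinates $\mathscr{U}=re^{\sq\theta}$. Fourier expansion gives angular modes $e^{\sq m\theta}$ with $m\in\beta^{-1}\ZZ$, and the radial characteristic exponents $\pm m$ then again range over $\beta^{-1}\ZZ$, producing indicial roots $\lambda_j=j/\beta$, $j\in\ZZ$. For $\dT^2$-invariant $(1,1)$-forms, I would first tabulate the $\iota^*$-weights of the four basis $2$-forms: the diagonal pieces $d\mathscr{U}\wedge d\bar{\mathscr{U}}$ and $d\mathscr{V}\wedge d\bar{\mathscr{V}}$ are $\iota$-invariant, while the cross pieces $d\mathscr{U}\wedge d\bar{\mathscr{V}}$ and $d\mathscr{V}\wedge d\bar{\mathscr{U}}$ transform with weights $e^{\sq\cdot 4\pi\beta}$ and $e^{-\sq\cdot 4\pi\beta}$ respectively. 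Well-definedness on the quotient therefore forces the diagonal coefficient functions to carry modes in $\beta^{-1}\ZZ$ and the cross-term coefficients to carry modes in $\beta^{-1}\ZZ\mp 2$. The same Cauchy--Euler argument then yields indicial roots $\lambda_j=j/\beta$ from the diagonal components and $\lambda_j=j/\beta\pm 2$ from the cross components, matching (1) and (2) exactly.

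The Fredholm assertion then follows from the Lockhart--McOwen-type theorem for elliptic operators with cylindrical/conic asymptotic geometry, applied to the ALG end (a sector cross a flat torus): the weight $\mu$ is non-critical precisely when it avoids the indicial set just determined, and the weight shift by $2$ in the target space reflects the fact that $\Delta_{g^{\cG}}$ is asymptotically of order $|\mathscr{U}|^{-2}$. The main technical obstacle I anticipate is the monodromy bookkeeping on the $(1,1)$-form side and a clean verification that the weighted Sobolev setup of the paper matches the hypotheses of the Fredholm theorem in this ALG geometry; both are essentially contained in the existing analysis of ALG spaces already used elsewhere in the paper, so the indicial computation itself is the real content of the proposition.
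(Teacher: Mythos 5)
Your proposal is correct and follows essentially the same route as the paper: both reduce to the flat model $(\cC_{\beta,\tau},h^{\FF})$, compute the $\iota^*$-weights of the basis $(1,1)$-forms to obtain the monodromy conditions on the coefficient functions ($\phi_{1},\phi_4$ periodic, $\phi_2,\phi_3$ twisted by $e^{\mp\sq\cdot 4\pi\beta}$), and then extract the indicial roots from the scalar Cauchy--Euler analysis of Proposition~\ref{p:Liouville-sector} with $\sigma\in\{0,\pm 2\beta\}$. For the Fredholm assertion the paper invokes the HHM/CCI weighted-elliptic estimate rather than Lockhart--McOwen by name, but this is the same circle of ideas, so the two arguments are essentially identical.
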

\begin{proof} For functions, this follows directly from Proposition~\ref{p:Liouville-sector} with $\sigma = 0$.	For $(1,1)$-forms, we write a harmonic $(1,1)$-form as
\begin{align}
\begin{split}
\xi=&f_1(|\mathscr{U}|)\phi_1(\arg \mathscr{U})d \mathscr{U}\wedge d\mathscr{\bar U}+f_2(|\mathscr{U}|)\phi_2(\arg \mathscr{U})d \mathscr{U}\wedge d\mathscr{\bar V} \\
& + f_3(|\mathscr{U}|)\phi_3(\arg \mathscr{U})d \mathscr{V}\wedge d\mathscr{\bar U}+f_4(|\mathscr{U}|) \phi_4(\arg \mathscr{U})d \mathscr{V}\wedge d\mathscr{\bar V},
\end{split}
\end{align}
and note that $\xi$ is well-defined on the model space if and only if
\begin{align}
\begin{split}
\phi_1(2\pi\beta)&=\phi_1(0),  \ \phi_2(2\pi\beta)=e^{-4\pi\beta\i}\phi_2(0),\\
\phi_3(2\pi\beta)&=e^{4\pi\beta\i}\phi_3(0), \  \phi_4(2\pi\beta)=\phi_4(0).
\end{split}
\end{align}
By Proposition~\ref{p:Liouville-sector}, we get the first class of indicial roots for $\phi_1, \phi_4$, and the second class
of indicial roots for $\phi_2, \phi_3$.

The Fredholm property is proved in \cite[Proposition~16]{HHM}. We also note that the estimate
\begin{align}
\Vert \phi \Vert_{W^{2,2}_{\mu}(\cG)} \leq C \Vert \Delta_{g^{\cG}} \phi \Vert_{L^2_{\mu + 2}(\cG)}
+ \Vert \phi \Vert_{L^2(B_R)},
\end{align}
for some $R > 0$, and $\mu$ non-indicial is proved in \cite[Theorem~4.11]{CCI}.
The Fredholm property then follows from this in a standard fashion, see \cite{Bartnik, MinerbeALF}.
\end{proof}

Define $L^2\mathcal{H}^2$ as the space of 2-forms $\xi\in L^2$ such that $d\xi=0$ and $d^*\xi=0$.
\begin{proposition}
\label{l2thm}
There exists a small neighborhood $U \subset \mathcal{U}$ around $\omega^{\cG}$ such that
there exists an isomorphism from $U$ onto a small ball in $L^2 \mathcal{H}^2$.
\end{proposition}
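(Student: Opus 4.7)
The plan is to parametrize $\mathcal{U}$ locally near $\omega^{\cG}$ by its $L^2$-harmonic representative via weighted Hodge theory, and then to use the implicit function theorem to solve the remaining nonlinear constraints for a gauge-fixed exact correction.

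First I would write any element of $\mathcal{U}$ close to $\omega^{\cG}$ as
\begin{equation}
\omega_1 = \omega^{\cG} + \xi,
\end{equation}
where $\xi$ is a closed $2$-form on $\cG$ with $|\nabla^k_{h^{\FF}}\xi|_{h^{\FF}} = O(|\mathscr{U}|^{-k-2})$ at infinity. The defining equations of $\mathcal{U}$ become
\begin{align}
\xi \wedge \omega_2^{\cG} &= 0, \\
\xi \wedge \omega_3^{\cG} &= 0, \\
2\omega^{\cG}\wedge \xi + \xi \wedge \xi &= 0.
\end{align}
By Proposition~\ref{indroots}, for a weight $\mu$ chosen strictly between $0$ and the smallest positive indicial root, the Laplacian $\Delta_{g^{\cG}}$ is Fredholm on the relevant weighted Sobolev spaces, and the weighted Hodge decomposition yields a unique splitting
\begin{equation}
\xi = h + d\eta,\qquad h \in L^2\mathcal{H}^2,\quad d^*\eta = 0,
\end{equation}
with $\eta$ in the corresponding weighted $W^{k,2}_{\mu-1}$ space (see \cite{HHM,CCI}). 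This defines the candidate map $F : U \to L^2\mathcal{H}^2$ by $F(\omega_1) \equiv h$.

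Next I would establish the inverse. Given small $h \in L^2\mathcal{H}^2$, I seek $\eta$ in the weighted divergence-free space such that $\omega_1 = \omega^{\cG} + h + d\eta$ lies in $\mathcal{U}$. Only the three triple equations must be solved, since closedness and decay are automatic. Setting up the map
\begin{equation}
\Phi(h,\eta) \equiv \Bigl( (h+d\eta)\wedge \omega_2^{\cG},\; (h+d\eta)\wedge \omega_3^{\cG},\; 2\omega^{\cG}\wedge(h+d\eta) + (h+d\eta)\wedge(h+d\eta)\Bigr),
\end{equation}
the linearization at $(0,0)$ in the $\eta$ direction is, after identifying self-dual $2$-forms with triples of $4$-forms via the hyperk\"ahler basis $\omega_1^{\cG},\omega_2^{\cG},\omega_3^{\cG}$, essentially $2\,d^+_{g^{\cG}}\eta$ (with $d^+$ computed in $g^{\cG}$). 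On divergence-free $1$-forms in the chosen weighted space, $d^+$ is Fredholm by Proposition~\ref{indroots}, and its kernel is trivial: any $\eta$ in the kernel is harmonic (since $d^+\eta = 0$ and $d^*\eta = 0$ imply $\Delta_{g^{\cG}}\eta = 0$) and the Bochner-type argument in Lemma~\ref{l:harmonic-1-form-Liouville} applied to $\eta$ (noting that $\Ric_{g^{\cG}} = 0$ and $\eta$ decays by the choice of weight) forces $\eta \equiv 0$. Surjectivity onto the appropriate target then follows from Fredholm theory once I verify the cokernel vanishes by the same duality argument. Given invertibility of this linearization on the complement of $L^2\mathcal{H}^2$ in the appropriate slice, the implicit function theorem produces a unique small $\eta = \eta(h)$ with $\Phi(h,\eta(h)) = 0$, giving the inverse $h \mapsto \omega^{\cG} + h + d\eta(h)$.

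The main obstacle, and the step I expect to occupy most of the technical work, is a careful bookkeeping of weighted spaces so that (a) the decomposition $\xi = h + d\eta$ respects the prescribed $O(|\mathscr{U}|^{-2-k})$ asymptotics from Definition~\ref{d:IALGM}, (b) the nonlinear term $(h+d\eta)\wedge(h+d\eta)$ maps the perturbation space back into the target weighted space (this is where the order-$2$ hypothesis is essential, since the product of two $O(|\mathscr{U}|^{-2})$ terms is integrable at infinity), and (c) the inverse map has image in $\mathcal{U}$, i.e.\ the resulting $\omega_1$ actually retains the prescribed decay rate of order $2$. Once these weighted estimates are in place, injectivity of $F$ on $U$ is immediate from uniqueness of the weighted Hodge decomposition combined with uniqueness in the implicit function theorem, and the isomorphism onto a small ball in $L^2\mathcal{H}^2$ follows.
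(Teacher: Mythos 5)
Your proof takes a genuinely different route from the paper's. The paper exploits the K\"ahler structure directly: it notes that $\omega_1\wedge\omega_2^{\cG}=\omega_1\wedge\omega_3^{\cG}=0$ is equivalent to $\omega_1$ being $(1,1)$, writes $\omega_1=\omega^{\cG}+\xi+\sqrt{-1}\partial\bar\partial\varphi$ with $\xi$ a closed anti-self-dual $(1,1)$-form and $\varphi$ a function, so that the $(1,1)$ conditions hold automatically and only the single scalar Monge--Amp\`ere equation remains. The linearization is the Laplacian on functions, whose $1$-dimensional cokernel (constants) is handled by the log-augmentation $W^{k,2}_\mu\oplus\RR(\chi(|\mathscr{U}|)\log|\mathscr{U}|)$. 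You instead stay inside the definite-triple framework, writing $\xi=h+d\eta$ with $h\in L^2\mathcal{H}^2$ and $\eta$ divergence-free, leaving a three-component system that linearizes to $d^+$ on divergence-free $1$-forms. That is a legitimate alternative, more in the spirit of Section~\ref{s:proof-of-main-theorem}, and it avoids the log-correction; but it is not obviously simpler, since you now have a vector-valued equation rather than a scalar one, and one also has to establish that the closed forms appearing in Definition~\ref{d:IALGM} admit a weighted Hodge decomposition respecting the $O(|\mathscr{U}|^{-2})$ decay, which requires the same indicial-root expansion the paper carries out at the end.

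The step you cannot wave away is the vanishing of the cokernel of $d^+$. The cokernel consists of self-dual harmonic $2$-forms in the dual weighted space (roughly $L^2$-decaying), and the duality/Bochner ingredient you cite, Lemma~\ref{l:harmonic-1-form-Liouville}, is a statement about $1$-forms and only uses $\Ric\ge 0$. For a $2$-form the Weitzenb\"ock remainder involves the full curvature operator on $\Lambda^2$, and to conclude you need the fact that a hyperk\"ahler $4$-manifold has $W^+=0$, so the curvature operator acts trivially on $\Lambda^2_+$ and $\Delta|\xi|^2=2|\nabla\xi|^2\ge 0$ for any self-dual harmonic $\xi$; one then needs the decay of $\xi$ from the weighted space to close the maximum-principle argument. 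In fact this is precisely the content of the second half of the paper's proof, where it is shown that every $\xi\in L^2\mathcal{H}^2$ is anti-self-dual (equivalently $L^2\mathcal{H}^2_+=0$) and decays at order $-2$ by checking that no indicial root between $-2$ and $-1$ supports a closed-and-coclosed leading term. So your phrase ``by the same duality argument'' is hiding the real work: you need to reproduce the K\"ahler-identity/anti-self-duality argument plus the indicial-root elimination, not merely the $1$-form Bochner lemma. With that supplied explicitly, your approach would go through.
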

\begin{proof}
First, let $\mu > 0$ be sufficiently close to zero. By elliptic regularity of distributions and Proposition \ref{indroots},
the cokernel of the mapping
\begin{align}
\label{lapmap}
\Delta_{g^{\cG}}: W^{k,2}_{\mu} \rightarrow W^{k-2,2}_{\mu+2},
\end{align}
can be identified with harmonic functions in $W^{k,2}_{-\mu}$. So let $h \in  W^{k,2}_{-\mu}$, then
\begin{align}h = c_1 \log|\mathscr{U}| + c_2 + O(|\mathscr{U}|^{-\epsilon})\end{align} as $\mathscr{U} \to \infty$. By integration by parts, we see that $c_1 = 0$, and therefore $h = c_2$ is a constant. The mapping in \eqref{lapmap} therefore
has a $1$-dimensional cokernel spanned by the constants. To overcome this, let $\chi(|\mathscr{U}|)$ be a smooth function which is $1$ when $|\mathscr{U}|\ge 2R$ and is $0$ when $|\mathscr{U}|\le R$ for a large radius $R$. The mapping
\begin{equation}
\Delta_{g^{\cG}}: W^{k,2}_{\mu}\oplus \RR (\chi(|\mathscr{U}|)\log|\mathscr{U}|)\rightarrow W^{k-2,2}_{\mu+2},
\end{equation}
is then surjective since the element $\Delta_{g^{\cG}} ( \chi(|\mathscr{U}|)\log|\mathscr{U}|)$ pairs nontrivially with $1$ under the $L^2$ pairing.
We claim that this mapping is moreover an isomorphism.	
To see this,  suppose that $c\in\RR$ and $f\in W^{k,2}_{\mu}$ satisfies \begin{equation}
\Delta_{g^{\cG}}(f+c(\chi(|\mathscr{U}|)\log|\mathscr{U}|)=0.
\end{equation} By elliptic regularity, we can assume that $k$ is very large. Then
\begin{equation}
0=\int_{|\mathscr{U}|\le R} \Delta_{g^{\cG}}(f+c(\chi(|\mathscr{U}|)\log|\mathscr{U}|)=\int_{|\mathscr{U}|=R} \frac{\partial}{\partial n}(f+c(\chi(|\mathscr{U}|)\log|\mathscr{U}|),
\end{equation}
where $\frac{\partial}{\partial n}$ means the derivative in the normal direction. So $c=0$. By maximum principle, $f=0$.
	
The condition $\omega_1\wedge\omega_2^{\cG}=\omega_1\wedge\omega_3^{\cG}$ is the same as $\omega_1$ being $(1,1)$. For any $\omega^{\cG}\in\mathcal{U}$, let $\mathcal{V}$ as the linear space
\begin{equation}
\{\xi\in\Lambda^{1,1}(\cG), d\xi=0, \xi\wedge\omega^{\cG}=0, |\nabla^k\xi|=O((|\mathscr{U}|+1)^{-k-2}), k\in\mathbb{N}\}
\end{equation}
using the norm defined by the metric $g^{\cG}$ induced by $(\omega^{\cG},\omega_2^{\cG},\omega_3^{\cG})$. Then for any small enough $\xi\in \mathcal{V}$, by a standard application of the implicit function theorem (Lemma \ref{l:implicit-function}), there exists a unique $\varphi\in W^{k,2}_{\mu}\oplus \RR (\chi(|\mathscr{U}|)\log|\mathscr{U}|)$ such that \begin{equation}
(\omega^{\cG}+\xi+\sqrt{-1}\partial\bar\partial\varphi)^2=(\omega_2^{\cG})^2.
\end{equation} By elliptic regularity, $\omega^{\cG}+\xi+\sqrt{-1}\partial\bar\partial\varphi\in\mathcal{U}$. This provides a local diffeomorphism from a small neighborhood of 0 in $\mathcal{V}$ to a small neighborhood of $\omega^{\cG}$ in $\mathcal{U}$.

Next, we will show that $L^2\mathcal{H}^2=\mathcal{V}$. For any element $\xi\in\mathcal{V}$, we know that $\xi$ is anti-self-dual. So \begin{equation}
d_{g^{\cG}}^*\xi=-*_{g^{\cG}}\circ d\circ *_{g^{\cG}}\xi=*_{g^{\cG}}\circ d\xi=0.
\end{equation}
Therefore $\mathcal{V}\subset L^2\mathcal{H}^2$. On the other hands, for any element $\xi \in L^2\mathcal{H}^2$, $\Delta_{g^{\cG}}\xi=0$. So by K\"ahler identities (for example, see the proof of \cite[Theorem~5.1]{CCIII}), the coefficient of the self-dual part of $\xi$ is also harmonic. It must vanish by maximal principle. In other words, $\xi$ is anti-self-dual. Since $L^2=L^2_{-1}$, we just need to analyze the indicial roots between~$-2$ and~$-1$. By \cite[Proposition~16]{HHM} (see also \cite[Theorem~4.6]{CCII} for a similar result), we have existence of harmonic expansions. For $\beta < 1/2$, there is no indicial root in the above range, so we are done. For $\beta > 1/2$, from Proposition \ref{indroots}, we have the harmonic leading terms
\begin{align}
\label{lt1}
\mathscr{U}^{-1/\beta}d \mathscr{U}\wedge d\mathscr{\bar U}, \ \mathscr{U}^{-1/\beta}d \mathscr{V}\wedge d\mathscr{\bar V},
\end{align}
and their conjugates. However, we require that $\xi$ is not only harmonic, but moreover closed and coclosed because we can do the integration by parts. It is clear that the first term in \eqref{lt1} is not coclosed, and the second term in \eqref{lt1} is not closed. Consequently, there is no non-trivial linear combination of these $4$ leading terms with is both closed and co-closed. So the leading term of $\xi$ can not be of order $-1/\beta$. In the cases $\beta = 2/3$ or $\beta = 3/4$, the next indicial root is $-2$, so we are done in these cases. In the case $\beta = 5/6$, the next indicial root is $-8/5$.  The corresponding leading term is
\begin{align}
(\bar{\mathscr{U}})^{-8/5} d \mathscr{U} \wedge d \bar{\mathscr{V}},
\end{align}
or its conjugate. Any non-trivial linear combination of these terms is not closed, so the leading
term cannot be of order $-8/5$. The next indicial root is $-2$, so we are done in this case as well.

We have shown that in all cases, the leading term of $\xi$ must be of order at least $-2$. By elliptic regularity, $\xi\in\mathcal{V}$, and this finishes the proof.

\end{proof}

\begin{theorem}The isotrivial ALG moduli space has dimension $\dim(\mathcal{U}) = b_2(\cG) -1$.
\end{theorem}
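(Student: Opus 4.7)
The plan is to combine Proposition \ref{l2thm} with a computation of $\dim L^2\mathcal{H}^2(\cG)$. That proposition provides a local isomorphism between a neighborhood of $\omega^{\cG}$ in $\mathcal{U}$ and a ball in $L^2\mathcal{H}^2(\cG)$, so $\dim(\mathcal{U}) = \dim L^2\mathcal{H}^2(\cG)$, and the theorem reduces to showing that the latter equals $b_2(\cG)-1$.

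The first step is to identify
\[
L^2\mathcal{H}^2(\cG) \;\cong\; \Image\bigl(H^2_c(\cG;\RR)\to H^2(\cG;\RR)\bigr),
\]
a Hausel-Hunsicker-Mazzeo style statement. The natural map sends a harmonic $L^2$ form to its de Rham class; it is well-defined since such forms are closed. Its image lies in $\Image(H^2_c\to H^2)$: by the proof of Proposition \ref{l2thm}, any $\xi\in L^2\mathcal{H}^2(\cG)$ decays as $O(|\mathscr{U}|^{-2})$ on the end, so using the indicial root expansion from Proposition \ref{indroots} on the model $\cC_{\beta,\tau}$ one produces a primitive $\eta$ with $|\eta|=O(|\mathscr{U}|^{-1})$, and $\xi - d(\chi\eta)$ is then a compactly supported representative of $[\xi]$ for a suitable cutoff $\chi$. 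Injectivity follows from integration by parts on exhausting balls $B_R$: the boundary term $\int_{\partial B_R}\eta\wedge *\xi$ decays as $O(R^{-2})$ because the $3$-dimensional boundary has volume $O(R)$ in the ALG geometry (the torus directions have bounded size while only the angular direction of $\mathscr{U}$ scales with $R$), while $|\eta|=O(R^{-1})$ and $|\xi|=O(R^{-2})$; thus if $[\xi]=0$ then writing $\xi=d\eta$ globally (after correcting on the end using the decaying primitive above) forces $\int_{\cG}|\xi|^2=0$. Surjectivity is standard $L^2$ Hodge theory on complete manifolds: every class in the image of $H^2_c\to H^2$ admits a unique $L^2$ harmonic representative.

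The second step is purely topological. Since $\cG$ deformation retracts onto its central fiber $\Theta$, Poincar\'e-Lefschetz duality identifies the map $H^2_c(\cG;\RR)\to H^2(\cG;\RR)$ with the map $H_2(\cG;\RR)\to H_2(\cG;\RR)^*$ induced by the intersection pairing among the irreducible components of $\Theta$. As recorded in the discussion preceding Table \ref{ALGtable}, this matrix is (the negative of) the extended Cartan matrix of the affine Dynkin diagram listed in the last row of that table. The extended Cartan matrix of any affine type is positive semidefinite with a one-dimensional null space spanned by the imaginary root (equivalently, the homology class of the full elliptic fiber), so its rank equals the number of nodes minus one. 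Since $b_2(\cG)$ equals the number of nodes, we conclude $\dim L^2\mathcal{H}^2(\cG) = b_2(\cG)-1$, as required.

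The main obstacle will be rigorously establishing the $L^2$-cohomology identification in the first step. ALG geometry of order exactly $2$ is borderline for the classical Hausel-Hunsicker-Mazzeo framework, and the construction of decaying primitives together with the vanishing of boundary terms in the integration by parts both rely sharply on the indicial root analysis from Proposition \ref{indroots} and on the $O(|\mathscr{U}|^{-2})$ decay proved in Proposition \ref{l2thm}. Once the identification is granted, the remainder reduces to reading off the last row of Table \ref{ALGtable}.
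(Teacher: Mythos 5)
Your proof is correct, and the final topological step takes a genuinely different route from the paper. Both arguments share the first two moves: reduce to $\dim L^2\mathcal{H}^2(\cG)$ via Proposition~\ref{l2thm}, and invoke (or, in your case, sketch a proof of) the Hausel--Hunsicker--Mazzeo identification
\begin{equation*}
L^2\mathcal{H}^2(\cG)\;\cong\;\Image\bigl(H^2_{cpt}(\cG;\RR)\to H^2(\cG;\RR)\bigr),
\end{equation*}
which the paper simply cites to \cite{HHM}. The divergence is in how the dimension of the right-hand side is computed. The paper truncates $\cG$ to a compact manifold with boundary $B\subset\cG$, computes $H^1(S)\cong H^2(S)\cong\RR$ for the cross-section $S=\partial B$, and reads off the answer from the long exact sequence of the pair $(B,S)$, obtaining $0\to\RR\to H^2_{cpt}(\cG)\to H^2(\cG)\to\RR\to 0$. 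You instead observe that after Lefschetz duality the map $H^2_{cpt}\to H^2$ is precisely the map $H_2(\cG)\to H_2(\cG)^*$ induced by the intersection form, identify that form with the negative of the affine Cartan matrix of the Dynkin type in Table~\ref{ALGtable}, and conclude from the standard Lie-theoretic fact that an affine Cartan matrix has corank exactly one. Both are valid; your route is shorter and conceptually clean, while the paper's long exact sequence argument is self-contained and avoids leaning on the precise identification of the intersection form with a Cartan matrix (which requires a moment of care for types $\II$, $\III$, $\IV$, where the components of the central fiber meet non-transversally --- tangentially or at a triple point --- though the resulting matrices do in fact coincide with the negatives of the $\widetilde{A}_0$, $\widetilde{A}_1$, $\widetilde{A}_2$ Cartan matrices). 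Your sketch of the $L^2$-cohomology isomorphism is consistent with what would be needed; the decay estimates and vanishing of the boundary term are all correct, and the paper's authors evidently considered including such a proof but opted for the citation.
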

\begin{proof}
Recall that from \cite{HHM}, there is an isomorphism
\begin{align}
L^2 \mathcal{H}^2 \cong  Im (  H^2_{cpt}(\cG)  \rightarrow  H^2(\cG) ),
\end{align}
so by Proposition~\ref{l2thm}, we just need to compute the dimension of the right hand side,
which is a topological invariant. Note that  $\cG$ deformation retracts onto a large ball $B \subset \cG$, and set
$S = \partial( B_p(R))$. Consider $B$ as a compact manifold with boundary $S$.
We claim that
\begin{align}
\label{Scoho}
H^1(S; \RR) \cong H^2(S; \RR) \cong \RR.
\end{align}
To see this, recall the action is $(\mathscr{U},\mathscr{V}) \mapsto (e^{2 \pi \i \beta}\mathscr{U}, e^{-2 \pi \i \beta }\mathscr{V})$,
so on $S^1 \times T^2$, the action is a rotation on the first factor.
Then it is easy to see that the pull-back of $d \theta$ from $S^1$ is the only invariant harmonic $1$-form under this action. Since the action is free, we can identify the cohomology of the quotient with the invariant cohomology.
	
The long exact sequence in relative cohomology, with real coefficients gives
\begin{align}
0 \rightarrow H^1(S) \rightarrow H^2(B, S) \rightarrow  H^2(B) \rightarrow H^2(S) \rightarrow 0.
\end{align}
This is because by Poincar\'e-Lefschetz duality for manifolds with boundary,
\begin{equation}
H^3(B,S) \cong H_1(B) \cong H_1(\cG) = 0
\end{equation}
and $H^1(B) \cong H^1(\cG) = 0$ since $\cG$ deformation retracts onto $B$. Also by duality we have \begin{equation}
H^2(B,S) \cong H_2(B) \cong H_2(\cG) \cong H^2_{cpt}(\cG).
\end{equation}
So the above exact sequence can be written as
\begin{align}
0 \rightarrow \RR \rightarrow H^2_{cpt}(\cG)  \rightarrow  H^2(\cG) \rightarrow \RR  \rightarrow 0.
\end{align}
Consequently, we have
\begin{align}
\dim \{   Im (  H^2_{cpt}(\cG)  \rightarrow  H^2(\cG) ) \} = b_2(\cG) -1.
\end{align}
\end{proof}

\subsection{Parameter count}
Our family of Calabi-Yau metrics depends upon the following parameters. For each
ALG space $\mathcal{G}_i$,  we have an open set $\mathcal{U}_i$ of dimension  $b_2( \mathcal{G}_i) -1$ as discussed in the previous subsection. Note that these parameters correspond to the area of the holomorphic curves in the finite singular fiber of each ALG space, of which there are $b_2( \mathcal{G}_i)$, but we subtract $1$ since the area of the singular fiber is fixed to be $\delta^2$ (after scaling).
For each $I_{\nu}$ fiber, we can also parametrize the multi-Ooguri-Vafa
metric by $\mathcal{V}_i$ which is an open set in $\RR^{\nu_i-1}$, which corresponds to
the areas of the holomorphic curves, minus $1$ constraint. We can vary these parameters by moving the monopole
points along the $S^1$ direction.
For the $I_{\nu'}^*$ fibers, we have an open set
$\mathcal{W}_i$ in $\RR^{\nu'_i + 4}$ given by the areas of the holomorphic curves
of multiplicity two (of which there are $\nu'_i +1$), subtracting $1$ constraint, together with the areas of the $(-2)$-curves in each Eguchi-Hanson metric (of which there are $4$).

Recall that $k_1$ denote the number of fibers with finite monodromy, $k_2$ denote the number of
$I_{\nu}$ fibers, and $k_3$ denote the number of $I_{\nu'}^*$ fibers.
The K\"ahler cone $\mathcal{H}(\cK) \subset H^{1,1}_{\RR}(\cK) \cong \RR^{20}$ is a convex cone.
By taking the K\"ahler class $[\omega_\delta^D]$
of the Calabi-Yau metric obtained in Theorem~\ref{t:existence-hyperkaehler}, we have a mapping
\begin{align}
\Phi: I \times  \mathscr{B} \times \Big( \prod_{i = 1}^{k_1} \mathcal{U}_i\Big) \times \Big( \prod_{i=1}^{k_2} \mathcal{V}_i\Big)
\times \Big(  \prod_{i=1}^{k_3} \mathcal{W}_i \Big) \rightarrow \mathcal{H}(\cK),
\end{align}
where $\delta \in I = (0, \delta_0)$, for $\delta_0$ sufficiently small, and
the space $\mathscr{B}$ is defined in \eqref{Bdef} above.
Note that, $\dim( \mathcal{U}_i) = b_2( \mathcal{G}_i) - 1$, $\dim ( \mathcal{V}_i) = \nu_i - 1$
and $\dim (\mathcal{W}_i) = \nu'_i + 4$, so
\begin{align}
\sum_{i=1}^{k_1} \dim( \mathcal{U}_i) + \sum_{i=1}^{k_2} \dim ( \mathcal{V}_i)
+ \sum_{i=1}^{k_3} \dim (\mathcal{W}_i)
= 24 - 2k_1 -k_2 - 2 k_3.
\end{align}
Combining this with \eqref{dimB} from above, we conclude that the domain of $\Phi$ is $20$-dimensional, which is the same dimension as $\mathcal{H}(\cK)$.

\begin{remark} We expect that  $[\omega_\delta^D] \to [\omega_E]$ as $\delta \to 0$,
where $[\omega_E]$ denotes the Poincar\'e dual of a fiber, and the image of $\Phi$ is an open set in $\mathcal{H}(\cK)$. This is intuitively clear from our construction, but a detailed analysis of this is very lengthy, so we do not include this here.
\end{remark}

\subsection{More bubble limits}
\label{ss:more-bubbles}
We can find other possible bubble limits which occur by slightly changing the gluing data.
First, near an $\I_{\nu}$-fiber ($\nu\in\dZ_+$), recall that the singularity model
comes from the multi-Ooguri-Vafa
metrics. In this case, we can change the locations of the monopoles so that they cluster together at points, so that one can also see multi-Taub-NUT ALF-$A_k$ metrics instead of having $\nu$ copies of Taub-NUT bubbles. It is also possible to obtain nontrivial bubble tree structure. For instance, one can make the monopole points cluster together at different rates, which can give an ALF orbifold with an orbifold point and the deepest bubble is given by a multi-Taub-NUT metric. As mentioned above in Remark~\ref{d2remark}, near a singular fiber of Type $\I_{\nu}^*$ ($\nu\in\dZ_+$), one can also change the scales of the Eguchi-Hanson bubbles so that
the ALF-$D_2$ type bubbles appear, which is identified with the
resolution of $(\dR^3\times S^1)/\dZ_2$. See \cite{BM} for more details about the Kummer construction of the ALF-$D_2$ space. Moreover, if the monopole points approach the ALF-$D_2$ space, it is also possible to get ALF-$D_k$ spaces for larger $k$. See \cite{CCII} and the references therein to see more details about ALF-$D_k$ hyperk\"ahler 4-manifolds.

\bibliographystyle{amsalpha}

\bibliography{CVZ_references}

 \end{document}